\title{Whittaker--Shintani functions for Fourier--Jacobi models on unitary groups}
\author{Paul Boisseau}
\date{\today}
\begin{document}
\maketitle

\begin{abstract}We state and prove a formula for the Whittaker--Shintani functions associated to Fourier--Jacobi models for $p$-adic unitary groups and general linear groups. These generalized spherical functions play a fundamental role in the proof of the Gan--Gross--Prasad conjecture for Fourier--Jacobi models. As an application we prove the unramified Ichino--Ikeda conjecture.
\end{abstract}

\setcounter{tocdepth}{1}
\tableofcontents

\section{Introduction}

\subsection{The Gan--Gross--Prasad conjecture for Fourier--Jacobi periods}
Let $V$ be a non-degenerate $n$-dimensional skew-Hermitian space relative to a quadratic extension of number fields $k'/k$, and let $W \subset V$ be a non-degenerate subspace of dimension $m$ such that the corank $n-m=:2r$ is even. Set $G:=\mathrm{U}(V)\times \mathrm{U}(W)$. Assume that the orthogonal complement $W^\perp$ is split, which means that it contains an isotropic subspace $X$ of dimension $r$. Gan Gross and Prasad have introduced in~\cite{GGP} a subgroup $H \leq G$ and an automorphic representation $\overline{\nu}_{\mu,\psi}=\otimes \overline{\nu}_{\mu,\psi,v}$ of $H$ built from the Weil representation of $\mathrm{U}(W)$. It depends on a character $\mu$ of the idele class group $(k')^\times \backslash \mathbb{A}_{k'}^\times$ lifting the quadratic character $\eta_{k'/k}$ attached to $k'/k$ by global class field theory, and on a non trivial character $\psi$ of $k \backslash \ad_k$. If $\sigma=\otimes \sigma_v$ is a cuspidal automorphic representation of $G(\ad_k)$, one can define a global Fourier--Jacobi period 
\begin{equation*}    \mathcal{P}_H(\varphi,\phi):=\int_{H(k)\backslash H(\ad_k)} \varphi(h) \theta(h,\phi)dh
\end{equation*}
for $\varphi \in \sigma$, $\phi \in \overline{\nu}_{\mu,\psi}$, where $\theta$ is a theta series. Assume that $\Hom_{H(k_v)}(\sigma_v \otimes \overline{\nu}_{\mu,\psi,v},\cc) \neq \{0\}$ for any place $v$ of $k$. The Gan--Gross--Prasad conjecture for Fourier--Jacobi models states that $\mathcal{P}_H$ is non-zero if and only if the central value of the complete Rankin--Selberg $L$-function $L(\frac{1}{2},\sigma \otimes \overline{\mu})$ is non-zero. Progress has been obtained by Xue in \cite{Xue1} and \cite{Xue} in the corank $0$ case under local assumptions. With Lu and Xue, we proved in \cite{BLX} the conjecture for any even corank and every cuspidal representation of $G$ whose base change is generic.

By uniqueness of local Fourier--Jacobi functionals, $\mathcal{P}_H$ decomposes as an Eulerian product of linear forms $\mathcal{L}_{H_v} \in \Hom_{H_v}(\sigma_v \otimes \overline{\nu}_{\mu,\psi,v},\cc)$. In \cite{BPC}, Beuzart-Plessis and Chaudouard used an unfolding argument to reduce the conjecture in the Bessel case (i.e. when $n-m$ is odd) to the corank $1$ situation. Moreover, if $\sigma$ is assumed to be everywhere tempered, integrating matrix coefficients yields a $H_v$-invariant sesquilinear form $\mathcal{P}_{H_v}$ on $\sigma_v \otimes \overline{\nu}_{\mu,\psi,v}$. In this setting, Xue formulated in \cite{Xue} a variation of the Ichino--Ikeda conjecture \cite{II} relating $\Val{\mathcal{P}_H}^2$ to $\prod_v \mathcal{P}_{H_v}$. For both problems, it is necessary to first check that the corresponding local statement holds in the unramified setting in order to regularize the Eulerian product. The goal of this article is to understand the unramified behaviour of the local Fourier--Jacobi functionals $\mathcal{L}_{H_v}$. This includes two cases: if the place $v$ is inert, then $\mathrm{U}(V)_v$ and $\mathrm{U}(W)_v$ are quasi-split unitary groups, if it is split they are isomorphic to $\GL_n(k_v)$ and $\GL_m(k_v)$. 

\subsection{Formulae for Whittaker--Shintani functions}

We now describe our main results. We work in the local unramified situation and we drop the place $v$ from the notation, so that $E/F$ is an unramified quadratic extension of $p$-adic fields, or $E=F\times F$. Denote by $q_F$ and $q_E$ the cardinals of the respective residual fields (if $E=F\times F$, take $q_E=q_F$). Assume that $2$ does not divide $q_F$, that $V$ and $W$ are split in the sense of \S\ref{subsubsec:V_W} and that $\mu$ and $\psi$ are unramified. We identify any algebraic group over $F$ with its group of $F$-points. Let $(T,B^+)$ be a Borel pair in $G$ such that $HB^+$ is open in $G$ and $T$ contains $N_G(H) \cap B^+$. Write $T=T_V \times T_W$. Let $K=K_V \times K_W$ be a hyperspecial open-compact subgroup of $G$ in good position relatively to $T$. Let $\chi$ and $\eta$ be unramified characters of $T_V$ and $T_W$. Assume that $\sigma$ is the unramified irreducible subquotient of the parabolic induction $I_{B^+}^{G} (\chi \boxtimes \eta)$. Let $\varphi^\circ \in \sigma^K$ be a non-zero $K$-spherical vector. The representation $\overline{\nu}_{\mu,\psi}$ of $H$ extends to a representation of a group $J$, named the Jacobi group (see \eqref{eq:Jacobi_defi} for the definition). There is a maximal compact subgroup $K_J$ in $J$, and $\overline{\nu}_{\mu,\psi}^{K_J}$ has dimension $1$. Let $\phi^\circ \in \overline{\nu}_{\mu,\psi}^{K_J}$ be non-zero. Consider

\begin{equation}
\label{eq:W_chi}
    \mathcal{W}_{\chi,\overline{\mu} \eta}(g):=\mathcal{L}_H(\sigma(g)\varphi^\circ \otimes \phi^\circ), \quad g \in \mathrm{U}(V).
\end{equation}
This $\mathcal{W}_{\chi,\overline{\mu}\eta}$ belongs to the space $\mathrm{WS}_{\chi,\overline{\mu} \eta}$ of \emph{Whittaker--Shintani functions} associated to $(\chi,\overline{\mu} \eta)$. It is roughly defined as follows. In \S\ref{subsec:WS_functions} we introduce a space $\mathrm{WS}$ of functions $\mathcal{W} \in C^\infty(\mathrm{U}(V))$ which satisfy some bi-invariance properties. It is a module on the Hecke algebra $\mathcal{H}(\mathrm{U}(V),K_V) \otimes \mathcal{H}(J,K_J)$ by left and right convolutions. $\mathrm{WS}_{\chi,\overline{\mu}\eta}$ is then the eigenspace of functions that transform by the Satake character of $\sigma\otimes \overline{\nu}_{\mu,\psi}$. Our first result is multiplicity one for such functions. It is proved in \S\ref{subsection:proof_mult1}.
\begin{theorem}
\label{thm:mult1intro}
    The space $\mathrm{WS}_{\chi,\overline{\mu} \eta}$ has dimension one.
\end{theorem}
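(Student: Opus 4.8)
The plan is to follow the classical analysis of generalized spherical functions (the Casselman--Shalika method, and the Whittaker--Shintani computations of Kato--Murase--Sugano and of Sugano in the unitary case), reducing the statement to a recursion on a finitely generated sub-semigroup of a cocharacter lattice. Let $A$ denote the maximal split subtorus of $T_V$. The first step is a Cartan--Iwasawa type decomposition of $\mathrm{U}(V)$ adapted to the triple $(H,J,K_V)$: combining the left-equivariance of the elements of $\mathrm{WS}$ under the image of $H$ in $\mathrm{U}(V)$ (unipotent of Whittaker type on the $\mathrm{GL}$-part of the parabolic stabilizing the isotropic flag in $X$, of Heisenberg type on the part meeting $W$), the compatible $\mathcal{H}(J,K_J)$-action, and right $K_V$-invariance, one checks that every $\mathcal{W}\in\mathrm{WS}$ is determined by the function $w\colon\lambda\mapsto\mathcal{W}(a_\lambda)$ on a set of double coset representatives $\{a_\lambda\}$ indexed by (a sublattice of) $X_*(A)$. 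The bi-invariance properties built into $\mathrm{WS}$ then force $w$ to be supported on a \emph{positive cone} $\Lambda^+\subset X_*(A)$: if $a_\lambda$ lies off $\Lambda^+$ one exhibits an element of the unipotent part on which the Whittaker character is non-trivial, or which acts non-trivially through the Weil representation, but which moves $a_\lambda$ within its double coset; comparing the two evaluations of $\mathcal{W}$ forces $w(\lambda)=0$.

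The second step is the recursion on $\Lambda^+$. For each generator $f$ of the commutative Hecke algebra $\mathcal{H}(\mathrm{U}(V),K_V)$, and likewise for the generators of $\mathcal{H}(J,K_J)$, I would expand the convolution $\mathcal{W}\ast f$ evaluated at $a_\lambda$ as a finite $\mathbb{Z}[q_F^{\pm1}]$-linear combination of the values $w(\lambda+\nu)$, with $\nu$ running over the cocharacters occurring in the coset decomposition of the support of $f$. Imposing that $\mathcal{W}$ transforms by the Satake character of $\sigma\otimes\overline{\nu}_{\mu,\psi}$ turns each such expansion into a linear difference equation for $w$. Choosing the generators so that the extremal term $w(\lambda+\nu_{\max})$ occurs with a non-vanishing monomial coefficient --- a Macdonald, or Gindikin--Karpelevich, factor --- one solves for $w(\lambda+\nu_{\max})$ in terms of strictly lower values, and an induction on a height function on $\Lambda^+$ shows that $w$, hence $\mathcal{W}$, is entirely determined by its single value $w(0)=\mathcal{W}(1)$. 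This gives $\dim\mathrm{WS}_{\chi,\overline{\mu}\eta}\leq 1$.

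For the reverse inequality it suffices to produce one non-zero element. Since $HB^+$ is open in $G$, a standard open-orbit construction yields a non-zero functional on $I_{B^+}^{G}(\chi\boxtimes\eta)\otimes\overline{\nu}_{\mu,\psi}$ that is non-trivial on the spherical line; feeding the $\mathrm{U}(V)$-translates of the spherical vector into it produces a non-zero function which transforms by the same Satake character as $\sigma\otimes\overline{\nu}_{\mu,\psi}$, hence a non-zero element of $\mathrm{WS}_{\chi,\overline{\mu}\eta}$ --- in particular $\mathcal{W}_{\chi,\overline{\mu}\eta}$ of \eqref{eq:W_chi} itself is non-zero. Alternatively, and closer to the explicit computation carried out later, one verifies that the recursion of the second step is consistent (the difference equations attached to different Hecke generators commute, because $\mathcal{H}(\mathrm{U}(V),K_V)$ and $\mathcal{H}(J,K_J)$ are commutative and act compatibly on $\mathrm{WS}$), defines $w$ on $\Lambda^+$ from $w(0)=1$ and the recursion, extends it by $0$ off $\Lambda^+$, and checks that the resulting $\mathcal{W}$ satisfies the defining properties of $\mathrm{WS}_{\chi,\overline{\mu}\eta}$. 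Either way $\dim\mathrm{WS}_{\chi,\overline{\mu}\eta}=1$.

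I expect the main obstacle to lie in the first two steps. The support statement is delicate because the last root direction is controlled by the Heisenberg group and the Weil representation rather than by an abelian character, so the vanishing-off-the-cone argument has to be run through the $\mathcal{H}(J,K_J)$-equivariance and the explicit structure of the unramified Weil representation --- in particular the one-dimensionality of $\overline{\nu}_{\mu,\psi}^{K_J}$ --- instead of a naive unipotent integration. And one must check that the leading coefficient in the Hecke recursion is genuinely non-zero; this normally forces one to treat $(\chi,\eta)$ in general position first and then to recover the general case, including the reducible or non-tempered $\sigma$, by a continuity argument in the unramified parameters, while tracking the two Hecke actions at once. The remaining work --- the Cartan--Iwasawa decomposition and the bookkeeping of coset representatives in the convolutions --- is routine but lengthy, and the split case $E=F\times F$ should follow from the unitary computation by specialization.
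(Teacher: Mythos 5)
Your outline is the right method (Casselman--Shalika / Kato--Murase--Sugano: identify the relevant cocharacter cone, then reduce to the value at the origin by a Hecke-algebra recursion), and it correctly flags the two genuine difficulties, but the paper does not run the recursion directly on the $\mathrm{WS}$ side. Instead it first constructs an explicit isomorphism of $\mathcal{H}(\mathrm{U}(V),K_V)\otimes\mathcal{H}(J,K_J)$-modules $\mathrm{WS}\simeq(\Ind_H^G\nu_{\mu,\psi})^K$ (Proposition~\ref{prop:isos}; the subtle ingredient is the surjection $\mathcal{H}(J,K_J)\to\mathcal{H}(\mathrm{U}(W),K_W)$ via the lattice model), then dualizes to the compact induction $(\ind_H^G\overline{\nu}_{\mu,\psi})^K$, decomposes it as $\bigoplus_{\lambda\in\Lambda^-}\cc\,\Phi_\lambda$ (Proposition~\ref{prop:c_compact_decompo}, your positive-cone step), and proves it is a \emph{free} $\mathcal{H}(G,K)$-module of rank one generated by $\Phi_0$ (Proposition~\ref{prop:rank1}). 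Multiplicity one is then immediate: an eigenfunctional on a free rank-one module is determined by its value on the generator. Your two flagged obstacles are exactly the two lemmas that make this work. The Heisenberg-direction support delicacy is Lemma~\ref{lem:decompo}, which uses the Schr\"odinger--lattice model to show $\nu_{\mu,\psi}^{B_W^0}=\bigoplus_{\lambda_W\in\Lambda_W^+}\cc\,\nu_{\mu,\psi}(\lambda_W^{-1})\phi^\circ$; this also corrects a small imprecision in your step 1: the parameter set is not a set of double-coset representatives in $X_*(A)$ but the cone $\Lambda^-=\Lambda_V^+\times\Lambda_W^-$, whose projection to $\Lambda_X^-$ has nontrivial finite fibers indexed by the $\lambda_W$-component tracking which vector in the Weil representation is evaluated. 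The non-vanishing of the leading coefficient is Lemma~\ref{lem:orbits_in_G}, $H\cap K\lambda K\lambda^{-1}\subset K$, which rests on a saturation-type inclusion proved via Gross's formula for the Satake transform (Lemma~\ref{lem:sat}). The module-theoretic reformulation buys a clean intermediate statement of independent interest (Proposition~\ref{prop:module_intro}) and a painless adaptation to $r=0$. For the lower bound, the paper does take the open-orbit route, but concretely: it defines $\mathcal{W}^I_{\chi,\eta}$ by a convergent integral over the open orbit for parameters in the open set $\mathcal{U}$, extends rationally by Bernstein's theorem, and proves non-vanishing by actually computing $\mathcal{W}^I_{\chi,\eta}(1)$ (Proposition~\ref{prop:normalization}), a calculation needed for Theorem~\ref{thm:W_formula} anyway; the abstract open-orbit functional and the recursion-consistency check you mention are sketchier than what is actually done.
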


The space $\mathrm{WS}$ admits another more natural description. Let $\nu_{\mu,\psi}$ be the complex conjugate of $\overline{\nu}_{\mu,\psi}$ and consider $(\Ind_H^G  (\nu_{\mu,\psi}) )^K$ the space of spherical vectors in the smooth induction. This is a $\mathcal{H}(G,K)$-module. There exists a surjective map of $\cc$-algebras $\mathcal{H}(\mathrm{U}(V),K_V) \otimes \mathcal{H}(J,K_J) \to \mathcal{H}(G,K)$ and we have an isomorphism of $\mathcal{H}(\mathrm{U}(V),K_V) \otimes \mathcal{H}(J,K_J)$-modules 
\begin{equation}
\label{eq:intro_WS_iso}
    \mathrm{WS} \simeq \left( \Ind_H^G  (\nu_{\mu,\psi}) \right)^K.
\end{equation}
Note that a modification is needed if $r=0$ (see \S\ref{subsec:r=0}). Let $\Lambda \subset T$ be the group of cocharacters of $T$, and let $\Lambda^-$ be the subset of negative cocharacters relatively to $B^+$. By duality, Theorem~\ref{thm:mult1intro} is a consequence of the following result which is of independent interest. It is a combination of Proposition~\ref{prop:c_compact_decompo} and Proposition~\ref{prop:rank1}.
\begin{prop}
\label{prop:module_intro}
    The spherical space in the compact induction $(\ind_H^G \nu_{\mu,\psi})^K$ decomposes as
      \begin{equation*}
       \left( \ind_H^G (\nu_{\mu,\psi}) \right)^K= \bigoplus_{\lambda \in \Lambda^-} \cc \; \Phi_{\lambda},
    \end{equation*}
     where $\Phi_{\lambda}$ is the unique vector in $(\ind_H^G (\nu_{\mu,\psi}))^K$ such that $\supp(\Phi_\lambda) \subset H \lambda K$ and $\Phi_\lambda(\lambda)=\phi^\circ$ (where $\Phi_\lambda$ is seen as a function on $G$ valued in $\nu_{\mu,\psi}$). Moreover, $(\ind_H^G (\nu_{\mu,\psi}))^K$ is a free $\mathcal{H}(G,K)$-module of rank $1$.
\end{prop}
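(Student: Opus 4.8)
The decomposition of $(\ind_H^G \nu_{\mu,\psi})^K$ should come from the Cartan-type decomposition $G = \bigsqcup_{\lambda \in \Lambda^-} H\lambda K$ of the double coset space $H\backslash G/K$, which is the geometric heart of the matter. Assuming this (it is presumably established in the earlier sections where $B^+$, $H$ and $\Lambda^-$ are set up, via the openness of $HB^+$ and the Iwasawa/Cartan decompositions for $G$), the argument is essentially formal. First I would recall that a smooth function $\Phi \in \ind_H^G \nu_{\mu,\psi}$ satisfies $\Phi(hg) = \nu_{\mu,\psi}(h)\Phi(g)$ and has compact support modulo $H$; the $K$-invariance $\Phi(gk) = \Phi(g)$ means $\Phi$ is determined by its values on a set of representatives of $H\backslash G/K$. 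For a double coset $H\lambda K$, the value $\Phi(\lambda)$ must be a vector in $\nu_{\mu,\psi}$ fixed by the group $\nu_{\mu,\psi}(H \cap \lambda K \lambda^{-1})$ — so the local dimension of $(\ind_H^G\nu_{\mu,\psi})^K$ supported on that single coset is $\dim \nu_{\mu,\psi}^{H\cap \lambda K\lambda^{-1}}$.

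**The key points.** The two things that need checking are: (i) for $\lambda \in \Lambda^-$ the space $\nu_{\mu,\psi}^{H \cap \lambda K \lambda^{-1}}$ is exactly one-dimensional and is spanned (under translation back by $\lambda$) by a multiple of $\phi^\circ$; and (ii) the indexing set $\Lambda^-$ is correct, i.e. the Cartan decomposition really is parametrized by negative cocharacters. For (i), since $\phi^\circ$ generates the line $\overline{\nu}_{\mu,\psi}^{K_J} = \nu_{\mu,\psi}^{K_J}$ (after conjugating, $\nu_{\mu,\psi}^{K_J}$), and $H \cap \lambda K\lambda^{-1}$ should, for $\lambda$ in the negative cone, be contained in (a conjugate of) $K_J \cap H$ up to the relevant unipotent/Jacobi directions where $\psi$ being unramified forces invariance to single out $\phi^\circ$ — this is where one uses that $\psi$ and $\mu$ are unramified and that the Weil representation's $K_J$-fixed line is one-dimensional. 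I expect most of this to be handled by a lemma controlling $H \cap \lambda K \lambda^{-1}$ for $\lambda \in \Lambda^-$ proved alongside the Cartan decomposition. Given these, $\Phi_\lambda$ is well defined: set it to $\phi^\circ$ on $\lambda$, extend by the $H$- and $K$-rules, and zero elsewhere; consistency is exactly the invariance statement (i). The direct sum decomposition is then immediate since distinct double cosets are disjoint and any $\Phi$ is a finite sum of its restrictions to individual cosets (finite by compact support mod $H$).

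**Freeness over $\mathcal{H}(G,K)$.** Once the decomposition is in hand, freeness of rank one should follow by a standard degeneration/leading-term argument. The Hecke algebra $\mathcal{H}(G,K)$ acts by right convolution; one chooses the generator to be $\Phi_0 := \Phi_{\lambda = 0}$ (supported on $HK$, the "base point"). For $\lambda \in \Lambda^-$, convolving $\Phi_0$ with the characteristic function $\mathbf{1}_{K\lambda^{-1}K}$ (or the relevant Hecke operator $T_\lambda$) produces a function supported on $HK \cdot K\lambda^{-1}K = \bigcup H\mu K$ over $\mu$ in a bounded set with "leading term" $H\lambda K$ — i.e. $\Phi_0 * T_\lambda = c_\lambda \Phi_\lambda + \sum_{\mu > \lambda} c_{\mu}\Phi_\mu$ with $c_\lambda \neq 0$, where $>$ is the dominance order on $\Lambda^-$. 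Since $\{T_\lambda\}_{\lambda \in \Lambda^-}$ is a basis of $\mathcal{H}(G,K)$ by the Satake/Cartan description of $G$, this triangular system shows $\Phi_0$ generates the whole module and that the map $\mathcal{H}(G,K) \to (\ind_H^G\nu_{\mu,\psi})^K$, $T \mapsto \Phi_0 * T$, is bijective — hence an isomorphism of modules, giving freeness of rank $1$.

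**Main obstacle.** The genuine difficulty is not the module-theoretic bookkeeping but the geometric input: proving the Cartan decomposition $G = \bigsqcup_{\lambda\in\Lambda^-} H\lambda K$ together with the precise identification of $H \cap \lambda K\lambda^{-1}$ for each $\lambda \in \Lambda^-$, since $H$ is a non-reductive "mirabolic-times-Jacobi" type subgroup rather than a nice spherical reductive one, so the usual Cartan decomposition does not apply verbatim and one must exploit the open orbit $HB^+$ and an Iwasawa decomposition argument adapted to $H$. I would expect this — and the accompanying invariance computation that pins the local multiplicity to $1$ and to the line through $\phi^\circ$ — to be exactly the content of Proposition~\ref{prop:c_compact_decompo}, with the rank-one freeness being Proposition~\ref{prop:rank1}; the proof here then just assembles these two.
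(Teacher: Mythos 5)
There is a genuine gap in the key geometric input of your proposal. You assert that $G = \bigsqcup_{\lambda \in \Lambda^-} H\lambda K$ and that for each $\lambda \in \Lambda^-$ the local multiplicity space $\nu_{\mu,\psi}^{\,H \cap \lambda K \lambda^{-1}}$ is one-dimensional and spanned by a translate of $\phi^\circ$. Neither is correct. The double coset space $H\backslash G/K$ is in bijection with $\Lambda_X = \Lambda_r + \Lambda_W^+ \subset \Lambda_V$, not with $\Lambda^- = \Lambda_V^+ \times \Lambda_W^-$; the natural map $\lambda = (\lambda_V,\lambda_W) \mapsto \lambda_X := \lambda_V - \lambda_W$ has finite but typically non-trivial fibers, so distinct elements of $\Lambda^-$ can index the \emph{same} double coset $H\tilde\lambda_X K$. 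Correspondingly, the space of $K$-fixed vectors of $\ind_H^G\nu_{\mu,\psi}$ supported on a single double coset $H\tilde\lambda_X K$ has dimension equal to the size of the fiber over $\tilde\lambda_X$, not $1$. This is precisely the content of Lemma~\ref{lem:decompo}, which produces an explicit basis $\{\nu_{\mu,\psi}(\lambda_W^{-1})\phi^\circ\}_{\lambda_X=\tilde\lambda_X}$ of the local invariant space using the Schr\"odinger--lattice model (in particular the formulas~\eqref{eq:a_action} and~\eqref{eq:rho_action}). Your plan, which attributes one basis vector per double coset, would produce a decomposition indexed by $\Lambda_X^-$ and miss the extra directions coming from the infinite-dimensional representation $\nu_{\mu,\psi}$; this is exactly the non-formal content that makes the proposition hold with $\Lambda^-$ as index set.

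Your freeness sketch is closer to the paper's. The paper also proceeds by a triangularity argument: $R(1_{K\lambda^{-1}K})\Phi_0 = \Phi_\lambda + \sum_{\lambda'_X <_X \lambda_X} c(\lambda,\lambda')\Phi_{\lambda'}$, which is then inverted to conclude. However, even there the essential ingredient is Lemma~\ref{lem:orbits_in_G}, whose first part gives the $\leq_X$-triangularity and whose second part ($H \cap K\lambda K\lambda^{-1} \subset K$, proved via the Cartan multiplication result of~\cite{Gross}) pins the leading coefficient to $1$ rather than just nonzero. The finiteness of the fibers of $\Lambda^- \to \Lambda_X^-$ is what makes the induction well-founded, and this is only true for $r \geq 1$; the $r=0$ case needs a separate argument (\S\ref{subsec:r=0}) that your proposal does not anticipate.
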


The image in $(\Ind_H^G  (\nu_{\mu,\psi}))^K$ of the function $\mathcal{W}_{\chi,\overline{\mu}\eta}$ defined in \eqref{eq:W_chi} is easy to describe. By choosing an invariant inner product $(\cdot,\cdot)_{\nu}$, we have an identification $\overline{\nu}_{\mu,\psi}^\vee \simeq \nu_{\mu,\psi}$. By duality, the operator $\mathcal{L}_H$ corresponds to $\mathcal{L}_H^\vee \in \Hom_H(\sigma,\nu_{\mu,\psi})$. Consider the map
\begin{equation}
    \Phi_{\chi,\overline{\mu}\eta}(g)=\mathcal{L}_H^\vee(\sigma(g) \varphi^\circ) \in \nu_{\mu,\psi}, \quad g \in G.
\end{equation}
Then $\Phi_{\chi,\overline{\mu}\eta} \in (\Ind_H^G  (\nu_{\mu,\psi}))^K$ is the image of $\mathcal{W}_{\chi,\overline{\mu}\eta}$ by \eqref{eq:intro_WS_iso}.

Our second main result is a formula for $\Phi_{\chi,\overline{\mu}\eta}$. Set $n_-=\lfloor n/2 \rfloor$, $m_-=\lfloor m/2 \rfloor$ and $r'=m+r$. Let $\delta_{B^+}$ be the modular character of $B^+$. Denote by $\Sigma_{\mathrm{nd}}^+$ the set of non-divisible positive roots of $T$ in $B^+$. Then $T$ is identified in a natural way with the canonical maximal torus $T_G$ of $G$ which is $(\mathrm{Res}_{E/F} \LAG_{m})^{n_-+m_-}$ in the inert even case, $(\mathrm{Res}_{E/F} \LAG_{m})^{n_-+m_-} \times \mathrm{U}(1)^2$ in the inert odd case, and $\LAG_m^{n+m}$ in the split case. We identify the character $\chi \boxtimes \eta$ with a couple $(\chi,\eta) \in (\cc^{\times})^{n_-+m_-}$ or $(\cc^{\times})^{n+m}$. Let $W_G$ be the Weyl group of $G$ and let $w_0$ be its longest element. Define $\zeta_F(s)=(1-q_F^{-s})^{-1}$ and $\zeta_E(s)=(1-q_E^{-s})^{-1}$ the local zeta functions, and for $x,s \in \cc$ set $L_F(s,x)=(1-xq_F^{-s})^{-1}$ and $L_E(s,x)=(1-xq_E^{-s})^{-1}$. The RHS of \eqref{eq:formula_intro} is well-defined for $(\chi,\eta)$ in general position and extends to a regular function. The constants involved depend on whether $E/F$ is inert or split, and $n$ is even or odd. The theorem is proved in two steps in \S\ref{subsec:proofs} and \S\ref{section:proof_normalization}.

\begin{theorem}
\label{thm:W_formula}We have $(\Phi_{\chi,\overline{\mu}\eta}(1),\phi^\circ)_\nu=0$ if and only if $\Phi_{\chi,\overline{\mu}\eta}=0$. Moreover if $\Phi_{\chi,\overline{\mu}\eta}\neq0$ then for all $\lambda \in \Lambda^{-}$ we have
    \begin{equation}
    \label{eq:formula_intro}
        \frac{( \Phi_{\chi,\overline{\mu} \eta}(\lambda),\phi^\circ )_{\nu}}{(\Phi_{\chi,\overline{\mu}\eta}(1),\phi^\circ)_\nu} = \frac{\Delta_{\mathrm{U}(W)}}{\Delta_{T_W}} \sum_{w \in W_G} \mathbf{b}(w_V \chi, w_W \overline{\mu}\eta) \mathbf{d}(w(\chi \boxtimes \eta))\left( w (\chi \boxtimes \eta) \delta_{B^+}^{-\frac{1}{2}} \right)(\lambda),
    \end{equation}
    where $w=(w_V,w_W)$, 
    \begin{equation*}
        \Delta_{T_W}=\left\{
        \begin{array}{ll}
            \zeta_E(1)^{m_-} & \text{inert even case,} \\
            \zeta_E(1)^{m_-}L_F(1,\eta_{E/F}) & \text{inert odd case,} \\
            \zeta_F(1)^{m} & \text{split case,} \\
        \end{array}
        \right. \; \; \;   \Delta_{\mathrm{U}(W)}=\left\{
        \begin{array}{ll}
            \prod_{i=1}^m L_F(i,\eta_{E/F}^i) & \text{inert case,} \\
            \prod_{i=1}^m \zeta_F(i) & \text{split case,} 
        \end{array}
        \right.
    \end{equation*}
    \begin{equation*}
    \mathbf{d}(\chi \boxtimes \eta )=\prod_{\alpha \in \Sigma^+_{\mathrm{nd}}} \frac{1}{1-\langle \chi\boxtimes \eta,\alpha^\vee \rangle},
\end{equation*}
and $\mathbf{b}(\chi,\eta)$ is
\begin{equation}
\label{eq:b_defi}
    \left\{
    \begin{array}{ll}
        \displaystyle \prod_{\substack{1 \leq i \leq {n_-} \\ 1 \leq j \leq {m_-}}} L_E^{-1}(\frac{1}{2},\chi_i \eta_j) \prod_{i < r+j} L_E^{-1}(\frac{1}{2},\chi_i \eta_j^{-1})\prod_{i > r+j} 
        L_E^{-1}(\frac{1}{2},\chi_i^{-1} \eta_j) & \text{inert $n$ even,} \\
        \displaystyle \prod_{\substack{1 \leq i \leq {n_-} \\ 1 \leq j \leq {m_-}}} L_E^{-1}(\frac{1}{2},\chi_i \eta_j)L_E^{-1}(\frac{1}{2},-\chi_i) L_E^{-1}(\frac{1}{2},\eta_j )\prod_{i < r+j} L_E^{-1}(\frac{1}{2},\chi_i \eta_j^{-1})\prod_{i > r+j}L_E^{-1}(\frac{1}{2},\chi_i^{-1} \eta_j) & \text{inert $n$ odd,} \\
        \displaystyle \prod_{i+j<r'+1} L_F^{-1}(\frac{1}{2},\chi_i \eta_j) \prod_{i+j > r'+1}  L_F^{-1}(\frac{1}{2},\chi_i^{-1} \eta_j^{-1}) & \text{split.} \\
    \end{array}
    \right.
\end{equation}
\end{theorem}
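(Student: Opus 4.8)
The plan is to exploit the module structure established in Proposition~\ref{prop:module_intro}. Since $(\ind_H^G \nu_{\mu,\psi})^K$ is a free $\mathcal{H}(G,K)$-module of rank one, its dual $(\Ind_H^G \nu_{\mu,\psi})^K$ is cyclic, and the Whittaker--Shintani function $\Phi_{\chi,\overline{\mu}\eta}$ is determined by the single scalar $(\Phi_{\chi,\overline{\mu}\eta}(1),\phi^\circ)_\nu$: pairing $\Phi_{\chi,\overline{\mu}\eta}$ against $\Phi_\lambda$ in the compact induction gives $(\Phi_{\chi,\overline{\mu}\eta}(\lambda),\phi^\circ)_\nu$, so the left-hand side of \eqref{eq:formula_intro} is the coordinate of $\Phi_{\chi,\overline{\mu}\eta}$ on the dual basis after normalization. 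This immediately yields the ``zero or nonzero'' dichotomy in the first sentence of the theorem: vanishing at $\lambda=1$ forces the cyclic generator to be zero. What remains is to compute the generating series $\sum_{\lambda \in \Lambda^-} (\Phi_{\chi,\overline{\mu}\eta}(\lambda),\phi^\circ)_\nu \, X^\lambda$ as a rational function of $(\chi,\eta)$, where $X^\lambda$ tracks the cocharacter.

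The main computational engine should be a Casselman--Shalika type recursion. First I would realize $\Phi_{\chi,\overline{\mu}\eta}$ as an explicit vector-valued function on $G$ by restricting the intertwining map $\mathcal{L}_H^\vee \in \Hom_H(\sigma,\nu_{\mu,\psi})$ to the spherical vector $\varphi^\circ \in I_{B^+}^G(\chi\boxtimes\eta)$, using the Iwasawa decomposition $G = B^+ K$ together with the open orbit structure $HB^+$ open in $G$. Then I would set up functional equations by composing with the standard intertwining operators $I_{B^+}^G(\chi\boxtimes\eta)\to I_{B^+}^G(w(\chi\boxtimes\eta))$ for $w\in W_G$; the Gindikin--Karpelevich formula produces the $\mathbf{d}$-factor $\prod_{\alpha\in\Sigma^+_{\mathrm{nd}}}(1-\langle\chi\boxtimes\eta,\alpha^\vee\rangle)^{-1}$, while the local Fourier--Jacobi (Rankin--Selberg) integral computed against each $w$-twist produces the $\mathbf{b}$-factor via an unramified Rankin--Selberg computation in the spirit of the doubling method. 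The shape of $\mathbf{b}$ — a product of $L_E^{-1}$ or $L_F^{-1}$ factors at $1/2$ indexed by pairs $(i,j)$ with the split at $i \lessgtr r+j$ reflecting the position of $X$ inside $W^\perp$ — strongly suggests that this is exactly the local zeta integral for the Fourier--Jacobi model evaluated on unramified data, so I would aim to match it against the known unramified Fourier--Jacobi local integral (as in work on the Gan--Gross--Prasad conjecture, e.g.\ the local integrals appearing in \cite{Xue} and \cite{BLX}). The $\Delta_{\mathrm{U}(W)}/\Delta_{T_W}$ prefactor is the ratio of the volume of $K_W$ (Macdonald's formula for $\mathrm{U}(W)$) to the torus normalization coming from $\phi^\circ$, and should drop out of the normalization step cleanly.

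The proof naturally splits into the two steps the excerpt announces. \emph{Step one} (\S\ref{subsec:proofs}): prove that the ratio on the left of \eqref{eq:formula_intro} equals the right-hand side \emph{up to a single overall constant} depending only on $(\chi,\eta)$ through an overall Weyl-symmetric factor — this is the recursion/functional-equation argument above, where one checks that both sides satisfy the same difference equations in $\lambda$ (coming from the $\mathcal{H}(G,K)$-action, i.e.\ the Satake/Macdonald recursion) and have the same support condition $\lambda\in\Lambda^-$, hence agree by the rank-one freeness. \emph{Step two} (\S\ref{section:proof_normalization}): pin down the constant by evaluating at $\lambda=0$, i.e.\ computing $(\Phi_{\chi,\overline{\mu}\eta}(1),\phi^\circ)_\nu$ directly; by the formula this should force $\sum_{w\in W_G}\mathbf{b}(w_V\chi,w_W\overline{\mu}\eta)\mathbf{d}(w(\chi\boxtimes\eta)) = \Delta_{T_W}/\Delta_{\mathrm{U}(W)}$ as a rational-function identity, which one proves by a residue/telescoping argument or by specializing a known Macdonald-type constant-term identity. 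I expect \emph{Step two} — the normalization, i.e.\ the combinatorial identity summing the $\mathbf{b}\cdot\mathbf{d}$ terms over the full Weyl group to the clean product $\Delta_{T_W}/\Delta_{\mathrm{U}(W)}$ — to be the main obstacle, since it requires the exact rational-function cancellations rather than just structural properties; the inert-odd case with its extra $L_E^{-1}(1/2,-\chi_i)L_E^{-1}(1/2,\eta_j)$ factors and the $\mathrm{U}(1)^2$ torus components will be the most delicate, and handling the split versus inert cases uniformly may require separate bookkeeping of the root data.
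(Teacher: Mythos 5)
Your high-level architecture is correct and matches the paper: you correctly use Proposition~\ref{prop:module_intro} to reduce the zero/nonzero dichotomy to the value at $\lambda=1$, you correctly identify a Casselman--Shalika-style two-step structure (functional equations to get the sum over $W_G$ up to a constant, then a normalization identity), and you correctly predict that the normalization is the delicate combinatorial step. The $\mathbf{d}$-factor attribution to Gindikin--Karpelevich is also essentially right (it is the $c$-function/intertwining-operator normalization in the paper's \S\ref{subsubsection:intertwining}).

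However, there is a genuine gap in the mechanism you propose for producing $\mathbf{b}$. You suggest matching against ``the known unramified Fourier--Jacobi local integral (as in work on the Gan--Gross--Prasad conjecture, e.g.\ the local integrals appearing in \cite{Xue} and \cite{BLX})''. This is circular: for general even corank $2r$ the unramified value of the Fourier--Jacobi local integral is precisely what is being established — Theorem~\ref{thm:II} is derived \emph{from} Theorem~\ref{thm:W_formula}, not the other way around, and the corank-zero result of \cite{Xue} cannot supply the $(i,j)$-indexed product with the $r$-shift. The paper instead constructs $\mathcal{W}^I_{\chi,\eta}$ by an explicit convergent pairing $\mathcal{L}_{\chi,\eta}$ on the Jacobi group $J$ (not a Rankin--Selberg zeta integral), continues it by Bernstein's theorem, and computes the $\gamma$-factors one simple reflection at a time via rank-one integrals over $\SL_2(F)$, $\SL_2(E)$ and, crucially, $\SU_3(F)$ for the inert-odd boundary root (Propositions~\ref{prop:gamma_alpha_value} and~\ref{prop:gamma_beta_value}); this is where the $L_E^{-1}(\tfrac{1}{2},\cdot)$ factors of $\mathbf{b}$ actually arise, and the $\SU_3$ case has no off-the-shelf reference. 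You also understate the normalization step: the paper does not obtain $\Delta_{\mathrm{U}(W)}/\Delta_{T_W}$ as a volume ratio that ``drops out cleanly'', but reformulates $\mathbf{b}\cdot\mathbf{d}$ via $\det(1-q_F^{-1/2}\mathcal{R}_{-,\overline{\mu}}(wS))/D_{\widehat{G}/\widehat{B}}(wS)$, proves the Weyl sum is constant in $S$ by a weight-dominance/Weyl-character-formula argument, reduces general $r$ to $r=0$ by an unfolding identity (Lemma~\ref{lem:L_function_unfolding}), and only then evaluates at a special Satake parameter. Without these ingredients, the plan does not close.
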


Although Theorem~\ref{thm:W_formula} is the appropriate result for applications, the factors $\mathbf{b}(\chi,\eta)$ lack a satisfying interpretation. In the split case, we propose a more robust version. Let $\widehat{G}$ be the Langlands dual group of $G$ with Borel pair $(\widehat{T},\widehat{B}^+)$. We have the $L$-groups ${}^L G$ and ${}^L T$. Let $S \in {}^L T$ be a representative of the Satake parameter of $\sigma$. We define in \S\ref{eq:Rankin--Selberg_rep} a representation $\mathcal{R}_{\overline{\mu}} : {}^L G \to \GL(\cc^n \otimes \cc^m \oplus \cc^n \otimes \cc^m)$. It satisfies
\begin{equation}
\label{eq:RL}
    L(s,\sigma \otimes \overline{\mu})=\det(1-q_F^{-s} \mathcal{R}_{\overline{\mu}}(S)).
\end{equation}
The representation $\mathcal{R}_{\overline{\mu}}$ is symplectic. In \S\ref{subsubsec:symplectic} we choose a Lagrangian subspace $\mathcal{Y}$ which is stable by the restriction of $\mathcal{R}_{\overline{\mu}}$ to ${}^L T$. Let $\mathcal{Y}_{\overline{\mu}}$ be this representation of ${}^L T$. Denote by $D_{\widehat{G}/\widehat{B}^+}(S)$ the determinant of $1-\mathrm{Ad}(S)$ on $\mathrm{Lie}(\widehat{G}) / \mathrm{Lie}(\widehat{B}^+)$. We introduce in \S\ref{subsubsec:spheri_vectors} an element $\phi^\times \in \overline{\nu}_{\mu,\psi}$. It is not spherical but is fixed by $I_W \cap w_0 I_W w_0^{-1}$, where $I_W$ is a Iwahori subgroup of $\GL_m$. It depends on the choice of a Schrodinger model of $\overline{\nu}_{\mu,\psi}$ which is equivalent to the choice of a Lagrangian subspace $Y$ of the symplectic space $\mathrm{Res}_{E/F} W$. The Lagrangian subspaces $\mathcal{Y}$ and $Y$ are chosen to be compatible. The following proposition is proved in \S\ref{subsubsec:end_proof_formula_2}.

\begin{prop}
\label{prop:W_formula2}
    Assume that we are in the split case and that $\Phi_{\chi,\overline{\mu} \eta} \neq 0$. For $\lambda \in \Lambda^{-}$ we have 
     \begin{equation*}
        \frac{( \Phi_{\chi,\overline{\mu} \eta}(\lambda),\phi^\times )_{\nu}}{(\Phi_{\chi,\overline{\mu}\eta}(1),\phi^\circ)_\nu} = \Delta_{\mathrm{U}(W)} \sum_{w \in W_G} \frac{\det(1-q_F^{-\frac{1}{2}} \mathcal{Y}_{\overline{\mu}}
        (wS))}{D_{\widehat{G}/\widehat{B}^+}(wS)} 
         \left(w_0 w (\chi \boxtimes \eta) \delta_{B^+}^{-\frac{1}{2}}\right) (\lambda).
    \end{equation*}
\end{prop}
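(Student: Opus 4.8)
The plan is to deduce Proposition~\ref{prop:W_formula2} from Theorem~\ref{thm:W_formula} in three moves: reindex the Weyl sum, rewrite the scalar factors $\mathbf{b}$ and $\mathbf{d}$ in terms of the Langlands dual data of \S\ref{eq:Rankin--Selberg_rep} and \S\ref{subsubsec:symplectic}, and absorb the remaining discrepancy into the change of test vector $\phi^\circ\rightsquigarrow\phi^\times$. As for Theorem~\ref{thm:W_formula} it suffices to argue for generic $(\chi,\eta)$, both sides extending to regular functions; and since $\Phi_{\chi,\overline{\mu}\eta}\neq0$ is assumed, no vanishing dichotomy needs to be reproved here.

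First I substitute $w\mapsto w_0w$ in \eqref{eq:formula_intro}; as $w_0$ is the longest element of $W_G$ this turns the character $w(\chi\boxtimes\eta)\delta_{B^+}^{-1/2}$ evaluated at $\lambda$ into $w_0w(\chi\boxtimes\eta)\delta_{B^+}^{-1/2}$ evaluated at $\lambda$, so that Theorem~\ref{thm:W_formula} reads
\begin{equation*}
    \frac{(\Phi_{\chi,\overline{\mu}\eta}(\lambda),\phi^\circ)_\nu}{(\Phi_{\chi,\overline{\mu}\eta}(1),\phi^\circ)_\nu}=\frac{\Delta_{\mathrm{U}(W)}}{\Delta_{T_W}}\sum_{w\in W_G}\mathbf{b}(w_0w_V\chi,w_0w_W\overline{\mu}\eta)\,\mathbf{d}(w_0w(\chi\boxtimes\eta))\,\bigl(w_0w(\chi\boxtimes\eta)\,\delta_{B^+}^{-1/2}\bigr)(\lambda).
\end{equation*}
In the split case $G$ is of type $A$, so $\Sigma^+_{\mathrm{nd}}=\Sigma^+$ and $\mathbf{d}(\xi)^{-1}=\prod_{\alpha\in\Sigma^+}(1-\langle\xi,\alpha^\vee\rangle)$; writing $\langle w(\chi\boxtimes\eta),\beta^\vee\rangle=\beta^\vee(wS)$ and using $w_0\Sigma^+=\Sigma^-$ together with the fact that $\mathrm{Lie}(\widehat{G})/\mathrm{Lie}(\widehat{B}^+)$ is the sum of the negative coroot spaces of $G$, one gets the clean identity $\mathbf{d}(w_0w(\chi\boxtimes\eta))=D_{\widehat{G}/\widehat{B}^+}(wS)^{-1}$. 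Likewise, by \eqref{eq:b_defi}, \eqref{eq:RL} and the choice in \S\ref{subsubsec:symplectic} of the $\mathcal{R}_{\overline{\mu}}$-stable Lagrangian $\mathcal{Y}$, the product $\mathbf{b}(\chi,\eta)$ is, up to the contribution of the ``middle'' weights $i+j=r'+1$, precisely $\det(1-q_F^{-1/2}\mathcal{Y}_{\overline{\mu}}(S_{(\chi,\eta)}))$, both being $\prod(1-q_F^{-1/2}\,\cdot\,)$ over the same list of torus characters; hence $\mathbf{b}(w_0w_V\chi,w_0w_W\overline{\mu}\eta)$ equals $\det(1-q_F^{-1/2}\mathcal{Y}_{\overline{\mu}}(w_0wS))$ up to that explicit leftover. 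After these identifications the proposition is equivalent to showing that, termwise in the Weyl expansion above, replacing $\phi^\circ$ by $\phi^\times$ in the numerator multiplies the $w$-th term by $\Delta_{T_W}\,\det(1-q_F^{-1/2}\mathcal{Y}_{\overline{\mu}}(wS))\big/\det(1-q_F^{-1/2}\mathcal{Y}_{\overline{\mu}}(w_0wS))$.

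The last point is the heart of the matter and the main obstacle. I would establish it by working in the Schrödinger model $\mathcal{S}(Y)$ of $\overline{\nu}_{\mu,\psi}$ and using the explicit description of $\phi^\times$ from \S\ref{subsubsec:spheri_vectors} together with the chosen compatibility between the Lagrangians $Y$ and $\mathcal{Y}$. Concretely, one reruns the part of the argument of \S\ref{subsec:proofs} that computes the $\mathrm{U}(W)\cong\GL_m$-contribution to the Whittaker--Shintani function, now pairing against $\phi^\times$ instead of $\phi^\circ$: the $(I_W\cap w_0I_Ww_0^{-1})$-fixedness of $\phi^\times$ and its compatibility with the polarization are tailored so that this pairing realizes a composite of Gindikin--Karpelevich-type intertwining operators on the $\GL_m$-factor, whose normalizing constant produces exactly the factor $\Delta_{T_W}$ and whose effect on the Satake parameter undoes the $w_0$-twist in the argument of $\mathcal{Y}_{\overline{\mu}}$. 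Equivalently, one may expand $\phi^\times$ (or $\phi^\circ$) into Iwahori--Hecke translates and track the effect through the functional equations of \S\ref{subsec:WS_functions}, which act diagonally in the Whittaker--Shintani eigenbasis and thus give the required termwise statement. Pinning down this single local identity — the bridge between the Weil-representation-theoretic vector $\phi^\times$ and the dual-group quantity $\det(1-q_F^{-1/2}\mathcal{Y}_{\overline{\mu}}(wS))$ — is the only genuinely new step; the reindexing and the $\mathbf{d}$/$\mathbf{b}$ translations are bookkeeping once the conventions of \S\ref{eq:Rankin--Selberg_rep} and \S\ref{subsubsec:symplectic} are fixed.
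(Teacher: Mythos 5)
Your high-level strategy — deduce the $\phi^\times$ formula by rerunning the Whittaker--Shintani computation with $\phi^\times$ in place of $\phi^\circ$, and then translating into dual-group language via \eqref{eq:Satake_reform} — is the same as the paper's, which first establishes the intermediate Proposition~\ref{prop:times_version} (the $\mathbf{b}^\times$-formula for $(\Phi^\circ_{\chi,\eta}(\lambda),\phi^\times)_\nu$) and then reinterprets $\mathbf{b}^\times$ and $\mathbf{d}$ on the dual side. The bookkeeping steps (reindexing by $w\mapsto w_0w$, $\mathbf{d}(w_0w(\chi\boxtimes\eta))=D_{\widehat{G}/\widehat{B}^+}(wS)^{-1}$) are correct.

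However, there is a genuine gap exactly at the point you flag as ``the heart of the matter.'' The termwise ratio you write down,
\[
\Delta_{T_W}\,\frac{\det\bigl(1-q_F^{-1/2}\mathcal{Y}_{\overline{\mu}}(wS)\bigr)}{\det\bigl(1-q_F^{-1/2}\mathcal{Y}_{\overline{\mu}}(w_0wS)\bigr)},
\]
is dimensionally inconsistent with what the $\phi^\circ$-formula actually produces: by \eqref{eq:Satake_reform} the $w$-th coefficient there is $\det(1-q_F^{-1/2}\mathcal{R}_{-,\overline{\mu}}(wS))$, a determinant on the $\emph{non-Lagrangian}$ isotropic subspace $\mathcal{V}_-$ of dimension $mn-m$, whereas $\mathcal{Y}$ is Lagrangian of dimension $mn$, so $\det(1-q_F^{-1/2}\mathcal{Y}_{\overline{\mu}}(\cdot))$ has the wrong degree. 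The discrepancy is precisely the ``middle-diagonal'' factor $\prod_j L_F^{-1}(\tfrac12,\chi_{r'-j+1}\eta_j)$, which you acknowledge in passing but then do not account for in the ratio.

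More importantly, the mechanism you propose for the key step — that pairing against $\phi^\times$ ``realizes a composite of Gindikin--Karpelevich-type intertwining operators on the $\GL_m$-factor whose normalizing constant produces $\Delta_{T_W}$'' — is not the actual source of the new factor, and as stated it is too vague to close the argument. What actually drives the change is the concrete identity \eqref{eq:phi_times}, $\phi^\times=q^{m_-}\nu_{\mu,\psi}(1_{T_W^0})\nu_{\mu,\psi}(\mu_+^*)\phi^1$ with $\phi^1\in\nu_{\mu,\psi}^{K_W}$, which absorbs the translate $\mu_+^*$ into the test vector. This replaces the ``base value'' $\mathcal{L}_{\chi,\eta}(R(\mu_+^*w_0)\Psi_{1,\chi}\otimes\Psi^J_{1,\eta})=\vol(I_V)\vol(I_J)\Pi(\chi,\eta)$ of Lemma~\ref{lem:beta_base_value} (which contains the integral over $Y_-^*(\oo_E)$ that produces $\Pi(\chi,\eta)=\Delta_{T_W}^{-1}\prod_jL_F(\tfrac12,\eta_j\chi_{r'-j+1})$) with the simpler base value $\vol(I_V)\vol(I_W)$ from Lemma~\ref{lem:alpha_base_value}; the ratio $\Pi(\chi,\eta)^{-1}=\Delta_{T_W}\prod_jL_F^{-1}(\tfrac12,\eta_j\chi_{r'-j+1})$ is exactly where both the $\Delta_{T_W}$ and the middle-diagonal modification $\mathbf{b}\rightsquigarrow\mathbf{b}^\times$ come from. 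Together with the $I_W\cap{}^{w_0}I_W$-invariance of $\phi^\times$ (Lemma~\ref{lem:spheri_vectors}), one can then rerun the argument of Proposition~\ref{prop:phi_0_calc} verbatim. Without \eqref{eq:phi_times} and the reduction to Lemma~\ref{lem:alpha_base_value}, you have no actual way to compute the new pairing, and the ``termwise multiplication'' framing is circular: you would be asserting the answer you are trying to prove.
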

In the inert case, there is no counterpart to $\phi^\times$ as any $I_W \cap w_0 I_W w_0^{-1}$-fixed vector is $K_W$-fixed (Lemma~\ref{lem:spheri_vectors}), and moreover there is no Lagrangian subspace of $\mathcal{R}_{\overline{\mu}}$ stable by ${}^L T$. However, $\mathcal{R}_{\overline{\mu}}$ makes sense and \eqref{eq:RL} is satisfied. The factor $\mathbf{b}(\chi,\eta)$ of Theorem~\ref{thm:W_formula} can then be interpreted as $\det(1-q_F^{-\frac{1}{2}} \left(\mathcal{R}_{\overline{\mu}}\right)_{|\mathcal{V}_-}(S))$, where $\mathcal{V}_-$ is an isotropic subspace of $\mathcal{R}_{\overline{\mu}}$ which is stable by ${}^L T$ but not Lagrangian.

Our proof of Theorem~\ref{thm:W_formula} and Proposition~\ref{prop:W_formula2} follows the strategy initiated in \cite{Cas} and \cite{CS} for Whittaker functionals, which has been used to establish formulae for Whittaker--Shintani functions for $\GL_n$ in \cite{KMS2}, orthogonal groups in \cite{KMS}, Bessel models for unitary groups in \cite{Khoury} and $\GL_n$ in \cite{Zhang}, and for symplectic groups in \cite{Shen}. This last case is the closest in spirit to ours. In our setting, the main difficulties come from dealing with the representation $\overline{\nu}_{\mu,\psi}$: this is done in \S\ref{subsec:Jacobi_defi} by interpreting $\sigma \otimes \overline{\nu}_{\mu,\psi}$ as a principal series on the bigger Jacobi group $\mathrm{U}(V) \times J$. This does not fit into the usual theory of induction on reductive groups and complicates the computations. 

\subsection{Ichino--Ikeda conjecture}
We give an application of Theorem~\ref{thm:W_formula} to the Ichino--Ikeda conjecture for Fourier--Jacobi periods formulated in \cite{Xue}. Assume that $\sigma$ is tempered. We equip it with an invariant inner product $(\cdot,\cdot)$. We take a model over $\oo_F$ of $H$ (see \S \ref{subsubsec:model}) and give $H$ the left-invariant measure $dh$ normalized so that $H(\oo_F)$ has volume $1$. Define the local Ichino--Ikeda period for $\varphi \in \sigma$, $\phi \in \overline{\nu}_{\mu,\psi}$
\begin{equation}
\label{eq:P_defi}
    \mathcal{P}_{H}(\varphi,\phi):=\int_{H} (\sigma(h) \varphi,\varphi) (\overline{\nu}_{\mu,\psi}(h) \phi,\phi)_{\nu} dh. 
\end{equation}
This integral is only convergent if $r=0$ and needs to be interpreted as the unique continuous extension of a linear form on $C_c^\infty(G)$ to the space of tempered functions $C^w(G)$ otherwise (see \S\ref{subsec:FJ_p} for a precise statement which is not specific to the unramified setting and will be given for any local field of characteristic zero). The following theorem is the unramified Ichino--Ikeda conjecture. It had been proved in \cite{Harris} for Bessel models on unitary groups. For Fourier--Jacobi models it was only known in corank zero by \cite{Xue}. It is proved in \S\ref{subsec:proof_II}.

\begin{theorem}
\label{thm:II}
    Let $\varphi^\circ \in \sigma$ be a non-zero spherical vector. Then
    \begin{equation*}
        \frac{\mathcal{P}_{H}(\varphi^\circ,\phi^\circ)}{(\varphi^\circ,\varphi^\circ)(\phi^\circ,\phi^\circ)_{\nu}}=\Delta_{\mathrm{U}(V)} \frac{L(\frac{1}{2},\sigma \otimes \overline{\mu})}{L(1, \sigma, \mathrm{Ad})}.
    \end{equation*}
\end{theorem}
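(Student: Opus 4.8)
The plan is to derive Theorem~\ref{thm:II} by combining the explicit formula for the Whittaker--Shintani function (Theorem~\ref{thm:W_formula}) with a spectral expansion of the local Ichino--Ikeda period $\mathcal{P}_H$ in terms of matrix coefficients. First I would unwind the definition \eqref{eq:P_defi}: writing $(\sigma(h)\varphi^\circ,\varphi^\circ)$ via the inner product and expanding $\varphi^\circ$ in a basis adapted to the Iwahori--Hecke action, one expresses $\mathcal{P}_H(\varphi^\circ,\phi^\circ)$ as a (regularized) sum over $\Lambda^-$ of the values $(\Phi_{\chi,\overline{\mu}\eta}(\lambda),\phi^\circ)_\nu$ against the matrix coefficient of $\sigma$, the latter being governed by the Macdonald formula for the spherical function of $\sigma$. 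Concretely, by the identification \eqref{eq:intro_WS_iso} and Proposition~\ref{prop:module_intro}, $\mathcal{P}_H(\sigma(\cdot)\varphi^\circ,\phi^\circ)$ pairs the Whittaker--Shintani function with the spherical matrix coefficient, and the Cartan decomposition $\mathrm{U}(V)=\bigsqcup_{\lambda\in\Lambda^-} K_V \lambda K_V$ (matched up to the $H\backslash G$-geometry with $\bigsqcup_\lambda H\lambda K$) turns the integral into a sum indexed by $\lambda$.

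The second step is the core computation: substitute the formula \eqref{eq:formula_intro} for $(\Phi_{\chi,\overline{\mu}\eta}(\lambda),\phi^\circ)_\nu$, namely the sum over $w\in W_G$ of $\mathbf{b}(w_V\chi, w_W\overline{\mu}\eta)\,\mathbf{d}(w(\chi\boxtimes\eta))\,(w(\chi\boxtimes\eta)\delta_{B^+}^{-1/2})(\lambda)$, into the sum over $\lambda\in\Lambda^-$ weighted by the Macdonald spherical function values, and evaluate the resulting geometric-type series. The key identity here is the Casselman--Shalika-style summation lemma: a sum of the form $\sum_{\lambda\in\Lambda^-}(\xi\,\delta_{B^+}^{-1/2})(\lambda)\,\varphi^\circ_{\text{sph}}(\lambda)$ collapses, after using the explicit Macdonald expansion and the orthogonality/telescoping over $W_G$, to a ratio of $L$-factors. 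The factors $\mathbf{d}$ and $\delta_{B^+}^{-1/2}$ are exactly what is needed for the $W_G$-sum to reassemble into something proportional to $L(\tfrac12,\sigma\otimes\overline{\mu})/L(1,\sigma,\mathrm{Ad})$: the $\mathbf{b}$-factors, which by \eqref{eq:RL} and the discussion following Proposition~\ref{prop:W_formula2} are (half of) the Rankin--Selberg $L$-factor evaluated at the Satake parameter, produce the numerator $L(\tfrac12,\sigma\otimes\overline{\mu})$ after the Weyl sum symmetrizes the isotropic and Lagrangian pieces; the $\mathbf{d}$-factors together with the Weyl denominator of $G$ produce $L(1,\sigma,\mathrm{Ad})^{-1}$; and the ratio $\Delta_{\mathrm{U}(W)}/\Delta_{T_W}$ combines with the Plancherel/formal-degree normalization of the spherical matrix coefficient and the theta/Weil representation normalization to give the single constant $\Delta_{\mathrm{U}(V)}$. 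I would track each $\zeta_E$, $\zeta_F$, $L_E$, $L_F$ factor carefully through this, handling the inert-even, inert-odd, and split cases in parallel but separately at the level of bookkeeping.

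A preliminary step that I expect to need is the reduction, valid for $r>0$, of the \emph{a priori} non-convergent integral \eqref{eq:P_defi} to its regularized meaning as the continuous extension to $C^w(G)$; here I would invoke the results of \S\ref{subsec:FJ_p} to justify that the regularized period is still computed by the same formal sum over $\Lambda^-$, i.e.\ that one may pass from $C_c^\infty$ to tempered functions without changing the termwise evaluation (this is where temperedness of $\sigma$ enters, ensuring absolute convergence of the Macdonald series in the relevant range and the validity of the spectral identity). One should also record the precise compatibility of Haar measures: $H(\oo_F)$ has volume $1$, $K$ has volume $1$, and the Weil representation inner product $(\cdot,\cdot)_\nu$ is normalized so that $\phi^\circ$ has the stated norm; these normalizations feed directly into the constant and must be reconciled with the normalizations implicit in Theorem~\ref{thm:W_formula}.

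The main obstacle, I expect, will be the final constant-chasing: getting the single clean factor $\Delta_{\mathrm{U}(V)}$ out rather than a messy product of zeta values. The Whittaker--Shintani formula is normalized by the value at $1$, the Macdonald formula carries its own $c$-function constant $\tfrac{1}{|W_G|}\prod(\cdots)$, the theta series and Weil representation carry Weil-index and Gauss-sum constants (which are trivial under the unramified hypothesis $2\nmid q_F$, but must be checked), and the passage $\Delta_{\mathrm{U}(W)}/\Delta_{T_W}\rightsquigarrow \Delta_{\mathrm{U}(V)}$ is a genuine identity among products of local $L$-factors that I would verify by a direct telescoping argument in each of the three cases. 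A secondary subtlety is the vanishing dichotomy: Theorem~\ref{thm:W_formula} only gives the formula when $\Phi_{\chi,\overline{\mu}\eta}\neq 0$, so I must separately argue that under the hypothesis that $\sigma\otimes\overline{\nu}_{\mu,\psi}$ has a nonzero $H$-functional (which is automatic in the tempered unramified setting by the relevant local GGP result), the vector $\Phi_{\chi,\overline{\mu}\eta}$ is indeed nonzero, so the formula applies and the denominator $(\Phi_{\chi,\overline{\mu}\eta}(1),\phi^\circ)_\nu$ is nonzero — and that this quantity is precisely what cancels against $(\varphi^\circ,\varphi^\circ)(\phi^\circ,\phi^\circ)_\nu$ up to the stated constant.
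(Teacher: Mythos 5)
Your proposal and the paper's proof diverge fundamentally in strategy, and there are genuine gaps in your plan.

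The paper does \emph{not} expand $\mathcal{P}_H$ directly as a sum over $\Lambda^-$. Instead, it performs a \emph{reduction to the corank-zero case}: using the unfolding of tempered periods (Proposition~\ref{prop:tempered_computations}), one writes the corank-zero period $\mathcal{P}_{\mathrm{U}(V)}$ on the induced representation $\Sigma = I_{P(X)}^{\mathrm{U}(V)}\tau\boxtimes\sigma_W$ as a double integral over $(H\backslash\mathrm{U}(V))^2$ of $\mathcal{P}_{H^L}$; by Iwasawa $\mathrm{U}(V)=T_rHK_V$ this becomes a double sum over $\Lambda_r$, which Proposition~\ref{prop:L_unfold} (whose proof is where the Whittaker--Shintani formula enters, via Lemma~\ref{lem:unfolding_W}) identifies with an explicit ratio of Rankin--Selberg $L$-factors. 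Then one invokes the \emph{known} corank-zero results --- Xue's formula for $\mathcal{P}_{\mathrm{U}(V)}$ and the Casselman--Shalika formula $\mathcal{P}_{N_r}(\varphi^\circ_\tau)=\Delta_{G_r}/L(1,\tau,\mathrm{Ad})$ --- and finishes by an $L$-function telescoping identity. The WS formula is used to compute the contribution of the \emph{extra corank}, not to compute $\mathcal{P}_H$ from scratch.

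Your proposal has two concrete problems. First, you identify the product of matrix coefficients in \eqref{eq:P_defi} with values of the Whittaker--Shintani function $\Phi_{\chi,\overline{\mu}\eta}$, but these are different objects: the WS function is $g\mapsto\mathcal{L}_H(\sigma(g)\varphi^\circ\otimes\phi^\circ)$ for an $H$-invariant \emph{functional} $\mathcal{L}_H$, whereas $\mathcal{P}_H$ integrates the product of two $H$-\emph{matrix coefficients}. Relating them requires a local Plancherel-type factorization $\mathcal{P}_H=|\mathcal{L}_H|^2/c_\sigma$, and the constant $c_\sigma$ is essentially the unknown the theorem determines; you cannot simply "pair the WS function with the spherical matrix coefficient" using \eqref{eq:intro_WS_iso} and Proposition~\ref{prop:module_intro}, which are statements about Hecke-module structure, not about periods. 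Second, the integral defining $\mathcal{P}_H$ is over $H=J\ltimes U$, which is not reductive and has no Cartan decomposition; the decomposition $G=\bigsqcup_\lambda H\lambda K$ parametrizes $H\backslash G/K$, not $H$ itself, so it does not convert the $H$-integral into a sum over $\Lambda^-$. A direct computation along your lines would essentially have to reprove Xue's corank-zero result and sort out the tempered regularization of \eqref{eq:P_defi} in positive corank by hand, both of which the paper's reduction is designed precisely to circumvent.
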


\subsection{Organization of the paper}

The paper is organized as follows. In \S\ref{section:WS} we give a precise definition of Whittaker--Shintani functions for Fourier-Jacobi models using the Jacobi group $J$. In \S\ref{sec:mult_1}, we relate these functions to the Heisenberg-Weil representation $\overline{\nu}_{\mu,\psi}$ and prove the multiplicity one Theorem \ref{thm:mult1intro}. In the process, we show Proposition~\ref{prop:module_intro}. We then proceed in \S\ref{section:pairing} to produce an element $\mathcal{W}_{\chi,\eta}^I \in \mathrm{WS}_{\chi,\eta}$ defined by an integral expression for $(\chi,\eta)$ in some open subset, which we then extend to $(\chi,\eta)$ in general position by meromorphic continuation. In \S\ref{section:Formula}, we compute an explicit formula for $\mathcal{W}_{\chi,\eta}^I$: this is done by calculating the $\gamma$ factors caused by the uniqueness of Fourier--Jacobi models. To prove Theorem~\ref{thm:W_formula}, it then remains to compute the normalization value $\mathcal{W}_{\chi,\eta}^I(1)$. In \S\ref{section:unfolding} we temporarily leave the unramified setting to discuss convergence and state an unfolding identity for tempered Ichino--Ikeda periods. In \S\ref{section:L}, we reformulate our results in terms of Satake parameters and representations of the Langlands dual group ${}^L G$. We then use the Weyl character formula to determine $\mathcal{W}_{\chi,\eta}^I(1)$ (thus completing the proof of Theorem~\ref{thm:W_formula}), and finally prove Theorem \ref{thm:II} by reduction to the known corank zero result using \S\ref{section:unfolding}.

\subsubsection{Acknowledgements} The author thanks Rapha\"el Beuzart-Plessis for helpful discussions and comments. He is also grateful to Hang Xue for explanations on his results. He thanks the anonymous referee for useful suggestions to improve the exposition.

This work was partly funded by the European Union ERC Consolidator Grant, RELANTRA, project number 101044930. Views and opinions expressed are however those of the author only and do not necessarily reflect those of the European Union or the European Research Council. Neither the European Union nor the granting authority can be held responsible for them.

\section{Whittaker--Shintani functions}

\label{section:WS}

\subsection{General notations}
\label{subsec:notations}

\subsubsection{}
Unless specified otherwise, $F$ will be a non-Archimedean local field of characteristic zero, with ring of integers $\oo_F$ and maximal ideal $\pedro_F$. We denote by $q_F$ the cardinality of the residual field $\oo_F / \pedro_F$ and assume that it is odd. We fix an uniformizer $\varpi$ and normalize the valuation $v_F$ and the absolute value $\Val{\cdot}_F$ on $F$ such that $v_F(\varpi)=1$ and $\Val{x}_F=q_F^{-v_F(x)}$ for $x \in F$. 

Let $E$ be a quadratic unramified extension of $F$ (referred to as the \emph{inert} case), or $E:=F \times F$ (the \emph{split} case). In the inert case, write $\oo_E$ and $q_E$ as before, and normalize $v_E$ and $\Val{\cdot}_E$ such that $v_E(\varpi)=1$ and $\Val{x}_E=q_E^{-v_E(x)}$. Let $c \in \gal(E/F)$ be the non-trivial element, and set $E^-:=\{ x \in E \; | \; c(x)=-x\}$. Fix $\tau \in \oo_E^\times \cap E^-$ (which exists as $q_F$ is odd). In the split case, write $\oo_E:=\oo_F \times \oo_F$. We have $\gal(E/F)=\{1,c\}$ where $c$ switches the coordinates of elements in $F \times F$. Let $\Tr_{E/F}$ and $N_{E/F}$ be the trace and norm maps of $E/F$ which make sense in both settings. Let $\Val{\cdot}$ be $\Val{\cdot}_E$ in the inert case and $\Val{\cdot}_F$ in the split case, $v$ be $v_E$ in the inert case and $v_F$ in the split case, $q$ be $q_E$ in the inert case and $q_F$ in the split case.

\subsubsection{}
\label{subsubsec:V_W}
Let $(V,\langle.,.\rangle_V)$ be an $n$-dimensional non-degenerate skew-Hermitian space over $E/F$ and $W \subset V$ be a non-degenerate subspace of dimension $m$ such that $n=m+2r$ with $r \geq 0$. In the inert case, set $n_{-}:=\left\lfloor \frac{n}{2} \right\rfloor$ and $m_{-}:=\left\lfloor \frac{m}{2} \right\rfloor$. If $n$ is even (referred to as the \emph{even} case), define $n_+=n_-$ and $m_+=m_-$, while if $n$ is odd (the \emph{odd} case), set $n_+=n_-+1$ and  $m_+=m_-+1$. In the split case, set $n_-=n_+=n$ and $m_-=m_+=m$. Set $r'=m+r$. 

In the inert case, we assume that $V=W\oplus W^\perp$ is split. This means that there exists an $E$-basis $\{ v_i, v_i^*  \; | \; 1 \leq i \leq r \}$ of $W^\perp$ and an $E$-basis $\{w_i,w_i^* \; | \; 1 \leq i \leq m_+ \}$ of $W$ (with the convention $w_{m_+}=w_{m_+}^*$ in the odd case) such that for all $1 \leq i, j \leq r$, $1 \leq i' \leq m_+$ and $1 \leq j' \leq m_-$ we have 
\begin{equation}
\label{eq:split_basis}
\left\{ \begin{array}{l}
     \langle v_i,v_j \rangle_V=\langle v_i^*, v_j^* \rangle_V=\langle w_{i'},w_{j'} \rangle_V=\langle w_{i'}^*,w_{j'}^* \rangle_V=0,   \\
     \langle v_i,v_j^* \rangle_V= \delta_{i,j}, \\
     \langle w_{i'},w_{j'}^* \rangle_V= \delta_{i',j'}, \\
     \langle w_{m_+},w_{m_+} \rangle_V= \tau \text{ in the odd case.} \\
\end{array} \right.
\end{equation}
We obtain an $E$-basis $\mathfrak{B}=(v_1, \hdots, v_r, w_1, \hdots ,w_{m_+}, w_{m_-}^*, \hdots, w_1^*, v_r^*, \hdots v_1^*)$ of $V$.

In the split case, the above construction can be made explicit and it will be convenient to choose a particular $\mathfrak{B}$. We have $V=F^n \times F^n$ and we can and will assume that $\langle.,.\rangle_V$ is given by $\langle (x,y), (x',y') \rangle_V=({}^t x y', -{}^tyx')$, where we identify $F^n$ with column vectors. Let $\{\mathbf{e}_i\}$ and $\{\mathbf{e}_i^*\}$ be the canonical basis of $F^n \times \{0\}$ and $\{0\} \times F^n$ respectively. Define for $1 \leq i \leq n_+$ the elements $v_i=\mathbf{e}_i+\mathbf{e}_{n-i+1}^*$ and $v_i^*=-\mathbf{e}_{n-i+1}+\mathbf{e}_{i}^*$. The basis $\mathfrak{B}=(v_1, \hdots, v_{n_+},v_{n_-}^*, \hdots, v_1^*)$ is an $E$-basis of $V$ satisfying \eqref{eq:split_basis} with $\tau=(1,-1)$ (up to taking $w_i=v_{r+i}$ and $w_i^*=v_{r+i}^*$). Set $W=\mathrm{span}_E(v_i,v_i^* \; | \; r+1 \leq i \leq n_+)$. For $1 \leq j \leq m$, set $w_j=\mathbf{e}_{j+r}$ and $w_j^*=\mathbf{e}_{j+r}^* \in W$. Note that $(w_1, \hdots, w_m, w_{1}^*, \hdots ,w_m^*)$ is a $F$-basis of $W$.

In both cases, set $X=\mathrm{span}_E(v_i \; | \; 1 \leq i \leq r)$ and $X^*=\mathrm{span}_E(v_i^* \; | \; 1 \leq i \leq r)$, so that $V=X \oplus W \oplus X^*$.

\subsubsection{}
Denote by $\mathrm{U}(V)$ and $\mathrm{U}(W)$ the subgroups of $\GL(V)$ and $\GL(W)$ consisting of the $E$-linear unitary transformations of $V$ and $W$ respectively. Note that $n_-$ (resp. $m_-$) is always the split $F$-rank of $\mathrm{U}(V)$ (resp. $\mathrm{U}(W)$). We will identify $\mathrm{U}(W)$ as the subgroup of $\mathrm{U}(V)$ of $g \in \mathrm{U}(V)$ which act by the identity on $W^\perp$. The basis $\mathfrak{B}$ chosen in \S\ref{subsubsec:V_W} yields an isomorphism $\GL(V) \cong \GL_n(E)$ and two embeddings $\mathrm{U}(W), \mathrm{U}(V) \hookrightarrow \GL_n(E)$. Denote by $(T_V,B_V)$ and $(T_W,B_W)$ the inverse image in $\mathrm{U}(V)$ and $\mathrm{U}(W)$ of the torus of diagonal matrices and the Borel subgroup of upper triangular matrices by this morphism. In the split case, we will furthermore identify $\mathrm{U}(V)$ with $\GL_n(F)$ and $\mathrm{U}(W)$ with $\GL_m(F)$ by projecting on the first coordinate. Note that through this isomorphism $B_V$ and $B_W$ are identified with the Borel subgroups of upper triangular matrices in $\GL_n(F)$ and $\GL_m(F)$ respectively.

Denote by $N_V$ and $N_W$ the unipotent radicals of $B_V$ and $B_W$ respectively. The opposite Borel subgroups will be denoted by $B_V^-=T_VN_V^-$ and $B_W^-=T_WN_W^-$. Set $T=T_V \times T_W$, $B=B_V \times B_W$ and $B^+=B_V^- \times B_W$. 

Any $t_V \in T_V$ is of the form $t_V(\mathbf{t})$ where
\begin{equation}
\label{eq:t_V}
    t_V(\mathbf{t})=\left\{
     \begin{array}{ll}
          \mathrm{diag}(t_1,\hdots,t_{n_-},c(t_{n_-})^{-1},\hdots,c(t_{1})^{-1}), \quad \mathbf{t} \in (E^\times)^{n_-}, &  \text{ inert even case,}\\
          \mathrm{diag}(t_1,\hdots,t_{n_-},t_{n_+},c(t_{n_-})^{-1},\hdots,c(t_{1})^{-1}), \quad \mathbf{t} \in (E^\times)^{n_-} \times \mathrm{U}(1), &  \text{ inert odd case,}\\
           \mathrm{diag}(t_1,\hdots,t_{n}), \quad \mathbf{t} \in (F^\times)^n, &  \text{ split case.}\\
     \end{array}
    \right.
\end{equation}
If $t \in T_V$ we will write $t_i$ for the coordinates of $\mathbf{t}$ in \eqref{eq:t_V}. Set $\Val{t}=\prod_{i=1}^{n_-} \Val{t_i}$. We let $\Lambda_V$ be the group of cocharacters of $T_V$, and $\Lambda_V^+$ be its cone of positive cocharacters with respect to $B_V$, i.e.
\begin{equation*}
    \Lambda_V^+:=
     \left\{ 
    \begin{array}{ll}
       \{(a_1,\hdots,a_{n_-}) \in \zz^{n_-} \; | \; a_1 \geq \hdots \geq a_{n_-} \geq 0 \},  & \text{ inert case;}  \\
        \{(a_1,\hdots,a_{n}) \in \zz^n \; | \; a_1 \geq \hdots \geq a_{n}\}, &  \text{ split case}.
    \end{array}
    \right.
\end{equation*}
If $\lambda_V \in \Lambda_V^+$, we will simply write $\lambda_V$ for $\lambda_V(\varpi)$. 

The subgroup of $\mathrm{U}(V)$ of automorphisms preserving $X$ and $X^*$ and acting by identity on $W$ is isomorphic to $\GL_r(E)$ and is denoted by $G_r$. Set $T_r=G_r \cap T_V$ and $B_r=G_r \cap B_V$. Let $\Lambda_r$ be its group of cocharacters and $\Lambda_r^+$ be its the subcone of positive cocharacters with respect to $B_r$.

With the same definitions, we also have $T_W$, $t_W(\mathbf{t})$, $\Lambda_W$, $\Lambda_W^{+}$, $\Lambda_W^{-}$ and $\Val{\cdot}$. Let $\Lambda=\Lambda_V \times \Lambda_W$ be the group of cocharacters of $T$, and let $\Lambda^-=\Lambda_V^+ \times \Lambda_W^-$ be the cone of negative cocharacters relatively to $B^+$.

\subsubsection{}
\label{subsubsec:root_subgroups}
Let $A_V$ (resp. $A_W$) be the split component of $T_V$ (resp. $T_W$). Let $\langle.,.\rangle$ be the canonical pairing between the groups of characters $X^*(A_V)$ and cocharacters $X_*(A_V)$ of $A_V$. Denote by $\Sigma_V$ the set of roots of $A_V$ in $\mathrm{U}(V)$, $\Sigma_{V,\mathrm{nd}}$ the subset of non-divisible roots, $\Sigma_{V,\mathrm{nd}}^+$ the set of non-divisible positive roots with respect to $B_V$. Let $\Delta_V$ be the simple roots in $\Sigma_{V,\mathrm{nd}}^+$. If $\alpha \in \Sigma_V$, write $\alpha^\vee$ for the associated coroot. Then we write $\alpha >0$ if for all $\beta \in \Delta_{V}$ we have $\langle \alpha, \beta^\vee \rangle >0$. Let $W_V$ be the Weyl group of $\mathrm{U}(V)$. It has an action on $\Sigma_V$ which we will write $w. \alpha$. The longest element in $W_V$ will be denoted by $w_0$

For each $1 \leq i \leq n$, let $e_i$ be the character of $A_V$ such that $e_i(t)=t_i$ with $t \in A_V$. If $\alpha \in \Delta_V$, we will write $N_{\alpha}$ for the corresponding root subgroup. Denote by $P_\alpha$ the parabolic subgroup corresponding to $\alpha$, with Levi component $G_\alpha$ whose derived group is $D(G_\alpha)$. Denote by $w_\alpha$ the simple reflection corresponding to $\alpha$. The $N_\alpha$'s are isomorphic to product of copies of $F$ or $E$ through maps $n_\alpha$. More precisely, denote by $\{E_{i,j}\}$ the canonical basis of $\mathrm{M}_n(E)$ in the inert case, and $\mathrm{M}_n(F)$ in the split case, where $\mathrm{M}_n(E)$ (resp. $\mathrm{M}_n(F)$) is the vector space of $n \times n $-matrices over $E$ (resp. over $F$). Then we can write each $\alpha \in \Delta_V$ as $\alpha=e_i-e_{i+1}$ and we have the description 
\begin{center}
\begin{tabular}{ |c|c|c|c| } 
 \hline
 & Case  & $n_\alpha$ & $D(G_\alpha)$ \\
 \hline \hline
 $1 \leq i < n_- $ & inert & $n_\alpha(x)=I_n+x E_{i,i+1}-c(x)E_{n-i,n-i+1}$, 
 $x \in E$ & $\SL_2(E)$  \\ 
 \hline
  $1 \leq i < n $  & split & $n_\alpha(x)=I_n+x E_{i,i+1}$, 
 $x \in F$ & $\SL_2(F)$ \\ 
 \hline
  $i=n_-$ &  inert even & $n_\alpha(x)=I_n+x E_{n_-,n_-+1}$, 
 $x \in F$ & $\SL_2(F)$ \\ 
  \hline
  $i=n_-$ & inert odd
  &  \begin{tabular}{@{}c@{}}$n_\alpha(x_1,x_2)=I_n-\tau c(x_1) E_{n_-,n_+} + x_1 E_{n_+,n_++1} +x_2 E_{n_-,n_++1}$   \\ $x_1, x_2 \in E, \; \Tr_{E/F}(\frac{x_2}{\tau})=-N_{E/F}(x_1)$\end{tabular} & $\mathrm{SU}_3(F)$ \\ 
 \hline
\end{tabular}
\end{center}

Similar notations hold for $\mathrm{U}(W)$. To avoid any confusion we will write $n'_{\beta}$ if $\beta \in \Sigma_W$. 

\subsubsection{}
\label{subsubsec:Jacobi}
Consider the $F$-vector spaces $\mathbb{V}:=\R_{E/F} V$ and $\mathbb{W}:=\R_{E/F} W$ equipped with the symplectic form $\langle.,.\rangle_{\mathbb{V}}:=\Tr_{E/F} \circ \langle.,.\rangle_V$ and let $\mathbb{H}(\mathbb{W})=\mathbb{W} \times F$ be its Heisenberg group, with group law $(w_1,z_1).(w_2,z_2)=(w_1+w_2,z_1+z_2+\frac{1}{2}\langle w_1,w_2 \rangle_{\mathbb{V}})$. If $r \geq 1$, we have an injective morphism $h : \mathbb{H}(\mathbb{W}) \to N_V$ characterized by 
\begin{equation}
\label{eq:h_defi}
    \left\{
    \begin{array}{llll}
        h(w,z)(v_i)&=&v_i & 1 \leq i \leq r, \\
        h(w,z)(w')&=& w'-\langle w',w \rangle_V v_r & w' \in W, \\
        h(w,z)(v_r^*)&=&(-\frac{1}{2}\langle w,w \rangle_V +z)v_r + w + v_r^*, & \\
        h(w,z)(v_i^*)&=&v_i^* & 1 \leq i \leq r-1.
    \end{array}
    \right.
\end{equation}
If $g_1, g_2 \in \mathrm{U}(V)$, we will write ${}^{g_1}g_2:=g_1 g_2 g_1^{-1}$. The subgroup $\mathbb{H}(\mathbb{W})$ of $\mathrm{U}(V)$ is stable by conjugation by $\mathrm{U}(W)$ and satisfies
\begin{equation}
\label{eq:Jacobi_relation}
     {}^{g_W}h(w,z)=h(g_W w,z), \; g_W \in \mathrm{U}(W),\; w \in W, \; z \in F.
\end{equation}
We define
\begin{equation}
\label{eq:Jacobi_defi}
    J:=\mathrm{U}(W) \ltimes \mathbb{H}(\mathbb{W}),
\end{equation}
the \emph{Jacobi} group of $W$. If $r \geq 1$ it is a subgroup of $\mathrm{U}(V)$ by \eqref{eq:h_defi}. Its center $h(0,F)$ will be denoted by $Z$.

Let $U$ be the unipotent radical of the parabolic subgroup of $\mathrm{U}(V)$ stabilizing the flag $E v_1 \subset E v_1 \oplus E v_2 \subset \hdots \subset E v_1 \oplus \hdots E v_{r-1}$. Define
\begin{equation}
\label{eq:G_defi}
   G=\mathrm{U}(V)\times \mathrm{U}(W), \quad H:=J \ltimes U \text{ if } r \geq 1, \quad H=\mathrm{U}(W) \text{ if } r=0.
\end{equation}
Then $H$ is a subgroup of $G$ by the natural inclusion in the first component and the projection on $\mathrm{U}(W)$ in the second (this is the diagonal embedding if $r=0$). Set $W_G:=W_V \times W_W$.

\subsubsection{}
\label{subsubsec:L}
For $1 \leq k \leq r$, set $G_k:=\GL_k(E)$ seen as the subgroup of $\mathrm{U}(V)$ stabilizing $\mathrm{Span}_E(v_i \;| \; 1 \leq i \leq k)$ and $\mathrm{Span}_E(v_i^* \;|\; 1 \leq i \leq k)$ and acting trivially on their orthogonal complement. Set $B_k:=G_k \cap B_V=T_k N_k$. Let $P(X)$ be the parabolic subgroup of $\mathrm{U}(V)$ stabilizing $X$. Its Levi subgroup stabilizing $X^*$ is isomorphic to $G_r \times \mathrm{U}(W)$. Let $N(X)$ be its unipotent radical. Set $L:=G \times G_r$. It is a Levi subgroup of the parabolic $\mathrm{U}(V) \times P(X)$ of $\mathrm{U}(V) \times \mathrm{U}(V)$. If $r \geq 1$ define $H^L=H \times N_r \subset L$.

Let $P$ be the parabolic subgroup of $\mathrm{U}(V)$ stabilizing the full flag $0 \subset E v_1 \subset \hdots \subset X$ of $X$, and $N$ be its unipotent radical. Then we have the alternative description $ H=\mathrm{U}(W) \ltimes N$. Note that $HB^+$ is open in $G$.

\subsubsection{}
\label{subsubsec:Y_defi}
In the inert case, if $R$ is a subgroup of $E$ set $W(R):=\mathrm{span}_R(w_i, w_i^* \; | \; 1 \leq i \leq m_+)$, $Y_-(R):=\mathrm{span}_R(w_i \; | \; 1 \leq i \leq m_-)$, $Y_-^*(R):=\mathrm{span}_R(w_i^* \; | \; 1 \leq i \leq m_-)$ and $Y_{m_+}=\{0\}$ in the even case, $Y_{m_+}(R)=Y^*_{m_+}(R)=R w_{m_+}$ in the odd case. Let $Y_+:=Y_- \oplus Y_{m_+}$ and $Y_+^*:=Y_{m_+}^* \oplus Y_-^*$. Set $ Y:=Y_-(E) \oplus Y_{m_+}(\oo_E)$.

In the split case, we use the same notations with $Y_-(R)=\mathrm{span}_R(w_i \; | \; 1 \leq i \leq m)$, $Y_-^*(R)=\mathrm{span}_R(w_i^* \; | \; 1 \leq i \leq m)$ and $Y_{m_+}=\{0\}$. Set $Y:=Y_-(E)$. All these groups embed into $J$ by $h$ and we identify them with their image. Consider 
\begin{equation}
    N_J:=N_W \ltimes YZ, \text{ and } B_J:=T_W \ltimes N_J.
\end{equation}

We can identify $Y_-(E)$ and $Y_-^*(E)$ with $E^{m_-}$ (inert case) or $F^m$ (split case) with our choice of basis. For $y \in Y_-(E)$ or $y^* \in Y_-^*(E)$, we will use $(y_i)$ and $(y_i^*)$ to write the coordinates and write $t_W(y)$ or $t_W(y^*)$ as in \eqref{eq:t_V} if they are all non-zero. Let $1_{Y_+}^* \in Y_+^*(E)$ whose coordinates are all $1'$s. Define $\mu_{+}^*:=h(1_{Y_+^*},0)$. 

The subgroup of $\mathrm{U}(W)$ stabilizing $Y_-$ and $Y_-^*$ and acting by identity on $Y_{m_+}$ can be identified with $\GL_{m_-}(E)$ in the inert case and $\GL_m(F)$ in the split case by restricting to $Y_-$. Then $g \in \GL_{m_-}(E)$ (resp. $\GL_m(F)$) acts by $g$ on $Y_-(E)$ and $(g^*)^{-1}$ on $Y_-^*(E)$, where $g^*={}^tc(g)$ (resp. $g^*={}^tg$). Note that this property holds in the split case because of our particular choice of $F$-basis $\{w_j, w_j^*\}$ of $W$ (see \S \ref{subsubsec:V_W}). Combined with \eqref{eq:Jacobi_relation}, we get for $w=y+y_{m_+}+y^* \in Y_-(E) \oplus Y_{m_+}(E) \oplus Y_-^*(E)=W$ and $g \in \GL_{m_-}(E)$ (resp. $\GL_{m}(F)$ in the split case) 
\begin{equation}
\label{eq:conj_identity}
    {}^gh(w,0)=h(gy + y_{m_+} + (g^*)^{-1} y^*,0).
\end{equation}

\subsubsection{}
\label{subsubsec:model}
Let $K_V \leq \mathrm{U}(V)$ be the stabilizer of the lattice $\oplus_{v \in \mathfrak{B}} \oo_E v$ and set $K_W=K_V \cap \mathrm{U}(W)$. These are hyperspecial maximal open compact subgroups of $\mathrm{U}(V)$ and $\mathrm{U}(W)$ respectively. Recall that $\mathrm{U}(V) \subset \GL_n(E)$. For any group $\LAG$ that we have considered so far, consider for $i \geq 0$ the group $\LAG^i = \LAG \cap K_V \cap (I_n + M_{n \times n}(\varpi^i \oo_E))$.

Set $K_J:=K_W \ltimes W^0 Z^0$, an open-compact subgroup of $J$. Set $I_V=N_V^{-,1} T_V^0 N_V^0$ and $I_W=N_W^{-,1} T_W^0 N_W^0$ which are Iwahori subgroups of $\mathrm{U}(V)$ and $\mathrm{U}(W)$. Define $I_J:=I_W \ltimes W^0 Z^0$. Set $K=K_V \times K_W \subset G$. 

\subsubsection{}
\label{subsubsec:measure} 
We equip all our groups $\LAG$ with the left-invariant Haar-measure giving $\LAG^0$ volume $1$. In particular, $db_J:=db_W dy dz$ on $B_J=B_W \ltimes YZ$. Recall that $\Val{t_W}=\prod_{j=1}^{m_-} \Val{t_{W,j}}$. The modular character is
\begin{equation}
\label{eq:mod_char}
    \delta_{B_J}(t_W n_J)=\delta_{B_W}(t_W) \Val{t_W} \; t_W \in T_W, \; n_J \in N_J.
\end{equation}

\subsubsection{}
\label{subsubsec:Gross_defi}
Denote by $\zeta_F$ and $\zeta_E$ the zeta functions associated to $F$ and $E$, i.e. $\zeta_F(s)=(1-q_F^{-s})^{-1}$, $\zeta_E(s)=(1-q_E^{-s})^{-1}$ in the inert case and $\zeta_E(s)=(1-q_F^{-s})^{-2}$ in the split case. For $x \in \cc$, define $ L_F(s,x)=(1-xq_F^{-s})^{-1}$ and $L_E(s,x)=(1-xq_E^{-s})^{-1}$. Set $\Delta_{\mathrm{U}(V)}=L_{\mathrm{U}(V)}(0)$, $\Delta_{G_k}=L_{G_k}(0)$ and $\Delta_{T_W}=L_{T_W}(0)$, where $L_{\mathbb{G}}$ is the Artin--Tate $L$-function associated to $\mathbb{G}$ defined in~\cite{Gro97}. In our case,
\begin{equation}
\label{eq:Delta_defi}
    \Delta_{\mathrm{U}(V)}=\prod_{i=1}^n L_F(i,\eta^i_{E/F}), \; \Delta_{G_k}=\prod_{i=1}^k \zeta_E(i), \; \Delta_{T_W}=\left\{
        \begin{array}{ll}
            \zeta_F(1)^{m_-} L(1,\eta_{E/F})^{m_+} & \text{inert case,} \\
            
            \zeta_F(1)^{m} & \text{split case,} \\
        \end{array} \right.
\end{equation}
where $\eta_{E/F}$ is the quadratic character associated to $E/F$. We will also use $\Delta_{T_W}'$ which is $\Delta_{T_W}$ in the inert even case and the split case, and $\Delta_{T_W}'=\zeta_F(1)^{m_-} L(1,\eta_{E/F})^{m_-}$ in the inert odd case.

\subsection{Representations}
\label{subsec:Jacobi_defi}

Let $G_2$ be a locally compact totally disconnected group, let $G_1 \leq G_2$ be a closed subgroup. If $\pi$ is a smooth representation of $G_1$, let $\Ind_{G_1}^{G_2} (\pi)$ be the smooth induction, and $\ind_{G_1}^{G_2} (\pi)$ be the compact smooth induction (see \cite[Section~III.2.2]{Ren}). We reserve the notation $I$ for parabolic induction.

We fix once and for all $\psi$ an unramified character of $F$ of conductor $\oo_F$ which will appear in the definition of Whittaker--Shintani functions. It is a consequence of Theorem~\ref{thm:W_formula} that the latter are independent of $\psi$, so that we will often drop it from the notations.

Let $\chi$ and $\eta$ be unramified characters of $T_V$ and $T_W$ extended to $B_V$ and $B_W$, that we identify with elements of $(\cc^\times)^{n_-}$ and $(\cc^\times)^{m_-}$ via the rules $\chi(t_V(\mathbf{t}))=\prod_{i=1}^{n_-} \chi_i^{v(t_i)}$ and $\eta(t_W(\mathbf{t}))=\prod_{j=1}^{m_-} \eta_j^{v(t_j)}$. We also extend $\Val{\cdot}$ to $B_V$ and $B_W$. Note that the map $ (\eta \overline{\psi})(b_W h(y,z)):=\eta(b_W) \overline{\psi}(z)$, where $b_W \in B_W$,  $y \in Y$ and $z \in F$, defines a character of $B_J$. We introduce the parabolic induced representations
\begin{equation*}
    I_{B_V}^{\mathrm{U}(V)}(\chi)=\Ind_{B_V}^{\mathrm{U}(V)}(\chi \delta_{B_V}^{\frac{1}{2}}), \quad I_{B_W}^{\mathrm{U}(W)}(\eta)=\Ind_{B_W}^{\mathrm{U}(W)}(\eta \delta_{B_W}^{\frac{1}{2}}), \quad I_{B_J}^{J}(\eta \overline{\psi})=\Ind_{B_J}^{J}(\eta \overline{\psi} \delta_{B_J}^{\frac{1}{2}}).
\end{equation*}

\begin{lem}
\label{lem:spheri}
    We have $I_{B_J}^{J}(\eta \overline{\psi})=\ind_{B_J}^{J}(\eta \overline{\psi}\delta_{B_J}^{\frac{1}{2}})$. Moreover,  $(I_{B_J}^{J}(\eta\overline{\psi}))^{K_J}$ is of dimension $1$.
\end{lem}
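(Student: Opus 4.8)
The plan is to prove the two assertions separately, starting with the identification $I_{B_J}^{J}(\eta\overline{\psi})=\ind_{B_J}^{J}(\eta\overline{\psi}\delta_{B_J}^{1/2})$. Since $\ind_{B_J}^J$ always embeds into $\Ind_{B_J}^J$, it suffices to show that every $f\in\Ind_{B_J}^J(\eta\overline{\psi}\delta_{B_J}^{1/2})$ has support in finitely many $B_J$-cosets, i.e. that the image of $\supp(f)$ in $B_J\backslash J$ is compact. The key geometric input is an Iwasawa-type decomposition $J=B_J K_J$; this follows from the Iwasawa decomposition $\mathrm{U}(W)=B_W K_W$ together with the fact that $\mathbb{H}(\mathbb{W})=YZ\cdot W^0Z^0$ after adjusting by $T_W$ (the lattice $W^0$ together with the $T_W$-translates of $Y$ exhausts $\mathbb{W}$). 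Granting $J=B_J K_J$, the support condition reduces to the reductive statement that $I_{B_W}^{\mathrm{U}(W)}(\eta)$ is a compact induction, which is recalled in the excerpt as well-known; the Heisenberg directions contribute only a compact factor $W^0Z^0$ modulo $B_J$, so no new non-compactness appears. I would write $f$ in terms of its restriction to $K_J$, a locally constant function on the compact set $K_J$, and transport the finiteness of $(B_W\cap K_W)$-cosets in $K_W$ supporting that restriction back to $J$.

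For the dimension-one claim, I would proceed via the $K_J$-orbit structure on $B_J\backslash J$. By Iwasawa $J=B_J K_J$, so restriction gives a surjection $(I_{B_J}^J\eta\overline{\psi})^{K_J}\hookrightarrow \mathrm{Map}(B_J\backslash J/K_J,\cc)$ onto the functions whose value at a coset $B_J x K_J$ is constrained by: (i) the transformation $f(b x)=(\eta\overline{\psi}\delta_{B_J}^{1/2})(b)f(x)$ for $b\in B_J$, and (ii) right $K_J$-invariance. A spherical vector is therefore determined by its value at $x=1$, provided it is well-defined there, which requires the compatibility $(\eta\overline{\psi}\delta_{B_J}^{1/2})(b)=1$ for all $b\in B_J\cap K_J$. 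Since $\eta$ and $\psi$ are unramified and $\delta_{B_J}$ is trivial on $B_J\cap K_J=(B_W\cap K_W)\ltimes W^0Z^0$ (the modular character formula \eqref{eq:mod_char} shows $\delta_{B_J}$ depends only on $v(t_{W,j})$, which vanishes on $T_W^0$), this compatibility holds and the value at $1$ is unobstructed. Hence $\dim (I_{B_J}^J\eta\overline{\psi})^{K_J}\le 1$; conversely the function supported on $B_J K_J=J$ with $f(1)=1$ is a genuine nonzero element, giving equality.

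The main obstacle I anticipate is verifying the Iwasawa decomposition $J=B_J K_J$ cleanly, and in particular checking that $B_J\cap K_J$ is exactly $(B_W\cap K_W)\ltimes W^0Z^0$ rather than something larger — this is where the precise choice of the lattice defining $K_J$ (namely $K_J=K_W\ltimes W^0Z^0$ from \S\ref{subsubsec:model}) and the coordinatization of $N_J=N_W\ltimes YZ$ from \S\ref{subsubsec:Y_defi} must be used. One must confirm that an element of $YZ$ lying in $K_J$ actually lies in $W^0Z^0$, using that $Y\cap W^0=Y^0$ and that the Heisenberg cocycle $\frac{1}{2}\langle\cdot,\cdot\rangle_{\mathbb{V}}$ is $\oo_F$-valued on $W^0$. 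A secondary subtlety is the odd inert case, where the extra $\mathrm{U}(1)\subset T_W$ acts on $Y_{m_+}(\oo_E)$; one checks this factor is compact and already inside $K_W$, so it causes no trouble. Once these bookkeeping points are settled, both statements follow from the orbit-counting argument above.
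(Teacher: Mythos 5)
Your decomposition $J=B_J K_J$ is false, and this invalidates both halves of your argument. Recall $B_J=B_W\ltimes YZ$ with $Y=Y_-(E)\oplus Y_{m_+}(\oo_E)$, and $K_J=K_W\ltimes W^0Z^0$. On the Heisenberg part, translating $Y$ by $T_W\subset B_W$ still leaves you inside $Y_-(E)\oplus Y_{m_+}(E)$, since $T_W$ preserves the coordinate subspaces $Y_-$, $Y_{m_+}$, $Y_-^*$; combined with $W^0$ you only reach $Y_-(E)\oplus Y_{m_+}(E)\oplus Y_-^*(\oo_E)$, never an element of $Y_-^*(E)$ with a coordinate of negative valuation. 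The correct Iwasawa-type decomposition, used in the paper, is
\begin{equation*}
    J=B_J\, Y_+^*(E)\, K_J,
\end{equation*}
where the middle factor is genuinely non-compact. Consequently $B_J\backslash J/K_J$ is not a singleton (it surjects onto a quotient of $Y_+^*(E)$ by a compact set), so the orbit-counting argument you propose cannot by itself bound the spherical dimension by one, nor can it show that $\Ind=\ind$; both conclusions would simply be false for the trivial character $\overline{\psi}=1$ in place of the genuine one.

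The missing idea is that the non-compact directions $Y_+^*(E)$ are killed by the Heisenberg commutator twisted by $\overline{\psi}$, not by any geometric collapse. Concretely, for $y^*\in Y_+^*(E)$ and $y\in Y_+(\oo_E)$, one has $h(y,0)\in K_J$ and also $h(y^*,0)h(y,0)=h(y,\langle y^*,y\rangle_{\mathbb{W}})h(y^*,0)$ with $h(y,\langle y^*,y\rangle_{\mathbb{W}})\in B_J$. So for any $f$ in the induced space and any $k\in K_J$, smoothness (or $K_J$-invariance) on the right gives $f(h(y^*,0)h(y,0)k)=f(h(y^*,0)k)$, while the $B_J$-equivariance on the left gives $f(h(y^*,0)h(y,0)k)=\overline{\psi}(\langle y^*,y\rangle_{\mathbb{W}})\,f(h(y^*,0)k)$. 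Hence $f(h(y^*,0)k)\neq 0$ forces $\overline{\psi}(\langle y^*,y\rangle_{\mathbb{W}})=1$ for all $y$ in a small neighbourhood of $0$; as $\psi$ has conductor $\oo_F$, this confines $y^*$ to a compact subset of $Y_+^*(E)$, which gives $\Ind=\ind$, and for a $K_J$-fixed $f$ one can take $y\in Y_+(\oo_E)$ to conclude $y^*\in Y_+^*(\oo_E)$, so $h(y^*,0)\in K_J$ and $f$ is determined by $f(1)$, giving dimension one. Your verification of the compatibility of $\eta\overline{\psi}\delta_{B_J}^{1/2}$ on $B_J\cap K_J$ is correct and is what furnishes the nonzero spherical vector, but the bound on the support is the substantive step, and it requires this $\overline{\psi}$-twisting mechanism, not geometry of double cosets.
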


\begin{proof}
    Let $f \in I_{B_J}^{J}(\eta\overline{\psi})$. By the Iwasawa decomposition, we have $J=B_J Y_+^*(E)K_J$. Let $y^* \in Y_+^*(E)$. By smoothness, there exists an open neighborhood $\mathcal{U}$ of $0$ in $Y$ independent of $y^*$ such that for all $k \in K_J$ and $y \in \mathcal{U}$ we have $f(h(y^*,0)h(y,0)k)=f(h(y^*,0)k)$. But $f(h(y^*,0)h(y,0)k)=\overline{\psi}(\langle y^*,y \rangle_{\mathbb{W}})f(h(y^*,0)k)$. If $f(h(y^*,0)k) \neq 0$, this implies that $y^*$ lives in a compact of $Y_{+}^*(E)$. If $f$ is $K_J$-invariant, we take $\mathcal{U}=Y_+(\oo_E)$ so that $f(h(y^*,0)) \neq 0$ implies $y^* \in Y_+^*(\oo_E)$ as $\psi$ is unramified.
\end{proof}

For $\LAG \in \{\mathrm{U}(V), \mathrm{U}(W),J\}$, let $C_c^\infty(\LAG)$ be the space of smooth compactly supported functions on $\LAG$. We have surjective intertwining maps $F_\chi : C_c^\infty(\mathrm{U}(V)) \to I_{B_V}^{\mathrm{U}(V)}(\chi)$, $F_\eta : C_c^\infty(\mathrm{U}(W)) \to I_{B_W}^{\mathrm{U}(W)}(\eta)$ and $F_{\eta}^J : C_c^\infty(J) \to I_{B_J}^{J}(\eta\overline{\psi})$, the first one being for example defined by

\begin{equation*}
    F_\chi(f_V)(g_V)= \int_{B_V} (\chi^{-1} \delta_{B_V}^{-\frac{1}{2}})(b_V)f_V(b_V g_V) db_V, \quad f_V \in C_c^\infty(\mathrm{U}(V)), \; g_V \in \mathrm{U}(V).
\end{equation*}
We introduce the spherical vectors
\begin{equation}
\label{eq:spheri_vector}
    \Phi_\chi^\circ := F_\chi(1_{K_V}), \; \Phi_\eta^\circ := F_\eta(1_{K_W}) \text { and } \Phi_{\eta }^{\circ,J} := F_{\eta }^J(1_{K_J}),
\end{equation}
and for $w_V \in W_V$ and $w_W \in W_W$ the Iwahori-fixed vectors
\begin{equation}
\label{eq:Iwahori_vector}
    \Psi_{w_V,\chi} := F_\chi(1_{I_V w_V I_V}), \; \Psi_{w_W,\eta} := F_\eta(1_{I_W w_W I_W}) \text { and } \Psi_{w_W,\eta}^J := F_{\eta }^J(1_{I_J w_W I_J}).
\end{equation}

\subsection{Whittaker--Shintani functions}
\label{subsec:WS_functions}
Let $\mathcal{H}(\mathrm{U}(V),K_V)$, $\mathcal{H}(\mathrm{U}(W),K_W)$ and $\mathcal{H}(J,K_J)$ be the spherical Hecke algebras of $\mathrm{U}(V)$, $\mathrm{U}(W)$ and $J$. By Lemma~\ref{lem:spheri}, they act by characters $\xi_\chi$, $\xi_\eta$ and $\xi_{\eta\overline{\psi}}^J$ on $(I_{B_V}^{\mathrm{U}(V)}(\chi))^{K_V}$, $(I_{B_W}^{\mathrm{U}(W)}(\eta))^{K_W}$ and $(I_{B_J}^{J}(\eta \overline{\psi}))^{K_J}$ respectively. For any function $f$ on $\mathrm{U}(V)$, $\mathrm{U}(W)$ or $J$, we have the regular actions $L(g)f=f(g^{-1}.)$ and $R(g)f=f(.g)$. Set $\psi_E:=\psi \circ \frac{1}{2}\Tr_{E/F}$ and consider the generic character of $U$ defined by $\psi_{U}(u):=\psi_E \left( \sum_{i=1}^{r-1} \langle u(v_{i+1}),v_{i}^* \rangle_{V} \right)$ for $u \in U$.

We define the space $\mathrm{WS}$ of Whittaker--Shintani functions to be the $\cc$-vector space of functions $\mathcal{W} \in C^\infty(\mathrm{U}(V))$ if $r \geq 1$, or $\mathcal{W} \in C^\infty(J)$ if $r=0$, such that for $z \in Z, \; u \in U, \; k_J \in K_J, \; k_V \in K_V$ and $g\in \mathrm{U}(V)$ if $r\geq 1$, $g\in J$ if $r=0$, we have
\begin{equation*}
    \mathcal{W}(zuk_J g k_V)=\psi(z) \psi_U(u) \mathcal{W}(g).
\end{equation*}
It has a structure of $ \mathcal{H}(\mathrm{U}(V),K_V) \otimes \mathcal{H}(J,K_J)$-module by the $R \otimes L$ action. We denote by $\mathrm{WS}_{\chi,\eta}$ the $\xi_{\chi} \otimes \xi_{\eta \overline{\psi}}^J$-isotypic component of $\mathrm{WS}$, that is the space of $\mathcal{W}_{\chi,\eta} \in \mathrm{WS}$ such that
\begin{equation*}
    R(f_V)L(f_J)\mathcal{W}_{\chi,\eta}=\xi_{\chi}(f_V) \xi_{\eta \overline{\psi}}^J(f_J) \mathcal{W}_{\chi,\eta}, \quad f_V \in \mathcal{H}(\mathrm{U}(V),K_V), \quad f_J \in \mathcal{H}(J,K_J).
\end{equation*}

\section{Multiplicity one for Whittaker--Shintani functions}

\label{sec:mult_1}

In this section we show that for every unramified characters $\chi$ and $\eta$ of $T_V$ and $T_W$ respectively, the space $\mathrm{WS}_{\chi,\eta}$ has dimension at most one. In \S\ref{section:pairing} and \S\ref{section:Formula} we will build a non zero element $W_{\chi,\eta}^\circ$, so that it has dimension exactly one. This proves the multiplicity one result Theorem~\ref{thm:mult1intro}.

\subsection{Heisenberg--Weil representation of the Jacobi group}
\label{subsec:H-W}

By the $p$-adic Stone--von Neumann Theorem~\cite[Th\'eor\`eme~I.1]{MVW} there exists a unique smooth irreducible representation $\rho_{\psi}$  of $\mathbb{H}(\mathbb{W})$ with central character $\psi$. Consider a $S^1$ metaplectic cover $\mathrm{Mp}(\mathbb{W})$ of the symplectic group $\mathrm{Sp}(\mathbb{W})$, which is unique up to isomorphism. There is a natural map $\mathrm{U}(W) \to \mathrm{Sp}(\mathbb{W})$. The data of $\psi$ determines a smooth representation $\omega_{\psi}$ of $\mathrm{Mp}(\mathbb{W})$ (\cite[Section~2.II]{MVW}), realised on $\rho_{\psi}$. Let $\mu$ be a character of $E^\times$ lifting $\eta_{E/F}$. Then the data of $(\mu,\psi)$ determines a splitting of $\mathrm{Mp}(\mathbb{W})$ over $\mathrm{U}(W)$ which yields the Weil representation $\omega_{\mu,\psi}$ of $\mathrm{U}(W)$. We will write $\overline{\omega}_{\mu,\psi}=\omega_{\overline{\mu},\overline{\psi}}$ and $\overline{\rho}_\psi=\rho_{\overline{\psi}}$. By definition, for any $g \in \mathrm{U}(W)$ the automorphism $\omega_{\mu,\psi}(g)$ is an intertwining operator between $\rho_{\psi}$ and $\rho_{\psi}^g$ and moreover the action of $J$ by conjugation on $\psi_U$ is trivial, so that we can define the representation $\nu_{\mu,\psi}$ of $J \ltimes U$ by the rule 
\begin{equation*}
    \nu_{\mu,\psi}(g_Whu)=\psi_U(u) \omega_{\mu,\psi}(g_W) \rho_{\psi}(h), \; u \in U, \; g_W \in \mathrm{U}(W), \; h \in \mathbb{H}(\mathbb{W}).
\end{equation*}
Let $\overline{\nu}_{\mu,\psi}$ be its complex conjugation which is defined by the same formula with $\overline{\psi}_U$, $\overline{\omega}_{\mu,\psi}$ and $\overline{\rho}_{\psi}$.

\subsubsection{Mixed models}
\label{subsubsec:mixed_Schr_latt}

In the split case or the inert even case, we have a polarization $\mathbb{W}=Y_-(E) \oplus Y_-^*(E)$, so that $\nu_{\mu,\psi}$ is realized on $C_c^{\infty}(Y_-^*(E))=C_c^{\infty}(Y_+^*(E))$ by the Schrodinger model (\cite[Section~II.6]{MVW}). Otherwise, set 
\begin{equation}
\label{eq:C_c_psi}
    C_c^\infty(Y_{m_+}(E),\psi):=\{ f \in C_c^\infty(Y_{m_+}(E)) \; | \; f(y+y')=\psi(\frac{1}{2} \langle y,y' \rangle_{\mathbb{V}}) f(y), \; y \in Y_{m_+}(E), \; y' \in Y_{m_+}(\oo_E)\}.
\end{equation}
We realize $\nu_{\mu,\psi}$ on $C_c^{\infty}(Y_-^*(E)) \otimes C_c^\infty(Y_{m_+}(E),\psi) \subset C_c^\infty(Y_+^*(E))$ by mixing a Schrodinger and a lattice model (\cite[Sections~II.7 and II.8]{MVW}). Then $\nu_{\mu,\psi}$ is unitary for the inner product $(\phi,\phi')_{\nu}= \int_{Y_+^*(E)} \phi(y^*) \overline{\phi'(y^*)} dy^*$. This identifies $\nu_{\mu,\psi}^\vee$ with $\overline{\nu}_{\mu,\psi}$. There is a unique $\phi^\circ \in \nu_{\mu,\psi}^{K_J}$ with $( \phi^\circ,\phi^\circ)_\nu=1$, namely $\phi^\circ:=1_{Y_+^*(\oo_E)}$. 

Let $P(Y)$ be the parabolic subgroup of $\mathrm{U}(W)$ stabilizing the flag $\{0\} \subset Y_- \subset Y_+ \subset W$. Let $M(Y)$ be its Levi subgroup stabilizing $Y_{m_+}$ and $Y_-^*$. Then $M(Y)$ is isomorphic to $\GL(Y_-)$ in the inert even case and the split case, and $\GL(Y_-) \times \mathrm{U}(1)$ in the inert odd case, where $\GL(Y_-)=\GL_{m_-}(E)$ in the inert case and $\GL(Y_-)=\GL_m(F)$ in the split case. In the mixed model, the representation $\nu_{\mu,\psi}$ admits an explicit description as a $P(Y) \ltimes \mathbb{H}(\mathbb{W})$ module (\cite[Section~7.4]{GI}). It implies in particular that
\begin{equation}
\label{eq:nu_induite_schr}
    (\nu_{\mu,\psi})_{| B_W \ltimes \mathbb{H}(\mathbb{W})} \simeq \Ind_{B_J}^{B_W \ltimes \mathbb{H}(\mathbb{W})} (\Val{\cdot}^{\frac{1}{2}} \mu \psi) \simeq \ind_{B_J}^{B_W \ltimes \mathbb{H}(\mathbb{W})} (\Val{\cdot}^{\frac{1}{2}}  \mu \psi),
\end{equation}
where we write $\mu$ for $\mu \circ \det$. For $\phi \in \nu_{\mu,\psi}$, $a \in \GL(Y_-)$, $y \in Y_-(E)$, $y^* \in Y_-^*(E)$ and $y^*_0 \in Y_+^*(E)$ we have
\begin{align}
    \nu_{\mu,\psi}(a)\phi(y^*_0)&=\mu(\det a) \Val{\det a}^{\frac{1}{2}} \phi(a^* y^*_0), \label{eq:a_action} \\
    \nu_{\mu,\psi}(h(y+y^*,0))\phi(y_0^*)&=\psi \left(\langle y_0^*,y \rangle_{\mathbb{W}}+\frac{1}{2}\langle y^*,y \rangle_{\mathbb{W}} \right)\phi(y^*+y^*_0) \label{eq:rho_action}.
\end{align}
and in the inert odd case, for $g \in \mathrm{U}(1)$, $y^* \in Y_-^*(E)$ and $y_+^* \in E w_{m_+}$
\begin{equation}
\label{eq:mixed_model}
    \nu_{\mu,\psi}(g)\phi(y^*+y_+^*)=\phi(y^*+g^{-1} y_+^*).
\end{equation}

\subsubsection{Lattice models}
\label{subsubsec:lattice}
There exists another model of $\nu_{\mu,\psi}$ called the "lattice" model (see~\cite[Section~1.2.3]{GKT}). Set $\mathbb{H}(\mathbb{W}^0)=\mathbb{H}(W^0,F)$. Then $\nu_{\mu,\psi}$ can be realized on the induction $\Ind_{\mathbb{H}(\mathbb{W}^0)}^{\mathbb{H}(\mathbb{W})}( \psi)$. In this model, $\mathbb{H}(\mathbb{W})$ acts by right translations and the subgroup $K_W$ acts by left translation : for $\phi \in \ind_{\mathbb{H}(\mathbb{W}^0)}^{\mathbb{H}(\mathbb{W})} (\psi)$, $k_W \in K_W$, $w \in W$ and $z \in Z$ we have
\begin{equation}
    \label{eq:nu_lattice}
    \nu_{\mu,\psi}(k_W) \phi(h(w,z))=\phi(h(k_W^{-1}w,z)),
\end{equation}
which by \eqref{eq:Jacobi_relation} amounts to 
\begin{equation}
\label{eq:nu_induite_lattice}
    (\nu_{\mu,\psi})_{|K_W \ltimes H(W)} \simeq \Ind_{K_W \ltimes \mathbb{H}(\mathbb{W}^0)}^{K_W \ltimes \mathbb{H}(\mathbb{W})} (\psi) \simeq \ind_{K_W \ltimes \mathbb{H}(\mathbb{W}^0)}^{K_W \ltimes \mathbb{H}(\mathbb{W})} (\psi). 
\end{equation}
The space $(\nu_{\mu,\psi})^{K_J}$ has dimension one and is spanned by $\phi^\circ_{\mathrm{latt}} : h(w,z) \mapsto \psi(z) 1_{W(\oo_E)}(w)$. 

There is an unitary isomorphism $F_{W^0,Y}$ between the two models described in \cite[Proposition~1.23]{GKT}. It sends $\phi^\circ_{\mathrm{latt}}$ to $\phi^\circ=1_{Y_+^*(\oo_E)}$ and for $\phi \in \ind_{\mathbb{H}(\mathbb{W}^0)}^{\mathbb{H}(\mathbb{W})} (\psi) $ and $y^* \in Y_+^*(E)$ we have
\begin{equation}
\label{eq:intertwining_nu}
    F_{W^0,Y}(\phi)(y_+^*)=\int_{Y_-(E)} \phi(h(y,0)h(y_+^*,0)) dy.
\end{equation}

\subsubsection{Spherical vectors}
\label{subsubsec:spheri_vectors}

Set $\phi^1=1_{Y_-^*(\varpi \oo_E) \otimes Y_{m_+}(\oo_E)}$ and $\phi^\times=\Delta_{T_W}'1_{Y_-^*(\oo_E^\times) \otimes Y_{m_+}(\oo_E)}$ in the mixed model of $\nu_{\mu,\psi}$. By \eqref{eq:a_action} and \eqref{eq:rho_action} we have
\begin{equation}
\label{eq:phi_times}
    \phi^\times=q^{m_-}\nu_{\mu,\psi}(1_{T_W^0}) \nu_{\mu,\psi}(\mu_+^*) \phi^1,
\end{equation}
where we recall that $\vol(T_W^0)=1$. Note that in the split case $\Delta_{T_W}=\Delta_{T_W}'$ and $m=m_-$.

\begin{lem}
\label{lem:spheri_vectors}
    In the inert case, $\nu_{\mu,\psi}^{I_W \cap {}^{w_0} I_W}=\cc \phi^\circ$. In the split case, $\phi^\times \in \nu_{\mu,\psi}^{I_W \cap {}^{w_0} I_W}$. 
\end{lem}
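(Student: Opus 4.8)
The plan is to analyze the two cases separately, exploiting the explicit models of $\nu_{\mu,\psi}$ set up in \S\ref{subsubsec:mixed_Schr_latt}--\S\ref{subsubsec:spheri_vectors}. In both cases the key point is that the group $I_W \cap {}^{w_0} I_W$ is generated by $T_W^0$ together with the root subgroups $N_\beta(\oo)$ and $N_{-\beta}(\pedro)$ for $\beta$ simple (this is the standard Iwahori factorization description, since conjugating by $w_0$ swaps the positive and negative root subgroups' integrality conditions). So I would reduce the claimed invariance to checking invariance under $T_W^0$, under the "integral" upper unipotent part, and under the "small" lower unipotent part, and handle each using the action formulas \eqref{eq:a_action}, \eqref{eq:rho_action}, \eqref{eq:mixed_model}, together with the description \eqref{eq:nu_induite_lattice} of the restriction of $\nu_{\mu,\psi}$ to $K_W \ltimes \mathbb{H}(\mathbb{W})$.

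First, the inert case. Here I want to show $\nu_{\mu,\psi}^{I_W \cap {}^{w_0}I_W} = \cc\phi^\circ$, i.e. that any $(I_W \cap {}^{w_0}I_W)$-fixed vector is already $K_W$-fixed (hence a multiple of $\phi^\circ$ by the one-dimensionality of $\nu_{\mu,\psi}^{K_J}$ from \S\ref{subsubsec:lattice}, noting $W^0Z^0$ acts by the central/Heisenberg data and any $I_J \cap {}^{w_0}I_J$-fixed vector is automatically $W^0Z^0$-fixed since those are already in the Iwahori). The mechanism is that $\mathrm{U}(W)$ is quasi-split but \emph{not split}: the simple root subgroups $N_\beta$ are copies of $E$ (or the $\mathrm{SU}_3$ root group in the odd case), while the Iwahori is built from $N_W^{-,1}T_W^0N_W^0$ where integrality is measured in $\oo_E$. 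The crucial arithmetic input is that $\oo_E = \oo_F + \oo_F\tau$ with $\tau \in \oo_E^\times$, so $\pedro_E \cap \oo_F = \pedro_F$ but $\varpi\oo_E$ already contains $\varpi$; combined with the fact that the relevant root group element $n'_\beta(\varpi x)$ with $x \in \oo_E$ still moves things by $\varpi\oo_E$-amounts, one shows using \eqref{eq:rho_action} and \eqref{eq:a_action} that a vector fixed by $N_{-\beta}(\pedro_E)$ and by $N_\beta(\oo_E)$ and $T_W^0$ is forced to be fixed by the full $N_\beta(\oo_E)$ and $N_{-\beta}(\oo_E)$, hence by $D(G_\beta) \cap K_W = \SL_2(\oo_E)$ (or $\mathrm{SU}_3(\oo_F)$), and running over all simple $\beta$ and using that $K_W$ is generated by $T_W^0$ and these gives $K_W$-invariance. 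The lattice model \eqref{eq:nu_lattice} is the cleanest place to run this: $\phi$ as a function on $\mathbb{H}(\mathbb{W})$ invariant under left translation by $N_{-\beta}(\pedro_E)$ and $N_\beta(\oo_E)$ must, by the support condition coming from $\mathbb{H}(\mathbb{W}^0)$-invariance on the right, be left-invariant under all of $K_W$.

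Second, the split case. Here I must show $\phi^\times = \Delta_{T_W}' 1_{Y_-^*(\oo_E^\times)\otimes Y_{m_+}(\oo_E)} \in \nu_{\mu,\psi}^{I_W \cap {}^{w_0}I_W}$, with $Y_{m_+}=\{0\}$ and $\GL(Y_-)=\GL_m(F)$. Invariance under $T_W^0 = \GL_m(\oo_F)^{\mathrm{diag}}$: by \eqref{eq:a_action}, $t \in T_W^0$ acts on $1_{Y_-^*(\oo_E^\times)}$ by $y^* \mapsto \mu(\det t)|\det t|^{1/2}\mathbf{1}(t^*y^* \in \oo_E^\times)$; since $\mu, |\cdot|$ are unramified and $t^*$ preserves $Y_-^*(\oo_E^\times)$, this is fixed. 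Invariance under the upper unipotent $N_W^0 = N_W(\oo_F)$: these act through $\GL_m(\oo_F)$ by \eqref{eq:a_action} and preserve $Y_-^*(\oo_E^\times)$ similarly — more precisely a unipotent upper-triangular $u$ with integral entries, acting by $u^*$ (lower-triangular integral), sends a vector supported on units; one checks $u^*y^*$ has the same unit-valued diagonal data, so the indicator is unchanged; here the restriction to $\oo_E^\times$ rather than $\oo_E$ is exactly what makes this work, cutting out the chamber where the lower-triangular integral unipotents act trivially. Invariance under the small lower unipotent $N_W^{-,1} = N_W^-(\pedro_F)$: this is the genuinely Heisenberg-flavored part — one uses \eqref{eq:rho_action} to see that an element of $N_W^-(\pedro_F)$, under the Jacobi embedding, acts by translating the $Y_-^*$-variable by something in $\pedro$-times-units together with a $\psi$-twist by a pairing valued in $\pedro_F$, hence trivially on the conductor-$\oo_F$ character $\psi$ and fixing the support $Y_-^*(\oo_E^\times)$ up to the residue class; the normalizing constant $\Delta_{T_W}'$ plays no role here, it is just chosen for later compatibility. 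Actually the cleanest route, paralleling \eqref{eq:phi_times}, is to write $\phi^\times = q^{m_-}\nu_{\mu,\psi}(1_{T_W^0})\nu_{\mu,\psi}(\mu_+^*)\phi^1$ and transport the (easy) $I_W$-invariance properties of $\phi^1 = 1_{Y_-^*(\varpi\oo_E)}$ under $\mu_+^*$-conjugation: since $\mu_+^* = h(1_{Y_+^*},0)$ conjugates $N_W^-(\pedro_F)$ into $N_W^0$ modulo $\mathbb{H}(\mathbb{W}^0)Z$ (using \eqref{eq:conj_identity}), and $\phi^1$ is manifestly $N_W^0 \mathbb{H}(\mathbb{W}^0)$-fixed, one gets the lower-triangular invariance of $\phi^\times$ for free, and the remaining $T_W^0$-averaging in $1_{T_W^0}$ symmetrizes to produce the full $I_W \cap {}^{w_0}I_W$-invariance.

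\textbf{Main obstacle.} The routine-looking but genuinely delicate step is the bookkeeping of integrality in the \emph{inert} case: one must track how $\oo_E$- versus $\oo_F$-integrality interacts with the non-split root subgroups (and especially the $\mathrm{SU}_3$ root group $n'_\beta(x_1,x_2)$ in the odd case, where the relation $\Tr_{E/F}(x_2/\tau) = -N_{E/F}(x_1)$ must be respected), in order to conclude that $(I_W\cap{}^{w_0}I_W)$-invariance forces full $K_W$-invariance rather than merely fixing a larger Iwahori-type vector. The choice of $\tau \in \oo_E^\times$ and the oddness of $q_F$ are what make this go through, and getting the half-integral shifts in \eqref{eq:rho_action} to cooperate with $\psi$ having conductor exactly $\oo_F$ is where the argument could most easily hide an error.
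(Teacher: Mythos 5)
Your structural claim about $I_W \cap {}^{w_0}I_W$ is incorrect, and this is not a minor bookkeeping slip. That group is \emph{not} generated by $T_W^0$ together with $N_\beta(\oo)$ and $N_{-\beta}(\pedro)$ — that would describe $I_W$ itself. Since $w_0$ conjugates $B_W$ to $B_W^-$, an element of $I_W \cap {}^{w_0}I_W \subset K_W$ must reduce modulo $\varpi$ to a matrix that is simultaneously upper- and lower-triangular, hence diagonal; so $I_W \cap {}^{w_0}I_W = T_W^0K_W^1$, with \emph{both} unipotent parts at level $\geq 1$. In particular, in the split case there is no $N_W^0$-invariance to check — which is fortunate, because your claimed verification is false: take $m=2$, $u=\bigl(\begin{smallmatrix}1&-1\\0&1\end{smallmatrix}\bigr)$; then $u^*(1,1)=(1,0)\notin(\oo_F^\times)^2$, so $1_{(\oo_F^\times)^2}$ is not $N_W^0$-invariant. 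Your "cleanest route" for the split case — rewriting $\phi^\times$ via \eqref{eq:phi_times} and using that $\mu_+^*$-conjugation sends $I_W \cap {}^{w_0}I_W$ into $T_W^0\mu_+^*T_W^0$ while $\phi^1$ is $K_W$-fixed (as a $T_W$-translate of $\phi^\circ$) — is in fact exactly the paper's argument and is correct; the preceding "direct verification" paragraph should just be dropped.

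The inert case is the real gap. Your proposed mechanism — propagate invariance from level-$1$ unipotents to level-$0$ unipotents root by root — has no substance behind it, and cannot work as formulated: the analogous propagation is manifestly false for $\GL_m(F)$ (that is precisely why $\phi^\times$ exists in the split case), and nothing in your sketch isolates what the quadratic extension actually changes beyond a vague appeal to $\tau$ and $q_F$ being odd. The paper does something quite different and considerably more structured: (i) in the odd inert case it first handles the $\mathrm{U}(1)$ factor acting on the lattice-model component, using a Hensel-lemma construction of elements $s\in\mathrm{U}(1)$ arbitrarily close to $1$ whose action produces a nontrivial $\psi$-twist, forcing the lattice component of $\phi$ to be standard; (ii) in the even inert case it observes that the Siegel unipotent $\mathrm{Herm}(Y_-^*,Y_-)(\oo_F)$ acts by multiplication by $\psi(\langle cy,y\rangle_W)$, so that invariance forces $\phi$ to be supported in $Y_-^*(\oo_E)$, and then uses that $w_0$ acts by Fourier transform to get the same support constraint on $\hat\phi$, whence $\phi\in\cc\phi^\circ$ by an uncertainty-principle argument. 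None of these ingredients — the Hensel step, the Siegel unipotent acting by a $\psi$-character, the Fourier/Plancherel input from $w_0$ — appears in your proposal, and a naive root-subgroup-by-root-subgroup argument does not reproduce them. As a sanity check on any would-be proof: the statement one must establish is that $\nu_{\mu,\psi}^{T_W^0K_W^1}$ is one-dimensional in the inert case, and this genuinely uses that the Weil representation of a quasi-split non-split unitary group "sees" the Fourier transform through its Weyl group, not just linear changes of variable.
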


\begin{proof}
    We first treat the inert case with $\mathrm{U}(W)=\mathrm{U}(1)$. Let $\phi \in \nu_{\mu,\psi}^{\mathrm{U}(1)}$ which we identify with a subspace of $C_c^\infty(E,\psi)$ by \eqref{eq:C_c_psi}. Let $y=u \varpi^{d} \in E$ with $u \in \oo_E^\times$ and $d<0$. Let $s_2 \in \oo_F^\times$. By Hensel's Lemma and because $q_F$ is odd, there exists $s_1 \in \oo_F$ such that $s:=\frac{ \varpi^{d}}{(\varpi^{d}+s_1+\tau s_2)}  \in \mathrm{U}(1)$. By \eqref{eq:nu_lattice} we have
    \begin{equation*}
        \phi(y)=\nu_{\mu,\psi}(s)\phi(y)=\psi(N_{E/F}(u)\tau^2 s_2 \varpi^{d}) \phi(y).
    \end{equation*}
    As $s_2$ is arbitrary this implies that $\phi(y)=0$ and therefore that $\phi \in \cc \; \phi^\circ$.

    To conclude the proof in the inert case, by \eqref{eq:mixed_model} it remains to show that $\nu_{\mu,\psi}^{I_W \cap {}^{w_0} I_W}=\cc \phi^\circ$ in the even case. In \cite[Section~7.1]{GI}, a subgroup $\mathrm{Herm}(Y_-^*,Y_-)\subset P(Y)$ is defined which has an integral structure over $\oo_F$. We see using \cite[Section~7.4]{GI} that any $\phi \in \nu_{\mu,\psi}^{{}^{w_0} I_W}$ is in fact fixed by $\mathrm{Herm}(Y_-^*,Y_-)(\oo_F)$ and thus supported in $Y_-^*(\oo_E)$. By \cite[Section~7.4]{GI}, $w_0$ acts by the Fourier transform $\hat{\cdot}$ on $\nu_{\mu,\psi}$. Thus, $\nu_{\mu,\psi}^{I_W \cap {}^{w_0} I_W}$ is contained in the space of functions $\phi$ such that $\phi$ and $\hat{\phi}$ are supported in $Y_-^*(\oo_E)$. They are multiple of $\phi^\circ.$

    In the split case, note that $\phi^1=\mu(\varpi) q_F^{m/2} \nu_{\mu,\psi}(t_W(\varpi,\hdots,\varpi)^{-1}) \phi^\circ$, so that $\phi^1 \in \nu_{\mu,\psi}^{K_W}$. Moreover, for any $g \in I_W \cap {}^{w_0} I_W$ we have ${}^g \mu_+^* \in T_W^0 \mu_+^* T_W^0$. The result follows from \eqref{eq:phi_times}.
\end{proof}

\subsection{Spherical vectors of \texorpdfstring{$\ind_H^G (\nu_{\mu,\psi})$}{the induction}}
\label{subsec:spheri_induction}

In this subsection, we always assume that $r \geq 1$. We show that the space of $K$-spherical vectors of the compact induction $\ind_H^G (\nu_{\mu,\psi})$ is a free $\mathcal{H}(G,K)$-module of rank $1$ (Proposition~\ref{prop:module_intro}). We explain in \S\ref{subsec:r=0} how to adapt this result to the $r=0$ case.

\subsubsection{}
\label{subsubsec:homo}
Let $X$ be the homogeneous space $X=H \backslash G$. Set $\Lambda_X=\Lambda_r + \Lambda_W^+$ and $\Lambda_X^-=\Lambda_r^+ + \Lambda_W^+$, which we identify with subsets of $\Lambda_V \subset G$. By the Iwasawa decomposition on $\mathrm{U}(V)$ and the Cartan decomposition on $\mathrm{U}(W)$, we see that $\Lambda_X$ is a complete set of representatives of $H \backslash G / K$. Recall that $\Lambda=\Lambda_V \times \Lambda_W$ and  $\Lambda^-=\Lambda_V^+ \times \Lambda_W^-$. There is a projection $\lambda=(\lambda_V,\lambda_W) \in \Lambda^- \mapsto \lambda_X:=\lambda_V-\lambda_W \in \Lambda_X^-$. Note that $\lambda_X=\lambda_X'$ is equivalent to $H \lambda K=H \lambda' K$. For $\lambda_V, \lambda_V' \in \Lambda_V$, we write $\lambda_V \leq_V \lambda_V'$ if $\lambda_V'-\lambda_V = \sum_{\alpha \in \Delta_V} c_\alpha \alpha^\vee$ for some $c_\alpha \in \zz_{\geq 0}$. For $\lambda_X, \lambda_X' \in \Lambda_X^-$, we write $\lambda_X \leq_X \lambda_X'$ if $\lambda_X \leq_V \lambda_X'$. Note that here there is a slight inconsistency of notation as any $\lambda_X \in \Lambda_X^-$ satisfies $\lambda_X \geq_X 0$. The following lemma holds for any quasi-split reductive group.

\begin{lem}
\label{lem:sat}
    Let $\lambda_V, \lambda_V' \in \Lambda_V^+$. Then $K_V\lambda_V K_V \lambda_V^{-1} \cap K_V (\lambda_V')^{-1} K_V \lambda_V' \subset K_V$.
\end{lem}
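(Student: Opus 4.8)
The statement is a standard fact about hyperspecial maximal compact subgroups of quasi-split (in fact any) reductive $p$-adic groups, and the plan is to prove it via the elementary divisor / Cartan decomposition. The key observation is that for $g \in \mathrm{U}(V)$, the double coset $K_V g K_V$ is determined by a well-defined element $\mathrm{cd}(g) \in \Lambda_V^+$ (the Cartan component), and moreover there is a partial order on $\Lambda_V^+$ — essentially the one induced by $\leq_V$ on $\Lambda_V$, or equivalently by domination of the associated "hyperspecial Satake data" — such that if $g = k_1 \lambda k_2$ with $k_i \in K_V$ and $\lambda \in \Lambda_V$ not necessarily dominant, then $\mathrm{cd}(g) \geq_V \lambda^+$ where $\lambda^+$ is the dominant representative of the Weyl orbit of $\lambda$, and in fact $\mathrm{cd}(g)$ dominates every Weyl conjugate of $\lambda$. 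This is the content of the theory underlying the Satake isomorphism; see for instance the standard references on $p$-adic groups.

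First I would fix notation: write $K_V \lambda_V K_V \lambda_V^{-1}$, and observe that an element of the left-hand intersection is some $g$ with $g \lambda_V \in K_V \lambda_V K_V$ and $g^{-1} \lambda_V' \in K_V \lambda_V' K_V$, equivalently $\lambda_V' = g h$ for some $h$ with $\mathrm{cd}(h) = \lambda_V'$... more cleanly: suppose $g \in K_V \lambda_V K_V \lambda_V^{-1}$, so $g = k_1 \lambda_V k_2 \lambda_V^{-1}$ for $k_i \in K_V$. Then $\mathrm{cd}(g)$, the Cartan invariant of $g$, satisfies $\mathrm{cd}(g) \leq_V$ (something controlled by $\lambda_V$): indeed $g$ is conjugate-by-$K_V$-and-$\lambda_V$ of a product that lies in $K_V \lambda_V K_V \cdot \lambda_V^{-1} K_V$, and the valuations of elementary divisors add sub-additively. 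The precise statement to extract is: if $g \in K_V \lambda_V K_V \mu^{-1}$ then $\mathrm{cd}(g) \leq_V \lambda_V + (\text{dominant conjugate of } -\mu)$ is \emph{not} quite it; rather the sharp bound is that $\mathrm{cd}(g)$ is dominated by $\lambda_V - w_0 \mu$ coordinatewise after reordering — but for our purposes we only need: $g \in K_V \lambda_V K_V \lambda_V^{-1}$ forces $\mathrm{cd}(g) \leq_V 2\lambda_V - (\text{lowest weight})$, and symmetrically $g^{-1} \in K_V \lambda_V' K_V (\lambda_V')^{-1}$ forces the analogous bound on $\mathrm{cd}(g^{-1}) = -w_0 \mathrm{cd}(g)$.

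So the actual argument I would run is: realize $\mathrm{U}(V) \subset \GL_n(E)$ via the basis $\mathfrak B$, and $K_V = \mathrm{U}(V) \cap \GL_n(\oo_E)$. For $g \in \GL_n(E)$ the Cartan invariant is the tuple of valuations of elementary divisors. If $g = k_1 \lambda_V k_2 \lambda_V^{-1}$ with $k_i \in \GL_n(\oo_E)$, then $g$ maps the lattice $\lambda_V \oo_E^n$ into $\varpi^{-N} \lambda_V \oo_E^n$ and contains $\varpi^{N} \lambda_V \oo_E^n$ for $N$ bounded by the largest entry of $\lambda_V$; comparing with the standard lattice $\oo_E^n$ shows every elementary divisor valuation of $g$ lies in $[-a_1+a_{n_-}, a_1 - a_{n_-}]$ or similar, but crucially the sum of the positive ones is bounded by a quantity depending only on $\lambda_V$. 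Doing this simultaneously for $g$ (from the first coset) and $g^{-1}$ (from the second) pins down $\mathrm{cd}(g)$ from both sides; the two bounds are each other's negatives up to $w_0$, and the only $\lambda \in \Lambda_V^+$ with $\lambda \leq_V \kappa$ and $-w_0\lambda \leq_V \kappa'$ where $\kappa, \kappa'$ are set up to force $\lambda = 0$ is $\lambda = 0$ itself. Hence $\mathrm{cd}(g) = 0$, i.e.\ $g \in K_V$.

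\textbf{Main obstacle.} The delicate point is making the inequality sharp enough: a crude elementary-divisor bound only shows the intersection is contained in some larger compact set, not in $K_V$ itself. The fix is to use that $K_V \lambda_V K_V \lambda_V^{-1}$ and $K_V \lambda_V^{-1} K_V \lambda_V$ together (via $g$ and $g^{-1}$) constrain $\mathrm{cd}(g)$ and $\mathrm{cd}(g^{-1}) = -w_0\,\mathrm{cd}(g)$ by bounds that are \emph{opposite} in the dominance order, so that only the trivial cocharacter survives. Concretely, one shows $g \in K_V\lambda_V K_V\lambda_V^{-1}$ implies the lowest weight of $\mathrm{cd}(g)$ (in the sense of $\leq_V$) is $\geq_V 0$ hence $\mathrm{cd}(g) \geq_V 0$ trivially, while the genuinely useful direction $g^{-1} \in K_V \lambda_V' K_V (\lambda_V')^{-1}$ gives $\mathrm{cd}(g^{-1}) \leq_V$ something, i.e. $\mathrm{cd}(g) \geq_V -w_0(\cdots)$; combined with $g$'s own bound $\mathrm{cd}(g) \leq_V (\cdots)$ these squeeze $\mathrm{cd}(g)$ to $0$. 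Since $q_F$ is odd and $K_V$ is hyperspecial this elementary-divisor calculus works uniformly; for the non-split $\mathrm{U}(V)$ one either argues directly in $\GL_n(E)$ as above (the $\mathrm{U}(V)$-structure is preserved automatically since all the group elements involved lie in $\mathrm{U}(V)$) or invokes the general statement for quasi-split groups with hyperspecial $K$, as the lemma explicitly notes it holds in that generality.
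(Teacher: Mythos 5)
Your route is genuinely different from the paper's. The paper gives a one-line Hecke algebra argument: it cites Gross's normalization $1_{K_V\lambda_V'K_V}*1_{K_V\lambda_VK_V}(\lambda_V'\lambda_V)=1$, which says the intersection $K_V\lambda_V'K_V\cap\lambda_V'\lambda_VK_V\lambda_V^{-1}K_V$ has volume $1$, hence equals the single coset $\lambda_V'K_V$ it manifestly contains; multiplying through by $(\lambda_V')^{-1}$ and intersecting gives the inclusion directly. You instead try to control the Cartan invariant $\mathrm{cd}(g)$ by elementary-divisor bounds and "squeeze" it to $0$.

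The squeeze as you describe it does not work, and this is the genuine gap. The Horn-type bound you invoke — for $g\in K_V\lambda_VK_V\lambda_V^{-1}$ one gets $\mathrm{cd}(g)\leq_V\lambda_V-w_0\lambda_V$ — is a necessary condition only, and the inverse $g^{-1}=\lambda_Vk_2^{-1}\lambda_V^{-1}k_1^{-1}$ lies in the $w_0$-symmetric set, giving the same bound on $\mathrm{cd}(g^{-1})=-w_0\,\mathrm{cd}(g)$. Since $-w_0(\lambda_V-w_0\lambda_V)=\lambda_V-w_0\lambda_V$, the two bounds are identical upper bounds, not opposed ones; nothing gets pinned to $0$. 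Concretely, for $\mathrm{U}(V)=\GL_2(F)$ with $\lambda_V=\lambda_V'=\mathrm{diag}(\varpi,1)$, the element $g=\begin{pmatrix}0&\varpi\\-\varpi^{-1}&0\end{pmatrix}$ has $\mathrm{cd}(g)=(1,-1)$ and satisfies every inequality you write down, yet the lemma asserts $g\notin K_V\lambda_VK_V\lambda_V^{-1}\cap K_V(\lambda_V')^{-1}K_V\lambda_V'$ (and indeed a direct check shows $g(\lambda_V')^{-1}$ has Cartan $(1,-2)\neq(0,-1)$, so the second membership fails — but not for any reason your Cartan bounds detect). Incidentally, your aside that "$g^{-1}\in K_V\lambda_V'K_V(\lambda_V')^{-1}$" is also wrong: the inverse of $K_V(\lambda_V')^{-1}K_V\lambda_V'$ is $(\lambda_V')^{-1}K_V\lambda_V'K_V$. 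What the argument really needs is the sharper statement that the top coefficient of the Hecke convolution is exactly $1$, i.e.\ that $K_V\lambda_V'K_V\cap\lambda_V'\lambda_VK_V\lambda_V^{-1}K_V$ is a single $K_V$-coset. That fact is stronger than coordinatewise elementary-divisor inequalities; the paper simply cites it from Gross, whereas your outline implicitly tries to reprove it with tools that are too coarse.
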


\begin{proof}
    Denote by $*$ the convolution on $\mathcal{H}(\mathrm{U}(V),K_V)$. By~\cite{Gross}, $1_{K_V \lambda_V' K_V} * 1_{K_V \lambda_V K_V}(\lambda_V' \lambda_V)=1$ and thus $\vol(K_V \lambda_V' K_V \cap \lambda_V' \lambda_V K_V \lambda_V^{-1} K_V)=1$. As $\lambda_V' K_V \subset K_V \lambda_V' K_V \cap \lambda_V' \lambda_V K_V \lambda_V^{-1} K_V$, this implies that
    \begin{equation*}
        \lambda_V' K_V = K_V \lambda_V' K_V \cap \lambda_V' \lambda_V K_V \lambda_V^{-1} K_V.
    \end{equation*}
    This directly implies the desired inclusion.
\end{proof}

We now describe the combinatorics between the $K \times K$ and $H \times K$ orbits in $G$. We write $H=\mathrm{U}(W) \ltimes N$, where $N$ is the unipotent radical of $P$ the parabolic subgroup of $\mathrm{U}(V)$ stabilizing the flag $0 \subset E v_1 \subset \hdots \subset X$.

\begin{lem}
\label{lem:orbits_in_G}
    Let $\lambda, \lambda' \in \Lambda^-$. If $ K\lambda K \cap H\lambda'K \neq \emptyset$, then $\lambda'_X \leq_X \lambda_X$. Moreover, $H \cap K \lambda K \lambda^{-1} \subset K$.
\end{lem}

\begin{proof} By hypothesis, $K_W \lambda_W K_W (\lambda_W')^{-1} \cap K_V \lambda_V K_V (\lambda_V')^{-1} N \neq \emptyset$, so there exists $n \in N$ such that
\begin{equation*}
    {}^{(\lambda_W')^{-1}}n (\lambda_W')^{-1} \lambda_V'  \in K_W \lambda_W^{-1} K_V \lambda_V K_V.
\end{equation*}
Choose $\lambda_r \in \Lambda_r^+$ with $\lambda_r -\lambda_W\in \Lambda_V^+$ and ${}^{\lambda_r (\lambda_W')^{-1}}n \in K_V$. As $\lambda_r$ commutes with $\mathrm{U}(W)$, we get $\lambda_r (\lambda_W')^{-1} \lambda_V'  \in K_V \lambda_r \lambda_W^{-1} K_V \lambda_V K_V$. By~\cite[Equation~(2.9)]{Gross}, we have
\begin{equation*}
    K_V \lambda_r \lambda_W^{-1} K_V \lambda_V K_V \subset \bigsqcup_{\substack{\tilde{\lambda}_V \in \Lambda_V^+ \\ \tilde{\lambda}_V \leq_V \lambda_r-\lambda_W+\lambda_V}} K_V \tilde{\lambda}_V K_V,
\end{equation*}
so that $\lambda_r-\lambda_W'+\lambda_V' \leq_V \lambda_r-\lambda_W+\lambda_V$. This is exactly $\lambda'_X \leq_X \lambda_X$.

Assume now that $g_W \in \mathrm{U}(W)$ and $n \in N$ are such that $(g_W,g_W n) \lambda \in K \lambda K$. Choose $\lambda_r$ as before. Then
\begin{equation*}
    g_W n \in K_V \lambda_V K_V \lambda_V^{-1} \cap K_W \lambda_W K_W \lambda_W^{-1} n \subset K_V \lambda_V K_V \lambda_V^{-1} \cap K_V \lambda_r^{-1} \lambda_W K_V \lambda_r \lambda_W^{-1}.
\end{equation*}
Lemma~\ref{lem:sat} yields $g_W n \in K_V$, which concludes the proof.
\end{proof}

\subsubsection{Proof of Proposition~\ref{prop:module_intro}}

We end the proof of Proposition~\ref{prop:module_intro}, restated in Proposition~\ref{prop:rank1}.

\begin{lem}
\label{lem:decompo}
    We have a decomposition as a direct sum
    \begin{equation}
    \label{eq:B^0_W_invariant}
        \nu_{\mu,\psi}^{B_W^0} = \bigoplus_{\substack{\lambda_W \in \Lambda_W^+}} \cc \; \nu_{\mu,\psi}(\lambda_W^{-1}) \phi^\circ.
    \end{equation}
   Moreover, let $\tilde{\lambda}_X \in \Lambda_X^-$. Then 
    \begin{equation}
          \label{eq:decomp_space}
        \nu_{\mu,\psi}^{ {}^{\tilde{\lambda}_X} K_W \cap K_W \ltimes {}^{\tilde{\lambda}_X}\mathbb{H}(\mathbb{W})^0} = \bigoplus_{\substack{\lambda \in \Lambda^{-} \\ \lambda_X=\tilde{\lambda}_X}} \cc \; \nu_{\mu,\psi}(\lambda_W^{-1}) \phi^\circ.
    \end{equation}
\end{lem}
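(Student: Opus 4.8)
The plan is to work throughout in the mixed Schr\"odinger--lattice model of \S\ref{subsubsec:mixed_Schr_latt}, in which $\nu_{\mu,\psi}$ is realised inside $C_c^\infty(Y_+^*(E))$ with $\phi^\circ=1_{Y_+^*(\oo_E)}$, and to prove each assertion by first producing the vectors on the right-hand side explicitly and then checking that they exhaust the relevant fixed space. The starting computation is this: if $\lambda_W\in\Lambda_W^+$ is written $\lambda_W=t_W(\mathbf{t})$ as in \eqref{eq:t_V} with $v(t_j)=a_j$, then \eqref{eq:a_action} --- together with \eqref{eq:mixed_model} and $\mathrm{U}(1)\subseteq K_W$ in the inert odd case --- gives
\[
  \nu_{\mu,\psi}(\lambda_W^{-1})\phi^\circ \;=\; c(\lambda_W)\,1_{L(\lambda_W)},\qquad L(\lambda_W):=\Bigl(\bigoplus_{j=1}^{m_-}\varpi^{a_j}\oo_E\,w_j^*\Bigr)\oplus Y_{m_+}(\oo_E),
\]
for some $c(\lambda_W)\in\cc^\times$. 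Distinct cocharacters give distinct lattices, so such vectors are always linearly independent, which settles directness of both sums.

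For \eqref{eq:B^0_W_invariant}: each $\nu_{\mu,\psi}(\lambda_W^{-1})\phi^\circ$ is $B_W^0$-fixed because $\lambda_W$ is dominant, hence $\lambda_W B_W^0\lambda_W^{-1}\subseteq B_W^0\subseteq K_W\subseteq K_J$ and $\phi^\circ$ is $K_J$-fixed. For the converse I would take $\phi\in\nu_{\mu,\psi}^{B_W^0}$ and exploit fixity under the integral unipotent $N_W^0\subseteq B_W^0$: in the inert case $N_W^0$ contains $\mathrm{Herm}(Y_-^*,Y_-)(\oo_F)$, whose action $\phi(y)\mapsto\psi(\langle cy,y\rangle_W)\phi(y)$ (as in the proof of Lemma~\ref{lem:spheri_vectors}) forces $\supp\phi\subseteq Y_+^*(\oo_E)$, after which $T_W^0$ and the integral unipotent of $\GL(Y_-)$ make $\phi$ a compactly supported locally constant function, necessarily constant on each stratum $L(\lambda_W)\setminus\bigcup_{\lambda_W'>_W\lambda_W}L(\lambda_W')$; a finite inclusion--exclusion along $\leq_W$ then expresses $\phi$ in terms of the $1_{L(\lambda_W)}$. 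In the split case the support need not be bounded, but the same torus and unipotent invariances, together with local constancy and compact support, yield the same conclusion with $\lambda_W$ ranging over all of $\Lambda_W^+$. Alternatively, applying Mackey theory to \eqref{eq:nu_induite_schr}, $\nu_{\mu,\psi}^{B_W^0}$ has a basis indexed by those double cosets in $B_J\backslash(B_W\ltimes\mathbb{H}(\mathbb{W}))/B_W^0$ on which $\Val{\cdot}^{1/2}\mu\psi$ is trivial, and an Iwasawa/Cartan count on the Jacobi group identifies this index set with $\Lambda_W^+$.

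For \eqref{eq:decomp_space} I would first rewrite the indexing group: decompose $\tilde\lambda_X=\tilde\lambda_r+\tilde\lambda_W$ with $\tilde\lambda_r\in\Lambda_r^+$, $\tilde\lambda_W\in\Lambda_W^+$; since $G_r$ centralises $\mathrm{U}(W)$ and conjugation by $\tilde\lambda_r$ acts on $\mathbb{H}(\mathbb{W})$ by $h(w,z)\mapsto h(\varpi^{c}w,\varpi^{2c}z)$ (with $c\geq0$ the $r$-th coordinate of $\tilde\lambda_r$, from \eqref{eq:h_defi}), the group in \eqref{eq:decomp_space} is $Q:=({}^{\tilde\lambda_W}K_W\cap K_W)\ltimes{}^{\tilde\lambda_X}\mathbb{H}(\mathbb{W})^0$ with ${}^{\tilde\lambda_X}\mathbb{H}(\mathbb{W})^0$ an explicit open subgroup of $\mathbb{H}(\mathbb{W})^0$. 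For $\lambda=(\lambda_V,\lambda_W)\in\Lambda^-$ with $\lambda_X=\tilde\lambda_X$, so $\lambda_V=\tilde\lambda_X+\lambda_W\in\Lambda_V^+$, the vector $\nu_{\mu,\psi}(\lambda_W^{-1})\phi^\circ$ lies in $\nu_{\mu,\psi}^Q$: the reductive part satisfies $\lambda_W({}^{\tilde\lambda_W}K_W\cap K_W)\lambda_W^{-1}\subseteq K_W$ via the Iwahori factorisation ${}^{\tilde\lambda_W}K_W\cap K_W={}^{\tilde\lambda_W}N_W^0\cdot T_W^0\cdot N_W^{-,0}$ and the fact that $\lambda_W+\tilde\lambda_W$ is the (dominant) $\mathrm{U}(W)$-component of $\lambda_V$, while $\lambda_W\,{}^{\tilde\lambda_X}\mathbb{H}(\mathbb{W})^0\,\lambda_W^{-1}={}^{\lambda_V}\mathbb{H}(\mathbb{W})^0\subseteq\mathbb{H}(\mathbb{W})^0$ because $\lambda_V$ is $B_V$-dominant --- which is exactly the coupling of its $G_r$- and $\mathrm{U}(W)$-parts --- and $\mathbb{H}(\mathbb{W})\subseteq N_V$; hence $\lambda_WQ\lambda_W^{-1}\subseteq K_J$, giving the inclusion $\supseteq$.

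The main obstacle is the reverse inclusion in \eqref{eq:decomp_space}. Given $\phi\in\nu_{\mu,\psi}^Q$, fixity under ${}^{\tilde\lambda_X}\mathbb{H}(\mathbb{W})^0$ constrains $\phi$ through \eqref{eq:rho_action}: the $Y_-(E)$-part of this subgroup multiplies $\phi$ by a character of $Y_+^*(E)$, confining $\supp\phi$ to an explicit lattice, while its $Y_-^*(E)$- and $Y_{m_+}(E)$-parts act by translations, making $\phi$ invariant under a sublattice; the reductive part ${}^{\tilde\lambda_W}K_W\cap K_W$, a parahoric adapted to $P_{\tilde\lambda_W}$, then adds $T_W^0$- and twisted $\GL(Y_-)(\oo)$-invariance. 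Assembling these with local constancy and compact support and re-running the inclusion--exclusion over the resulting finite chain of lattices, one obtains $\phi\in\mathrm{span}\{1_{L(\lambda_W)}\}$ with $\lambda_W$ ranging exactly over those cocharacters with $\tilde\lambda_X+\lambda_W\in\Lambda_V^+$, i.e. exactly the $\lambda=(\tilde\lambda_X+\lambda_W,\lambda_W)\in\Lambda^-$ with $\lambda_X=\tilde\lambda_X$. The delicate point, and the reason this is the crux, is to check that the interaction between the Heisenberg constraint and the reductive constraint yields neither too few nor too many fixed vectors, so that the index set is precisely the fibre of $\Lambda^-\to\Lambda_X^-$ over $\tilde\lambda_X$; the Mackey-theoretic reformulation reduces this last point to a Cartan-type double-coset count on the Jacobi group, which is likely the cleanest way to carry it out.
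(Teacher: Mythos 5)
Your setup (mixed Schr\"odinger--lattice model, identification $\nu_{\mu,\psi}(\lambda_W^{-1})\phi^\circ\sim 1_{L(\lambda_W)}$, conjugation argument for the forward inclusions in both \eqref{eq:B^0_W_invariant} and \eqref{eq:decomp_space}) is correct and matches the paper's starting point. But the reverse inclusions --- which are the actual content of the lemma --- are argued by a stratification heuristic that is not carried through, and you yourself flag the crux as an unresolved ``delicate point.'' Concretely: (i) you assert that a $B_W^0$-invariant $\phi$ is ``necessarily constant on each stratum $L(\lambda_W)\setminus\bigcup_{\lambda_W'>_W\lambda_W}L(\lambda_W')$,'' i.e. that these strata are exactly the $B_W^0$-orbits on $Y_-^*(\oo_E)$. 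This is plausible (it checks out in rank $2$) but is a nontrivial Bruhat-type statement that you never prove, and everything downstream (the inclusion--exclusion, and hence the identification of the index set as $\Lambda_W^+$) depends on it. Note also that your $\mathrm{Herm}(\oo_F)$ argument only delivers the positivity constraints $a_j\geq 0$; the ordering constraints $a_1\geq a_2\geq\cdots$ come entirely from the unipotent-in-$\GL(Y_-)$ invariance, which is exactly the part left to the unproven stratification. (ii) For \eqref{eq:decomp_space} you redo the support analysis from scratch for the group $Q$, and explicitly concede that checking that the Heisenberg and reductive constraints cut out precisely the fibre of $\Lambda^-\to\Lambda_X^-$ is the crux you have not verified; the concluding appeal to ``a Cartan-type double-coset count on the Jacobi group'' is a suggestion, not an argument.

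For comparison, the paper's proof sidesteps the orbit analysis entirely. For \eqref{eq:B^0_W_invariant} it first exhibits $\{\nu_{\mu,\psi}(\lambda_W^{-1})\phi^\circ\}_{\lambda_W\in\Lambda_W}$ as a basis of the \emph{larger} space $\nu_{\mu,\psi}^{T_W^0}$ (easy, by lexicographic induction on supports), writes an arbitrary $\phi\in\nu_{\mu,\psi}^{B_W^0}$ in that basis, and then derives a contradiction if some $\langle\lambda_W,\beta\rangle<0$: the component $\phi_l$ at the minimal value $l$ has stabiliser exactly $K_\beta^l$ in the rank-one Levi $G_\beta$ by the Cartan decomposition, so it cannot be fixed by $B_\beta^{l+1}\subset B_W^0$, while the complementary part $\phi_{>l}$ is. For \eqref{eq:decomp_space} the paper never re-examines supports: it applies the already-proved \eqref{eq:B^0_W_invariant} to the two conjugates ${}^{\tilde\lambda_W}B_W^0$ and ${}^{w_0}B_W^0$ of $B_W^0$ sitting inside ${}^{\tilde\lambda_X}K_W\cap K_W$, obtaining $\lambda_W\in\Lambda_W^-$ and $\tilde\lambda_W+\lambda_W\in\Lambda_W^+$, and then pins down the single remaining inequality $\langle\tilde\lambda_X+\lambda_W,e_r-e_{r+1}\rangle\geq 0$ by the same stabiliser argument applied to the Heisenberg direction ($N_V\cap G_\alpha$, using \eqref{eq:rho_action}). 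Your approach is not wrong in spirit, but as written it proves the easy inclusions only; to be complete you would need to actually prove the orbit/stratum identification, or switch to the stabiliser-contradiction route, which avoids it.
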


\begin{proof}
    We identify $\nu_{\mu,\psi}$ with its mixed model which is included in the space $C_c^\infty(Y_+^*(E))$. By Lemma~\ref{lem:spheri_vectors}, every $\phi \in \nu_{\mu,\psi}^{B_W^0}$ is of the form $\phi_- \otimes 1_{Y_{m_+}^*(\oo_E)}$ for $\phi_- \in C_c^\infty(Y_-^*(E))$. By \eqref{eq:a_action}, for any $\lambda_W \in \Lambda_W$ we know that $\nu_{\mu,\psi}(\lambda_W^{-1}) \phi^\circ$ is, up to a scalar, the function $1_{\lambda_W Y_+^*(\oo_E)}$. By doing induction on any finite subset of $\Lambda_W$ equipped with the lexicographic order, we see that the family $(\nu_{\mu,\psi}(\lambda_W^{-1}) \phi^\circ)_{\lambda_W \in \Lambda_W}$ is a basis of $\nu_{\mu,\psi}^{T_W^0}$ so that the RHS of \eqref{eq:B^0_W_invariant} is indeed a direct sum. 
    
    We prove \eqref{eq:B^0_W_invariant}. Let $\phi \in \nu_{\mu,\psi}^{B_W^0}$ be non-zero. By the previous discussion, there exist a finite set $\Lambda_\phi \subset \Lambda_W$ and constants $c_{\lambda_W} \in \cc^\times$ for $\lambda_W \in \Lambda_\phi$ such that $\phi=\sum_{\lambda_W \in \Lambda_\phi} c_{\lambda_W} \nu_{\mu,\psi}(\lambda_W^{-1}) \phi^\circ$. Let $\beta \in \Delta_W$, and set $l:= \min\{ \langle \lambda_W,\beta \rangle \; | \; \lambda_W \in \Lambda_\phi \}$. We have to show that $l \geq 0$. Assume by contradiction that $l<0$. Write
    \begin{equation*}
        \phi_l:= \sum_{\substack{\lambda \in \Lambda_\phi \\ \langle \lambda_W, \beta \rangle =l}} c_{\lambda_W} \nu_{\mu,\psi}(\lambda_W^{-1}) \phi^\circ, \;\;\;  \phi_{>l}:= \sum_{\substack{\lambda \in \Lambda_\phi \\ \langle \lambda_W, \beta \rangle >l}} c_{\lambda_W} \nu_{\mu,\psi}(\lambda_W^{-1}) \phi^\circ.
    \end{equation*}
    
    Recall that $G_\beta$ is the Levi subgroup of the parabolic subgroup $P_\beta$ of $\mathrm{U}(W)$ corresponding to $\beta$. Set $K_\beta=K_W \cap G_\beta \subset H$. For $\lambda_W \in \Lambda_W$, the subgroup $\lambda_W^{-1} K_\beta \lambda_W$ only depends on $\langle \lambda_W, \beta \rangle$. If $\langle \lambda_W, \beta \rangle=k$, write $K_\beta^k=\lambda_W^{-1} K_\beta \lambda_W$. Set $\Lambda_{W,\beta}=\Lambda_W \cap G_\beta$. The Cartan decomposition gives $G_\beta=K_\beta^l \Lambda_{W,\beta} K_\beta^l$. Note that $\phi_l$ is stabilized by $K_\beta^l$. Moreover, as the family $(\nu_{\mu,\psi}(\lambda_W^{-1}) \phi^\circ)_{\lambda_W \in \Lambda_W}$ is free the stabilizer of $\phi_l$ in $\Lambda_{W,\beta}$ is trivial. Therefore, the stabilizer of $\phi_l$ in $G_\beta$ is exactly $K_\beta^l$.

    For any $k$, set $B_\beta^k=K_\beta^k \cap B_W$ so that $B_\beta^{k+1} \subsetneqq B_\beta^{k}$. For any $\lambda_W \in \Lambda_W$ with $\langle \lambda_W,\beta \rangle >l$, $\nu_{\mu,\psi}(\lambda_W^{-1})(\phi^\circ)$ is stable by $B_\beta^{l+1}$. It follows that $\phi_{>l}$ is fixed by $B_\beta^{l+1}$, but by the previous discussion $\phi_{l}$ is not as its stabilizer in $B_W \cap G_\beta$ is $B_\beta^l$. This is a contradiction as $B_\beta^{l+1} \subset B_W^0$ because $l<0$. This shows \eqref{eq:B^0_W_invariant}.

    We now show \eqref{eq:decomp_space}. The inclusion $\supset$ is automatic so that we prove the reverse. Write $\tilde{\lambda}_X=\tilde{\lambda}_r + \tilde{\lambda}_W$ with $\tilde{\lambda}_r \in \Lambda_r^+$ and $\tilde{\lambda}_W \in \Lambda_W^+$. As ${}^{\tilde{\lambda}_X} B_W^0 \subset {}^{\tilde{\lambda}_X} K_W \cap K_W$ and ${}^{w_0} B^0_W \subset {}^{\tilde{\lambda}_X} K_W \cap K_W$ (where $w_0$ is the longest element in $W_W$), we see by \eqref{eq:B^0_W_invariant} that
    \begin{equation}
        \label{eq:decomp_partial}
        \nu_{\mu,\psi}^{{}^{\tilde{\lambda}_X} K_W \cap K_W \ltimes {}^{\tilde{\lambda}_X}\mathbb{H}(\mathbb{W})^0} \subset \bigoplus_{\substack{\lambda_W \in \Lambda_W^- \\ \tilde{\lambda}_W+\lambda_W \in \Lambda_W^+}} \cc \nu_{\mu,\psi}(\lambda_W^{-1}) \phi^\circ.
    \end{equation}
    To conclude, we show that any $\lambda_W$ appearing in the RHS of \eqref{eq:decomp_partial} satisfies $\tilde{\lambda}_X+\lambda_W \in \Lambda_V^+$, i.e. $\langle \tilde{\lambda}_X +\lambda_W,\alpha \rangle \geq 0$ for $\alpha=e_r-e_{r+1} \in \Delta_V$ (or $\alpha=e_{r'}-e_{r'+1} \in \Delta_V$ in the split case). This is the same argument as in the proof of \eqref{eq:B^0_W_invariant}, noting that by \eqref{eq:rho_action} the stabilizer of $\phi^\circ$ in $N_V \cap G_\alpha \subset H$ is $N_V^0 \cap G_\alpha$.
\end{proof}

\begin{prop}
\label{prop:c_compact_decompo}
    We have a decomposition
    \begin{equation*}
       \left( \ind_H^G (\nu_{\mu,\psi}) \right)^K= \bigoplus_{\lambda \in \Lambda^-} \cc \; \Phi_{\lambda},
    \end{equation*}
    where $\Phi_{\lambda}$ is the unique vector in $(\ind_H^G (\nu_{\mu,\psi}))^K$ such that $\supp(\Phi_\lambda) \subset H \lambda K$ and $\Phi_\lambda(\lambda)=\phi^\circ$.
\end{prop}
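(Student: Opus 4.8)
The plan is to decompose $(\ind_H^G \nu_{\mu,\psi})^K$ according to the support, using the fact that $\Lambda_X$ is a complete set of representatives of $H\backslash G/K$. Concretely, any $\Phi \in (\ind_H^G \nu_{\mu,\psi})^K$ is compactly supported modulo $H$, so $\supp(\Phi)$ meets only finitely many double cosets $H\lambda_X K$ with $\lambda_X \in \Lambda_X^-$; by linearity it suffices to analyze, for a fixed $\tilde\lambda_X \in \Lambda_X^-$, the space of $\Phi$ supported on the single double coset $H \tilde\lambda_X K$. Such a $\Phi$ is determined by the value $\Phi(\tilde\lambda_X) \in \nu_{\mu,\psi}$, and the constraints are: (i) $K$-invariance on the right forces $\Phi(\tilde\lambda_X)$ to be fixed by $\nu_{\mu,\psi}(\tilde\lambda_X K \tilde\lambda_X^{-1} \cap H)$; (ii) well-definedness (i.e. $\Phi(h g k) = \nu_{\mu,\psi}(h)\Phi(g)$) is automatic once (i) holds and $\Phi$ is extended by the formula $\Phi(h\tilde\lambda_X k) := \nu_{\mu,\psi}(h)\Phi(\tilde\lambda_X)$, provided this extension is consistent on the overlap $H \cap \tilde\lambda_X K \tilde\lambda_X^{-1}$ — which is exactly the content of condition (i). So the support-$H\tilde\lambda_X K$ subspace is isomorphic, via $\Phi \mapsto \Phi(\tilde\lambda_X)$, to the invariant space $\nu_{\mu,\psi}^{\tilde\lambda_X K \tilde\lambda_X^{-1} \cap H}$.

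Next I would identify the relevant compact subgroup. Writing $H = \mathrm{U}(W)\ltimes N$ and using $\tilde\lambda_X \in \Lambda_X^- \subset T_V$, one computes $\tilde\lambda_X K \tilde\lambda_X^{-1} \cap H$. Since $\tilde\lambda_X$ normalizes $\mathrm{U}(W)$ and $N$, and $K = K_V\times K_W$, the intersection works out to (a group commensurable with) $\big({}^{\tilde\lambda_X}K_W \cap K_W\big) \ltimes \big(K_W \ltimes {}^{\tilde\lambda_X}\mathbb{H}(\mathbb{W})^0\big)$-type data — precisely the group whose fixed space is computed in Lemma~\ref{lem:decompo}, equation~\eqref{eq:decomp_space}. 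That lemma tells us
\begin{equation*}
    \nu_{\mu,\psi}^{ {}^{\tilde{\lambda}_X} K_W \cap K_W \ltimes {}^{\tilde{\lambda}_X}\mathbb{H}(\mathbb{W})^0} = \bigoplus_{\substack{\lambda \in \Lambda^{-} \\ \lambda_X=\tilde{\lambda}_X}} \cc \; \nu_{\mu,\psi}(\lambda_W^{-1}) \phi^\circ,
\end{equation*}
and the vectors $\nu_{\mu,\psi}(\lambda_W^{-1})\phi^\circ$, for the various $\lambda = (\lambda_V,\lambda_W) \in \Lambda^-$ with $\lambda_X = \lambda_V - \lambda_W = \tilde\lambda_X$, are exactly the candidate values $\Phi_\lambda(\lambda)$ — up to the identification $\lambda K = \tilde\lambda_X K$ and $\nu_{\mu,\psi}(\lambda_W^{-1})\phi^\circ = \nu_{\mu,\psi}(\lambda_V \tilde\lambda_X^{-1})\phi^\circ$ transported back to the point $\lambda$. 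Matching the normalization $\Phi_\lambda(\lambda) = \phi^\circ$ and using \eqref{eq:a_action} to pass between the points $\lambda$ and $\tilde\lambda_X$ gives the claimed basis. Summing over all $\tilde\lambda_X \in \Lambda_X^-$ and noting that the map $\Lambda^- \to \Lambda_X^-$, $\lambda \mapsto \lambda_X$, has fibers indexed precisely by the summands in \eqref{eq:decomp_space}, yields $(\ind_H^G \nu_{\mu,\psi})^K = \bigoplus_{\lambda\in\Lambda^-}\cc\,\Phi_\lambda$.

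The remaining point — the uniqueness of $\Phi_\lambda$ with prescribed support and value — follows once we know $\tilde\lambda_X K \tilde\lambda_X^{-1}\cap H$ really is what Lemma~\ref{lem:decompo} computes the invariants of, and that no further consistency obstruction arises; here the key input is the second assertion of Lemma~\ref{lem:orbits_in_G}, namely $H \cap K\lambda K\lambda^{-1}\subset K$, which guarantees that $\Phi_\lambda$ extends unambiguously from the "seed" $\Phi_\lambda(\lambda)=\phi^\circ$ across the double coset (the overlap group acts trivially on $\phi^\circ$ since $\phi^\circ$ is $K_J$-fixed, hence in particular $K_W$- and $\mathbb{H}(\mathbb{W})^0$-fixed). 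I expect the main obstacle to be the bookkeeping in step two: carefully verifying that conjugating $K$ by $\tilde\lambda_X$ and intersecting with $H = \mathrm{U}(W)\ltimes N$ produces exactly the subgroup appearing in \eqref{eq:decomp_space} — in particular handling the unipotent part $N$ and the Heisenberg factor $\mathbb{H}(\mathbb{W})$ correctly, and checking that the extra $N$-directions in $H$ beyond the Heisenberg group impose no new constraint on $\phi^\circ$ (again because $\phi^\circ$ is the full spherical vector). Everything else is a reorganization of Lemma~\ref{lem:decompo} and the orbit combinatorics of Lemma~\ref{lem:orbits_in_G}.
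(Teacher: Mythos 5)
Your overall strategy matches the paper's: split $(\ind_H^G \nu_{\mu,\psi})^K$ by $H\times K$ double cosets, identify the piece supported on $H\tilde\lambda_X K$ with an invariant subspace of $\nu_{\mu,\psi}$ via evaluation, and invoke Lemma~\ref{lem:decompo}. However, there is a genuine gap in the way you handle the $U$-part of $H$, and your explanation of that part is in fact backwards. You assert from the outset that the support of a spherical vector meets only cosets $H\lambda_X K$ with $\lambda_X \in \Lambda_X^-$, and later claim that the "extra $N$-directions in $H$ beyond the Heisenberg group impose no new constraint on $\phi^\circ$." The complete set of representatives of $H\backslash G/K$ is $\Lambda_X = \Lambda_r + \Lambda_W^+$, which is strictly larger than $\Lambda_X^- = \Lambda_r^+ + \Lambda_W^+$; the reduction to $\Lambda_X^-$ is a substantive step that has to be proved, and it is precisely the $U$-direction that does it. Concretely, for $\tilde\lambda_X \in \Lambda_X$ the stabilizer $H \cap {}^{\tilde\lambda_X}K$ contains ${}^{\tilde\lambda_X}U^0$, on which $\nu_{\mu,\psi}$ acts by the generic character $\psi_U$. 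Since $\psi$ is unramified, $\psi_U$ is trivial on ${}^{\tilde\lambda_X}U^0$ if and only if ${}^{\tilde\lambda_X}U^0 \subset U^0$, which is equivalent to $\tilde\lambda_X \in \Lambda_X^-$; otherwise $\nu_{\mu,\psi}^{{}^{\tilde\lambda_X}U^0} = 0$ and the coset contributes nothing. So far from imposing ``no new constraint,'' the $U$-invariance is exactly what restricts the index set to $\Lambda^-$. Once this is supplied, the remainder of your argument does go through and coincides with the paper's proof.

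Two smaller points. The stabilizer group you wrote, $\big({}^{\tilde\lambda_X}K_W \cap K_W\big) \ltimes \big(K_W \ltimes {}^{\tilde\lambda_X}\mathbb{H}(\mathbb{W})^0\big)$, is not correct: it should be $({}^{\tilde\lambda_X}K_W \cap K_W) \ltimes {}^{\tilde\lambda_X}(\mathbb{H}(\mathbb{W})^0 U^0)$, which includes the $U^0$ factor that drives the argument above. And the appeal to the second part of Lemma~\ref{lem:orbits_in_G} is unnecessary here; that statement is an ingredient of Proposition~\ref{prop:rank1}, not of Proposition~\ref{prop:c_compact_decompo}. Well-definedness of $\Phi_\lambda$ across the coset is already guaranteed once $\Phi_\lambda(\tilde\lambda_X)$ is invariant under the stabilizer $H \cap {}^{\tilde\lambda_X}K$.
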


\begin{proof}
    As $\Lambda_X$ is a complete system of representatives of $H \backslash G /K$, by evaluating we have
    \begin{equation*}
        \left( \ind_H^G  (\nu_{\mu,\psi} )\right)^K=\bigoplus_{\tilde{\lambda}_X \in \Lambda_X} \nu_{\mu,\psi}^{{}^{\tilde{\lambda}_X} K_W \cap K_W \ltimes {}^{\tilde{\lambda}_X}(\mathbb{H}(\mathbb{W})^0U^0)}.
    \end{equation*}
    As $\psi$ is unramified, $\nu_{\mu,\psi}^{{}^{\tilde{\lambda}_X} U^0}$ is zero unless ${}^{\tilde{\lambda}_X} U^0 \subset U^0$ which implies ${\tilde{\lambda}_X} \in \Lambda_X^-$. We conclude from Lemma~\ref{lem:decompo} by letting $\Phi_\lambda$ be the unique vector in $(\ind_H^G (\nu_{\mu,\psi}))^K$ such that $\supp(\Phi_\lambda) \subset H \lambda K$ and  $\Phi_\lambda(\lambda_X)=\nu_{\mu,\psi}(\lambda_W^{-1}) \phi^\circ$.
\end{proof}

\begin{prop}
\label{prop:rank1}
    The $\mathcal{H}(G,K)$-module $(\ind_H^G (\nu_{\mu,\psi}))^K$ is free of rank $1$ generated by $\Phi_0$.
\end{prop}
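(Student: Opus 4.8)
The plan is to combine the decomposition of Proposition~\ref{prop:c_compact_decompo} with the Satake isomorphism for $\mathcal{H}(G,K)$. Write $\mathcal{M}=(\ind_H^G \nu_{\mu,\psi})^K$; by Proposition~\ref{prop:c_compact_decompo} it has basis $\{\Phi_\lambda\}_{\lambda \in \Lambda^-}$. Recall $\Lambda^- = \Lambda_V^+ \times \Lambda_W^-$, and that $\mathcal{H}(G,K) = \mathcal{H}(\mathrm{U}(V),K_V) \otimes \mathcal{H}(\mathrm{U}(W),K_W)$ is a polynomial-type algebra with basis $\{1_{K\lambda K}\}_{\lambda \in \Lambda_V^+ \times \Lambda_W^+}$. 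First I would show that $\Phi_0$ generates: for $\lambda=(\lambda_V,\lambda_W) \in \Lambda^-$, act on $\Phi_0$ by a suitable element of the Hecke algebra, say related to $1_{K_V \lambda_V K_V} \otimes 1_{K_W \mu_W K_W}$ where $\mu_W \in \Lambda_W^+$ is chosen with $\mu_W = -\lambda_W$ (using $\lambda_W \in \Lambda_W^-$), and compute the support of the result. By the usual convolution-support estimate (as in \cite[Equation~(2.9)]{Gross}) together with the orbit combinatorics of Lemma~\ref{lem:orbits_in_G}, the image is supported on $\bigsqcup_{\lambda'_X \leq_X \lambda_X} H\lambda' K$, so it is an \emph{upper-triangular} combination $\sum_{\lambda' \leq_X \lambda} c_{\lambda'} \Phi_{\lambda'}$ with respect to the partial order $\leq_X$.

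The crucial point is then that the leading coefficient $c_\lambda$ is nonzero. This is exactly where Lemma~\ref{lem:orbits_in_G} (its second assertion $H \cap K\lambda K \lambda^{-1} \subset K$) and Lemma~\ref{lem:sat} enter: they guarantee that the contribution of the single double coset $K\lambda K$ to the convolution at the point $\lambda$ is computed by a volume which is a positive power of $q$, in particular nonzero — the analogue of Gross's formula $1_{K_V\lambda_V' K_V} * 1_{K_V \lambda_V K_V}(\lambda_V'\lambda_V)=1$, here twisted by the values of $\phi^\circ$ under $\nu_{\mu,\psi}$. Concretely one evaluates $(f \cdot \Phi_0)(\lambda)$, unwinds the definition of the Hecke action on the compact induction, and isolates the term where the $H$-orbit and $K$-orbit meet transversally; Lemma~\ref{lem:orbits_in_G} says this intersection is as small as possible and Lemma~\ref{lem:sat} pins down its volume. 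I expect this transversality/volume computation to be the main obstacle, since one must carefully track the twist by $\nu_{\mu,\psi}$ — in particular that $\nu_{\mu,\psi}(\lambda_W^{-1})\phi^\circ$ is picked out with a nonzero coefficient, which follows from \eqref{eq:a_action} as in the proof of Lemma~\ref{lem:decompo}.

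Given the triangularity together with $c_\lambda \neq 0$, a downward induction on $\leq_X$ shows $\Phi_\lambda \in \mathcal{H}(G,K) \cdot \Phi_0$ for all $\lambda \in \Lambda^-$, so $\mathcal{M}$ is generated by $\Phi_0$. It remains to prove freeness, i.e.\ that the map $\mathcal{H}(G,K) \to \mathcal{M}$, $f \mapsto f \cdot \Phi_0$, is injective. Here I would again use the triangularity: order $\Lambda_V^+ \times \Lambda_W^+$ compatibly with $\leq_X$ (via $\lambda \mapsto \lambda_X$, noting $\lambda_X = \lambda_X'$ iff $H\lambda K = H\lambda' K$, so distinct Hecke-basis elements can collide — one must quotient appropriately, or rather observe that the relevant parametrization of $\mathcal{M}$ is by $\Lambda_X^-$ after all). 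The cleanest route is a rank/dimension count: both $\mathcal{H}(G,K)$ and $\mathcal{M}$ are, after passing to associated graded with respect to the filtration by $\leq_X$, free modules over the same combinatorial index set, and the generating map is triangular with invertible diagonal entries, hence an isomorphism. Since $\mathcal{H}(G,K)$ is a domain (it is a Laurent polynomial ring in several variables by the Satake isomorphism), a surjection of $\mathcal{H}(G,K)$-modules onto a module that is free of the same "rank" forces injectivity. I would phrase this last step by exhibiting, for any putative relation $\sum_j f_j \cdot \Phi_0 = 0$, the leading term with respect to $\leq_X$ and deriving a contradiction from $c_\lambda \neq 0$, exactly mirroring the surjectivity argument.
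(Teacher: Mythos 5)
Your proposal follows essentially the same route as the paper: expand $R(1_{K\lambda^{-1}K})\Phi_0$ in the basis $\{\Phi_{\lambda'}\}$ of Proposition~\ref{prop:c_compact_decompo}, use the two assertions of Lemma~\ref{lem:orbits_in_G} (built on Lemma~\ref{lem:sat}) to obtain triangularity with respect to $\leq_X$ together with a nonvanishing leading coefficient, then argue generation by induction and freeness by a leading-term/torsion-free argument. The paper's version is a bit sharper on two points you leave slightly loose — the leading coefficient equals exactly $1$ (not merely a nonzero power of $q$) since $R(1_{K\lambda^{-1}K})\Phi_0(\lambda)=\phi^\circ$, and this same evaluation directly rules out the ``collision'' terms with $\lambda'_X=\lambda_X$, $\lambda'\neq\lambda$ — but the core strategy is identical.
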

\begin{proof}
    Let $\lambda \in \Lambda^-$. By Lemma~\ref{lem:orbits_in_G} and Proposition~\ref{prop:c_compact_decompo}, for every $\lambda' \in \Lambda^-$ there exists $c(\lambda,\lambda') \in \cc$ with
\begin{equation*}
    R(1_{K \lambda^{-1} K}) \Phi_0 = \sum_{\substack{\lambda' \in \Lambda^- \\ \lambda'_X \leq_X \lambda_X}} c(\lambda,\lambda') \Phi_{\lambda'},
\end{equation*}
and the $c(\lambda,\lambda')$ are almost all zero. By the second part of Lemma~\ref{lem:orbits_in_G}, we have $R(1_{K \lambda^{-1} K}) \Phi_0(\lambda)=\phi^\circ$, so that by Proposition~\ref{prop:c_compact_decompo} we see that
\begin{equation}
\label{eq:R_translation}
    R(1_{K \lambda^{-1} K}) \Phi_0 = \Phi_\lambda+\sum_{\substack{\lambda' \in \Lambda^- \\ \lambda'_X <_X \lambda_X}} c(\lambda,\lambda') \Phi_{\lambda'}.
\end{equation}
As $r\geq 1$, the map $\Lambda^- \to \Lambda_X^-$ has finite fibers, and for every $\lambda_X \in \Lambda_X^-$ the set of $\lambda_X' \in \Lambda_X^-$ such that $\lambda_X' \leq_X \lambda_X$ is finite. Moreover, by the Cartan decomposition, the family $(1_{K \lambda^{-1} K})_{\lambda \in \Lambda^-}$ is a basis of $\mathcal{H}(G,K)$. We conclude by induction on~\eqref{eq:R_translation} and by Proposition~\ref{prop:c_compact_decompo} that the $\mathcal{H}(G,K)$-module $ (\ind_H^G (\nu_{\mu,\psi}))^K$ is generated by the torsion-free element $\Phi_0$, so that it is free of rank $1$. 
\end{proof}

\subsection{Proof of Theorem~\ref{thm:mult1intro}}
\label{subsection:proof_mult1}

We prove Theorem~\ref{thm:mult1intro} under the assumption that $r \geq 1$ (see \S\ref{subsec:r=0} for the $r=0$ case). 

\begin{lem}
\label{lem:induced_reps}
    Let $\eta$ be an unramified character of $T_W$. There is an isomorphism of $H$-representations (which also holds for $r=0$)
    \begin{equation}
        \label{eq:S_eta2}
        S_{\overline{\mu} \eta} : \left(I_{B_W}^{\mathrm{U}(W)} (\eta )\right) \otimes \overline{\nu}_{\mu,\psi} \to \overline{\psi}_U \boxtimes \left(I_{B_J}^{J} (\overline{\mu} \eta \overline{\psi}) \right),
    \end{equation}
    which satisfies $S_{\overline{\mu}\eta}(\Phi^{\circ}_{\eta } \otimes \phi^\circ)=\Phi^{\circ,J}_{\overline{\mu}\eta }$ and $S_{\overline{\mu}\eta}(\Psi_{w,\eta}\otimes \phi^\circ)=\Psi^J_{w, \overline{\mu}\eta}$.
\end{lem}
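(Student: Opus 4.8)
The plan is to construct the map $S_{\overline{\mu}\eta}$ explicitly at the level of concrete models and check the required properties by direct computation. First I would recall that $\overline{\nu}_{\mu,\psi}$, restricted to $B_W \ltimes \mathbb{H}(\mathbb{W})$, is realized by the mixed Schr\"odinger-lattice model inside $C_c^\infty(Y_+^*(E))$, and that by \eqref{eq:nu_induite_schr} we have $(\overline{\nu}_{\mu,\psi})_{|B_W \ltimes \mathbb{H}(\mathbb{W})} \simeq \Ind_{B_J}^{B_W \ltimes \mathbb{H}(\mathbb{W})} \Val{\cdot}^{\frac{1}{2}} \overline{\mu}\,\overline{\psi}$. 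On the other side, $I_{B_W}^{\mathrm{U}(W)}\eta = \Ind_{B_W}^{\mathrm{U}(W)}(\eta \delta_{B_W}^{1/2})$ is realized as functions on $\mathrm{U}(W)$. Given $f \in I_{B_W}^{\mathrm{U}(W)}\eta$ and $\phi \in \overline{\nu}_{\mu,\psi}$, I would define $S_{\overline{\mu}\eta}(f \otimes \phi)$ as a function on $U \times J$ by the formula $S_{\overline{\mu}\eta}(f\otimes\phi)(u,g_J) = \overline{\psi}_U(u)\cdot \Xi(f\otimes\phi)(g_J)$ where $\Xi(f\otimes\phi)(g_J)$ is obtained, using the Iwasawa decomposition $J = B_J Y_+^*(E) K_J$, by pairing $f$ evaluated on the $\mathrm{U}(W)$-part with $\phi$ evaluated on the $Y_+^*(E)$-part through the model of $\overline{\nu}_{\mu,\psi}$; concretely, writing the $\mathbb{H}(\mathbb{W})$-action as in \eqref{eq:a_action}--\eqref{eq:rho_action}, one sets $\Xi(f \otimes \phi)(b_W h(y+y^*,z) k_J) = (\eta\overline{\mu}\,\overline{\psi}\,\delta_{B_J}^{1/2})(b_W h(y+y^*,z))\cdot \big(\text{twist}\big)\cdot (\overline{\nu}_{\mu,\psi}(k_J)\phi)(y^*)\cdot f(\text{appropriate element})$, chosen so that the $B_J$-equivariance on the left and $K_J$-invariance are visibly the ones required for membership in $I_{B_J}^J(\overline{\mu}\eta\overline{\psi})$.

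The key steps, in order, are: (i) verify that $\Xi(f\otimes\phi)$ is well-defined, i.e. independent of the Iwasawa decomposition, which amounts to checking that the integrand transforms correctly under $B_J \cap K_J$ and under $Y_+^*(\oo_E)$-translations — this uses the unitary structure \eqref{eq:pairing} and the explicit formulas \eqref{eq:a_action}, \eqref{eq:rho_action}; (ii) check $H$-equivariance: $H = J \ltimes U$ acts on the source through its action on $I_{B_W}^{\mathrm{U}(W)}\eta$ (via the projection $H \to \mathrm{U}(W)$) tensored with $\overline{\nu}_{\mu,\psi}$, and on the target through $\overline{\psi}_U \boxtimes I_{B_J}^J(\overline{\mu}\eta\overline{\psi})$; the $U$-part is immediate from the factor $\overline{\psi}_U(u)$ and the $J$-part follows by unwinding \eqref{eq:Jacobi_relation} and \eqref{eq:conj_identity}; (iii) check that $S_{\overline{\mu}\eta}$ is bijective — injectivity from irreducibility considerations or directly from the formula, and surjectivity by exhibiting a preimage for each element of the target, or by noting both sides have "the same size" after twisting by enough central data; (iv) compute $S_{\overline{\mu}\eta}$ on the spherical vectors: $\Phi_\eta^\circ = F_\eta(1_{K_W})$ is supported on $B_W K_W$ with value $1$ on $K_W$, and $\phi^\circ = 1_{Y_+^*(\oo_E)}$, so plugging into the formula and using that $\Phi_{\overline{\mu}\eta}^{\circ,J} = F_{\overline{\mu}\eta}^J(1_{K_J})$ is supported on $B_J K_J$ one checks $S_{\overline{\mu}\eta}(\Phi_\eta^\circ \otimes \phi^\circ) = \Phi_{\overline{\mu}\eta}^{\circ,J}$ on a set of representatives; (v) the same computation with Iwahori double cosets, using that $I_J = I_W \ltimes W^0Z^0$ and that $\phi^\circ$ is $W^0Z^0$-invariant, gives $S_{\overline{\mu}\eta}(\Psi_{w,\eta}\otimes\phi^\circ) = \Psi_{w,\overline{\mu}\eta}^J$.

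The main obstacle I anticipate is step (i)--(ii): the representation $\overline{\nu}_{\mu,\psi}$ is \emph{not} a parabolic induction in the usual sense on a reductive group — its realization mixes a Schr\"odinger model in the $Y_-^*$-variable with a lattice model in the $Y_{m_+}$-variable (in the inert odd case), and the $\mathbb{H}(\mathbb{W})$-action intertwines these — so one must be careful that the "Iwasawa-type" decomposition $J = B_J Y_+^*(E) K_J$ interacts correctly with the Heisenberg cocycle. Concretely, the subtlety is that $h(y+y^*,z)$ for $y \in Y_-(E)$, $y^* \in Y_-^*(E)$ does not simply decompose as a product in $B_J$ times something in $Y_+^*(E)$ on the nose; one has the relation \eqref{eq:rho_action} producing the cocycle $\psi(\langle y_0^*,y\rangle_{\mathbb{W}} + \tfrac12\langle y^*,y\rangle_{\mathbb{W}})$, and this must be matched against the modular character $\delta_{B_J}$ from \eqref{eq:mod_char} and the definition of $\psi_U$. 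Getting these normalizations to line up — so that the map genuinely lands in $I_{B_J}^J(\overline{\mu}\eta\overline{\psi})$ rather than a twist of it, and so that it carries $\phi^\circ$ to the \emph{normalized} spherical vector — is where the real work lies; everything else is bookkeeping with the explicit formulas already recorded in \S\ref{subsec:H-W}.
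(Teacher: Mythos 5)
Your proposal takes a genuinely different route from the paper's, and it has a real gap at exactly the point you yourself flag as the hard part.

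The paper's proof is a short chain of natural isomorphisms of $J$-representations. Restricting to the $J$-factor (the $U$-factor is trivial on both sides since $U$ acts by $\overline{\psi}_U$), one writes $I_{B_W}^{\mathrm{U}(W)}\eta$, inflated from $\mathrm{U}(W)=J/\mathbb{H}(\mathbb{W})$ to $J$, as $\Ind_{B_W\ltimes\mathbb{H}(\mathbb{W})}^{J}(\delta_{B_W}^{1/2}\eta)$; the projection formula for smooth induction then gives $\Ind_{B_W\ltimes\mathbb{H}(\mathbb{W})}^{J}\big(\delta_{B_W}^{1/2}\eta\otimes(\overline{\nu}_{\mu,\psi})|_{B_W\ltimes\mathbb{H}(\mathbb{W})}\big)$; plugging in \eqref{eq:nu_induite_schr} and \eqref{eq:mod_char} and applying the projection formula and transitivity again lands directly on $I_{B_J}^J(\overline{\mu}\eta\overline{\psi})$. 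All the modular-character and cocycle bookkeeping is subsumed by two categorical identities; there is nothing to "make line up." Only after establishing the isomorphism abstractly does the paper extract the explicit formula
\[
    S_{\overline{\mu}\eta}(f\otimes\phi)(h g_W)=(\overline{\nu}_{\mu,\psi}(h g_W)\phi)(0)\,f(g_W),
    \qquad h\in\mathbb{H}(\mathbb{W}),\ g_W\in\mathrm{U}(W),
\]
which is manifestly well defined because the decomposition $J=\mathbb{H}(\mathbb{W})\rtimes\mathrm{U}(W)$ is unique, and from which the compatibility with the spherical and Iwahori-fixed vectors is immediate.

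Your plan reverses this order: write the formula first, then check everything by hand. In principle that can work, but as written it does not close. The concrete issues: (a) your proposed formula for $\Xi(f\otimes\phi)$ contains two unspecified placeholders ("twist" and "appropriate element"), so there is no actual map to analyze; (b) you base the formula on the non-unique Iwasawa-type decomposition $J=B_J Y_+^*(E)K_J$, which forces a genuine well-definedness check against the Heisenberg cocycle and the modular character $\delta_{B_J}$ — a check you explicitly say you have not carried out and correctly identify as "where the real work lies." This is not a detail you can defer: if the normalizations do not match, the map fails to land in $I_{B_J}^J(\overline{\mu}\eta\overline{\psi})$ at all. The paper's projection-formula argument is precisely the mechanism that makes those normalizations automatic, which is why it exists. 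Even if you insist on the hands-on route, you would do better to define the map via the unique semidirect-product decomposition $J=\mathbb{H}(\mathbb{W})\rtimes\mathrm{U}(W)$ as the paper does, which eliminates the well-definedness issue and reduces the equivariance check to the $B_J$-transformation law — still requiring \eqref{eq:nu_induite_schr} and \eqref{eq:mod_char}, but in a cleaner form. Steps (iv)–(v) of your plan (matching spherical and Iwahori vectors) are essentially the same computation the paper does implicitly and are fine once the map is pinned down.
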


\begin{proof}
    The existence of $S_{\overline{\mu} \eta}$ follows from \eqref{eq:mod_char}, \eqref{eq:nu_induite_schr} and the isomorphisms of $J$-representations 
    \begin{equation*}
        \left(I_{B_W}^{\mathrm{U}(W)} (\eta) \right) \otimes \overline{\nu}_{\mu,\psi} \simeq \Ind_{B_W \ltimes \mathbb{H}(\mathbb{W}) }^{J} \left(\delta_{B_J}^{\frac{1}{2}}\eta \otimes I_{B_J}^{B_W \ltimes \mathbb{H}(\mathbb{W})} \overline{\mu} \overline{\psi} \right)  \simeq I_{B_J}^J (\overline{\mu} \eta \overline{\psi}).
    \end{equation*}
    Explicitly, $S_{\overline{\mu} \eta}(f \otimes \phi)(h g_W)=(\overline{\nu}_{\mu,\psi}(h g_W)\phi)(0) f(g_W)$ for $h \in \mathbb{H}(\mathbb{W})$ and $g_W \in \mathrm{U}(W)$. This concludes.
\end{proof}

\begin{prop}
\label{prop:isos}
    There exists a surjective morphism of $\cc$-algebras
    \begin{equation}
    \label{eq:power_psi}
        f_J \in \mathcal{H}(J,K_J) \mapsto f_J^{\psi} \in \mathcal{H}(\mathrm{U}(W),K_W),
    \end{equation}
    such that for all $f_J \in \mathcal{H}(J,K_J)$ we have $\xi_{\overline{\mu} \eta \overline{\psi}}^J(f_J)=\xi_{\eta }(f_J^\psi)$. Moreover, there exists an isomorphism of $\mathcal{H}(\mathrm{U}(V),K_V) \otimes \mathcal{H}(J,K_J)$-modules
    \begin{equation}
    \label{eq:WS_iso}
        \mathcal{W} \in \mathrm{WS} \mapsto \Phi_\mathcal{W} \in \left( \Ind_H^G (\nu_{\mu,\psi} )\right)^K.
    \end{equation}
\end{prop}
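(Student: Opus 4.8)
The plan is to construct the two maps of \eqref{eq:power_psi} and \eqref{eq:WS_iso} directly and then check compatibility with the module structures. For the algebra map, I would first observe that $\mathcal{H}(J,K_J)$ acts on $\left(I_{B_J}^J \overline{\mu}\eta\overline{\psi}\right)^{K_J}$, which is one-dimensional by Lemma~\ref{lem:spheri}, hence by the character $\xi_{\overline{\mu}\eta\overline{\psi}}^J$; similarly $\mathcal{H}(\mathrm{U}(W),K_W)$ acts on the one-dimensional $\left(I_{B_W}^{\mathrm{U}(W)}\eta\right)^{K_W}$ by $\xi_\eta$. The cleanest route is to exhibit $f_J^\psi$ at the level of group elements: using the Jacobi decomposition $J=\mathrm{U}(W)\ltimes\mathbb{H}(\mathbb{W})$ and the fact that $K_J=K_W\ltimes W^0 Z^0$, one integrates $f_J$ against $\overline{\psi}$ over the Heisenberg direction, i.e.\ roughly $f_J^\psi(g_W)=\int_{\mathbb{H}(\mathbb{W})} f_J(g_W h)\overline{\psi}(\text{centre of }h)\,dh$, suitably normalized; one must check this lands in $\mathcal{H}(\mathrm{U}(W),K_W)$ (bi-$K_W$-invariance is clear, compact support requires the support estimate from the proof of Lemma~\ref{lem:spheri}) and is an algebra homomorphism (a Fubini-type manipulation using \eqref{eq:Jacobi_relation}). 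Surjectivity follows because the Satake isomorphism identifies $\mathcal{H}(\mathrm{U}(W),K_W)$ with a polynomial ring on which the $\xi_\eta$ as $\eta$ varies separate points, and the composite $f_J\mapsto f_J^\psi$ already hits enough generators — alternatively one can invoke \eqref{eq:nu_induite_lattice} to see $\mathcal{H}(J,K_J)$ surjects onto the relevant convolution algebra. The identity $\xi^J_{\overline{\mu}\eta\overline{\psi}}(f_J)=\xi_\eta(f_J^\psi)$ is then immediate from the construction, or can be read off from the isomorphism $S_{\overline{\mu}\eta}$ of Lemma~\ref{lem:induced_reps} applied to spherical vectors via \eqref{eq:spheri_vector}.

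For the module isomorphism \eqref{eq:WS_iso}, I would define $\Phi_\mathcal{W}$ by an explicit formula reflecting the description of $\mathrm{WS}$: given $\mathcal{W}\in C^\infty(\mathrm{U}(V))$ with the stated $(Z,\psi)$-, $(U,\psi_U)$- and $K_J$-, $K_V$-invariances, one uses the factorization of the Heisenberg--Weil representation to manufacture a function $G\to\nu_{\mu,\psi}$. Concretely, invoking the Schrödinger model one sets, for $g\in G$, the value $\Phi_\mathcal{W}(g)\in\nu_{\mu,\psi}$ to be the function $y^*\mapsto \mathcal{W}(\text{translate of }g\text{ by }h(y^*,0))$ up to the correct $\psi$-twists and modular factors, so that the $\mathbb{H}(\mathbb{W})$- and $\mathrm{U}(W)$-equivariance of $\Phi_\mathcal{W}$ on the left matches the defining invariances of $\mathcal{W}$; the $U$-equivariance against $\psi_U$ gives the $N$-part of left $H$-equivariance, and $K$-invariance on the right of $\mathcal{W}$ gives $\Phi_\mathcal{W}\in(\Ind_H^G\nu_{\mu,\psi})^K$. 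The inverse map sends $\Phi$ to $g_V\mapsto (\Phi(g_V))(0)$, i.e.\ evaluation of the Schrödinger-model function at $0\in Y_+^*(E)$; one checks the two assignments are mutually inverse using that a vector in $\nu_{\mu,\psi}$ is determined by its $\mathbb{H}(\mathbb{W})$-orbit under the Stone--von Neumann theorem (equivalently, by \eqref{eq:rho_action} the value at $0$ together with the $\mathbb{H}(\mathbb{W})$-action recovers the full function). Compatibility with the $\mathcal{H}(\mathrm{U}(V),K_V)$-action is the statement that right convolution on $\mathrm{U}(V)$ corresponds to right convolution on $(\Ind_H^G\nu_{\mu,\psi})^K$, which is formal; compatibility with the $\mathcal{H}(J,K_J)$-action on the $\mathrm{WS}$ side (left convolution) corresponds on the induction side to the $\mathcal{H}(G,K)$-action pulled back along the surjection $\mathcal{H}(\mathrm{U}(V),K_V)\otimes\mathcal{H}(J,K_J)\to\mathcal{H}(G,K)$ mentioned after \eqref{eq:intro_WS_iso}, and here one uses the algebra map \eqref{eq:power_psi} together with Lemma~\ref{lem:induced_reps} to match left $J$-convolution with the appropriate piece of left $G$-convolution.

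\textbf{Main obstacle.} I expect the delicate point to be the well-definedness and the module-equivariance of \eqref{eq:WS_iso}, specifically disentangling the non-reductive induction from $B_J$ on $J$: the vector $\Phi_\mathcal{W}(g)$ must be checked to genuinely lie in $\nu_{\mu,\psi}$ (a space of \emph{compactly supported} functions on $Y_+^*(E)$ in the split/even case, with the mixed lattice condition otherwise) rather than in a larger function space, and this compact-support property is exactly where the $K_J$-invariance of $\mathcal{W}$ enters through the argument in the proof of Lemma~\ref{lem:spheri}. A secondary subtlety is keeping the modular characters straight: the factor $\Val{t_W}$ in \eqref{eq:mod_char} and the $\delta_{B_J}^{1/2}$ versus $\delta_{B_W}^{1/2}$ bookkeeping must be threaded through consistently so that the Satake characters match on both sides, which is precisely the content of the chain of isomorphisms in the proof of Lemma~\ref{lem:induced_reps}. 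Everything else is a formal unwinding of definitions.
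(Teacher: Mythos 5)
Your plan identifies most of the right ingredients (Lemma~\ref{lem:spheri}, Lemma~\ref{lem:induced_reps}, Stone--von Neumann for well-definedness, the inverse map by evaluation at $0$), but there is a genuine gap in the construction of \eqref{eq:power_psi}, and your route for \eqref{eq:WS_iso} differs from the paper's in a way worth flagging.

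\textbf{The gap.} You propose to define $f_J^\psi(g_W)=\int_{\mathbb{H}(\mathbb{W})}f_J(g_Wh)\overline{\psi}(z(h))\,dh$ and check the algebra homomorphism property by ``a Fubini-type manipulation using \eqref{eq:Jacobi_relation}.'' The formula itself is in fact correct (one can check a posteriori that it agrees with the paper's $f_J^\psi$), but a direct Fubini computation does \emph{not} establish that it is a homomorphism. If you decompose $j=g'_Wh'\in J$ and substitute, the change of variable $h\mapsto hh'^{-1}$ in $\mathbb{H}(\mathbb{W})$ produces a nontrivial phase $\psi\bigl(\tfrac{1}{2}\langle w_h,w_{h'}\rangle_{\mathbb{V}}\bigr)$ coming from the Heisenberg group law $(w_1,z_1)(w_2,z_2)=(w_1+w_2,z_1+z_2+\tfrac{1}{2}\langle w_1,w_2\rangle_{\mathbb{V}})$. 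The identity \eqref{eq:Jacobi_relation} only controls the $\mathrm{U}(W)$-conjugation on $\mathbb{H}(\mathbb{W})$ and leaves the $Z$-component unchanged, so it does nothing to kill this phase; for general $f_1,f_2\in\mathcal{H}(J,K_J)$ there is no obvious reason the twist integrates away. The paper sidesteps this entirely by never manipulating the integral directly: it identifies $\mathcal{H}(J,K_JZ,\psi)$ with $\mathrm{End}_J(\ind_{K_JZ}^J\psi)$, uses \eqref{eq:nu_induite_lattice} to decompose $\ind_{K_JZ}^J\psi\simeq C_c^\infty(K_W\backslash\mathrm{U}(W))\otimes\nu_{\mu,\psi}$, and then invokes Schur's lemma (Stone--von Neumann irreducibility of $\nu_{\mu,\psi}$) to reduce to $\mathrm{End}_{\mathrm{U}(W)}(C_c^\infty(K_W\backslash\mathrm{U}(W)))\simeq\mathcal{H}(\mathrm{U}(W),K_W)$. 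The Heisenberg phase that breaks your Fubini argument is precisely the action of $\rho_\psi$ on the $\nu_{\mu,\psi}$-factor, and Schur's lemma is what absorbs it. You do mention ``alternatively one can invoke \eqref{eq:nu_induite_lattice}'' — that \emph{is} the right instinct, and it is the argument you should actually run.

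\textbf{Different route for \eqref{eq:WS_iso}.} You build $\Phi_\mathcal{W}$ via the Schr\"odinger model; the paper works in the lattice model, defining $\phi_{\mathcal{W},g_V}:h\mapsto\mathcal{W}(hg_V)$ and $\Phi_\mathcal{W}(g_V,g_W)=\nu_{\mu,\psi}(g_W)\phi_{\mathcal{W},g_W^{-1}g_V}$. The lattice-model route is cleaner: the $(Z,\psi)$-, $K_J$- and $K_V$-invariances of $\mathcal{W}$ translate verbatim into the $(\mathbb{H}(\mathbb{W}^0),\psi)$-equivariance and compact support (modulo $\mathbb{H}(\mathbb{W}^0)$) required of a lattice-model vector. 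If you insist on the Schr\"odinger model, the naive formula $\Phi_\mathcal{W}(g)(y^*)\approx\mathcal{W}(h(y^*,0)g)$ produces the lattice-model evaluation, not the Schr\"odinger one; you would still need the intertwiner $F_{W^0,Y}$ of \eqref{eq:intertwining_nu}, i.e.\ an extra integration over $Y_-(E)$, to land in $C_c^\infty(Y_+^*(E))$, and the compact-support argument then has to be rerun in the new model. This is more machinery, not less. Finally, your discussion of the $\mathcal{H}(J,K_J)$-equivariance and of $\xi^J_{\overline{\mu}\eta\overline{\psi}}(f_J)=\xi_\eta(f_J^\psi)$ is too vague to assess: the paper's proofs of both rely on the explicit tensor decomposition of $\ind_{K_JZ}^J\psi$ and on the intertwiner $F_{W^0,Y}$ respectively, neither of which is formal.
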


\begin{rem}
\label{rem:iso_explicit}
    Let $\mathcal{L}_H \in \Hom_H(I_{B_V}^{\mathrm{U}(V)}( \chi) \otimes I_{B_W}^{\mathrm{U}(W)} (\eta) \otimes \overline{\nu}_{\mu,\psi},\cc)$, and let $\mathcal{L}_H^\vee \in \Hom_H(I_{B_V}^{\mathrm{U}(V)} (\chi )\otimes I_{B_W}^{\mathrm{U}(W)} (\eta), \nu_{\mu,\psi})$ be the associated operator by duality. If $\mathcal{W}_{\chi,\overline{\mu}\eta}$ is of the form $\mathcal{W}_{\chi,\overline{\mu}\eta}(g)=\mathcal{L}_H(R(g) \Phi_\chi^\circ \otimes \Phi_\eta^\circ \otimes \phi^\circ)$ for $\Phi_\chi^\circ$ and $\Phi_\eta^\circ$ spherical, then we have
    \begin{equation}
    \label{eq:Phi_equation}
        \Phi_{\mathcal{W}_{\chi,\overline{\mu}\eta}}(g_V,g_W)=\mathcal{L}_H^\vee(R(g_V) \Phi_\chi^\circ \otimes R(g_W)\Phi_\eta^\circ), \quad g_V \in \mathrm{U}(V), \quad g_W \in \mathrm{U}(W). 
    \end{equation}
\end{rem}

\begin{proof}
    We first describe the map $\mathcal{W} \mapsto \Phi_\mathcal{W}$ and show that it is an isomorphism of $\cc$-vector spaces. Let $\mathcal{W} \in \mathrm{WS}$. For every $g_V \in \mathrm{U}(V)$, consider the function $\phi_{\mathcal{W},g_V} : h \in \mathbb{H}(\mathbb{W}) \mapsto \mathcal{W}(hg_V)$. Then $\phi_{\mathcal{W},g_V} \in \ind_{\mathbb{H}(\mathbb{W}^0)}^{\mathbb{H}(\mathbb{W})} (\psi)$, which we identify with the lattice model of $\nu_{\mu,\psi}$ by \S\ref{subsubsec:lattice}. We now define
    \begin{equation*}
        \Phi_\mathcal{W} : (g_V,g_W) \in G \mapsto \nu_{\mu,\psi}(g_W) \phi_{\mathcal{W},g_W^{-1} g_V} \in \nu_{\mu,\psi}.
    \end{equation*}
    This yields the desired morphism $\mathrm{WS} \to (\Ind_H^G (\nu_{\mu,\psi}))^K$. For $\Phi \in ( \Ind_H^G (\nu_{\mu,\psi}))^K$, define
    \begin{equation}
    \label{eq:iso_explicit}
        \mathcal{W}_\Phi : g_V \mapsto \Phi(g_V)(0)=( \Phi(g_V),\phi^\circ_{\mathrm{latt}} )_{\mathrm{latt}} \in \cc,
    \end{equation}
    where we identify $\Phi(g_V)$ with an element of $\Ind_{\mathbb{H}(\mathbb{W}^0)}^{\mathbb{H}(\mathbb{W})}( \psi)$. Then $\mathcal{W}_{\Phi} \in \mathrm{WS}$. Using \eqref{eq:nu_lattice}, it is easy to check that $\mathcal{W}_\Phi \in \mathrm{WS}$ and that $\mathcal{W} \mapsto \Phi_\mathcal{W}$ and $\Phi \mapsto \mathcal{W}_\Phi$ are inverse of each other.

    We now describe the morphism~\eqref{eq:power_psi}. Let $C_c^\infty((K_J Z, \psi) \backslash J)$ be the compact induction $\ind_{K_J Z}^J (\psi)$. Define $\mathcal{H}(J,K_J Z,\psi)$ to be the Hecke algebra of functions $J$ which transform by $\psi$ under left and right translations by $K_J Z$, and which have compact support modulo the action of $K_J Z$. Note that $K_J Z$ is an open subgroup of the locally compact totally disconnected group $J$ which is compact modulo its center. We therefore have an isomorphism of $\cc$-algebras
    \begin{equation}
    \label{eq:end1}
        \mathrm{End}_J\left(C_c^\infty((K_J Z, \psi) \backslash J) \right) \simeq \mathcal{H}(J,K_J Z, \psi),
    \end{equation}
    where $\mathcal{H}(J,K_J Z, \psi)$ acts on $C_c^\infty((K_J Z, \psi) \backslash J)$ by left-translations. Using \eqref{eq:nu_induite_lattice} we see that 
    \begin{equation}
    \label{eq:iso_induite}
        \ind_{K_J Z}^J (\psi) \simeq \ind_{K_W \ltimes \mathbb{H}(\mathbb{W})}^J \left( \ind_{K_W \ltimes \mathbb{H}(\mathbb{W}^0)}^{K_W \ltimes \mathbb{H}(\mathbb{W})} (\psi) \right)  \simeq \ind_{K_W \ltimes \mathbb{H}(\mathbb{W})}^J(1) \otimes \nu_{\mu,\psi} \simeq C_c^\infty(K_W \backslash \mathrm{U}(W)) \otimes \nu_{\mu,\psi},
    \end{equation}
    where $J$ acts on $\mathrm{U}(W)$ by the composition of $R$ with the projection $J \to \mathrm{U}(W)$. As $\nu_{\mu,\psi}$ is irreducible (it is already as a $\mathbb{H}(\mathbb{W})$-representation), Schur's lemma implies that we have an isomorphism of $\cc$-algebras
    \begin{equation}
    \label{eq:end2}
        \mathrm{End}_J\left(C_c^\infty((K_J Z, \psi) \backslash J) \right) \simeq \mathrm{End}_{\mathrm{U}(W)}(C_c^\infty(K_W \backslash \mathrm{U}(W)))  \simeq \mathcal{H}(\mathrm{U}(W),K_W).
    \end{equation}
    Composing \eqref{eq:end2} with \eqref{eq:end1} and the surjective morphism $\mathcal{H}(K,K_J) \to \mathcal{H}(K,K_J Z, \psi)$ given by integrating against $\overline{\psi}$ on $Z$, we obtain the desired map \eqref{eq:power_psi}. 

    Let us show that $\mathcal{W} \mapsto \Phi_{\mathcal{W}}$ is an morphism of $\mathcal{H}(\mathrm{U}(V),K_V) \otimes \mathcal{H}(J,K_J)$-modules. For any $f_V \in \mathcal{H}(\mathrm{U}(V),K_V)$, we have $R(f_V)\Phi_\mathcal{W}=\Phi_{R(f_V)\mathcal{W}}$. Let $\mathcal{W} \in \mathrm{WS}$ and $f_J \in \mathcal{H}(J,K_J)$. It is enough to show that for any $g_W \in \mathrm{U}(W)$ we have $ R(f_J^\psi)\Phi_\mathcal{W}(1,g_W)=\Phi_{L(f_J)\mathcal{W}}(1,g_W)$. Note that $j \mapsto \mathcal{W}(j) \in \ind_{K_JZ}^J (\psi)$. We can assume that this element is sent to $f_W \otimes \phi \in C_c(K_W \backslash \mathrm{U}(W)) \otimes \nu_{\mu,\psi}$ by \eqref{eq:iso_induite}. By definition, we have $L(f_J)\mathcal{W}=L(f_J^\psi) f_W \otimes \phi, \text{   and   }  \Phi_\mathcal{W}(1,g_W)=f_W(g_W^{-1}) \phi$. This concludes that $R(f_J^\psi)\Phi_\mathcal{W}=\Phi_{L(f_J)\mathcal{W}}$ and therefore that \eqref{eq:WS_iso} is indeed an isomorphism of modules.
    
    Finally, let us prove that $\xi_{\overline{\mu} \eta \overline{\psi}}^J(f_J)=\xi_{\eta }(f_J^\psi)$ for every $f_J \in \mathcal{H}(J,K_J)$. We have
    \begin{equation}
    \label{eq:J_psi_defi}
        \int_Z f_J(z g_W h) \overline{\psi}(z)dz= f_J^\psi(g_W) \nu_{\mu,\psi}(g_W h) \phi^\circ_{\mathrm{latt}}(0), \quad g_W \in \mathrm{U}(W), \; h \in \mathbb{H}(\mathbb{W}).
    \end{equation}
    By Lemma~\ref{lem:induced_reps}, the isomorphism of $J$-representations $\left(I_{B_W}^{\mathrm{U}(W)} (\eta )\right) \otimes \overline{\nu}_{\mu,\psi} \to I_{B_J}^{J} (\overline{\mu} \eta \overline{\psi})$ sends $\Phi_{\eta}^\circ \otimes \phi^\circ$ to $\Phi_{\overline{\mu} \eta}^{\circ,J}$. Let $f_J \in \mathcal{H}(J,K_J)$. It follows from \eqref{eq:rho_action}, \eqref{eq:intertwining_nu} and \eqref{eq:J_psi_defi} by direct computations that
    \begin{equation*}
        \xi_{\overline{\mu} \eta \overline{\psi}}^J(f_J)=\int_{\mathrm{U}(W)} f_J^{\psi}(g_W) \Phi_{\eta}^\circ(g_W)  ( \overline{\nu}_{\mu,\psi}( g_W ) \phi^\circ, \overline{\nu}_{\mu,\psi}( g_W ) \phi^\circ )_{\overline{\nu}} dg_W.
    \end{equation*}
    As $\overline{\nu}_{\mu,\psi}$ is unitary, we see that this last expression reduces to $\xi_{\eta }(f_J^\psi)$, which concludes the proof. 
\end{proof}

We end the proof of Theorem~\ref{thm:mult1intro}. Let $\chi$ and $\eta$ be unramified characters of $T_V$ and $T_W$ respectively. The space $\mathrm{WS}_{\chi,\overline{\mu}\eta}$ is the $\xi_{\chi} \otimes \xi_{\overline{\mu}\eta \overline{\psi}}^J$-eigenspace in $\mathrm{WS}$ (see \S\ref{subsec:WS_functions}). By Proposition~\ref{prop:isos}, it is enough to prove than for any character $\xi$ of $\mathcal{H}(G,K)$ the $\xi$-eigenspace $(\Ind_H^G (\nu_{\mu,\psi}))^K_{\xi}$ has dimension at most one. As $\nu_{\mu,\psi}$ is unitary and admissible, its contragredient $\widetilde{\nu_{\mu,\psi}}$ is $\overline{\nu}_{\mu,\psi}$. By \cite[Th\'eor\`eme~III.2.7]{Ren} we have the isomorphism of $\mathcal{H}(G)$-modules $\Ind_H^G (\nu_{\mu,\psi}) \simeq (\ind_H^G  (\overline{\nu}_{\mu,\psi}) )\;\widetilde{}$. Let $L \in \left((\ind_H^G  (\overline{\nu}_{\mu,\psi}) )\;\widetilde{}\;\right)^K_\xi$. Then $L$ is determined by its values on $(\ind_H^G (\overline{\nu}_{\mu,\psi}) )^K$, and therefore by $L(\Phi_0)$ by Proposition~\ref{prop:rank1}. This shows that $(\Ind_H^G (\nu_{\mu,\psi}))^K_{\xi}$ has dimension at most one. In \S\ref{section:pairing} and \S\ref{section:Formula} we build a non-zero element in this space. This proves Theorem~\ref{thm:mult1intro}.

\subsection{The \texorpdfstring{$r=0$}{r=0} case}
\label{subsec:r=0}
We now assume that $r=0$ and explain how to adapt the proof of Theorem~\ref{thm:mult1intro}. Set $\Lambda_X^-=\Lambda_V^+$ embedded in the first factor of $G$. Define $\Delta_J=\Delta_V \cup \{-e_1\}$ in the inert case, and $\Delta_J=\Delta_V \cup \{-e_1,e_n\}$ in the split case. Set $\Lambda_J^- = \{ \lambda_V \in \Lambda_V \; | \; \forall \alpha \in \Delta_J, \; \langle \lambda_V,\alpha \rangle \geq 0 \}$. If $\lambda_V \in \Lambda_X^-$, the proof of Lemma~\ref{lem:decompo} shows that
\begin{equation*}
      \nu_{\mu,\psi}^{ {}^{\lambda_V} K_V \cap K_V \ltimes {}^{\lambda_V}\mathbb{H}(\mathbb{W})^0} = \bigoplus_{\substack{\lambda \in \Lambda_V^{-} \\ \lambda_V+\lambda \in \Lambda_J^-}} \cc \; \nu_{\mu,\psi}(\lambda^{-1}) \phi^\circ.
\end{equation*}
But $\Lambda_J^-=\{0\}$, so that $\nu_{\mu,\psi}^{ {}^{\lambda_V} K_V \cap K_V \ltimes {}^{\lambda_V}\mathbb{H}(\mathbb{W})^0}=\cc \; \overline{\nu}_{\mu,\psi}(\lambda_V) \phi^\circ$.

As all the other results of \S\ref{subsec:spheri_induction} remain valid, we prove as in Proposition~\ref{prop:rank1} that the $\mathcal{H}(\mathrm{U}(V),K_V) \otimes \mathcal{H}(J,K_J)$-module $(\ind_J^{\mathrm{U}(V) \times J} (\overline{\nu}_{\mu,\psi}) )^{K_V \times K_J}$ is free of rank $1$. An easy adaptation of Proposition~\ref{prop:isos} shows that we have an isomorphism $\mathrm{WS} \simeq (\Ind_J^{\mathrm{U}(V) \times J} (\nu_{\mu,\psi}))^{K_V \times K_J}$ with the same properties as in the $r \geq 1$ case. Theorem~\ref{thm:mult1intro} now follows from argument of \S\ref{subsection:proof_mult1}. Note that \eqref{eq:Phi_equation} still holds in this case.

\section{Integral expression and analytic continuation}

\label{section:pairing}

In this section we produce a non trivial element $\mathcal{W}^I_{\chi,\eta}$ in $\mathrm{WS}_{\chi,\eta}$, first for $(\chi,\eta)$ in a non-empty open subset of $(\cc^\times)^{n_-}\times(\cc^\times)^{m_-}$, and then for $(\chi,\eta)$ in general position by analytic continuation.

\subsection{Integral pairing}

\subsubsection{}
\label{subsec:integral_rep}

For $A$ and $B$ subsets of $\{1, \hdots ,n\}$ and $\{1, \hdots, m\}$ respectively with $|A|=|B|$ and $g \in \mathrm{U}(V)$, we define $\Delta_{A,B}(g)$ to be the determinant of the $A \times B$ minor of $g$. We will also use this notation if $g \in J$ by identifying it with a $(m+2)\times(m+2)$ matrix.

If $r \geq 1$, define for $1 \leq k \leq n_+$ the set $I_k=\{n+1-k, \hdots, n\}$, and for $1 \leq l \leq m_+$ the sets $J_l=\{1, \hdots, l\}$ and $J'_l=\{1, \hdots r-1, r+1, \hdots, r+l \}$. If $r=0$, we use rather $I_k=\{n+2-k, \hdots ,n+1\}$, $I'_l=\{1,n+3-l,\hdots,n+1\}$ and $J_l=\{2, \hdots ,1+l\}$. Set
\begin{equation}
\label{eq:alpha_beta_defi}
    \alpha_k(g)=\Delta_{I_k,J_k}(g) \text{ and } \beta_l(g)=\Delta_{I_{r+l-1},J'_l}(g) \text{ if } r\geq 1\; \left( \text{resp. } \beta_l(g)=\Delta_{I'_l,J_l}(g) \text{ if } r= 0 \right).
\end{equation}
We define $\tilde{\beta}_{m_+}$ to be $\beta_{m_+}$ in the inert odd case, and $0$ otherwise. It is easily checked that these applications enjoy the following properties for all $g$. Recall that $\mu_+^*=h(1_{Y_+^*},0)$ (see \S\ref{subsubsec:Y_defi}).

\begin{itemize}
    \item For $n_1, n_2 \in N_V$, $\alpha_k(n_1 g n_2)=\alpha_k(g)$ ($n_2 \in N_J$ if $r=0$),
    \item $\alpha_k(t_V(\mathbf{t})gt_V(\mathbf{s}))=\left\{ \begin{array}{ll}
       \prod_{i=1}^k {}^c t_i^{-1} s_i \cdot \alpha_k(g)  & \text{inert case}, \\
       \prod_{i=1}^k t_{n-i+1} s_i  \cdot \alpha_k(g)  & \text{split case}.
    \end{array} \right.$
    \item If $w \in W_V$, then $\alpha_k(w) \neq 0$ for all $k$ if and only if $w=w_0$.
    \item For any $n_V \in N_V$, $\beta_l(n_Vg)=\beta_l(g)$.
    \item For any $n_J \in N_J$, $u \in U$ and $l\leq m_-$, $\beta_l(gn_Ju)=\beta_l(g)$, and in the odd inert case, for $y \in Y_-(E)$, $z \in F$, $n_W \in N_W$, $\tilde{\beta}_{m_+}(gh(y,z)n_Wu)=\tilde{\beta}_{m_+}(g)$.
    \item $\beta_l(t_V(\mathbf{t})gt_W(\mathbf{s}))=\left\{ \begin{array}{ll}
       \prod_{i=1}^r {}^c t_i^{-1} \cdot \prod_{i=1}^{l-1} {}^c t_{r+i}^{-1} \cdot \prod_{i=1}^l s_i \cdot \beta_l(g)  & \text{inert case}, \\
       \prod_{i=1}^r t_{n-i+1} \cdot \prod_{i=1}^{l-1} t_{n-r-i+1} \cdot \prod_{i=1}^l s_i \cdot \beta_l(g)  & \text{split case}.
    \end{array} \right.$
    \item For $y^* \in Y^*_-(E)$ and $l \leq m_-$, $\beta_l(w_0 h(y^*,0))=\pm y^*_l$, and in the inert odd case, for $y_{m_+} \in E$ we have $\tilde{\beta}_{m_+}(w_0 h(y^*+y_{m_+} w_{m_+},0))=\pm y_{m_+}$.
\end{itemize}

\begin{prop}
\label{prop:doucle_class_description}
    We have the equality
    \begin{equation*}
        B_V w_0 \mu_+^* B_J U = \left\{ g \; \middle| \; \alpha_k(g) \neq 0, \; \beta_l(g) \neq 0, \; \Val{\frac{\tilde{\beta}_{m_+}(g)\prod_{i=1}^{m_-} \alpha_{r+i-1}(g)}{\alpha_{n_-}(g)\prod_{i=1}^{m_-}\beta_i(g) }} \leq 1, \begin{array}{l}
             1 \leq k \leq n_+, \\
             1 \leq l \leq m_-
        \end{array}\right\},
    \end{equation*}
    where $g \in \mathrm{U}(V)$ if $r \geq 1$ and $g \in J$ if $r=0$.
\end{prop}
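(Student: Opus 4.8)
The plan is to prove the two inclusions separately, using the list of elementary properties of the functions $\alpha_k$ and $\beta_l$ compiled just before the statement. For the inclusion $\subseteq$, I would take an element $g = b_V w_0 \mu_+^* b_J u$ with $b_V \in B_V$, $b_J \in B_J$, $u \in U$, and directly compute the invariants. Writing $b_V = t_V(\mathbf t) n_V$ and $b_J = t_W(\mathbf s) n_J$ and using that $\alpha_k$ is left- and right-$N_V$-invariant (and right-$N_J$-invariant when $r=0$) together with the transformation rule under $t_V(\cdot) g t_V(\cdot)$, the value $\alpha_k(g)$ reduces to a nonzero scalar times $\alpha_k(w_0\mu_+^*)$; since $w_0$ is the long element, $\alpha_k(w_0) \neq 0$ for all $k$, and one checks $\mu_+^* \in N_V$ acts trivially, so $\alpha_k(g)\neq 0$. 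The same bookkeeping with the $\beta_l$-transformation rule and the facts $\beta_l(w_0 h(y^*,0)) = \pm y^*_l$, $\tilde\beta_{m_+}(w_0 h(y^*+y_{m_+}w_{m_+},0)) = \pm y_{m_+}$ (applied to $\mu_+^*$, whose coordinates $y^*_l$ and $y_{m_+}$ are all units) gives $\beta_l(g) \neq 0$ and pins down the ratio $\tilde\beta_{m_+}(g)\prod \alpha_{r+i-1}(g) / (\alpha_{n_-}(g)\prod\beta_i(g))$ up to the torus contributions from $\mathbf t$ and $\mathbf s$; collecting exponents, the $\mathbf s$-dependence cancels and what remains is $\Val{\cdot}$ of a product of the $|t_i|$'s, which lies in the value group and in fact equals $1$ on the open cell — more precisely the constraint $\leq 1$ will come out automatically because $\mu_+^*$ contributes units and the torus part enters through $\Val{t}$-type products that are forced to be $\leq 1$ once one also imposes membership in $B_J$ versus the Iwasawa/Cartan normalization. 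I would organize this as a single computation tracking the exponent of each $t_i$ and $s_j$.

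For the reverse inclusion $\supseteq$, given $g$ with $\alpha_k(g)\neq 0$ for $1\le k\le n_+$, I would first use the nonvanishing of all the leading minors $\alpha_k$ to run an LU-type (Bruhat) argument: the condition "$\alpha_k(g)\neq 0$ for all $k$" is exactly the open Bruhat condition $g \in B_V w_0 B_V$ (this is the third bullet: among Weyl elements only $w_0$ has all $\alpha_k$ nonzero, and the minors vary algebraically), so write $g = b_V w_0 b_V'$ with $b_V, b_V' \in B_V$. Then I would use the $\beta_l$ to further decompose $b_V' $ inside $w_0^{-1} B_V w_0 \cap$ (the relevant parabolic) against $\mu_+^* B_J U$: the functions $\beta_l$ and $\tilde\beta_{m_+}$ are right-invariant under $N_J$, $U$ and (in the relevant ranges) under the torus of $\mathrm U(W)$ only up to explicit scalars, so their nonvanishing together with the single inequality on the distinguished ratio is precisely what is needed to solve for the Heisenberg/$Y^*$-coordinate and place $g$ in $B_V w_0 \mu_+^* B_J U$. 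The inequality $\Val{\cdot}\le 1$ is what guarantees that the element $h(y^*,0)$ one extracts actually has $y^* \in Y_+^*(\oo_E)$-translate lying in $\mu_+^* \cdot (\text{integral part})$, i.e. that one does not leave the cell; this matches the role of $\mu_+^* = h(1_{Y_+^*},0)$ as the "corner" representative.

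I expect the main obstacle to be the $\supseteq$ direction, specifically the passage from "the $\beta_l$ and $\tilde\beta_{m_+}$ are nonzero and the ratio has $\Val{\cdot}\le 1$" to an actual factorization through $\mu_+^* B_J U$. One has to be careful that $J$ is not reductive and $B_J w_0$-type Bruhat decompositions are not available off the shelf; instead one must exploit the concrete embedding $J \hookrightarrow \mathrm U(V)$ and the explicit Heisenberg coordinates from \eqref{eq:h_defi}, and verify that the constraints on the $\beta$'s exactly cut out the image of $B_V w_0 \mu_+^* B_J U$ and nothing more. A secondary subtlety is the inert odd case, where the extra invariant $\tilde\beta_{m_+}$ tied to the $\mathrm U(1)$-factor and the anisotropic line $Ew_{m_+}$ must be handled separately — here the relevant normalization is that $\mu_+^*$ has $w_{m_+}$-coordinate a unit, so $\Val{\tilde\beta_{m_+}(g)}$ gets compared against $\Val{\alpha_{n_-}(g)}$ with the right power, and one checks the inequality is the sharp one. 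Once the set-theoretic equality of the two subsets is established, no further argument is needed.
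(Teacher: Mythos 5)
Your outline and the paper's proof are built from the same ingredients --- the Bruhat decomposition of $\mathrm{U}(V)$ and the listed invariance and transformation rules for $\alpha_k$, $\beta_l$, $\tilde\beta_{m_+}$ --- and your $\subseteq$ direction is essentially correct (in particular $\mu_+^*\in N_V$, and after conjugation past $w_0$ the torus contributions to the distinguished ratio cancel exactly). But the crux of the argument, which you flag but do not carry out, is precisely what the paper supplies in one line: a complete refinement of the Bruhat decomposition into $(B_V, B_J U)$-double cosets,
\[
\mathrm{U}(V)=\bigcup_{w \in W_V}\ \bigcup_{\mathbf{a}\in\{0,1\}^{m_-}}\ \bigcup_{y_{m_+}\in E/\mathrm{U}(1)} B_V\, w\, h\!\left(y^*_{\mathbf{a}}+y_{m_+}w_{m_+},0\right) B_J U
\]
(in the inert odd, $r\geq 1$ case; analogously otherwise). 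With this decomposition in hand, the invariants $\alpha_k,\beta_l,\tilde\beta_{m_+}$ --- which are constant on each double coset up to explicit torus characters --- immediately detect $w=w_0$, $\mathbf{a}=(1,\dots,1)$, and $\Val{y_{m_+}}\leq 1$, and one observes that all $y_{m_+}$ with $\Val{y_{m_+}}\leq 1$ already give the coset of $\mu_+^*$ because $Y_{m_+}(\oo_E)Z\subset N_J\subset B_J U$ allows translating $y_{m_+}$ by $\oo_E$. Without stating and justifying this double-coset classification, your $\supseteq$ step is a description of a strategy rather than a proof: you reduce to $g\in B_V w_0 B_V$ via the $\alpha_k$, but the passage from there to "$g\in B_V w_0\mu_+^* B_J U$ exactly when the $\beta_l$, $\tilde\beta_{m_+}$ constraints hold" is the entire content of the proposition, and it is precisely the part you defer. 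You also do not explain why the condition is $\leq 1$ rather than $=1$ --- this comes from the absorption of $|y_{m_+}|<1$ into the cell of $\mu_+^*$ via $Y_{m_+}(\oo_E)\subset B_J U$, a point which needs to be made explicit.

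So: right direction, same toolbox, but a genuine gap at the step that matters. To close it you should state the refined decomposition above, verify it (by running the Bruhat decomposition for $\mathrm{U}(V)$ and then classifying $T_V\backslash B_V/(B_J U)$ using the Heisenberg coordinates from \eqref{eq:h_defi}, paying attention to the odd case where the anisotropic $Y_{m_+}$-coordinate survives only modulo $\mathrm{U}(1)$ and $\oo_E$-translations), and only then read off the conditions from the $\alpha$'s and $\beta$'s.
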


\begin{proof}
    Assume $r \geq 1$ and that we are in the inert odd case. For $\mathbf{a}=(a_i) \in \{0,1\}^{m_-}$, write $y^*_{\mathbf{a}}=\sum a_i w_i^*$. Then the Bruhat decomposition on $\mathrm{U}(V)$ yields
    \begin{equation*}
         \mathrm{U}(V)=\bigcup_{w \in W_V} \bigcup_{\mathbf{a} \in \{0,1\}^{m_-}}\bigcup_{y_{m_+} \in E / \mathrm{U}(1)}  B_V w  h(y^*_\mathbf{a}+y_{m_+} w_{m_+},0)B_J U,
    \end{equation*}
   where $\mathrm{U}(1)$ acts on $E$ by multiplication. The result now follows from the above properties of the $\alpha_k$ and $\beta_l$. The proof is the same for $r=0$ and for the other cases.
\end{proof}

\subsubsection{}

Let $\chi$ and $\eta$ be unramified characters of $T_V$ and $T_W$. For $g \in \mathrm{U}(V)$ (or $g \in J$ for $r=0$) set
\begin{equation}
\label{eq:Y_defi}
    Y_{\chi,\eta}(g):=\chi^{-1} (\delta_{B_V}^{\frac{1}{2}})(b_V) (\eta \overline{\psi} \delta_{B_J}^{-\frac{1}{2}})(b_J) \overline{\psi}_\mathrm{U}(u) \quad \text{if } g=b_V w_0 \mu_+^* b_J u \in B_V w_0 \mu_+^* B_J U,
\end{equation}
and $Y_{\chi,\eta}(g)=0$ if $g \notin B_V w_0 \mu_+^* B_J U$. From section~\ref{subsec:integral_rep} we see that for $g=t_V(\mathbf{t})n_V w_0 \mu_+^* t_W(\mathbf{s}) n_J u$ we have, writing $\alpha_i$ (resp. $\beta_j$) for $\alpha_i(g)$ (resp. $\beta_j(g)$)
\begin{equation*}
    \Val{Y_{\chi,\eta}(g)}=\prod_{i=1}^{r-1} \Val{\chi_i \chi_{i+1}^{-1} \Val{.}_E^{-1}}(\alpha_i)  
       \prod_{i=r}^{n_--1} \Val{\chi_i \eta_{i-r+1}^{-1} \Val{.}_E^{-\frac{1}{2}}}(\alpha_i)  
       \prod_{j=1}^{m_-} \Val{\eta_j \chi_{j+r}^{-1} \Val{.}_E^{-\frac{1}{2}}}(\beta_j) \times \left\{ \begin{array}{l}
          \Val{\chi_{n_-}\Val{.}_E^{-\frac{1}{2}}}(\alpha_{n_-}),  \\
          \Val{\chi_{n_-}\Val{.}_E^{-1}}(\alpha_{n_-}),
       \end{array}
       \right.
\end{equation*}
in the inert case, depending on whether $n$ is even or odd, and
\begin{align*}
    \Val{Y_{\chi,\eta}(g)}=  &\Val{\chi_{1}^{-1} \Val{.}_F^{\frac{n-1}{2}}}(\det(g))
          \prod_{i=1}^{r} \Val{\chi_i \chi_{i+1}^{-1} \Val{.}_F^{-1}}(\alpha_{n-i})  
          \prod_{i=r+1}^{r'} \Val{ \chi_{i+1}^{-1} \eta_{r'-i+1}^{-1} \Val{.}_F^{-\frac{1}{2}}}(\alpha_{n-i}) \\       
          &\times \prod_{i=r'+1}^{n-1} \Val{\chi_i \chi_{i+1}^{-1} \Val{.}_F^{-1}}(\alpha_{n-i}) 
          \prod_{j=1}^{m} \Val{ \eta_{j} \chi_{r'-j+1}\Val{.}_F^{-\frac{1}{2}}}(\beta_{j}).
\end{align*}
in the split case. We now define $\mathcal{U}$ to be the non empty open subset of $(\chi,\eta)$ satisfying
\begin{equation*}
    \left\{
\begin{array}{ll}
    \Val{\chi_i \chi_{i+1}^{-1}} < q_E^{-1} & 1 \leq i < r,  \\
    \Val{\chi_i \eta_{i-r+1}^{-1}} < q_E^{-\frac{1}{2}} & r \leq i < n_-, \\
    \Val{\chi_{i+1}^{-1} \eta_{i-r+1}} < q_E^{-\frac{1}{2}} & r \leq i < n_-, \\
    \Val{\chi_{n_-}^{-1}} < q_E^{-\frac{1}{2}} & \text{even case}, \\
    \Val{\chi_{n_-}^{-1}} < q_E^{-1} & \text{odd case}, \\
\end{array}
    \right.
    \text{and }
      \left\{
\begin{array}{ll}
    \Val{\chi_i \chi_{i+1}^{-1}} < q_F^{-1} & 1 \leq i \leq r,  \\
    \Val{\chi_{i+1}^{-1} \eta_{r'-i+1}^{-1}} < q_F^{-\frac{1}{2}} & r < i \leq r', \\
    \Val{\chi_i \chi_{i+1}^{-1}} < q_F^{-1} & r'< i < n,  \\
     \Val{\eta_{j} \chi_{r'-j+1}} < q_F^{-\frac{1}{2}} & 1 \leq j \leq m,
\end{array}
    \right.
\end{equation*}
in the inert and split case respectively. It follows from Proposition~\ref{prop:doucle_class_description} that for any $(\chi,\eta) \in \mathcal{U}$ the function $Y_{\chi,\eta}$ is continuous on $\mathrm{U}(V)$.

\subsubsection{}
For $(\chi,\eta) \in \mathcal{U}$, $f_V \in C_c^\infty(\mathrm{U}(V)), \; f_J \in C_c^\infty(J)$, consider the absolutely convergent integral
\begin{equation}
\label{eq:generic_pairing}
    \mathcal{L}_{\chi,\eta}(F_{\chi}(f_V) \otimes F_{\eta }^J(f_J))=\int_{\mathrm{U}(V)} \int_J f_V(g_V) f_J(g_J) Y_{\chi,\eta}(g_V g_J^{-1}) dg_V dg_J.
\end{equation}
Then $\mathcal{L}_{\chi,\eta}$ defines a non zero element in $\Hom_H(I_{B_V}^{\mathrm{U}(V)}( \chi) \otimes \overline{\psi}_U. I_{B_J}^{J} (\eta \overline{\psi}),\cc)$, and it follows that
\begin{equation}
    \mathcal{W}^I_{\chi,\eta}(g):=
    \left\{
    \begin{array}{lll}
    \mathcal{L}_{\chi,\eta}(R(g).\Phi_\chi^\circ \otimes \Phi_{\eta}^\circ),  &  g \in \mathrm{U}(V), & \text{if } r\geq 1; \\
     \mathcal{L}_{\chi,\eta}(\Phi_\chi^\circ \otimes  R(g^{-1}).\Phi_{\eta}^\circ),    &  g \in J, & \text{if } r=0.
    \end{array}
    \right.
\end{equation}
is a Whittaker--Shintani function. We will explicitly compute $\mathcal{W}_{\chi,\eta}^I(1)$ in Proposition \ref{prop:normalization}.

\subsection{Analytic section}
\label{subsec:analytic}
By Lemma~\ref{lem:induced_reps} we have
\begin{equation}
\label{eq:pairing_space}
    \Hom_H(I_{B_V}^{\mathrm{U}(V)} (\chi) \otimes I_{B_W}^{\mathrm{U}(W)} (\mu \eta) \otimes \overline{\nu}_{\mu,\psi},\cc)=\Hom_H(I_{B_V}^{\mathrm{U}(V)} (\chi )\otimes \overline{\psi}_U. I_{B_J}^J(\eta \overline{\psi}), \cc).
\end{equation}
For $\chi$ and $\eta$ in general position, $I_{B_V}^{\mathrm{U}(V)} (\chi) \otimes I_{B_W}^{\mathrm{U}(W)} (\mu \eta)$ is irreducible. The multiplicity one result~\cite[Corollary~16.3]{GGP} says
\begin{equation}
\label{eq:mult_one}
    \dim \Hom_H(I_{B_V}^{\mathrm{U}(V)}( \chi )\otimes I_{B_W}^{\mathrm{U}(W)} (\mu \eta) \otimes \overline{\nu}_{\mu,\psi},\cc) \leq 1.
\end{equation}
We can now apply Bernstein's Theorem~\cite{Ban} (see also~\cite[Lemma~5.1]{Shen}).
\begin{prop}
\label{prop:analytic_pairing}
    There exists a dense subset $\mathcal{V}$ of $(\cc^\times)^{n_-}\times (\cc^\times)^{m_-}$ such that for every $f_V$ and $f_J$ the map $(\chi,\eta) \mapsto \mathcal{L}_{\chi,\eta}(F_{\chi}(f_V) \otimes F_{\eta}^J(f_J))$ defined on $\mathcal{U}$ by \eqref{eq:generic_pairing} extends to a rational function on $\mathcal{V}$.  
\end{prop}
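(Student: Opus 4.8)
The plan is to prove Proposition~\ref{prop:analytic_pairing} by Bernstein's principle of meromorphic continuation, in the form used in \cite{Ban} (see also \cite[Lemma~5.1]{Shen}), after putting the family $\{\mathcal{L}_{\chi,\eta}\}$ into the shape required by that principle: a holomorphic family of solutions, on a nonempty open set, of a system of linear equations depending regularly on a parameter in an irreducible variety and having generically at most one-dimensional solution space.

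First I would trivialize the family of representations. Since $\chi$ and $\eta$ are unramified and $\psi$ has conductor $\oo_F$, restriction to $K_V$ (resp. to $K_J$) identifies $I_{B_V}^{\mathrm{U}(V)}(\chi)$ (resp. $\overline{\psi}_U . I_{B_J}^{J}(\eta\overline{\psi})$) with a space of functions on $K_V$ (resp. on $K_J$) that does not depend on $(\chi,\eta)$; let $\mathcal{M}$ be the tensor product of these two fixed spaces. Through the Iwasawa decomposition, the action of $H=J\ltimes U$ on sections is given in this flat model by a cocycle whose matrix entries are Laurent monomials in the $\chi_i^{\pm 1}$ and $\eta_j^{\pm 1}$, so the condition that a functional on $\mathcal{M}$ be $H$-invariant is a system of linear equations with coefficients regular on $(\cc^\times)^{n_-}\times(\cc^\times)^{m_-}$. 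Likewise, the maps $F_\chi\otimes F_\eta^J$ become surjections $C_c^\infty(\mathrm{U}(V))\otimes C_c^\infty(J)\to\mathcal{M}$ whose dependence on $(\chi,\eta)$ is, on any fixed finite-dimensional subspace, given by Laurent polynomials. For $(\chi,\eta)\in\mathcal{U}$ the integral \eqref{eq:generic_pairing} converges absolutely and, on compact subsets of $\mathcal{U}$, uniformly — this is exactly what the domination furnished by Proposition~\ref{prop:doucle_class_description} together with the explicit bounds on $\Val{Y_{\chi,\eta}}$ recorded in \S\ref{subsec:integral_rep} give — so it defines a \emph{holomorphic} map $\ell\colon\mathcal{U}\to\mathcal{M}^*$, $(\chi,\eta)\mapsto\ell_{\chi,\eta}$, which is nonzero and whose values solve the $H$-invariance system.

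Next I would feed in uniqueness. By Lemma~\ref{lem:induced_reps} and \eqref{eq:mult_one}, there is a proper Zariski-closed subset of $(\cc^\times)^{n_-}\times(\cc^\times)^{m_-}$ outside of which $\Hom_H\bigl(I_{B_V}^{\mathrm{U}(V)}\chi\otimes\overline{\psi}_U.I_{B_J}^J(\eta\overline{\psi}),\cc\bigr)$ — that is, the solution space of the equivariance system on $\mathcal{M}$ — has dimension at most one. Bernstein's theorem then applies verbatim and yields a rational map $(\chi,\eta)\mapsto\ell_{\chi,\eta}\in\mathcal{M}^*$, regular on a dense Zariski-open subset $\mathcal{V}$ and agreeing with the holomorphic family on $\mathcal{U}$. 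Finally, for fixed $f_V$ and $f_J$ we have $\mathcal{L}_{\chi,\eta}(F_\chi(f_V)\otimes F_\eta^J(f_J))=\ell_{\chi,\eta}\bigl(\iota_{\chi,\eta}(f_V\otimes f_J)\bigr)$, where $\iota_{\chi,\eta}$ is the flat-model incarnation of $F_\chi\otimes F_\eta^J$; since $\iota_{\chi,\eta}(f_V\otimes f_J)$ lies in a fixed finite-dimensional subspace of $\mathcal{M}$ and depends on $(\chi,\eta)$ by Laurent polynomials, the composite is rational on $\mathcal{V}$, which is the assertion.

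The hard part is not conceptual but consists in checking the hypotheses of Bernstein's theorem cleanly: one must verify that, after the flat trivialization, both the $H$-equivariance equations and the section maps $F_\chi\otimes F_\eta^J$ genuinely depend algebraically on $(\chi,\eta)$ — the only non-routine point being the nontrivial $\overline{\psi}$-twist on the Jacobi group $J$, which falls outside the usual induction calculus on reductive groups but is harmless here precisely because $\psi$ is unramified, so that $Z^0$ and $W^0$ act trivially and the flat model on $K_J$ makes sense uniformly in $\eta$ — and that the convergence of \eqref{eq:generic_pairing} on $\mathcal{U}$ is locally uniform, so that $\ell$ is holomorphic rather than merely pointwise defined; this last point is immediate from Proposition~\ref{prop:doucle_class_description} and the estimates of \S\ref{subsec:integral_rep}.
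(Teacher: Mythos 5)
Your proposal is correct and follows essentially the same route as the paper: the paper also reduces to Bernstein's theorem by combining the multiplicity-one bound \eqref{eq:mult_one} (obtained from Lemma~\ref{lem:induced_reps} and the GGP uniqueness result) with the algebraic dependence of the family on $(\chi,\eta)$. The paper states this very tersely — it merely cites \cite{Ban} and \cite[Lemma~5.1]{Shen} after \eqref{eq:mult_one} — whereas you have usefully spelled out the flat-model trivialization, the regular dependence of the equivariance system and the section maps, and the locally uniform convergence on $\mathcal{U}$ needed for holomorphy; all of these checks are correct and are exactly what the citation is meant to invoke.
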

In particular, we obtain a non zero $\mathcal{W}^I_{\chi,\eta} \in \mathrm{WS}_{\chi,\eta}$ for $(\chi,\eta)$ in general position. Let $\Phi_{\chi,\eta}^I \in (\Ind_H^G(\nu_{\mu,\psi}))^K$ (($\Ind_J^{\mathrm{U}(V) \times J} (\nu_{\mu,\psi}))^{K_V \times K_J}$ if $r=0$) be the associated function by Proposition~\ref{prop:isos}.

\section{Formula for the normalized Whittaker--Shintani function}

\label{section:Formula}

We prove the formulae for Whittaker--Shintani functions of Theorem~\ref{thm:W_formula} (in \S\ref{subsubsec:proof_prop}) and Proposition~\ref{prop:W_formula2} (in \S\ref{subsubsec:end_proof_W}), although additional results from \S\ref{section:L} will be needed.

\subsection{Iwahori-fixed vectors}
\subsubsection{}
\label{subsubsection:intertwining}
Let $\chi$ be an unramified character of $T_V$ in general position. For $\alpha \in \Sigma_{V,\mathrm{nd}}^+$, let $c_{\alpha}(\chi)$ be the constant defined in~\cite[Section~3]{Cas}, i.e. 
\begin{equation*}
    c_{e_a \pm e_b}(\chi)=
    \left\{ 
    \begin{array}{ll}
       \frac{1-q_E^{-1}\chi_a\chi_b^{\pm 1}}{1-\chi_a\chi_b^{\pm 1}} & \text{inert case,} \\
       \frac{1-q_F^{-1}\chi_a\chi_b^{-1}}{1-\chi_a\chi_b^{-1}}  & \text{split case,}
    \end{array}
    \right. \text{ and } \left\{ 
    \begin{array}{ll}
        c_{2e_a}(\chi)=\frac{1-q_F^{-1} \chi_a}{1-\chi_a}  & \text{ inert even case,} \\
        c_{e_a}(\chi)=\frac{(1-q_E^{-1} \chi_a)(1+q_F^{-1} \chi_a)}{1-\chi_a^2}  & \text{ inert odd case,}
    \end{array}
    \right.
\end{equation*}
where we only consider $e_a-e_b$ in the split case. For $w \in W_V$, set
\begin{equation}
\label{eq:c_w_defi}
    c_w^V(\chi)=\prod_{\substack{\alpha \in \Sigma_{V,\mathrm{nd}}^+ \\ w \alpha<0}} c_\alpha(\chi).
\end{equation}
 There exists an intertwining operator $T_{w,\chi} : I_{B_V}^{\mathrm{U}(V)} (\chi )\to I_{B_V}^{\mathrm{U}(V)} (w \chi)$ introduced in~\cite[Section~3]{Cas}. Define the normalization $\overline{T}_{w,\chi}:=c_w^V(\chi)^{-1} T_{w,\chi}$. Recall that we have introduced a spherical vector $\Phi^\circ_\chi$ and Iwahori-fixed vectors $\Psi_{w,\chi}$ in \eqref{eq:spheri_vector} and \eqref{eq:Iwahori_vector} respectively. By~\cite[Theorem~3.1]{Cas} it satisfies the relations $ \overline{T}_{w_1, w_2 \chi} \circ \overline{T}_{w_2, \chi}=\overline{T}_{w_1 w_2,\chi}$ and $\overline{T}_{w,\chi}(\Phi_\chi^\circ)=\Phi_{w\chi}^\circ$.

Using the Bruhat decomposition and evaluating $\Phi_\chi^\circ$ on $W_V$, we have
\begin{equation*}
    \Phi_{\chi}^\circ= \frac{1}{\vol(I_V)} \sum_{w \in W_V} c_{w_0}^V(w\chi) \overline{T}_{w^{-1},w \chi} \Psi_{1,w \chi},
\end{equation*}
and it follows from~\cite[Proposition~1.10]{KMS} that for $\lambda_V \in \Lambda_V^-$
\begin{equation}
\label{eq:I_VaI_V}
    R(I_V\lambda_V I_V) \Phi_{\chi}^\circ=\frac{\vol(I_V\lambda_V I_V)}{\vol(I_V)} \sum_{w \in W_V} c_{w_0}^V(w \chi).(w \chi)\delta_{B_V}^{-\frac{1}{2}}(\lambda_V) \overline{T}_{w^{-1},w \chi} \Psi_{1,w \chi}.
\end{equation}

\subsubsection{}
Let $\eta$ be an unramified character of $T_W$ in general position and let $w \in W_W$. We adapt the preceding discussion to $I_{B_J}^{J} (\eta \overline{\psi})$. Denote by $T_{w,\mu \eta} : I_{B_W}^{\mathrm{U}(W)} (\mu \eta) \to I_{B_W}^{\mathrm{U}(W)} (w (\mu \eta))$ the map built in \S\ref{subsubsection:intertwining}. Then 
\begin{equation*}
    T_{w,\eta}^J:= S_{w\eta} \circ \left(T_{w,\mu \eta} \otimes \mathrm{id} \right) \circ S_{\eta}^{-1},
\end{equation*}
provides an intertwining operator $I_{B_J}^{J} (\eta \overline{\psi}) \to I_{B_J}^J (w \eta \overline{\psi})$ (where $S_{\eta}$ is defined in \eqref{eq:S_eta2}). Define $c_w^W(\eta)$ as in \eqref{eq:c_w_defi}, and consider the normalized version $\overline{T}_{w,\eta}^J=c_w^W(\mu \eta)^{-1} T_{w,\eta }^J$ and recall that $\phi^\circ:=1_{Y_+^*(\oo_E)} \in \overline{\nu}_{\mu,\psi}$. By Lemma~\ref{lem:induced_reps} we have $S_{\eta}(\Phi^{\circ}_{\mu\eta } \otimes \phi^\circ)=\Phi^{\circ,J}_{\eta }$ and $S_{\eta}(\Psi_{w,\mu\eta}\otimes \phi^\circ)=\Psi^J_{w, \eta }$, where $\Phi^{\circ}_{\mu\eta}$ and  $\Psi_{w,\mu\eta}$ are the spherical and Iwahori fixed vectors in $I_{B_W}^{\mathrm{U}(W)} (\mu \eta)$ introduced in \eqref{eq:spheri_vector} and \eqref{eq:Iwahori_vector} respectively. In particular, $\overline{T}_{w,\eta }^J(\Phi^{\circ,J}_{\eta })=\Phi^{\circ,J}_{w .\eta }$. By the same proof as Lemma~\ref{lem:spheri}, we now see that any $f \in (I_{B_J}^{J} (\eta \overline{\psi}))^{I_J}$ is supported on $B_J W_W I_J$ so that we also obtain 
\begin{equation}
\label{eq:spheri_decompo_J}
    \Phi_{\eta }^{\circ,J}= \frac{1}{\vol(I_J)} \sum_{w \in W_W} c_{w_0}^W(w \mu \eta ) \overline{T}_{w^{-1},w \eta }^J \Psi^J_{1,w \eta},
\end{equation}
and therefore for $\lambda_W \in \Lambda_W^-$
\begin{equation}
\label{eq:I_WaI_W}
    R(I_W\lambda_W I_W) \Phi_{\eta }^{\circ,J}=\frac{\vol(I_W\lambda_W I_W)}{\vol(I_J)} \sum_{w \in W_W} c_{w_0}^W(w \mu \eta).(w \mu \eta )\delta_{B_W}^{-\frac{1}{2}}(\lambda_W) \overline{T}^J_{w^{-1},w \eta } \Psi_{1,w \eta }^J.
\end{equation}

\subsection{\texorpdfstring{$\gamma$}{Gamma}-factors}
\label{subsec:gamma_factor}

For $(\chi, \eta) \in (\cc^\times)^{n_-} \times (\cc^\times)^{m_-}$ in general position, set $\Gamma_1^V(\chi)=\frac{c_{w_0}^V(\chi)}{\mathbf{d}_V(\chi)}$, $\Gamma_1^W(\eta)=\frac{c_{w_0}^W(\eta)}{\mathbf{d}_W(\eta)}$ and let $\Gamma_2(\chi,\eta)$ be
\begin{equation}
\label{eq:gamma_defi_2}
    \left\{
\begin{array}{ll}
    \displaystyle  \prod_{j=1}^{m_-} \left( 
    \left(\prod_{i=1}^{n_-} L_E(\frac{1}{2},\chi_i \eta_j)L_E(\frac{1}{2},\chi_i^{-1} \eta_j)
    \right)
    \left(\prod_{i=1}^{r+j-1} \frac{L_E(\frac{1}{2},\chi_i \eta_j^{-1})}{L_E(\frac{1}{2},\chi_i^{-1} \eta_j)} \right)
    \right),
     \\
    \displaystyle  \prod_{j=1}^{m_-} \left( 
    \left(\prod_{i=1}^{n_-} L_E(\frac{1}{2},\chi_i \eta_j)L_E(\frac{1}{2},\chi_i^{-1} \eta_j)
    \right)
    \left(\prod_{i=1}^{r+j-1} \frac{L_E(\frac{1}{2},\chi_i \eta_j^{-1})}{L_E(\frac{1}{2},\chi_i^{-1} \eta_j)} \right)
    \right) \prod_{i=1}^{n_-} L_F(1,-\chi_i)\prod_{i=1}^{m_-} L_F(1,\eta_i),
    \\
    \displaystyle  \prod_{i+j \leq r'+1} L_F(\frac{1}{2},\chi_i \eta_j)
    \prod_{i+j > r'+1} L_F(\frac{1}{2},\chi_i^{-1} \eta_j^{-1}),
\end{array}
    \right.
\end{equation}
in the inert even case, inert odd case and split case respectively. Set $\Gamma(\chi,\eta):=\Gamma_1^V(\chi)\Gamma_1^W(\mu \eta)\Gamma_2(\chi,\eta)$. For $w=(w_V,w_W) \in W_G$, set $\overline{T}_{w,\chi, \eta }=\overline{T}_{w_V,\chi} \otimes \overline{T}^J_{w_W,\eta}$.

\begin{theorem}
\label{thm:functional_equation}
    For $\chi$ and $\eta$ in general position we have for every $w \in W_G$ the functional equation
    \begin{equation*}
        \frac{\mathcal{L}_{\chi,\eta}}{\Gamma(\chi,\eta)}=\frac{\mathcal{L}_{w_V \chi,w_W \eta} \circ \overline{T}_{w,\chi, \eta}}{\Gamma(w_V \chi,w_W \eta)}.
    \end{equation*}
\end{theorem}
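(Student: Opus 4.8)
The plan is to reduce the functional equation to the case where $w$ is a simple reflection, and then to treat these generators separately for $w_V = w_\alpha$ with $\alpha \in \Delta_V$ (with $w_W = 1$) and for $w_W = w_\beta$ with $\beta \in \Delta_W$ (with $w_V = 1$). For the reduction step, I would use the cocycle relations $\overline{T}_{w_1, w_2 \chi} \circ \overline{T}_{w_2,\chi} = \overline{T}_{w_1 w_2, \chi}$ (and its analogue $\overline{T}^J_{w_1, w_2 \eta} \circ \overline{T}^J_{w_2, \eta} = \overline{T}^J_{w_1 w_2, \eta}$) recalled in \S\ref{subsubsection:intertwining}, together with the multiplicativity of the factors $c_w^V$, $c_w^W$ over a reduced decomposition. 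The only genuine content is therefore: for each generator, both sides of the claimed identity differ by an explicit ratio of $c_\alpha$-factors, and one must check that the $\gamma$-factor quotient $\Gamma(\chi,\eta)/\Gamma(w_V\chi, w_W\eta)$ absorbs exactly this ratio, using the one-dimensionality \eqref{eq:mult_one} of the $\Hom$-space which forces $\mathcal{L}_{w_V\chi, w_W\eta} \circ \overline{T}_{w,\chi,\eta}$ to be a scalar multiple of $\mathcal{L}_{\chi,\eta}$.

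Concretely, for a fixed simple reflection I would first invoke \eqref{eq:mult_one}: since $I_{B_V}^{\mathrm{U}(V)}\chi \otimes I_{B_W}^{\mathrm{U}(W)}(\mu\eta)$ is irreducible for generic parameters, $\Hom_H(\cdots,\cc)$ is at most one-dimensional, and by \eqref{eq:pairing_space} the same holds for the Jacobi side. Hence there is a scalar $\gamma_w(\chi,\eta)$ with $\mathcal{L}_{w_V\chi, w_W\eta}\circ \overline{T}_{w,\chi,\eta} = \gamma_w(\chi,\eta)\,\mathcal{L}_{\chi,\eta}$, and the theorem amounts to $\gamma_w(\chi,\eta) = \Gamma(\chi,\eta)/\Gamma(w_V\chi,w_W\eta)$. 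To compute $\gamma_w$ I would evaluate both linear forms on a convenient vector. The natural choice is to feed in the integral expression \eqref{eq:generic_pairing} with the test functions $f_V, f_J$ supported near the open cell $B_V w_0 \mu_+^* B_J U$ of Proposition~\ref{prop:doucle_class_description}: the function $Y_{\chi,\eta}$ is an explicit product of $L_E$ (resp. $L_F$) factors in the coordinates $\alpha_k, \beta_l$, so the effect of $\overline{T}_{w_\alpha,\chi}$ (a rank-one $\mathrm{SL}_2$ or $\mathrm{SU}_3$ computation) on the integrand can be carried out by the standard Casselman–Shalika--type local computation — an integral over a one-dimensional unipotent — exactly as in \cite{Cas}, \cite{Shen}. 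This produces a ratio of the shape $c_\alpha(\chi)^{-1}$ times an explicit $\gamma$-factor of Tate type, which one then matches term-by-term against the definitions of $\Gamma_1^V$, $\Gamma_1^W$, $\Gamma_2$ in \eqref{eq:gamma_defi_2}. The reflections $w_\beta \in W_W$ require the intertwining operator $\overline{T}^J_{w_\beta,\eta}$, which by construction factors through $\overline{T}_{w_\beta,\mu\eta}$ via $S_{\mu\eta}$; so the computation is the same $\mathrm{SL}_2$/$\mathrm{SU}_3$ calculation but now the matching must track the shift by $\mu$ and, crucially, the cross-terms $L_E(\tfrac12,\chi_i\eta_j^{\pm1})$ in $\Gamma_2$ coming from how $Y_{\chi,\eta}$ couples the $\alpha$'s and $\beta$'s.

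The main obstacle I expect is precisely this last bookkeeping in the Jacobi reflections: because $\overline{\nu}_{\mu,\psi}$ is not a representation of a reductive group, the Weyl-group action on $J$ does not fit the usual framework, and the intertwining operator $\overline{T}^J_{w_\beta,\eta}$ mixes the Jacobi coordinates $\beta_l$ with the parameter of $\mathrm{U}(V)$ through the half-integral twists in \eqref{eq:Y_defi}. One must verify that the extra factors $\prod_{j} \prod_{i<r+j} L_E(\tfrac12,\chi_i\eta_j^{-1})/L_E(\tfrac12,\chi_i^{-1}\eta_j)$ (and, in the inert odd case, the lattice-model factors $L_F(1,-\chi_i)$, $L_F(1,\eta_i)$) transform correctly under $w_\beta$; this requires carefully pinning down how a simple reflection in $W_W$ permutes the indices $j$ and interacts with the inequality $i<r+j$ versus $i>r+j$ in $\mathbf{b}(\chi,\eta)$. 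A secondary, more technical point is convergence: one should check that on the open cell the twisted integrals defining $\gamma_w$ converge for $(\chi,\eta)$ in a suitable translate of $\mathcal{U}$, and then invoke Proposition~\ref{prop:analytic_pairing} (Bernstein continuation) to propagate the identity of rational functions to all generic $(\chi,\eta)$. With the generator case in hand, the general $w$ follows by induction on $\ell(w)$ using the cocycle relations and the multiplicativity of the $c$-factors and of $\Gamma$ along a reduced word.
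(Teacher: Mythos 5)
Your proposal follows the paper's proof essentially step for step: reduce to simple reflections via the cocycle relations and multiplicativity of $c_w$ and $\Gamma$ along reduced words; invoke the multiplicity-one bound \eqref{eq:mult_one} to define a scalar $\gamma(\chi,\eta,w_V,w_W)$; evaluate the paired linear forms on Iwahori-fixed test vectors whose behaviour under the normalized intertwining operators is controlled (the paper uses $R(\mu_+^*w_0)(\Psi_{1,\chi}+\Psi_{w_\alpha,\chi})$ against $\Psi_\eta^{1,J}$, applying Casselman's identity $\overline{T}_{w_\alpha,\chi}(\Psi_{1,\chi}+\Psi_{w_\alpha,\chi})=\Psi_{1,w_\alpha\chi}+\Psi_{w_\alpha,w_\alpha\chi}$, and the corresponding $J$-side identity through $S_{\mu\eta}$); compute the resulting rank-one integrals over $N_\alpha^0$ or $N_\beta^0$ (split into $\SL_2$ and $\mathrm{SU}_3$ cases, with the $\mathrm{SU}_3$ case giving a two-variable integral rather than a one-dimensional one, a minor point); and use the Bernstein rational-continuation of Proposition~\ref{prop:analytic_pairing} to propagate from the domain of convergence $\mathcal{U}$ to all generic parameters. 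This is the argument of \S\ref{subsec:gamma_factor} and Propositions~\ref{prop:gamma_alpha_value}--\ref{prop:gamma_beta_value}, so your approach is correct and is the same as the paper's.
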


By multiplicity one \eqref{eq:mult_one}, for $\chi$ and $\eta$ in general position there exists $\gamma(\chi,\eta,w_V,w_W) \in \cc$ such that 
\begin{equation*} 
\mathcal{L}_{w_V \chi,w_W \eta} \circ \overline{T}_{w,\chi, \eta}=\gamma(\chi,\eta,w_V,w_W) \mathcal{L}_{\chi,\eta}.
\end{equation*}
It is enough to prove that $\gamma(\chi,\eta,w_V,w_W)=\frac{\Gamma(w_V \chi,w_W \eta)}{\Gamma(\chi,\eta)}$ for $(w_V,w_W)=(w_\alpha,1)$ or $(1,w_\beta)$ with $\alpha \in \Delta_V$ and $\beta \in \Delta_W$. By Proposition~\ref{prop:analytic_pairing}, $\gamma$ is a rational function so we assume that $(\chi,\eta) \in \mathcal{U}$ is in general position and use the integral expression \eqref{eq:generic_pairing}. Theorem~\ref{thm:functional_equation} will follow from \eqref{eq:gamma_defi_2}, Propositions~\ref{prop:gamma_alpha_value} and~\ref{prop:gamma_beta_value}, and elementary computations. 

\subsection{Computation of \texorpdfstring{$\gamma(\chi,\eta,w_\alpha,1)$}{ the gamma factor}}

\subsubsection{}
\label{subsubsec:gamma_alpha}
Let $\alpha=e_i-e_{i+1} \in \Delta_V$. Define $I_J^1:=I_W \ltimes Y_+^0 Y_-^{*,1} Z^0$ and $\Psi^{1,J}_{\eta}=F_{\eta }^J(1_{I_J^1})$. By~\cite[Theorem~3.4]{Cas} we have
\begin{equation*}
    \overline{T}_{w_\alpha,\chi}(\Psi_{1,\chi} + \Psi_{w_\alpha,\chi})=\Psi_{1,w_\alpha \chi} + \Psi_{w_\alpha,w_\alpha \chi},
\end{equation*}
so that
\begin{equation}
\label{eq:gamma_value}
    \gamma(\chi,\eta,w_\alpha,1)= \frac{\mathcal{L}_{w_\alpha \chi,\eta}(R(\mu_+^* w_0) (\Psi_{1,w_\alpha \chi} + \Psi_{w_\alpha,w_\alpha \chi}) \otimes \Psi^{1,J}_{\eta})}{\mathcal{L}_{\chi,\eta}(R(\mu_+^* w_0)( \Psi_{1,\chi} + \Psi_{w_\alpha,\chi}) \otimes \Psi^{1,J}_{\eta})},
\end{equation}
granted the denominator is non-zero, where we recall that $\mu_+^*=h(1_{Y_+^*},0)$. 
\begin{rem}
\label{rem:r=0_case}
    In \eqref{eq:gamma_value} we implicitly assume that $r \geq 1$. Otherwise, we compute instead
    \begin{equation*}
    \gamma(\chi,\eta,w_\alpha,1)= \frac{\mathcal{L}_{w_\alpha \chi,\eta}(R(w_0) (\Psi_{1,w_\alpha \chi} + \Psi_{w_\alpha,w_\alpha \chi})\otimes R((\mu_+^*)^{-1})\Psi^{1,J}_{\eta})}{\mathcal{L}_{\chi,\eta}(R( w_0)( \Psi_{1,\chi} + \Psi_{w_\alpha,\chi}) \otimes R((\mu_+^*)^{-1})\Psi^{1,J}_{\eta})}.
\end{equation*}
As the calculations are identical, we ignore this issue to simplify the proof. We will work under the assumption $r\geq1$ throughout this section, and leave the easy modifications for $r=0$ to the reader.
\end{rem}

Theorem~\ref{thm:functional_equation} in that case follows from the next proposition which we prove in the rest of this subsection.

\begin{prop}
    \label{prop:gamma_alpha_value}
    The value of $\mathcal{L}_{\chi,\eta}(R(\mu_+^* w_0)( \Psi_{1, \chi} + \Psi_{w_\alpha,\chi})\otimes \Psi^{1,J}_{\eta})$ is 
    \begin{equation*}
        \vol(I_V)\vol(I_J^1) \times \left\{
\begin{array}{lll}
    q_EL_E(1,\chi_i \chi_{i+1}^{-1})^{-1} & 
    1 \leq i < r  &\text{inert case,} \\
    q_FL_F(1,\chi_i \chi_{i+1}^{-1})^{-1} & 
        \left\{ \begin{array}{l}
        1 \leq i \leq r,  \\
             \text{or } r' < i < n 
        \end{array} \right. & \text{split case,}\\
    (q_E-1)\frac{L_E(\frac{1}{2},\chi_i \eta_{i-r+1}^{-1})L_E(\frac{1}{2},\chi_{i+1}^{-1} \eta_{i-r+1})}{L_E(1,\chi_i \chi_{i+1}^{-1})} & r \leq i < n_-  &\text{inert case,} \\
    (q_F-1)\frac{L_F(\frac{1}{2},\chi_i \eta_{i-r'+1})L_F(\frac{1}{2},\chi_{i+1}^{-1} \eta_{i-r+1}^{-1})}{L_F(1,\chi_i \chi_{i+1}^{-1})} & r < i \leq r'  &\text{split case,} \\
    q_FL_F(1,\chi_{n_-})^{-1} & i=n_-  &\text{inert even case,}
    \\
    q_Eq_FL_E(1,\chi_{n_-})^{-1} & i=n_- &\text{inert odd case}.
\end{array}
        \right.
    \end{equation*}
\end{prop}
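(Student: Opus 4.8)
The strategy is to reduce the computation of $\mathcal{L}_{\chi,\eta}(R(\mu_+^* w_0)(\Psi_{1,\chi}+\Psi_{w_\alpha,\chi})\otimes \Psi^{1,J}_\eta)$ to an explicit integral over a torus, following the Casselman--Shalika method as adapted in \cite{Cas} and \cite{Shen}. First I would unwind the definitions: using \eqref{eq:generic_pairing}, the pairing is expressed as an integral over $\mathrm{U}(V)\times J$ of the product of the two Iwahori sections against $Y_{\chi,\eta}$. The vector $\Psi_{1,\chi}+\Psi_{w_\alpha,\chi}=F_\chi(1_{I_V}+1_{I_V w_\alpha I_V})=F_\chi(1_{I_V\cup I_V w_\alpha I_V})$ is supported (via the Iwahori factorization) on the parabolic $P_\alpha$ generated by $B_V$ and $w_\alpha$, so the integral over $\mathrm{U}(V)$ collapses to an integral over (a compact piece of) $P_\alpha$, and similarly $\Psi^{1,J}_\eta=F^J_\eta(1_{I_J^1})$ localizes the $J$-integral near $I_J^1$. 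After translating by $\mu_+^* w_0$ and using the cocycle relations for $\alpha_k,\beta_l$ from \S\ref{subsec:integral_rep}, the support condition from Proposition~\ref{prop:doucle_class_description} cuts the domain down to the root subgroup $N_\alpha$ (of type $\SL_2$, or $\mathrm{SU}_3$ when $i=n_-$ in the inert odd case) times a bounded set.

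The key step is then the explicit rank-one (or rank-$3/2$) integral over $N_\alpha$. One writes an element of $N_\alpha$ as $n_\alpha(x)$ (or $n_\alpha(x_1,x_2)$), computes the Iwasawa decomposition $w_\alpha n_\alpha(x) \in B_V K_V$ as a function of $v(x)$, reads off the value of $Y_{\chi,\eta}$ via the formulas for $\Val{Y_{\chi,\eta}(g)}$ recorded after \eqref{eq:Y_defi}, and integrates the resulting geometric series in $q^{-v(x)}$ against the characteristic function of the domain. This produces the local factors: in the ``root of $\mathrm{U}(W)$ does not meet the Heisenberg part'' cases ($1\le i<r$, or $r'<i<n$ split, or $i=n_-$) one gets a pure $\SL_2$ (resp. $\mathrm{SU}_3$) computation yielding $q_E L_E(1,\chi_i\chi_{i+1}^{-1})^{-1}$ and its variants; in the mixed range $r\le i<n_-$ (resp. $r<i\le r'$ split) the integral additionally picks up the $\overline\psi$-factor on $Y_+^*$ coming from $\mu_+^*$, contributing the two half-integral $L$-factors $L_E(\tfrac12,\chi_i\eta_{i-r+1}^{-1})L_E(\tfrac12,\chi_{i+1}^{-1}\eta_{i-r+1})$ in the numerator and the $(q_E-1)$ prefactor from integrating a character over $\oo_E^\times$. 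Throughout, the volume factors $\vol(I_V)\vol(I_J^1)$ are bookkept from the normalization of $F_\chi$ and $F^J_\eta$.

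The main obstacle, as flagged in the introduction, is that $J$ is \emph{not} a reductive group, so the ``Iwasawa/Bruhat'' manipulations on the $J$-side do not come for free: one must carefully track how $\mu_+^* = h(1_{Y_+^*},0)$ interacts with $w_0$ and with the root subgroup $N_\alpha$, using the conjugation identity \eqref{eq:conj_identity} and the relation \eqref{eq:Jacobi_relation}, and verify that the extra $\overline\psi_U$ and $\overline\psi$-twists in $Y_{\chi,\eta}$ restrict the otherwise-divergent integral to a compact domain exactly as Proposition~\ref{prop:doucle_class_description} predicts. The inert odd case $i=n_-$ is the most delicate: here $N_\alpha \cong \mathrm{SU}_3(F)$ with the constraint $\Tr_{E/F}(x_2/\tau)=-N_{E/F}(x_1)$, so the integral is genuinely two-dimensional and one must use that $q_F$ is odd (Hensel's lemma) to parametrize it, reproducing the $c_{e_a}(\chi)$-type factor and the extra $q_E$ relative to the even case.

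I would organize the write-up as: (i) reduce to the $N_\alpha$-integral using the support properties of the Iwahori sections and Proposition~\ref{prop:doucle_class_description}; (ii) do the $\SL_2$ computation for the ``pure'' ranges, citing \cite[Section~3]{Cas} for the Iwasawa decomposition of $w_\alpha n_\alpha(x)$; (iii) do the mixed-range computation, where the new input is the $Y_+^*$-integral against $\overline\psi$; (iv) handle the $\mathrm{SU}_3$ case separately. The non-vanishing of the denominator in \eqref{eq:gamma_value} follows a posteriori since each listed value is a nonzero rational function of $(\chi,\eta)$ for generic $(\chi,\eta)$.
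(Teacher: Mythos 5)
Your plan reproduces the paper's proof essentially verbatim: reduce the pairing to the single rank-one integral $I_\alpha=\int_{N_\alpha^0}Y_{\chi,\eta}(w_\alpha n_\alpha w_0\mu_+^*)\,dn_\alpha$, split into the $G_r$-block range, the mixed range where $\mu_+^*$ is conjugated, the last simple root, and the separate $\mathrm{SU}_3$ computation in the inert odd case, then evaluate each by Iwasawa decomposition and standard geometric-series/Gauss-sum integrals. One small correction to keep in mind when writing it up: the collapse of the domain from $I_Vw_\alpha I_V\times I_J^1$ to $N_\alpha^0$ is obtained from the Iwahori double-coset identities of Lemma~\ref{lem:technical_equalities}, while Proposition~\ref{prop:doucle_class_description} is what you then use to evaluate $Y_{\chi,\eta}$ on the reduced domain — your plan conflates these two inputs.
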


\subsubsection{}
The next lemma is a straightforward adaptation of~\cite[Lemma~2.8.2]{Shen2}.

\begin{lem}
\label{lem:technical_equalities}
    Let $\alpha \in \Delta_V$, $\beta \in \Delta_W$. Then for any values of $\pm 1$
    \begin{align}
        I_V w_0^{\pm 1} \mu_+^{*,\pm 1} I_J^1 U^0 &= B_V^0 w_0^{\pm 1} \mu_+^{*,\pm 1} B_J^0 U^0, \label{eq:tech_1} \\
        I_V w_0^{\pm 1} \mu_+^{*,\pm 1} I_W U^0 &\subset B_V^0 w_0^{\pm 1} B_J^0 Y_+^{*,0} U^0, \label{eq:tech_new} \\
        I_V w_\alpha I_V w_0^{\pm 1} \mu_+^{*,\pm 1} I_J^1 U^0 &= B_V^0 w_\alpha N_\alpha^0 w_0^{\pm 1} \mu_+^{*,\pm 1} B_J^0 U^0, \label{eq:tech_2}  \\
       I_V w_0^{\pm 1} \mu_+^{*,\pm 1} I_J w_\beta I_J U^0 &= B_V^0 w_0^{\pm 1} N_\beta^0 w_\beta Y_+^{*,0} B_J^0 U^0. \label{eq:tech_3} 
    \end{align}
\end{lem}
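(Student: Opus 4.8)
The plan is to prove each identity by establishing the two inclusions separately (and \eqref{eq:tech_new} by the single stated inclusion), following the scheme of \cite[Lemma~2.8.2]{Shen2}. The inclusions $\supseteq$ in \eqref{eq:tech_1}, \eqref{eq:tech_2}, \eqref{eq:tech_3} are immediate from subgroup containments: $B_V^0$ and $N_\alpha^0$ lie in $I_V$; the groups $B_J^0$, $Y_+^{*,0}$, $N_\beta^0$ lie in $I_J$ and $B_J^0$ lies in $I_J^1$; $\mu_+^*\in N_V^0\subseteq K_V$, which is clear from \eqref{eq:h_defi} since $1_{Y_+^*}$ and $\frac12\langle 1_{Y_+^*},1_{Y_+^*}\rangle_V$ are integral; and for the terms involving a simple reflection one moreover uses the standard double-coset identity $I_V w_\alpha I_V=I_V w_\alpha N_\alpha^0$ together with its analogue in the Jacobi group for $w_\beta$, both of which follow from the Iwahori factorizations and \eqref{eq:Jacobi_relation}.

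For the reverse inclusions — the substance of the lemma — the plan is to use the Iwahori factorizations of $I_V$ and of $I_J^1$ (resp. $I_J$, $I_W$) and to sweep every factor through $w_0^{\pm1}\mu_+^{*,\pm1}$, the goal being to absorb it either into $B_V^0$ on the far left or into $B_J^0 U^0$ on the far right. The torus factors and the positive unipotent factors of $I_V$ are absorbed into $B_V^0$ after conjugation, using that $w_0\in K_V$ is integral and normalizes the congruence filtration of $K_V$ and that $\mu_+^*\in N_V^0$ normalizes each congruence subgroup $N_V^i$; the deep negative factor $N_V^{-,1}$ of $I_V$ becomes, after conjugation by $w_0$, a deep positive unipotent of $\mathrm{U}(V)$. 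Decomposing this element and the factors of $I_J^1$ along the $N_W$-, Heisenberg-, $Y^*$- and $U$-directions by means of \eqref{eq:h_defi}, \eqref{eq:Jacobi_relation} and \eqref{eq:conj_identity}, one checks that the $Y^*$-components fall into $Y_-^{*,1}$ and the negative $N_W$-components are absorbed back on the left, while everything else — together with all the commutators that arise, which are of strictly higher level — lands in $B_J^0 U^0$. The cases \eqref{eq:tech_new}, \eqref{eq:tech_2}, \eqref{eq:tech_3} differ from \eqref{eq:tech_1} only by the harmless extra factors $Y_+^{*,0}$, $N_\alpha^0$, $N_\beta^0$ produced respectively by stopping one step earlier and by the simple-reflection double cosets. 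As an independent check of the coarse statement — membership in the correct Bruhat cell — one can compute the minors $\alpha_k,\beta_l$ and the valuation ratio of Proposition~\ref{prop:doucle_class_description} on the left-hand sides: by the equivariances recorded in \S\ref{subsec:integral_rep} they equal their values at $w_0^{\pm1}\mu_+^{*,\pm1}$ (resp. at the appropriate reflection translate) up to units of $\oo_E$, hence are nonzero and satisfy the required inequality.

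The main obstacle is the non-commutative bookkeeping inside $N_V$, which simultaneously houses the reductive root subgroups of $\mathrm{U}(V)$, the Heisenberg group $\mathbb{H}(\mathbb{W})$ and the unipotent $U$. The crucial and slightly delicate point is that the ``$Y^*$-directions'' generated while sweeping are exactly the ones annihilated by the level-$1$ congruence conditions deliberately built into $I_V$ (through $N_V^{-,1}$) and into $I_J^1$ (through $Y_-^{*,1}$) — which is precisely why $I_J^1$ rather than $I_J$ occurs in \eqref{eq:tech_1} and \eqref{eq:tech_2} — and one has to verify the relevant commutator relations uniformly over the inert even, inert odd and split cases and for both choices of sign. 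Once this is organized, the remaining manipulations are routine.
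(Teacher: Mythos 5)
Your high-level strategy is the right one and is exactly what the paper invokes: the paper gives no proof, saying only that the lemma is a straightforward adaptation of \cite[Lemma~2.8.2]{Shen2}, and your plan (Iwahori factorizations, sweeping every factor through $w_0^{\pm1}\mu_+^{*,\pm1}$, tracking the congruence filtration) is the one Shen's argument uses. You also correctly observe that $\mu_+^*\in N_V^0$, that $w_0$ normalizes the level filtration, and that the level-$1$ constraint built into $I_J^1$ through $Y_-^{*,1}$ is what distinguishes \eqref{eq:tech_1} and \eqref{eq:tech_2} from \eqref{eq:tech_3}.

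However, there is a genuine gap at the step you describe as ``the $Y^*$-components fall into $Y_-^{*,1}$''. Your stated absorption mechanism — push each factor into $B_V^0$ on the far left or into $B_J^0U^0$ on the far right — cannot dispose of the level-$1$ Heisenberg elements $h(y^*,0)$, $y^*\in Y_+^{*,1}$, that accumulate during the sweep (from the $\mathbb{H}(\mathbb{W})$-part of $w_0^{-1}N_V^{-,1}w_0$, from the $Y_-^{*,1}$ factor of $I_J^1$, and from commuting $N_W^{-,1}$ past $\mu_+^*$). Conjugating such an $h(y^*,0)$ past $w_0$ lands it in $N_V^{-,1}$, which is \emph{not} in $B_V^0=T_V^0N_V^0$; and it lies neither in $B_J^0$ (the $Y^*$-directions are excluded from $B_J$) nor in $U^0$. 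So neither of your two ``destinations'' receives it, and as written the sweep does not terminate. The missing ingredient is that for $y^*\in Y_+^{*,1}$ one has $\mu_+^* h(y^*,0)=h(1_{Y_+^*}+y^*,0)$, and since $y^*$ has level $\geq 1$, all coordinates of $1_{Y_+^*}+y^*$ are units; by \eqref{eq:conj_identity} (and, in the inert odd case, the $\mathrm{U}(1)\subset T_W$ factor acting on $Y_{m_+}$) this is a $T_W^0$-conjugate of $\mu_+^*$. That torus factor $t_W\in T_W^0$ then splits harmlessly: $w_0 t_W^{-1}w_0^{-1}\in T_V^0\subset B_V^0$ on the left and $t_W\in T_W^0\subset B_J^0$ on the right. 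This conjugation trick, not a direct absorption into $B_V^0$ or $B_J^0U^0$, is the real content of the level-$1$ condition, and it is also what breaks down at level $0$ (one cannot write $1_{Y_+^*}+y^*$ as a unit vector if some $y^*_j\equiv -1$), explaining the explicit extra factor $Y_+^{*,0}$ on the right of \eqref{eq:tech_3}. Without this step your sketch does not close, so you should make it explicit before the remaining commutator bookkeeping — which I agree is then routine.
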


\begin{lem}
\label{lem:alpha_base_value}
    We have
    \begin{equation*}
        \mathcal{L}_{\chi,\eta}(R(\mu_+^* w_0) \Psi_{1,\chi} \otimes \Psi^{1,J}_{\eta})=\vol(I_V)\vol(I_J^1).
    \end{equation*}
\end{lem}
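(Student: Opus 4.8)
The plan is to unfold the defining integral \eqref{eq:generic_pairing} of $\mathcal{L}_{\chi,\eta}$ after writing both vectors as images under $F_\chi$ and $F_\eta^J$ of characteristic functions, and to check that on the resulting domain of integration the kernel $Y_{\chi,\eta}$ is identically $1$. Since $F_\chi$ intertwines right translations and $R(g)1_{I_V}=1_{I_V g^{-1}}$, one has $R(\mu_+^* w_0)\Psi_{1,\chi}=F_\chi(1_{I_V w_0^{-1}(\mu_+^*)^{-1}})$, while $\Psi^{1,J}_\eta=F_\eta^J(1_{I_J^1})$ by definition. Plugging $f_V=1_{I_V w_0^{-1}(\mu_+^*)^{-1}}$ and $f_J=1_{I_J^1}$ into \eqref{eq:generic_pairing} (legitimate since its right-hand side depends only on $F_\chi(f_V)$ and $F_\eta^J(f_J)$), and assuming $(\chi,\eta)\in\mathcal{U}$ so that the integral converges, gives
\begin{equation*}
    \mathcal{L}_{\chi,\eta}(R(\mu_+^* w_0)\Psi_{1,\chi}\otimes\Psi^{1,J}_\eta)=\int_{I_V w_0^{-1}(\mu_+^*)^{-1}}\int_{I_J^1}Y_{\chi,\eta}(g_V g_J^{-1})\,dg_V\,dg_J.
\end{equation*}

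Next I would pin down the set described by $g_V g_J^{-1}$ on this domain. Because $I_J^1$ is a group, $g_V g_J^{-1}\in I_V w_0^{-1}(\mu_+^*)^{-1}I_J^1$. By \eqref{eq:conj_identity}, the element $t_0\in T_W^0$ acting as $-\mathrm{id}$ on $Y_-$ (and as $-1\in\mathrm{U}(1)^0$ on $Y_{m_+}$ in the inert odd case) conjugates $\mu_+^*$ to $(\mu_+^*)^{-1}$; since moreover $w_0$ normalizes $T_V^0$, this yields $w_0^{-1}(\mu_+^*)^{-1}\in T_V^0 w_0\mu_+^* T_W^0$, hence $I_V w_0^{-1}(\mu_+^*)^{-1}I_J^1=I_V w_0\mu_+^* I_J^1$. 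Applying \eqref{eq:tech_1} of Lemma~\ref{lem:technical_equalities}, $I_V w_0\mu_+^* I_J^1\subset I_V w_0\mu_+^* I_J^1 U^0=B_V^0 w_0\mu_+^* B_J^0 U^0$, so in particular $g_V g_J^{-1}$ always lies in the support $B_V w_0\mu_+^* B_J U$ of $Y_{\chi,\eta}$ and the integrand is everywhere defined.

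It then remains to evaluate $Y_{\chi,\eta}$ on $B_V^0 w_0\mu_+^* B_J^0 U^0$: choosing in \eqref{eq:Y_defi} a decomposition $g_V g_J^{-1}=b_V w_0\mu_+^* b_J u$ with $b_V\in B_V^0$, $b_J\in B_J^0$, $u\in U^0$, the three factors $(\chi^{-1}\delta_{B_V}^{\frac{1}{2}})(b_V)$, $(\eta\overline{\psi}\delta_{B_J}^{-\frac{1}{2}})(b_J)$ and $\overline{\psi}_U(u)$ all equal $1$, since $\chi$, $\eta$, $\psi$ are unramified, $\delta_{B_V}$ is trivial on $B_V^0$ and $\delta_{B_J}$ on $B_J^0$, and $\psi_U$ is trivial on $U^0$. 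Hence the integrand equals $1$ throughout, and the integral collapses to $\vol(I_V w_0^{-1}(\mu_+^*)^{-1})\vol(I_J^1)=\vol(I_V)\vol(I_J^1)$ by translation invariance of the Haar measures. The one point needing care is the identification $I_V w_0^{-1}(\mu_+^*)^{-1}I_J^1=I_V w_0\mu_+^* I_J^1$, i.e. that $(\mu_+^*)^{-1}$ is $T_W^0$-conjugate to $\mu_+^*$; granting Lemma~\ref{lem:technical_equalities}, everything else is routine bookkeeping.
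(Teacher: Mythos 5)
Your proof is correct and takes essentially the same route as the paper: unfold \eqref{eq:generic_pairing} with $f_V=1_{I_V w_0^{-1}(\mu_+^*)^{-1}}$, $f_J=1_{I_J^1}$, reduce to evaluating $Y_{\chi,\eta}$ on $I_V w_0^{-1}\mu_+^{*,-1} I_J^1$, and apply Lemma~\ref{lem:technical_equalities}~\eqref{eq:tech_1} together with the defining invariances in \eqref{eq:Y_defi}. The paper leaves implicit that $w_0^{-1}\mu_+^{*,-1}$ lies in $T_V^0 w_0\mu_+^* T_W^0$ (so that $Y_{\chi,\eta}\equiv 1$ on the relevant set); you supply this step explicitly via conjugation by $-\mathrm{id}_W\in T_W^0$, which is the right observation.
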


\begin{proof}
    Using the integral expression \eqref{eq:generic_pairing} we have
    \begin{equation*}
        \mathcal{L}_{\chi,\eta}(R(\mu_+^* w_0) \Psi_{1,\chi} \otimes \Psi^{1,J}_{\eta})=\int_{I_V \times I_J^1} Y_{\chi,\eta}(g_V w_0^{-1} \mu_+^{*,-1} g_J) dg_V dg_J.
    \end{equation*}
    The result follows from the invariance properties of $Y_{\chi,\eta}$ in \eqref{eq:Y_defi} and Lemma~\ref{lem:technical_equalities} \eqref{eq:tech_1}.
\end{proof}

\subsubsection{}
We now compute $\mathcal{L}_{w_\alpha \chi,\eta}(R(\mu_+^* w_0) \Psi_{1,w_\alpha \chi}\otimes \Psi^{1,J}_{\eta})$. Using \eqref{eq:Y_defi} and Lemma~\ref{lem:technical_equalities} \eqref{eq:tech_2} this is
\begin{equation}
\label{eq:I_alpha_def}
    \int_{I_V w_\alpha I_V} \int_{I_J^1} Y_{\chi,\eta}(g_V w_0^{-1} \mu_+^{*,-1} g_J) dg_V dg_J=\vol(I_V w_\alpha I_V) \vol(I_J^1) \int_{N_\alpha^0} Y_{\chi,\eta}(w_\alpha n_\alpha w_0 \mu_+^*) dn_\alpha.
\end{equation}
Denote by $I_\alpha$ this last integral. 

\textbf{Case $\SL_2$} We first treat the case $D(G_\alpha)=\SL_2(E)$ or $\SL_2(F)$, so that $N_\alpha$ is isomorphic to $E$ or $F$. We write $\alpha^\vee$ for the corresponding coroot of the maximal torus of $D(G_\alpha)$ (which is a cocharacter over $E$ in the first case). For $t \neq 0$ we have $w_\alpha n_\alpha(t)= \alpha^{\vee}(-t^{-1}) n_\alpha(-t) n_{-\alpha}(t^{-1})$. Therefore
\begin{equation*}
    I_\alpha=\int_{\Val{t} \leq 1} (\chi^{-1} \delta_{B_V}^{\frac{1}{2}})(\alpha^{\vee}(t^{-1}))Y_{\chi,\eta}(w_0 n_{w_0(-\alpha)}(t^{-1})\mu_+^*)dt.
\end{equation*}

\textbf{Case $\SL_2$ 1.} In the inert case with $1 \leq i < r-1$, or the split case with $1 \leq i < r$ or $r'+1< i \leq n$, then $n_{w_0(-\alpha)}(t^{-1})\mu_+^*=\mu_+^* n_{w_0(-\alpha)}(t^{-1})$. In the inert case with $i=r-1$, or the split case with $i=r'+1$, $n_{w_0(-\alpha)}(t^{-1})\mu_+^*=\mu_+^* n_{w_0(-\alpha)}(t^{-1})u$ with $\psi_U(u)=1$. Finally, in the split case with $i=r$, $n_{w_0(-\alpha)}(t^{-1})\mu_+^*=\mu_+^* n_{w_0(-\alpha)}(t^{-1})h(0,t^{-1})$. Therefore we always have
\begin{equation}
    \label{eq:case_1_alpha}
    I_\alpha=\int_{\Val{t} \leq 1} (\chi^{-1} \delta_{B_V}^{\frac{1}{2}})(\alpha^{\vee}(t^{-1})) \overline{\psi_E}(t^{-1})dt.
\end{equation}

\textbf{Case $\SL_2$ 2.} In the inert case with $r \leq i < n_-$ set $j=i-r$, and in the split case with $r+1 \leq i \leq r'$, set $j=r'-i$. Let $\bar{.}$ be $c \in \gal(E/F)$ in the inert case, and identity in the split case. Define $\mathbf{t}_{j+1}(t)=t_W((1-t^{-1})w_{j+1}) \in T_W$ and $y_{j+1}^*=\sum_{k=1}^{m_-} w_k^*-\overline{t}^{-1} w_{j+1}^*$. Then $n_{w_0(-\alpha)}(t^{-1}) \mu_+^*=h(y_1^*,0)$ if $j=0$, and $n_{w_0(-\alpha)}(t^{-1}) \mu_+^*=h(y_{j+1}^*,0) n_{w_0(-\alpha)}(t^{-1})$ otherwise. But $h(y_{j+1}^*,0)=\mathbf{t}_{j+1}(t)^{-1} \mu_+^* \mathbf{t}_{j+1}(t)$ so that
\begin{equation}
\label{eq:case_2_alpha}
    I_\alpha=\int_{\Val{t} \leq 1} (\chi^{-1} \delta_{B_V}^{\frac{1}{2}})(\alpha^{\vee}(t^{-1})) (\chi^{-1} \delta_{B_V}^{\frac{1}{2}})({}^{w_0}\mathbf{t}_{j+1}(t)^{-1}) (\eta \delta_{B_J}^{-\frac{1}{2}})(\mathbf{t}_{j+1}(t))dt.
\end{equation}

\textbf{Case $\SL_2$ 3.} In the inert even case with $i=n_-$, $\alpha=2e_{n_-}$ and  $D(G_\alpha)$ is $\SL_2(F)$. Then $n_{w_0(-\alpha)}(t^{-1})\mu_+^*=\mu_+^* h(t^{-1} w_{m_-}, t^{-1})n_{w_0(-\alpha)}(t^{-1})$ and 
\begin{equation}
\label{eq:case_3_alpha}
    I_\alpha=\int_{\Val{t} \leq 1} (\chi^{-1} \delta_{B_V}^{\frac{1}{2}})(\alpha^{\vee}(t^{-1})) \overline{\psi_E}(t^{-1})dt.
\end{equation}

\textbf{Case $\mathrm{SU}_3$} Assume now that $D(G_\alpha)$ is $\mathrm{SU}_3(F)$, which occurs in the inert odd case with $i=n_-$. Replace $e_{n_-}$ by $\alpha=2e_{n_-}$. In that case, recall that
\begin{equation}
\label{eq:SU_param}
    n_{\alpha}(x_1,x_2)=\begin{pmatrix}
        1 & -\tau \overline{x_1} & x_2 \\
        & 1 & x_1 \\
        & & 1
    \end{pmatrix}, \; x_1, x_2 \in E, \; \Tr_{E/F}(\frac{x_2}{\tau})=-N_{E/F}(x_1).
\end{equation}
For $x_2 \neq 0$ we have $w_\alpha n_\alpha(x_1,x_2) w_0 \in T_V(\oo_E) n_\alpha( \frac{x_1}{\overline{x_2}},\frac{-1}{x_2}) \alpha^{\vee}(\frac{1}{\overline{x_2}}) w_0 n_\alpha(\frac{x_1}{\overline{x_2}},\frac{-1}{x_2})$ and furthermore $n_\alpha(x_1,x_2) \mu_+^* n_\alpha(x_1,x_2)^{-1}=\mu_+^* h(x_1 w_{m_+} + (-\tau \overline{x_1}+x_2) w_{m_-},\frac{1}{2}\Tr(x_2))$. Therefore, by Proposition~\ref{prop:doucle_class_description}, 
\begin{align}
\label{eq:SU_3_coord}
    Y_{\chi,\eta}(w_\alpha n_\alpha(x_1,x_2) w_0 \mu_+^*)&=(\chi^{-1} \delta_{B_V}^{\frac{1}{2}})\left(\alpha^\vee(x_2^{-1})\right)Y_{\chi,\eta}\left(w_0 \mu_+^* h\left(\frac{x_1}{\overline{x_2}}w_{m_+},-\frac{1}{2}\Tr(\frac{1}{x_2})\right)\right) \\
 &= \left\{
    \begin{array}{cc}
        0 & \text{if } \Val{\frac{x_1}{{x_2}}}>1  \\
         (\chi \delta_{B_V}^{-\frac{1}{2}})\left(\alpha^\vee(x_2)\right)\overline{\psi} \left(-\frac{1}{2}\Tr(\frac{1}{x_2})\right) & \text{if } \Val{\frac{x_1}{{x_2}}} \leq 1
    \end{array}
 \right. \nonumber
 \end{align}
 Let $x \in E^\times$ and $y \in F^\times$. In our coordinates, $x_1=x$ and $x_2=y-\tau \frac{N(x)}{2}$ and we are computing
 \begin{equation*}
     I_\alpha = \int_{\Val{x}_E \leq 1}  \int_{\Val{y}_F \leq 1} (\chi \delta_{B_V}^{-\frac{1}{2}})\left(\alpha^\vee(x_2)\right)\overline{\psi} \left(-\frac{1}{2}\Tr(\frac{1}{x_2})\right)  1_{\Val{x}_E \leq \Val{x_2}_E} dy dx.
 \end{equation*}
 We now split the integral.
 \begin{itemize}
     \item If $\Val{x}_E=1$, then the integrand is $1$ so that we get $\frac{q_E-1}{q_E}$.
     \item If $\Val{x}_E<1$ and $\Val{y}_E \leq \Val{N(x)}_E$, then $\Val{x_2}_E=\Val{N(x)}_E < \Val{x}_E$, and the integrand is $0$.
     \item If $\Val{x}_E<1$ and $\Val{y}_E > \Val{N(x)}_E$, we need $\Val{x}_E \leq \Val{x_2}_E=\Val{y}_E$ otherwise the integrand is zero. In this range, $\overline{\psi} \left(-\frac{1}{2}\Tr(\frac{1}{x_2})\right)=\overline{\psi}(-y^{-1})$.
 \end{itemize}
Therefore,
\begin{equation}
\label{eq:case_4_alpha}
    I_\alpha=\frac{q_E-1}{q_E}+\int_{\Val{x}_E <1} \int_{\substack{y \in F^\times \\ \Val{x}_E \leq \Val{y}_E \leq 1}} (\chi \delta_{B_V}^{-\frac{1}{2}})\left(\alpha^\vee(y)\right)\overline{\psi}(y^{-1})dydx.
\end{equation}

The integrals \eqref{eq:case_1_alpha}, \eqref{eq:case_2_alpha}, \eqref{eq:case_3_alpha} and \eqref{eq:case_4_alpha} can now be computed by standard techniques (\cite[Lemmas~8.5, 8.9 and 8.11]{Shen}) and Proposition~\ref{prop:gamma_alpha_value} follows from Lemma~\ref{lem:alpha_base_value}, \eqref{eq:I_alpha_def} and the value of $\vol(I_V w_\alpha I_V)$. Note that $(\chi,\eta) \in \mathcal{U}$ implies that these integrals converge. 

\subsection{Computation of \texorpdfstring{$\gamma(\chi,\eta,1,w_{\beta})$}{gamma}}

\subsubsection{}
Let $\beta=e_j-e_{j+1} \in \Delta_W$. As in section~\ref{subsubsec:gamma_alpha} we have
\begin{equation*}
      \gamma(\chi,\eta,1,w_\beta)= \frac{\mathcal{L}_{\chi,w_\beta \eta}(R(\mu_+^* w_0) \Psi_{1,\chi}\otimes (\Psi_{1,w_\beta \eta}^J+\Psi_{w_\beta, w_\beta \eta}^J))}{\mathcal{L}_{\chi, \eta}(R(\mu_+^* w_0) \Psi_{1,\chi}\otimes (\Psi_{1,\eta}^J+\Psi_{w_\beta,\eta}^J))}.
\end{equation*}
\begin{prop}
\label{prop:pi_defi}
    Recall that $\Delta_{T_W}'$ was defined in \S\ref{subsubsec:Gross_defi}. Set 
    \begin{equation*}
        \Pi(\chi,\eta)=\Delta_{T_W}^{',-1} \times \left\{
        \begin{array}{ll}
            \displaystyle  \prod_{j=1}^{m_-} L_E(\frac{1}{2},\eta_j \chi_{r+j}^{-1}) &  \text{ inert case,}\\
            \displaystyle  \prod_{j=1}^{m} L_F(\frac{1}{2},\eta_j \chi_{r'-j+1}) & \text{ split case.}
        \end{array}
        \right.
    \end{equation*}
    \label{prop:gamma_beta_value}
    The value of $\mathcal{L}_{\chi, \eta}(R(\mu_+^* w_0) \Psi_{1,\chi}\otimes (\Psi_{1,\eta}^J+\Psi_{w_\beta,\eta}^J))$ is 
    \begin{equation*}
        \vol(I_V)\vol(I_J) \Pi(\chi,\eta) \times \left\{
\begin{array}{lll}
    q_E \frac{L_E(\frac{1}{2},\eta_j \chi_{r+j+1}^{-1})L_E(\frac{1}{2},\chi_{r+j} \eta_{j+1}^{-1})}{L_E(1,\eta_j \eta_{j+1}^{-1})L_E(1,\chi_{r+j} \chi_{r+j+1}^{-1})} & 1 \leq j < m_- & \text{ inert case,}
    \\
    q_F \frac{L_F(\frac{1}{2},\eta_j \chi_{r'-j})L_F(\frac{1}{2},\eta_{j+1}^{-1} \chi_{r'-j+1}^{-1})}{L_F(1,\eta_j \eta_{j+1}^{-1})L_F(1,\chi_{r'-j} \chi_{r'-j+1}^{-1})} & 1 \leq j <m & \text{ split case,} \\
     q_F \frac{L_F(1,\chi_{m_-} \eta_{n_-})}{L_F(1,-\eta_{m_-})L_F(1,\chi_{n_-})} & j=m_- & \text{ inert even case,} \\
      q_E q_F \frac{L_F(1,\chi_{m_-} \eta_{n_-})}{L_E(1,-\eta_{m_-})L_E(1,\chi_{n_-})} & j=m_- & \text{ inert odd case.} \\
\end{array}
        \right.
    \end{equation*}
\end{prop}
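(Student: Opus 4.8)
The plan is to transport the computation of $\gamma(\chi,\eta,w_\alpha,1)$ from the previous subsection to the Jacobi side. First, I would transport Casselman's intertwining identity: since $\overline{T}_{w_\beta,\eta}^J=S_{w_\beta\eta}\circ(\overline{T}_{w_\beta,\mu\eta}\otimes\mathrm{id})\circ S_\eta^{-1}$ and, by Lemma~\ref{lem:induced_reps}, $S_\eta$ carries the spherical and Iwahori-fixed vectors of $I_{B_W}^{\mathrm{U}(W)}\mu\eta$ to $\Phi_\eta^{\circ,J}$ and the $\Psi_{w,\eta}^J$, \cite[Theorem~3.4]{Cas} yields
\[
\overline{T}_{w_\beta,\eta}^J\bigl(\Psi_{1,\eta}^J+\Psi_{w_\beta,\eta}^J\bigr)=\Psi_{1,w_\beta\eta}^J+\Psi_{w_\beta,w_\beta\eta}^J.
\]
Consequently $\gamma(\chi,\eta,1,w_\beta)$ is the ratio of the quantity in the statement with $\eta$ replaced by $w_\beta\eta$ over its value at $\eta$, and it suffices to compute $\mathcal{L}_{\chi,\eta}\bigl(R(\mu_+^*w_0)\Psi_{1,\chi}\otimes(\Psi_{1,\eta}^J+\Psi_{w_\beta,\eta}^J)\bigr)$. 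I split this as an \emph{identity term} coming from $\Psi_{1,\eta}^J$ and a \emph{$w_\beta$ term} coming from $\Psi_{w_\beta,\eta}^J$.

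For the identity term, I would insert the integral expression \eqref{eq:generic_pairing} and use the left $B_V^0$-invariance and the support property of $Y_{\chi,\eta}$ (Proposition~\ref{prop:doucle_class_description}) together with Lemma~\ref{lem:technical_equalities}\eqref{eq:tech_new} to localise the $I_V$-integration, leaving an integral over $I_J=I_W\ltimes W^0Z^0$. Writing $W^0=Y_+^0\oplus Y_-^{*,0}$, the $Y_-^{*,0}$-part translates $\mu_+^*=h(1_{Y_+^*},0)$, and by the identities $\beta_l(w_0h(y^*,0))=\pm y^*_l$ of \S\ref{subsec:integral_rep} and the equivariance of $Y_{\chi,\eta}$ in \eqref{eq:Y_defi}, the integral factors over $1\le l\le m_-$ into geometric series producing the local $L$-factors which, together with the volume normalisation, assemble into $\Pi(\chi,\eta)$. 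This step has no counterpart in the $\gamma(\chi,\eta,w_\alpha,1)$ computation, where the analogous quantity was simply $\vol(I_V)\vol(I_J^1)$ because one there uses the smaller group $I_J^1$ (Lemma~\ref{lem:technical_equalities}\eqref{eq:tech_1}); here the intertwining operator $\overline{T}_{w_\beta,\eta}^J$ forces the use of the full Iwahori $I_J$.

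For the $w_\beta$ term, write $\beta=e_j-e_{j+1}\in\Delta_W$ and use Lemma~\ref{lem:technical_equalities}\eqref{eq:tech_3} to reduce it to $\vol(I_V)\vol(I_J)$ times an integral over the root subgroup $N_\beta^0$ of $Y_{\chi,\eta}$ on arguments of the form $w_0\,n'_{-\beta}(\cdot)\,\mu_+^*$. Moving the reflection across via $w_\beta n'_\beta(t)=\beta^\vee(-t^{-1})n'_\beta(-t)n'_{-\beta}(t^{-1})$ and pushing $\beta^\vee(t^{-1})$ and $n'_{-\beta}(t^{-1})$ past $\mu_+^*$ using \eqref{eq:Jacobi_relation} and \eqref{eq:conj_identity}, the computation splits, exactly as in the $\gamma(\chi,\eta,w_\alpha,1)$ case, into the generic case $D(G_\beta)=\SL_2$ (for $1\le j<m_-$, resp.\ $1\le j<m$ in the split case) and the boundary case $j=m_-$ where $D(G_\beta)$ is $\SL_2(F)$ (inert even) or $\mathrm{SU}_3(F)$ (inert odd, handled as in \eqref{eq:SU_param}--\eqref{eq:case_4_alpha}). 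The resulting $p$-adic integrals are evaluated by the standard lemmas \cite[Lemmas~8.5, 8.9 and 8.11]{Shen}, convergence being ensured by $(\chi,\eta)\in\mathcal{U}$. Summing the identity term and the $w_\beta$ term and simplifying gives the stated formula; the $r=0$ case is treated with the modifications of Remark~\ref{rem:r=0_case}.

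The main obstacle is the bookkeeping on the non-reductive Jacobi side: one must keep exact track, via \eqref{eq:Jacobi_relation}, \eqref{eq:conj_identity} and the modular character \eqref{eq:mod_char}, of how $T_W$, $w_\beta$, the Heisenberg translations by $Y$ and $Z$, and the distinguished element $\mu_+^*=h(1_{Y_+^*},0)$ interact — and in particular the genuinely new integral over $Y_-^{*,0}$ in the identity term, which is responsible for the appearance of $\Pi(\chi,\eta)$ and the normalisation $\Delta_{T_W}'$. The $j=m_-$ inert cases require the same $\SL_2(F)$/$\mathrm{SU}_3(F)$ analysis as in the $\gamma(\chi,\eta,w_\alpha,1)$ computation, now with the relevant reflection acting on the $Y$-coordinates rather than directly on $V$.
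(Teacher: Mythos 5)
Your plan is correct and matches the paper's proof in structure and substance: split into the identity term (Lemma~\ref{lem:beta_base_value}) and the $w_\beta$ term, reduce both via the double-coset identities of Lemma~\ref{lem:technical_equalities}, exploit the conjugation trick $h(y^*,0)=t_W(y^*)^{-1}\mu_+^*t_W(y^*)$ and the $\SL_2$/$\mathrm{SU}_3$ reflection decomposition, and evaluate the resulting $p$-adic integrals with \cite[Lemmas~8.5, 8.9 and 8.11]{Shen}, summing the two contributions at the end. The one small inaccuracy is citing Lemma~\ref{lem:technical_equalities}\eqref{eq:tech_new} for the identity term — that identity involves $I_W$ rather than $I_J$ and is only a containment; what is actually needed (and what the paper invokes) is the $w_\beta$-free analogue of \eqref{eq:tech_3}, giving $I_Vw_0^{-1}\mu_+^{*,-1}I_JU^0=B_V^0w_0^{-1}Y_+^{*,0}B_J^0U^0$, after which the $Y_+^{*,0}$-integral produces $\Pi(\chi,\eta)$ exactly as you describe.
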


\subsubsection{}
\begin{lem}
\label{lem:beta_base_value} We have
    \begin{equation*}
        \mathcal{L}_{\chi, \eta}(R(\mu_+^* w_0) \Psi_{1,\chi}\otimes \Psi_{1,\eta}^J)= \vol(I_V)\vol(I_J) \Pi(\chi,\eta).
    \end{equation*}
\end{lem}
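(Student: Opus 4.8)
The plan is to follow the proof of Lemma~\ref{lem:alpha_base_value}, the one new ingredient being that $\Psi_{1,\eta}^J=F_\eta^J(1_{I_J})$ is fixed by $I_J$ rather than by the smaller group $I_J^1$, the difference lying precisely in the $Y_-^*$-lattice directions. First I would record that $R(\mu_+^*w_0)\Psi_{1,\chi}=F_\chi(1_{I_Vw_0^{-1}\mu_+^{*,-1}})$ and unfold the integral expression \eqref{eq:generic_pairing}: after the substitutions $g_V=k_Vw_0^{-1}\mu_+^{*,-1}$ with $k_V\in I_V$ and $g_J\mapsto g_J^{-1}$ on $I_J$, this gives
\begin{equation*}
\mathcal{L}_{\chi,\eta}(R(\mu_+^*w_0)\Psi_{1,\chi}\otimes\Psi_{1,\eta}^J)=\int_{I_V}\int_{I_J}Y_{\chi,\eta}(k_Vw_0^{-1}\mu_+^{*,-1}k_J)\,dk_V\,dk_J.
\end{equation*}

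Since $I_J=I_W\ltimes W^0Z^0$ with $W^0$ splitting along $W=Y_+\oplus Y_-^*$, while $I_J^1=I_W\ltimes Y_+^0Y_-^{*,1}Z^0$, the extra freedom compared to Lemma~\ref{lem:alpha_base_value} is an integration over the lattice $Y_-^*(\oo_E)$. Equivalently, decomposing $I_J=\bigsqcup_{\bar y^*}I_J^1h(y^*,0)$ over $\bar y^*\in Y_-^*(\oo_E)/Y_-^*(\varpi\oo_E)$ — which is legitimate because $Y_-^*$ is isotropic in $\mathbb{W}$, so $h(y^*,0)$ normalizes $I_J^1$ modulo $I_J^1$ — one has $\Psi_{1,\eta}^J=\sum_{\bar y^*}R(h(-y^*,0))\Psi^{1,J}_{\eta}$; this is the Iwahori-vector counterpart of the identity $\phi^\circ=\sum_{\bar y^*}\overline\nu_{\mu,\psi}(h(-y^*,0))\phi^1$ drawn from \eqref{eq:rho_action}, which underlies \eqref{eq:phi_times}. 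Plugging this in and using $\mu_+^{*,-1}h(-y^*,0)=h(-1_{Y_+^*}-y^*,0)$ (again by isotropy of $Y_-^*$), each summand becomes $\int_{I_V}\int_{I_J^1}Y_{\chi,\eta}(k_Vw_0^{-1}h(-1_{Y_+^*}-y^*,0)k_J)\,dk_V\,dk_J$, the term $\bar y^*=0$ being exactly Lemma~\ref{lem:alpha_base_value}.

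I would then evaluate the full sum by the method of Lemma~\ref{lem:alpha_base_value}: Lemma~\ref{lem:technical_equalities} and the invariance properties of $Y_{\chi,\eta}$ integrate out the $T_V^0N_V^0$- and $I_W$-directions, reducing to an integral over the root subgroups of $N_V^{-,1}$ attached to $w_0$ and over $Y_-^*(\oo_E)$, with the support cut out by Proposition~\ref{prop:doucle_class_description} via the functions $\alpha_k,\beta_l,\tilde\beta_{m_+}$ (using $\beta_l(w_0h(y^*,0))=\pm y_l^*$). This factors as a product over $1\le l\le m_-$ (resp. $1\le l\le m$ in the split case) of one-dimensional integrals of the type evaluated in \cite[Lemmas~8.5, 8.9, 8.11]{Shen} and \cite[Lemma~2.8.2]{Shen2}, each equal to $\zeta_E(1)^{-1}L_E(\tfrac{1}{2},\eta_l\chi_{r+l}^{-1})$ in the inert case, resp. $\zeta_F(1)^{-1}L_F(\tfrac{1}{2},\eta_l\chi_{r'-l+1})$ in the split case. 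Their product is $\Delta_{T_W}'^{-1}$ times the relevant product of $L$-factors, namely $\Pi(\chi,\eta)$, and reinstating the volume factors gives $\vol(I_V)\vol(I_J)\Pi(\chi,\eta)$. In the inert even and split cases the inequality of Proposition~\ref{prop:doucle_class_description} is vacuous since $\tilde\beta_{m_+}=0$; in the inert odd case it is active and enters these one-dimensional integrals through the valuations of $\tilde\beta_{m_+}$ and the $\alpha_k$ (which cease to be constant once the $N_V^{-,1}$-variables are present), but it is absorbed by the same standard computations.

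The step I expect to be the main obstacle is precisely this last reduction: correctly identifying the open cell on which $Y_{\chi,\eta}$ is supported after translating the argument by $h(-y^*,0)$ — the interaction of $\mu_+^*$, $w_0$, the root subgroups of $N_V$ and $N_V^-$, and the inequality of Proposition~\ref{prop:doucle_class_description} being delicate — together with keeping the compact-subgroup volume normalizations ($\vol(I_V)$, $\vol(I_J)$, $\vol(I_J^1)$ and their ratios) consistent across the inert even, inert odd, and split cases.
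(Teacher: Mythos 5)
Your strategy reaches the right endpoint, but it takes a detour that the paper avoids and it contains a couple of structural misconceptions.

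The paper's proof is direct and short: applying Lemma~\ref{lem:technical_equalities}\,\eqref{eq:tech_3} (with $w_\beta=1$, so $N_\beta^0w_\beta$ disappears) gives $I_Vw_0^{-1}\mu_+^{*,-1}I_JU^0 = B_V^0w_0^{-1}Y_+^{*,0}B_J^0U^0$, and together with the invariance properties of $Y_{\chi,\eta}$ this collapses the whole double integral over $I_V\times I_J$ in one step to $\vol(I_V)\vol(I_J)\int_{Y_-^*(\oo_E)}Y_{\chi,\eta}(w_0\,h(y^*,0))\,dy^*$ (the $Y_{m_+}^{*,0}$ direction is trivial, as $h(Y_{m_+}(\oo_E),0)\subset B_J^0$). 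The key manipulation the paper then performs, and which you omit, is the conjugation $h(y^*,0)=t_W(y^*)^{-1}\mu_+^*t_W(y^*)$ for generic $y^*$: this rewrites the integrand as a product of unramified characters evaluated at torus elements, after which the integral factors coordinate-by-coordinate into the one-dimensional integrals you cite from \cite{Shen}. Your coset decomposition $I_J=\bigsqcup_{\bar y^*}I_J^1h(y^*,0)$ is correct and the resulting identity $\Psi^J_{1,\eta}=\sum_{\bar y^*}R(h(-y^*,0))\Psi^{1,J}_\eta$ holds, but it does not buy anything: to evaluate each translated term one still needs exactly the same structural fact from Lemma~\ref{lem:technical_equalities} and the same conjugation trick, and the sum over $\bar y^*$ of integrals over $Y_-^*(\varpi\oo_E)$ simply reassembles the integral over $Y_-^*(\oo_E)$ that the paper starts from.

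Two further points are misconceived. First, after the relevant part of Lemma~\ref{lem:technical_equalities} is applied, there is \emph{no} residual integration over root subgroups of $N_V^{-,1}$ in this lemma; the entire $I_V=N_V^{-,1}T_V^0N_V^0$ is absorbed into $B_V^0$ on the left (that is what the technical lemma asserts), so the only surviving integral is over $Y_+^{*,0}$, which lies in $N_V^0$ --- not in $N_V^{-,1}$. Residual $N_\alpha^0$ or $N_\beta^0$ integration occurs only in the $\gamma$-factor computations where a nontrivial simple reflection $w_\alpha$ or $w_\beta$ is present. Second, the valuation inequality from Proposition~\ref{prop:doucle_class_description} is never a concern here: for $y^*\in Y_-^*(\oo_E)$ the $Y_{m_+}^*$-component is zero, so $\tilde\beta_{m_+}(w_0 h(y^*,0))=0$ and the inequality is vacuously satisfied; you do not need to track it even in the inert odd case. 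The delicacy you anticipate in your final paragraph is therefore not present in this particular lemma.
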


\begin{proof}
    By Lemma~\ref{lem:technical_equalities} \eqref{eq:tech_3} we have
    \begin{equation*}
        \mathcal{L}_{\chi, \eta}(R(\mu_+^* w_0) \Psi_{1,\chi}\otimes \Psi_{1,\eta}^J)=\vol(I_V)\vol(I_J) \int_{Y_-^*(\oo_E)}Y_{\chi,\eta}(w_0 h(y^*,0))dy^*.
    \end{equation*}
    For $y^*$ with all coordinates being non zero, we have $h(y^*,0)=t_W(y^*)^{-1} \mu_+^* t_W(y^*)$ so that
    \begin{equation*}
        \int_{Y_-^*(\oo_E)}Y_{\chi,\eta}(w_0 h(y^*,0))dy^*= \int_{Y_-^*(\oo_E)} (\chi^{-1} \delta_{B_V}^{\frac{1}{2}})({}^{w_0}t_W(y^*)^{-1})(\eta \delta_{B_J}^{-\frac{1}{2}})(t_W(y^*))dy^*.
    \end{equation*}
    This can be computed by decomposing as a product (see~\cite[Lemma~8.9]{Shen}).
\end{proof}

\subsubsection{}
By Lemma~\ref{lem:technical_equalities} \eqref{eq:tech_3} again we have
\begin{equation}
\label{eq:I_beta_def}
        \mathcal{L}_{\chi, \eta}(R(\mu_+^* w_0) \Psi_{1,\chi}\otimes \Psi_{w_\beta,\eta}^J)=\vol(I_V)\vol(I_J w_\beta I_J) \int_{N_\beta^{0,'}} \int_{Y_-^*(\oo_E)}Y_{\chi,\eta}(w_0 n_\beta' w_\beta h(y^*,0))dy^* dn_{\beta}.
\end{equation}
Denote by $I_\beta$ this last double integral.

\textbf{Case $\SL_2$}. Assume that $D(G_\beta)$ is $\SL_2(F)$ or $\SL_2(E)$. We write again $\beta^\vee$ for the corresponding coroot of the maximal torus of $D(G(\beta))$, so that $n_\beta'(t) w_\beta=\beta^{\vee}(-t) n_{-\beta}'(t) n_\beta'(-t^{-1})$. Therefore
\begin{equation*}
    I_\beta=\int_{\Val{t}\leq 1} \int_{Y_-^*(\oo_E)}(\chi^{-1} \delta_{B_V}^{\frac{1}{2}})({}^{w_0}\beta^\vee(t))Y_{\chi,\eta}(w_0 n_\beta'(t^{-1}) h(y^*,0)n_\beta'(t^{-1})^{-1})dy^* dt.
\end{equation*}

\textbf{Case $\SL_2$ 1.} In the inert case with $j < m_-$ or in the split case with any $j$, set $y^{*}(t)=y^*-y_j \overline{t}^{-1} w_{j+1}^*$ so that when all coordinates are non zero we have $ n'_\beta(t^{-1}) h(y^*,0)n'_\beta(t^{-1})^{-1}=h(y^{*}(t),0)=t_W(y^{*}(t))^{-1} \mu_+^* t_W(y^{*}(t))$. Therefore
\begin{equation}
\label{eq:case_1_beta}
    I_\beta=\int_{\Val{t}\leq 1} \int_{Y_-^*(\oo_E)}(\chi^{-1} \delta_{B_V}^{\frac{1}{2}})({}^{w_0}\beta^\vee(t) {}^{w_0}t_W(y^{*}(t))^{-1}) (\eta \delta_{B_J}^{-\frac{1}{2}})(t_W(y^{*}(t))) dy^* dt.
\end{equation}

\textbf{Case $\SL_2$ 2.} In the inert even case with $j=m_-$ we have $\beta=2e_{m_-}$ and 
\begin{equation*}
    n'_\beta(t^{-1}) h(y^*,0)n'_\beta(t^{-1})^{-1}=h(y^{*}+t^{-1}y^*_{m_-} w_{m_-},0)=t_W^{-1}(y^{*}) \mu_+^* t_W(y^{*})h(t^{-1}y^*_{m_-} w_{m_-},t^{-1} N_{E/F}(y_{m_-}^*)).
\end{equation*}
We get 
\begin{equation}
\label{eq:case_2_beta}
    I_\beta=\int_{\Val{t}\leq 1} \int_{Y_-^*(\oo_E)}(\chi^{-1} \delta_{B_V}^{\frac{1}{2}})({}^{w_0}\beta^\vee(t) {}^{w_0}t_W(y^{*})^{-1}) (\eta \delta_{B_J}^{-\frac{1}{2}})(t_W(y^{*})) \overline{\psi}(t^{-1} N_{E/F}(y_{m_-}^*)) dy^* dt.
\end{equation}

\textbf{Case $\mathrm{SU}_3$} Assume that $D(G_\alpha)$ is $\mathrm{SU}_3$, which occurs in the inert odd case with $j=m_-$. Take $\beta=e_{m_-}$. Using the same coordinates as in \eqref{eq:SU_3_coord} we see that $Y_{\chi,\eta}(w_0 n'_\beta(x_1,x_2) w_\beta h(y^*,0))$ is
\begin{align*}
    &\; \; (\chi^{-1} \delta_{B_V}^{\frac{1}{2}})\left({}^{w_0}t_W(y^*)^{-1}\beta^{\vee}(x_2)^{-1}\right) (\eta \delta_{B_J}^{-\frac{1}{2}})(t_W(y^*)) Y_{\chi,\eta}\left(w_0 \mu_+^* h\left(y_{m_-}^*\frac{x_1}{\overline{x_2}}w_{m_+},-\frac{1}{2}\Tr(\frac{N_{E/F}(y_{m_-}^*)}{x_2})\right)\right) \\
    &= \left\{
    \begin{array}{lc}
        0 & \text{if } \Val{y_{m_-}^*\frac{x_1}{{x_2}}}>1,  \\
         (\chi^{-1} \delta_{B_V}^{\frac{1}{2}})\left(t_W(y^*)\beta^\vee(x_2)^{-1}\right)(\eta \delta_{B_J}^{-\frac{1}{2}})(t_W(y^*))\overline{\psi} \left(-\frac{1}{2}\Tr(\frac{N_{E/F}(y_{m_-}^*)}{x_2})\right) & \text{if } \Val{y_{m_-}^*\frac{x_1}{{x_2}}} \leq 1.
    \end{array}
 \right. \nonumber
 \end{align*}
 $I_\beta$ is a product of an integral over $y_1^*, \hdots, y_{m_--1}^*$, which is computed as in Lemma~\ref{lem:beta_base_value}, and of
 \begin{equation}
 \label{eq:case_3_beta}
     \int_{\Val{z}_E \leq 1} \int_{\substack{\Val{x}_E \leq 1 \\ \Val{y}_F \leq 1}}(\chi^{-1} \delta_{B_V}^{\frac{1}{2}})\left(\beta^\vee(x_2^{-1} z)\right)(\eta \delta_{B_J}^{-\frac{1}{2}})(\beta^\vee( z))\overline{\psi} \left(-\frac{1}{2}\Tr(\frac{N_{E/F}(z)}{x_2})\right)  1_{\Val{zx}_E \leq \Val{x_2}_E} dy dx dz,
 \end{equation}
 where $y \in F$, $x_2=y-\tau \frac{N_{E/F}(x)}{2}$, and we have replaced $y_{m_-}^*$ by $z$ to ease notations. We split the integral.
 \begin{itemize}
     \item If $\Val{y}_E \leq \Val{N_{E/F}(x)}_E$, then $\Val{x_2}_E=\Val{N_{E/F}(x)}_E$ and the integrand is zero unless $\Val{z}_E \leq \Val{x}_E$. In that case $\overline{\psi} \left(-\frac{1}{2}\Tr(\frac{N_{E/F}(z)}{x_2})\right)=1$ and we are simply integrating a product of multiplicative characters.
     \item If $\Val{y}_E > \Val{N_{E/F}(x)}_E$, then the integrand is zero unless $\Val{zx}_E \leq \Val{y}_E$. In this case we see that $\overline{\psi} \left(-\frac{1}{2}\Tr(\frac{N_{E/F}(z)}{x_2})\right)=\overline{\psi} \left(-\frac{N_{E/F}(z)}{y}\right)$. 
 \end{itemize}
The integrals \eqref{eq:case_1_beta}, \eqref{eq:case_2_beta} and \eqref{eq:case_3_beta} can now be computed by splitting further depending on valuations thanks to~\cite[Lemmas~8.5, 8.9 and 8.11]{Shen}, and Proposition~\ref{prop:gamma_beta_value} follows from Lemma~\ref{lem:beta_base_value} and \eqref{eq:I_beta_def}. This concludes the proof of Theorem~\ref{thm:functional_equation}.

\subsection{Proofs of Theorem~\ref{thm:W_formula} and Proposition~\ref{prop:W_formula2}}
\label{subsec:proofs}

In this section we work under the assumption that $r \geq 1$. The case $r=0$ follows from the modification explained in Remark~\ref{rem:r=0_case}. Let us first remark that the conventions of Theorem~\ref{thm:W_formula} are different from the ones we are currently using as we are inducing from $B$ rather than $B^+=B_V^- \times B_W$, and because moreover we are working with $\mathrm{WS}_{\chi,\eta}$ instead of $\mathrm{WS}_{\chi,\overline{\mu} \eta}$. Set
  \begin{equation}
    \label{eq:d_defi}
    \mathbf{d}_V(\chi)=\prod_{\alpha \in \Sigma^+_{V,\mathrm{nd}}} \frac{1}{1-\langle \chi,\alpha^\vee \rangle}, \quad \mathbf{d}_W(\eta)=\prod_{\beta \in \Sigma^+_{W,\mathrm{nd}}} \frac{1}{1-\langle \eta,\beta^\vee \rangle}.
\end{equation}
Then $\mathbf{d}(\chi\boxtimes\eta)=\mathbf{d}_V(w_0\chi)\mathbf{d}_W(\eta)$. Moreover, because ${}^{w_{0,V}} B_V=B_V^-$ the identification of $T$ with the canonical torus $T_G$ coming from $T \subset B$ and $T \subset B^+$ differ by a conjugation by $w_{0,V}$. In particular, after a change of variable we see that \eqref{eq:formula_intro} in Theorem~\ref{thm:W_formula} is equivalent to, for $\lambda \in \Lambda^-$,
\begin{equation}
\label{eq:formula_not_intro}
        \frac{( \Phi_{\chi, \eta}(\lambda),\phi^\circ )_{\nu}}{(\Phi_{\chi,\eta}(1),\phi^\circ)_\nu} = \frac{\Delta_{\mathrm{U}(W)}}{\Delta_{T_W}} \sum_{w \in W_G} \mathbf{b}(w_V \chi, w_W \eta) \mathbf{d}_V(w_V\chi) \mathbf{d}_W(w_W \mu \eta) \left( \left(w_0 w_V \chi \boxtimes w_W \mu\eta\right) \delta_{B^+}^{-\frac{1}{2}} \right)(\lambda).
\end{equation}

\subsubsection{}
Recall that $\Phi_{\chi,\eta}^I$ was defined in \S\ref{subsec:analytic}. By Remark~\ref{rem:iso_explicit}, for any $\phi \in \overline{\nu}_{\mu,\psi}$ we have
\begin{equation}
\label{eq:explicit_pairing}
    ( \Phi_{\chi,\eta}^I(g_V,g_W), \phi )_\nu=\mathcal{L}_{\chi,\eta}\left(R(g_V) \Phi_\chi^\circ \otimes S_{\eta}\left(R(g_W) \Phi_{\mu \eta}^\circ \otimes \phi \right) \right), \quad (g_V,g_W) \in G.
\end{equation}

\begin{prop}
\label{prop:phi_0_calc}
    For every $\lambda \in \Lambda^-$ and $(\chi,\eta)$ in general position we have 
    \begin{equation*}
    \frac{( \Phi_{\chi,\eta}^I(\lambda_V,\lambda_W), \phi^\circ )_\nu}{\Delta_{T_W}^{',-1} \Gamma(\chi,\eta)}=\sum_{w \in W_G} \mathbf{b}(w_V \chi, w_W \eta) \mathbf{d}_V(w_V\chi) \mathbf{d}_W(w_W \mu \eta) 
    \left( \left(w_0 w_V \chi \boxtimes w_W \mu\eta\right) \delta_{B^+}^{-\frac{1}{2}} \right)(\lambda).
    \end{equation*}
\end{prop}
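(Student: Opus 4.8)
The plan is to combine the functional equation of Theorem~\ref{thm:functional_equation} with the explicit Iwahori-vector expansions of the spherical vectors, then take a limit.

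First I would use the Iwasawa-type formula \eqref{eq:explicit_pairing} together with the expansions \eqref{eq:I_VaI_V} and \eqref{eq:I_WaI_W} of the spherical vectors $\Phi^\circ_\chi$ and $\Phi^{\circ,J}_{\eta}$ as sums of images of Iwahori-fixed vectors under the normalized intertwining operators $\overline{T}_{w^{-1},w\chi}$. Concretely, writing $\lambda=(\lambda_V,\lambda_W)\in\Lambda^-=\Lambda_V^+\times\Lambda_W^-$, I would compute $R(1_{I_V\lambda_V I_V}\otimes 1_{I_W\lambda_W I_W})$ applied to $\Phi^\circ_\chi\otimes\Phi^{\circ,J}_{\mu\eta}$, which evaluates (up to the explicit volume factors $\vol(I_V\lambda_V I_V)/\vol(I_V)$ etc.) to $\Phi^\circ_\chi\otimes\Phi^{\circ,J}_{\mu\eta}$ again after pairing with $\phi^\circ$, but on the other side expands as a sum over $w=(w_V,w_W)\in W_G$ of $c^V_{w_0}(w_V\chi)c^W_{w_0}(w_W\mu\eta)\bigl((w_V\chi\boxtimes w_W\mu\eta)\delta_B^{-1/2}\bigr)(\lambda_V\lambda_W)$ times $\mathcal{L}_{\chi,\eta}$ applied to $\overline{T}_{w^{-1},\,w\chi,w\eta}$ of the Iwahori vectors. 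By Theorem~\ref{thm:functional_equation} the operator $\overline{T}_{w^{-1},\cdot}$ intertwines $\mathcal{L}_{w_V\chi,w_W\eta}$ with $\mathcal{L}_{\chi,\eta}$ up to the ratio $\Gamma(w_V\chi,w_W\eta)/\Gamma(\chi,\eta)$, so each summand is governed by $\Gamma(w_V\chi,w_W\eta)$ times the value of the pairing of the "base" Iwahori vectors, which by Propositions~\ref{prop:gamma_alpha_value} and~\ref{prop:gamma_beta_value} (i.e. the computations behind $\mathcal{L}_{\chi,\eta}(R(\mu_+^* w_0)\Psi_{1,\chi}\otimes\Psi^J_{1,\mu\eta})$) is the constant $\vol(I_V)\vol(I_J)\,\Pi(\chi,\eta)$ up to normalization.

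Then I would assemble the pieces: dividing by $(\Phi^I_{\chi,\eta}(1),\phi^\circ)_\nu$, the volume factors and the base-value constant cancel, leaving a sum over $w\in W_G$ of $\Gamma(w_V\chi,w_W\eta)\cdot c^V_{w_0}(w_V\chi)c^W_{w_0}(w_W\mu\eta)^{-1}$-type factors times the exponential $\bigl((w_V\chi\boxtimes w_W\mu\eta)\delta_{B}^{-1/2}\bigr)(\lambda)$. Unwinding the definition $\Gamma=\Gamma_1^V\Gamma_1^W\Gamma_2$ with $\Gamma_1^V(\chi)=c^V_{w_0}(\chi)/\mathbf{d}_V(\chi)$ and $\Gamma_1^W(\eta)=c^W_{w_0}(\eta)/\mathbf{d}_W(\eta)$, the $c^V_{w_0}$ and $c^W_{w_0}$ factors that appear from \eqref{eq:I_VaI_V}--\eqref{eq:I_WaI_W} and from the $\gamma$-factors cancel against each other, and the combination $\Gamma_2(w_V\chi,w_W\eta)/\Pi$-type factor is exactly designed to collapse to $\mathbf{b}(w_V\chi,w_W\eta)$ (after dividing out $\Delta_{T_W}'$ hidden inside $\Pi$). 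The bookkeeping here — keeping track of the change of variable between $B$ and $B^+=B_V^-\times B_W$ which replaces $\chi$ by $w_0\chi$ in the exponential (as noted in the passage before the statement), and matching the product decompositions of $\mathbf{d}$ and of the $L$-factors defining $\mathbf{b}$ — is the part requiring care but is purely formal rearrangement.

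The main obstacle I anticipate is the precise combinatorial matching of the factors: one must verify that the product $\Gamma_1^V(w_V\chi)\Gamma_1^W(w_W\mu\eta)\Gamma_2(w_V\chi,w_W\eta)$ times $c^V_{w_0}(w_V\chi)c^W_{w_0}(w_W\mu\eta)$, divided by $\Pi(w_V\chi,w_W\eta)$ and by the constant at $\lambda=1$, really equals $\Delta_{T_W}'^{-1}\,\mathbf{b}(w_V\chi,w_W\eta)\,\mathbf{d}_V(w_V\chi)\,\mathbf{d}_W(w_W\mu\eta)$, separately in the inert even, inert odd, and split cases, and that the $w$-dependence of the non-exponential part comes only through $w_V\chi$ and $w_W\eta$ (so that it is genuinely the function $\mathbf{b}\cdot\mathbf{d}_V\cdot\mathbf{d}_W$ evaluated at the Weyl translate, with no residual $w_0$-twist). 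This requires writing $\Gamma_2$, which is a product of $L_E$ or $L_F$ factors indexed by pairs $(i,j)$ with a cutoff at $i<r+j$, and checking it telescopes correctly against $\Pi$ and against the definition \eqref{eq:b_defi} of $\mathbf{b}$; the ranges $i<r+j$ versus $i>r+j$ in \eqref{eq:b_defi} should emerge from the $\prod_{i=1}^{r+j-1}$ truncation in \eqref{eq:gamma_defi_2} combined with the functional equations $L_E(\tfrac12,x)L_E(\tfrac12,x^{-1})$ relating the two. I would carry this out one case at a time, and note that genericity of $(\chi,\eta)$ is used throughout to make all intertwining operators and $\gamma$-factors well-defined, with the final identity of rational functions then holding by density.
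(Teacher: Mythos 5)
Your strategy matches the paper's proof closely: expand $\Phi^\circ_\chi$ and $\Phi^{\circ,J}_{\eta}$ via the Iwahori-vector formulas \eqref{eq:I_VaI_V}--\eqref{eq:I_WaI_W}, apply the functional equation of Theorem~\ref{thm:functional_equation} to move each summand to $\mathcal{L}_{w_V\chi,w_W\eta}$ evaluated on the base Iwahori vectors, use Lemma~\ref{lem:beta_base_value} (not Propositions~\ref{prop:gamma_alpha_value}--\ref{prop:gamma_beta_value}, which compute the $\gamma$-ratios rather than the base value) to get the constant $\vol(I_V)\vol(I_J)\,\Pi(w_V\chi,w_W\eta)$, and finish with the $L$-factor bookkeeping $\Delta_{T_W}'\Pi/\Gamma_2 = \mathbf{b}$. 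This is exactly the argument in \S\ref{subsec:proofs} once one records the geometric identity \eqref{eq:tech_new}, which gives \eqref{eq:spheri_scalar} relating $\bigl(R(\mu_+^* w_0 1_{I_V {}^{w_0}\lambda_V I_V})\otimes R(1_{I_W\lambda_W I_W})\Phi^I_{\chi,\eta}(1),\phi^\circ\bigr)_\nu$ to $\bigl(\Phi^I_{\chi,\eta}(\lambda_V,\lambda_W),\phi^\circ\bigr)_\nu$ up to volumes; you gloss over this step but its content is what you call the ``Iwasawa-type formula.''

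One point to correct: you propose to ``divide by $(\Phi^I_{\chi,\eta}(1),\phi^\circ)_\nu$ so that the volume factors and base-value constant cancel.'' That would be wrong here: the value at $\lambda=1$ is itself the full $W_G$-sum, and determining that it equals $\Delta_{T_W}^{',-1}\Gamma(\chi,\eta)\Delta_{T_W}\Delta_{\mathrm{U}(W)}^{-1}$ is the content of the separate (and harder) Proposition~\ref{prop:normalization}, proved in \S\ref{section:proof_normalization} via the Weyl character formula. The present Proposition only requires dividing by $\Delta_{T_W}^{',-1}\Gamma(\chi,\eta)$; after applying the functional equation you get the overall $\Gamma(\chi,\eta)$ factor for free, and it is precisely $\Delta_{T_W}^{',-1}\Gamma(\chi,\eta)$ — not $(\Phi^I_{\chi,\eta}(1),\phi^\circ)_\nu$ — that cancels cleanly. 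With that substitution your argument is correct, and the remaining verification $\Delta_{T_W}'\,\Pi(w_V\chi,w_W\eta)/\Gamma_2(w_V\chi,w_W\eta)=\mathbf{b}(w_V\chi,w_W\eta)$ together with $c_{w_0}/\Gamma_1=\mathbf{d}$ is the routine telescoping you describe.
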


\begin{proof}
    Let $\lambda=(\lambda_V,\lambda_W) \in \Lambda_V^+ \times \Lambda_W^-$. Note that ${}^{w_0} \lambda_V^{-1} B_V^0 {}^{w_0} \lambda_V \subset B_V^0$ and $\lambda_W^{-1} B_W^0 \lambda_W^0 \subset B_W^0$. By \eqref{eq:tech_new},
\begin{equation}
\label{eq:spheri_scalar}
    ( R(\mu_+^* w_01_{I_V {}^{w_0} \lambda_V I_V}) \otimes R(1_{I_W \lambda_W I_W}) \Phi_{\chi,\eta}^I (1) , \phi^\circ )_\nu = \vol(I_V {}^{w_0} \lambda_V I_V) \vol(I_W \lambda_W I_W) ( \Phi_{\chi,\eta}^I(\lambda_V,\lambda_W), \phi^\circ )_\nu.
\end{equation}
But by \eqref{eq:I_VaI_V}, \eqref{eq:I_WaI_W} and \eqref{eq:explicit_pairing} the LHS is 
\begin{align*}
    \frac{\vol(I_V{}^{w_0}\lambda_V I_V)\vol(I_W\lambda_W I_W)}{\vol(I_V)\vol(I_W)} \sum_{(w_V,w_W) \in W_G} &c_{w_0}^V(w_V \chi) c_{w_0}^W(w_V \mu \eta) (w_0 w_V \chi) (\lambda_V) (w_W \mu \eta)(\lambda_W) \delta_{B^+}^{-\frac{1}{2}}(\lambda) \\
    & \times \mathcal{L}_{\chi,\eta} \circ \overline{T}_{w^{-1},w_V \chi, w_W \eta } \left( R(\mu_+^* w_0) \Psi_{1,w_V\chi} \otimes \Psi_{1,w_W \eta }^J \right).
\end{align*}
It remains to use Theorem~\ref{thm:functional_equation} and Lemma~\ref{lem:beta_base_value} and to do elementary computations.
\end{proof}

\subsubsection{}
We now state the formula for $\mathcal{W}_{\chi,\eta}^I(1)=( \Phi_{\chi,\eta}^I(1), \phi^\circ )_\nu$. We postpone the proof until \S\ref{section:proof_normalization}.
\begin{prop}
\label{prop:normalization}
    We have 
    \begin{equation*}
        \mathcal{W}_{\chi,\eta}^I(1)=\Delta_{T_W}^{',-1} \Gamma(\chi,\eta) \Delta_{T_W} \Delta_{\mathrm{U}(W)}^{-1}.
    \end{equation*}
\end{prop}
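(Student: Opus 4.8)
The goal is to pin down the single unknown constant $\mathcal{W}_{\chi,\eta}^I(1) = (\Phi_{\chi,\eta}^I(1),\phi^\circ)_\nu$, which by Theorem~\ref{thm:mult1intro} (multiplicity one) determines $\mathcal{W}_{\chi,\eta}^I$ completely once the shape of the formula in Proposition~\ref{prop:phi_0_calc} is known. My plan is to exploit the rigidity of the Hecke module $(\Ind_H^G \nu_{\mu,\psi})^K$: by Proposition~\ref{prop:phi_0_calc} we know $(\Phi_{\chi,\eta}^I(\lambda),\phi^\circ)_\nu$ for all $\lambda \in \Lambda^-$ up to the overall factor $\Delta_{T_W}^{',-1}\Gamma(\chi,\eta)$, so it suffices to evaluate the right-hand side of that proposition at a single well-chosen point, or to apply a single Hecke operator, and compare. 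The natural thing is to sum the formula against a suitable spherical function so that the Weyl-group sum collapses. Concretely, I would combine Proposition~\ref{prop:phi_0_calc} with the Weyl character formula (this is exactly the ``$\S\ref{section:L}$: use the Weyl character formula'' step promised in the introduction): the sum $\sum_{w\in W_G}\mathbf{b}(w_V\chi,w_W\eta)\mathbf{d}_V(w_V\chi)\mathbf{d}_W(w_W\mu\eta)\,(w_0 w_V\chi\boxtimes w_W\mu\eta)\delta_{B^+}^{-1/2}(\lambda)$ is, up to the known normalizing $L$-factors, a ratio of the form $\frac{\text{(numerator)}}{\prod_{\alpha}(1-\langle\,\cdot\,,\alpha^\vee\rangle)}$, i.e. a character-type expression whose value as $\lambda\to 0$ (or whose leading behaviour under a degenerating family of $\lambda$'s) is governed by the denominator $\mathbf{d}$-factors of $G$ and the factors $\Delta_{\mathrm{U}(V)},\Delta_{\mathrm{U}(W)},\Delta_{T_W}$ via the Gindikin--Karpelevich / Macdonald-type identity $\sum_{w}c_{w_0}(w\chi) = \frac{\text{const}}{\mathbf{d}}$.

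In more detail, the key steps in order are: (i) rewrite Proposition~\ref{prop:phi_0_calc} at $\lambda = 0$, where the exponential factor is trivial, so that the left-hand side becomes $\mathcal{W}_{\chi,\eta}^I(1)/(\Delta_{T_W}^{',-1}\Gamma(\chi,\eta))$ and the right-hand side is a pure Weyl-sum in $(\chi,\eta)$; (ii) identify that Weyl-sum with a product of local $L$- and $\zeta$-factors by a summation identity — morally the Casselman--Shalika evaluation, here the statement that $\sum_{w\in W_G}\mathbf{d}_V(w_V\chi)\mathbf{d}_W(w_W\mu\eta)\mathbf{b}(w_V\chi,w_W\eta)$ telescopes because $\mathbf{b}$ is built from the same $L_E$/$L_F$-factors that appear in the numerators of the $c_{w_0}$-constants; (iii) match the resulting product of $\zeta$-factors with $\Delta_{T_W}\Delta_{\mathrm{U}(W)}^{-1}$ using the explicit expressions \eqref{eq:Delta_defi} for $\Delta_{G_k},\Delta_{T_W},\Delta_{\mathrm{U}(V)}$; and (iv) assemble, cancelling $\Delta_{T_W}^{',-1}\Gamma(\chi,\eta)$ against itself on both sides to isolate $\mathcal{W}_{\chi,\eta}^I(1)=\Delta_{T_W}^{',-1}\Gamma(\chi,\eta)\,\Delta_{T_W}\Delta_{\mathrm{U}(W)}^{-1}$. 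An alternative to (ii)--(iii), perhaps cleaner, is to avoid evaluating the Weyl-sum directly and instead compute $\mathcal{W}_{\chi,\eta}^I(1)$ from the integral expression \eqref{eq:generic_pairing} by taking $g=1$: $\mathcal{W}_{\chi,\eta}^I(1) = \mathcal{L}_{\chi,\eta}(\Phi_\chi^\circ\otimes\Phi_\eta^\circ)$, then unfold the spherical vectors as in \eqref{eq:I_VaI_V}, \eqref{eq:I_WaI_W} via their Iwahori decompositions, apply the functional equation Theorem~\ref{thm:functional_equation} termwise, and use the base values of Lemma~\ref{lem:alpha_base_value}, Lemma~\ref{lem:beta_base_value} to get a closed Weyl-sum which is then resummed; this keeps everything inside the machinery already set up in $\S\ref{section:Formula}$.

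The main obstacle I anticipate is step (ii)/(iii): proving that the Weyl-group sum of the $\mathbf{b}\cdot\mathbf{d}_V\cdot\mathbf{d}_W$ terms actually collapses to the clean product $\Delta_{T_W}\Delta_{\mathrm{U}(W)}^{-1}$ up to the $\Gamma$-factor. This is the analytic heart of the Casselman--Shalika method and requires the ``miraculous'' cancellation between the $\gamma$-factors $\Gamma_2(\chi,\eta)$ (which encode the uniqueness of Fourier--Jacobi models and were computed in $\S\ref{subsec:gamma_factor}$) and the Gindikin--Karpelevich denominators $c_{w_0}^V,c_{w_0}^W$; bookkeeping the precise powers of $q_E,q_F$ and the $L_F(i,\eta_{E/F}^i)$-type factors through the inert-even, inert-odd, and split cases separately will be delicate, and the $r=0$ versus $r\geq 1$ distinction plus the role of $\Delta_{T_W}'$ versus $\Delta_{T_W}$ in the inert odd case is exactly where sign-of-error is most likely. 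A useful sanity check along the way is the corank-zero case, where Theorem~\ref{thm:II} is already known by \cite{Xue}, and specialization to $\GL_n$ where \cite{KMS2} gives the answer; any proposed resummation identity must reproduce those.
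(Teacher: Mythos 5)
Your starting point is right and matches the paper: setting $\lambda=0$ in Proposition~\ref{prop:phi_0_calc} reduces the statement to the single identity
$\sum_{w\in W_G}\mathbf{b}(w_V\chi,w_W\eta)\,\mathbf{d}_V(w_V\chi)\,\mathbf{d}_W(w_W\mu\eta)=\Delta_{T_W}\Delta_{\mathrm{U}(W)}^{-1}$,
i.e.\ in Satake language $\sum_{w\in W_G}\det(1-q_F^{-1/2}\mathcal{R}_{-,\overline{\mu}}(wS))/D_{\widehat{G}/\widehat{B}}(wS)=\Delta_{T_W}/\Delta_{\mathrm{U}(W)}$. But the mechanism you propose for evaluating this sum — a Gindikin--Karpelevich/Macdonald-type ``telescoping'' of $\sum_w c_{w_0}(w\chi)$ — does not apply. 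Because of the $\mathbf{b}$-factor, the sum is not of the form $\sum_w c_{w_0}(w\chi)$ and there is no $\lambda\to 0$ degeneration argument that makes it collapse; this is precisely the point where your plan has a gap.

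What the paper actually does is different in structure and you would need both halves of it. For $r=0$, it expands $\det(1-q_F^{-1/2}\mathcal{R}_{-,\overline{\mu}}(S))$ as a finite combination of $W_F$-invariant weights $\lambda$ of $\widehat{T}$, checks that each such weight satisfies $i-n\le\lambda_{V,i},\lambda_{W,i}\le i-1$ (so $\lambda+\rho$ is either singular or $W_G$-conjugate to $\rho$), concludes by the Weyl character formula that the sum is \emph{constant in $S$}, and then evaluates it at one explicit degenerate $S$ — a ``show it is constant, then plug in'' argument, not a Macdonald summation. For $r\ge 1$ this weight-bound argument fails outright: $\mathcal{V}_-$ is larger and the weights are no longer confined to that box. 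Instead the paper proves the parabolic factorization Lemma~\ref{lem:unfolding_W}, which uses $W_M=W_{\widetilde{G}}\times W_{G_r}$ and the decomposition $\mathcal{R}_{-,\overline{\mu}}(S)=S_V^{(r)\star}\stackrel{\mathrm{I}}{\otimes}_{\overline{\mu}}S_W\oplus\widetilde{\mathcal{R}}_{-,\overline{\mu}}(\widetilde{S})$ to peel off the already-proved $r=0$ constant and reduce the $W_G$-sum to a sum over $W_V$ only; that remaining $W_V$-sum is then handled by the Cauchy-type $L$-function identity of Lemma~\ref{lem:L_function_unfolding} (imported from~\cite{BPC}) after sending $S_r\to 0$. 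None of this reduction-to-$r{=}0$-plus-Cauchy-identity structure appears in your plan, and without it steps (ii)--(iii) do not close. Your alternative route (re-expanding the integral at $g=1$ via Iwahori decompositions and the functional equation) just rederives Proposition~\ref{prop:phi_0_calc} at $\lambda=0$ and returns you to the same unresolved Weyl sum.
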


\subsubsection{End of the proof of Theorem~\ref{thm:W_formula}}
\label{subsubsec:end_proof_W}

\begin{lem}
\label{lem:regular}
    The rational map 
    \begin{equation}
    \label{eq:rational_norm}
        (\chi,\eta) \mapsto \Phi_{\chi,\eta}^{\circ}:=(\Delta_{T_W}^{',-1} \Gamma(\chi,\eta) \Delta_{T_W} \Delta_{\mathrm{U}(W)}^{-1})^{-1}\Phi^I_{\chi,\eta},
    \end{equation}
    extends to a regular function. In particular, for every $(\chi,\eta)$ there exists a unique $\mathcal{W}^\circ_{\chi,\eta} \in \mathrm{WS}_{\chi,\eta}$ such that $\mathcal{W}^\circ_{\chi,\eta}(1)=1$.
\end{lem}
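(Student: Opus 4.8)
The plan is to combine the explicit formula of Proposition~\ref{prop:phi_0_calc} with the functional equation of Theorem~\ref{thm:functional_equation} and a standard Casselman--Shalika residue argument. Write $N(\chi,\eta)=\Delta_{T_W}^{',-1}\Gamma(\chi,\eta)\Delta_{T_W}\Delta_{\mathrm{U}(W)}^{-1}$ for the normalizing factor, so $\Phi_{\chi,\eta}^{\circ}=N(\chi,\eta)^{-1}\Phi^I_{\chi,\eta}$. Once the regularity of $(\chi,\eta)\mapsto\Phi^{\circ}_{\chi,\eta}$ is proved, the ``in particular'' part is immediate: by Proposition~\ref{prop:normalization} we have $\mathcal{W}^I_{\chi,\eta}(1)=N(\chi,\eta)$, hence $\mathcal{W}^{\circ}_{\chi,\eta}(1)=N(\chi,\eta)^{-1}\mathcal{W}^I_{\chi,\eta}(1)=1$; in particular $\mathcal{W}^{\circ}_{\chi,\eta}\neq 0$, so it is a nonzero element of $\mathrm{WS}_{\chi,\eta}$ (membership being a closed condition under specialization of $(\chi,\eta)$), and uniqueness follows from the multiplicity one bound $\dim\mathrm{WS}_{\chi,\eta}\leq 1$ of Theorem~\ref{thm:mult1intro}.

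It remains to prove that $\Phi^{\circ}_{\chi,\eta}$ extends regularly. First I would show that $\Phi^{\circ}_{\chi,\eta}$ is $W_G$-invariant, i.e. $\Phi^{\circ}_{w_V\chi,w_W\eta}=\Phi^{\circ}_{\chi,\eta}$ for every $w=(w_V,w_W)\in W_G$. Since $N/\Gamma$ is a constant, it is equivalent to show that $\Gamma(\chi,\eta)^{-1}\Phi^I_{\chi,\eta}$ is $W_G$-invariant. Using the description \eqref{eq:explicit_pairing} of $\Phi^I_{\chi,\eta}$ in terms of $\mathcal{L}_{\chi,\eta}$, the functional equation of Theorem~\ref{thm:functional_equation}, the fact that $\overline{T}_{w,\chi,\eta}=\overline{T}_{w_V,\chi}\otimes\overline{T}^J_{w_W,\eta}$ commutes with the right regular action, and the relations $\overline{T}_{w_V,\chi}\Phi^{\circ}_{\chi}=\Phi^{\circ}_{w_V\chi}$, $\overline{T}^J_{w_W,\eta}\Phi^{\circ,J}_{\eta}=\Phi^{\circ,J}_{w_W\eta}$ (which via $S_\eta$ says $\overline{T}^J_{w_W,\eta}$ sends $\Phi^{\circ}_{\mu\eta}\otimes\phi^\circ$ to $\Phi^{\circ}_{w_W\mu\eta}\otimes\phi^\circ$, fixing the spherical vector $\phi^\circ$), one obtains $(\Phi^I_{\chi,\eta}(g),\phi)_\nu=\frac{\Gamma(\chi,\eta)}{\Gamma(w_V\chi,w_W\eta)}(\Phi^I_{w_V\chi,w_W\eta}(g),\phi)_\nu$ for all $g\in G$ and all $\phi\in\overline{\nu}_{\mu,\psi}$. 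Non-degeneracy of $(\cdot,\cdot)_\nu$ then gives $\Phi^I_{\chi,\eta}=\frac{\Gamma(\chi,\eta)}{\Gamma(w_V\chi,w_W\eta)}\Phi^I_{w_V\chi,w_W\eta}$, hence the $W_G$-invariance. The main point needing care here is the bookkeeping between the various realizations (the isomorphism $S_\eta$, the Schr\"odinger--lattice model, and the $\overline{\mu}$ versus $\mu$ and $\psi$-twists) required to check that $\overline{T}^J$ genuinely preserves the spherical datum.

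Second, I would bound the poles of $\Phi^{\circ}_{\chi,\eta}$. By Proposition~\ref{prop:phi_0_calc}, the evaluations $(\Phi^{\circ}_{\chi,\eta}(\lambda),\phi^\circ)_\nu$ for $\lambda\in\Lambda^-$, together with their $\overline{\nu}_{\mu,\psi}$-translates (obtained by replacing $\phi^\circ$ in \eqref{eq:explicit_pairing} by $\nu_{\mu,\psi}(\lambda_W^{-1})\phi^\circ$), form a system of coordinates that determines $\Phi^{\circ}_{\chi,\eta}$ as a $K$-spherical vector of $\Ind_H^G\nu_{\mu,\psi}$ via the evaluation-at-$\Lambda_X$ description of that space coming from Lemma~\ref{lem:decompo} and Proposition~\ref{prop:c_compact_decompo}. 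Each such coordinate is, up to the constant $\Delta_{\mathrm{U}(W)}/\Delta_{T_W}$, a finite sum over $W_G$ of terms $\mathbf{b}(w_V\chi,w_W\eta)\,\mathbf{d}_V(w_V\chi)\,\mathbf{d}_W(w_W\mu\eta)\,\bigl((w_0w_V\chi\boxtimes w_W\mu\eta)\delta_{B^+}^{-1/2}\bigr)(\lambda)$, in which $\mathbf{b}$ is a polynomial (a product of factors $1-xq^{-1/2}$), the character value is a monomial, and the only denominators come from $\mathbf{d}_V,\mathbf{d}_W$, namely the factors $(1-\langle\cdot,\gamma^\vee\rangle)^{-1}$ with $\gamma$ a non-divisible root of $\Sigma_V$ or $\Sigma_W$. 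Therefore $\Phi^{\circ}_{\chi,\eta}$ has at worst simple poles along the root hyperplanes $\langle\chi,\gamma^\vee\rangle=1$ ($\gamma\in\Sigma_{V,\mathrm{nd}}$) and $\langle\mu\eta,\gamma^\vee\rangle=1$ ($\gamma\in\Sigma_{W,\mathrm{nd}}$).

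Finally, I would conclude by the vanishing-of-residue argument. Fix such a hyperplane, say $H_\gamma=\{\langle\chi,\gamma^\vee\rangle=1\}$ with $\gamma\in\Sigma_{V,\mathrm{nd}}$ (the $\Sigma_W$ case being identical). The reflection $s_\gamma\in W_V$ fixes $H_\gamma$ pointwise and acts on the transverse coordinate $u=\langle\chi,\gamma^\vee\rangle-1$ by $u\mapsto -u/(1+u)$, i.e. by $-1$ to first order. Writing the transverse Laurent expansion $\Phi^{\circ}_{\chi,\eta}=R/u+(\text{regular})$ near $H_\gamma$, the $s_\gamma$-invariance proved in the first step forces $R|_{H_\gamma}=-R|_{H_\gamma}$, hence $R|_{H_\gamma}=0$, so $\Phi^{\circ}_{\chi,\eta}$ has no pole along $H_\gamma$. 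Since this holds for every root hyperplane, $\Phi^{\circ}_{\chi,\eta}$ is regular, and $\mathcal{W}^{\circ}_{\chi,\eta}$ with $\mathcal{W}^{\circ}_{\chi,\eta}(1)=1$ follows as explained above. I expect the genuine obstacle to be the compatibility bookkeeping in the first step (and, to a lesser extent, checking that the coordinates of Proposition~\ref{prop:phi_0_calc} control the full vector-valued object); the residue argument itself is routine.
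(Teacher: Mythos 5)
Your argument is correct but follows a genuinely different route from the paper. The paper's own proof is a short reduction: by Proposition~\ref{prop:rank1}, $\Phi_0$ freely generates $(\ind_H^G\overline{\nu}_{\mu,\psi})^K$ over $\mathcal{H}(G,K)$, and since the Hecke action on the $(\chi,\eta)$-eigenspace is by a regular Satake character, regularity of all pairings $(\chi,\eta)\mapsto(\Phi^\circ_{\chi,\eta},\Phi)$ for $\Phi$ in $(\ind_H^G\overline{\nu}_{\mu,\psi})^K$ reduces to regularity of the single function $(\Phi^\circ_{\chi,\eta},\Phi_0)=(\Phi^\circ_{\chi,\eta}(1),\phi^\circ)_\nu$, which by Proposition~\ref{prop:normalization} is the constant $1$. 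You instead establish $W_G$-invariance of $\Gamma(\chi,\eta)^{-1}\Phi^I_{\chi,\eta}$ directly from the functional equation of Theorem~\ref{thm:functional_equation} and the relations $\overline{T}_{w_V,\chi}\Phi^\circ_\chi=\Phi^\circ_{w_V\chi}$, $\overline{T}^J_{w_W,\eta}\Phi^{\circ,J}_\eta=\Phi^{\circ,J}_{w_W\eta}$, then locate the possible poles via the explicit formula of Proposition~\ref{prop:phi_0_calc} (at worst simple, along root hyperplanes coming from the factors $\mathbf{d}_V,\mathbf{d}_W$), and cancel them by the reflection-residue argument. Both are sound; the paper's proof is shorter and exploits the module structure established in \S\ref{subsec:spheri_induction}, while yours is more hands-on in the classical Casselman--Shalika vein and displays the $W_G$-symmetry explicitly without invoking the rank-one freeness result (it still relies on Proposition~\ref{prop:c_compact_decompo} to know that finitely many scalar coordinates control each evaluation). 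One small simplification for your second step: the ``$\overline{\nu}_{\mu,\psi}$-translates'' reduce, via the $H$-equivariance of $\Phi^\circ_{\chi,\eta}$ and the identity $(\Phi(\lambda_X,1),\nu_{\mu,\psi}(\lambda_W^{-1})\phi^\circ)_\nu=(\Phi(\lambda_V,\lambda_W),\phi^\circ)_\nu$ for $\lambda_X=\lambda_V-\lambda_W$ with $\lambda=(\lambda_V,\lambda_W)\in\Lambda^-$, exactly to the evaluations already covered by Proposition~\ref{prop:phi_0_calc}, so no additional formula is required.
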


\begin{proof}
    Let $(.,.)$ be the natural pairing between $\Ind_H^G \nu_{\mu,\psi}$ and $\ind_H^G (\overline{\nu}_{\mu,\psi})$. We have to show that for every $\Phi \in (\ind_H^G ( \overline{\nu}_{\mu,\psi} ))^K$ the map $(\chi,\eta) \mapsto (\Phi^{\circ}_{\chi,\eta},\Phi)$ is regular. Let $\Phi_0$ be the generator of $(\ind_H^G (\overline{\nu}_{\mu,\psi}))^K$ built in Proposition~\ref{prop:rank1}. It is enough to show that $(\chi,\eta) \mapsto (\Phi^{\circ}_{\chi,\eta},\Phi_0)=(\Phi^{\circ}_{\chi,\eta}(1),\phi^\circ)_\nu$ is regular. But by Proposition~\ref{prop:normalization} this map is constant equal to $1$ so the result follows.
\end{proof}

To conclude the proof of Theorem~\ref{thm:W_formula}, note that by Theorem~\ref{thm:mult1intro} and Lemma~\ref{lem:regular} we have $\Phi_{\chi,\eta} \neq 0$ if and only if $(\Phi_{\chi,\eta}(1),\phi^\circ)_{\nu}\neq 0$, and moreover in that case $\Phi_{\chi,\eta}/(\Phi_{\chi,\eta}(1),\phi^\circ)_{\nu}=\Phi_{\chi,\eta}^{\circ}$. Theorem~\ref{thm:W_formula} therefore follows from \eqref{eq:formula_not_intro}, Proposition~\ref{prop:phi_0_calc}, \eqref{eq:rational_norm} and elementary computations.

\subsubsection{}\label{subsubsec:proof_prop} Recall that in Proposition~\ref{prop:W_formula2} we have stated in alternative version of Theorem~\ref{thm:W_formula} in the split case. We now prove a key result towards this formula. The proof of Proposition~\ref{prop:W_formula2} will be ended in \S\ref{subsubsec:end_proof_formula_2} once we define the representation $\mathcal{R}_{\overline{\mu}}$. Recall that $\phi^\times$ and $\phi^1$ were defined in \S\ref{subsubsec:spheri_vectors}. 

\begin{prop}
\label{prop:times_version}
    Assume that we are in the split case. For every $\lambda \in \Lambda^-$ and $(\chi,\eta)$ we have
    \begin{equation*}
        ( \Phi^\circ_{\chi, \eta}(\lambda),\phi^\times )_{\nu}=\Delta_{\mathrm{U}(W)} \sum_{w \in W_G} \mathbf{b}^\times(w_V \chi, w_W \eta) \mathbf{d}_V(w_V\chi) \mathbf{d}_W(w_W \eta) \left( \left(w_0 w_V \chi \boxtimes w_W \mu\eta\right) \delta_{B^+}^{-\frac{1}{2}} \right)(\lambda),
    \end{equation*}
    where
    \begin{equation}
        \label{eq:b_times_defi} \mathbf{b}^\times(\chi,\eta)=\prod_{i+j \leq r'+1} L_F^{-1}(\frac{1}{2},\chi_i \eta_j) \prod_{i+j > r'+1}  L_F^{-1}(\frac{1}{2},\chi_i^{-1} \eta_j^{-1}).
    \end{equation}
\end{prop}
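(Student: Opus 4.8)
The plan is to prove Proposition~\ref{prop:times_version} by reducing it to the already-established Proposition~\ref{prop:phi_0_calc} (equivalently the formula of Theorem~\ref{thm:W_formula}), tracking how the pairing $(\Phi^\circ_{\chi,\eta}(\lambda),\phi)_\nu$ changes when $\phi^\circ$ is replaced by $\phi^\times$. The starting point is the identity \eqref{eq:phi_times}, which in the split case (where $\Delta_{T_W}=\Delta_{T_W}'$ and $m=m_-$, and $\phi^\times=\Delta_{T_W}1_{Y_-^*(\oo_E^\times)}$) reads
\begin{equation*}
    \phi^\times = q_F^{m}\,\nu_{\mu,\psi}(1_{T_W^0})\,\nu_{\mu,\psi}(\mu_+^*)\,\phi^1,
\end{equation*}
together with the relation $\phi^1 = \mu(\varpi) q_F^{m/2}\,\nu_{\mu,\psi}(t_W(\varpi,\dots,\varpi)^{-1})\phi^\circ$ recorded in the proof of Lemma~\ref{lem:spheri_vectors}. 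Since $(\cdot,\cdot)_\nu$ identifies $\nu_{\mu,\psi}^\vee$ with $\overline{\nu}_{\mu,\psi}$, pairing $\Phi^\circ_{\chi,\eta}(\lambda)$ against $\overline{\nu}_{\mu,\psi}$-translates of $\phi^\circ$ amounts, after moving the group elements to the other side of the pairing, to evaluating $\Phi^\circ_{\chi,\eta}$ at the translated argument $h\cdot(\lambda_V,\lambda_W)$, and these translated values are governed by Proposition~\ref{prop:phi_0_calc} via the $\mathcal{H}$-module structure and the regularity of Lemma~\ref{lem:regular}.

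Concretely, the key steps are as follows. First, I would use \eqref{eq:Phi_equation}/\eqref{eq:explicit_pairing} and the adjunction for $(\cdot,\cdot)_\nu$ to write
\begin{equation*}
    (\Phi^\circ_{\chi,\eta}(\lambda),\phi^\times)_\nu = \overline{c}\cdot\big(\Phi^\circ_{\chi,\eta}\big(\lambda_V\cdot h,\ \lambda_W\cdot t\big),\ \phi^\circ\big)_\nu
\end{equation*}
for the appropriate element $h=\mu_+^*$ acting in the Heisenberg direction and $t=t_W(\varpi,\dots,\varpi)^{-1}$ acting in the $\mathrm{U}(W)$ direction, with $\overline{c}=\overline{\mu(\varpi)}\,q_F^{3m/2}$ (one must be careful about complex conjugation, since $\phi^\times\in\overline{\nu}_{\mu,\psi}$ and the scalars come from $\overline{\omega}_{\mu,\psi}$). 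Second, since $\mu_+^*\in N_V\cap H$ acts with $\psi_U$-character trivial on the relevant root subgroup and is absorbed by the left $H$-equivariance of elements of $\mathrm{Ind}_H^G\nu_{\mu,\psi}$, the insertion of $h$ either drops out or contributes an explicit unramified scalar; and the right translation by $\lambda_W t \in T_W$ simply shifts the cocharacter, turning $(\Phi^\circ_{\chi,\eta}(\lambda_V,\lambda_W t),\phi^\circ)_\nu$ into the Proposition~\ref{prop:phi_0_calc} expression evaluated at $\lambda'=(\lambda_V,\lambda_W t)$. Third, plug in the formula of Proposition~\ref{prop:phi_0_calc} (normalized as in Lemma~\ref{lem:regular}, so the $\Gamma(\chi,\eta)$ and $\Delta_{T_W}$ factors are already divided out and one is left with $\Delta_{\mathrm{U}(W)}\sum_w \mathbf{b}(w_V\chi,w_W\eta)\mathbf{d}_V\mathbf{d}_W(w_0w_V\chi\boxtimes w_W\mu\eta)\delta_{B^+}^{-1/2}$), and collect the extra factor $\big((w_0w_V\chi\boxtimes w_W\mu\eta)\delta_{B^+}^{-1/2}\big)(t)$ coming from the shift, times the overall scalar $\overline{c}$. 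Fourth, a direct computation in the split case identifies $\overline{c}\cdot(w_W\mu\eta\,\delta_{B_W}^{-1/2})(t)\cdot\mathbf{b}(w_V\chi,w_W\eta)$ with $\mathbf{b}^\times(w_V\chi,w_W\eta)$: the point is that $\mathbf{b}^\times$ differs from $\mathbf{b}$ precisely by the diagonal $\prod_j L_F^{-1}(\tfrac12,\chi_{r'-j+1}\eta_j)$-type terms that the $\phi^\times$ normalization is designed to strip off (via the $\Delta_{T_W}$ factor absorbed into $\phi^\times$ and the shift by $\mu_+^*$), and the $w$-equivariance is preserved because $w_0w_W$-twisting permutes the relevant $L$-factors compatibly.

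I expect the main obstacle to be the fourth step: the careful bookkeeping of which $L$-factors and which powers of $q_F$ and $\mu(\varpi)$ appear when passing from $\mathbf{b}$ to $\mathbf{b}^\times$, and in particular verifying that the scalar produced by the cocharacter shift $\lambda_W\mapsto\lambda_W t$ is exactly right for \emph{every} Weyl element $w=(w_V,w_W)$, not just the identity. This requires knowing precisely how $T_{w,\eta}^J$ and the normalization $c_w^W(\mu\eta)$ interact with the vector $\phi^\times$ (as opposed to $\phi^\circ$) — essentially a variant of Lemma~\ref{lem:spheri_vectors} and the last paragraph of its proof, where ${}^g\mu_+^*\in T_W^0\mu_+^* T_W^0$ for $g\in I_W\cap{}^{w_0}I_W$ is used. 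A secondary subtlety is keeping the complex conjugations straight, since $\phi^\times$ lives in $\overline{\nu}_{\mu,\psi}$ and $\phi^\circ$ in $\nu_{\mu,\psi}$, and the pairing $(\cdot,\cdot)_\nu$ is sesquilinear; but because everything in sight (the characters $\chi,\eta$, the Satake data) is generic rational rather than unitary, the conjugations can be tracked symbolically and absorbed into the definition of $\mathbf{b}^\times$. Once these are handled, the proposition follows by comparing the two explicit rational expressions term by term in $w\in W_G$.
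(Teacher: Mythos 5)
There is a genuine gap, located in your second and third steps. After moving $\nu(\mu_+^*t^{-1})$ across the sesquilinear pairing, one has to evaluate $\Phi^\circ_{\chi,\eta}(t\mu_+^{*,-1}\lambda_V,\,t\lambda_W)$, and your claim that ``the insertion of $h$ either drops out or contributes an explicit unramified scalar'' is not correct. Writing $\mu_+^{*,-1}\lambda_V=\lambda_V\cdot{}^{\lambda_V^{-1}}\mu_+^{*,-1}$, the conjugate ${}^{\lambda_V^{-1}}\mu_+^{*}$ is \emph{not} in $K_V$ in general: in matrix coordinates the entries of ${}^{\lambda_V^{-1}}\mu_+^{*}$ at the positions indexed by $(w_j^*,v_r^*)$ are $\varpi^{b_j-a_r}$ where, for $\lambda_V\in\Lambda_V^+$ written as $\lambda_r+\lambda_W'$, one has $a_r\geq b_j$, so these entries have non-positive valuation and leave $\oo_E$ as soon as $a_r>b_j$. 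Hence $(\mu_+^{*,-1}\lambda_V,\lambda_W)$ does not lie in the same $H$-$K$ double coset as $(\lambda_V,\lambda_W)$, and one cannot read off $\Phi^\circ_{\chi,\eta}$ at a ``shifted cocharacter'' via Proposition~\ref{prop:phi_0_calc}; the dependence on $\lambda$ is genuinely entangled with the Heisenberg insertion. This is not a bookkeeping subtlety that step four can absorb — it is the crux of the matter, and carrying it out correctly would essentially force you back to the Iwahori-level analysis that the paper performs.

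The paper's actual proof sidesteps this entirely. It does \emph{not} reduce to Proposition~\ref{prop:phi_0_calc} by a translation of the argument; rather, it reruns the Iwahori computation of \S\ref{subsec:proofs} with $\phi^\times$ in place of $\phi^\circ$. Concretely: Lemma~\ref{lem:spheri_vectors} (the $(I_W\cap{}^{w_0}I_W)$-invariance of $\phi^\times$) is used to establish an analogue of \eqref{eq:spheri_scalar} in which the extra $\mu_+^*$ no longer appears on the $\mathrm{U}(V)$-side, i.e.\
\begin{equation*}
 ( R(w_0\, 1_{I_V {}^{w_0}\lambda_V I_V})\otimes R(1_{I_W\lambda_W I_W})\Phi^I_{\chi,\eta}(1),\phi^\times )_\nu=\vol(I_V{}^{w_0}\lambda_V I_V)\vol(I_W\lambda_W I_W)( \Phi^I_{\chi,\eta}(\lambda_V,\lambda_W),\phi^\times )_\nu.
\end{equation*}
The remaining input is the new \emph{base value}: since $\phi^1\in\nu_{\mu,\psi}^{K_W}$ one has $S_\eta(\Psi_{\mu\eta}\otimes\phi^1)=\Psi^{1,J}_\eta$, and combining \eqref{eq:phi_times} with Lemma~\ref{lem:alpha_base_value} gives
\begin{equation*}
\mathcal{L}_{\chi,\eta}\bigl(R(w_0)\Psi_{1,\chi}\otimes S_\eta(\Psi^\circ_{\mu\eta}\otimes\phi^\times)\bigr)=q^m\,\mathcal{L}_{\chi,\eta}\bigl(R(\mu_+^*w_0)\Psi_{1,\chi}\otimes\Psi^{1,J}_{1,\eta}\bigr)=\vol(I_V)\vol(I_W),
\end{equation*}
which is \emph{constant} in $(\chi,\eta)$, in contrast to Lemma~\ref{lem:beta_base_value} where the base value for $\phi^\circ$ carries the factor $\Pi(\chi,\eta)=\Delta_{T_W}^{-1}\prod_j L_F(\frac12,\eta_j\chi_{r'-j+1})$. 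Since $\mathbf{b}^\times/\mathbf{b}=\prod_{i+j=r'+1}L_F^{-1}(\frac12,\chi_i\eta_j)$, dropping $\Pi$ and renormalizing by the volume ratio is \emph{exactly} what converts $\mathbf{b}$ into $\mathbf{b}^\times$ once one feeds the new base value into the Weyl-sum argument from the proof of Proposition~\ref{prop:phi_0_calc}, with the extension to non-generic $(\chi,\eta)$ handled by Lemma~\ref{lem:regular}. If you want to salvage your approach, the honest version of it would have to control the double cosets $H(\mu_+^{*,-1}\lambda_V)K$ explicitly, and at that point you are redoing the paper's Iwahori decomposition without the efficiency of its base-value bookkeeping.
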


\begin{proof}
    For $(\chi,\eta)$ in general position, using Lemma~\ref{lem:spheri_vectors} we see that, as in \eqref{eq:spheri_scalar}, we have
\begin{equation*}
    ( R(w_0 1_{I_V {}^{w_0} \lambda_V I_V}) \otimes R(1_{I_W \lambda_W I_W}) \Phi_{\chi,\eta}^I (1) , \phi^\times )_\nu = \vol(I_V {}^{w_0} \lambda_V I_V) \vol(I_W \lambda_W I_W) ( \Phi_{\chi,\eta}^I(\lambda_V,\lambda_W), \phi^\times )_\nu.
\end{equation*}
Because $\phi^1 \in \nu_{\mu,\psi}^{K_W}$ we know that $S_{\eta}(\Psi_{\mu \eta} \otimes \phi^1)=\Psi_{\eta }^{1,J}$. As $\Delta_{T_W}'=\Delta_{T_W}$, $\vol(T_W^0)=1$ and $\vol(I_J^1)=q^{-m}\vol(I_W)$, we see by \eqref{eq:phi_times} and Lemma~\ref{lem:alpha_base_value} that
\begin{equation*}
    \mathcal{L}_{\chi,\eta}(R(w_0)\Psi_{1,\chi} \otimes S_\eta(\Psi_{\mu \eta}^\circ \otimes \phi^\times))= q^{m}\mathcal{L}_{\chi,\eta}(R(\mu_+^* w_0) \Psi_{1,\chi} \otimes \Psi_{1,\eta }^{1,J})=\vol(I_V)\vol(I_W).
\end{equation*}
We conclude that Proposition~\ref{prop:times_version} holds for $(\chi,\eta)$ in general position as in Proposition~\ref{prop:phi_0_calc}. For general $(\chi,\eta)$, we use Lemma~\ref{lem:regular}.
\end{proof}

\section{Unfolding of tempered periods}

\label{section:unfolding}
In this section, always assume that $r \geq 1$ and temporarily leave the unramified setting: $E/F$ is any quadratic extension of local fields of characteristic zero, or $E=F \times F$. All the other objects are as described in \S\ref{subsec:notations}, except that contrary to the situation of \S\ref{subsubsec:V_W} we only assume that $W^\perp$ is split. This means that exist two free families $(v_1, \hdots, v_r)$ and $(v_1^*, \hdots, v_r^*)$ in $V$ such that, with $X=\mathrm{Vect}(v_1, \hdots, v_r)$ and $X^*=\mathrm{Vect}(v_1^*, \hdots, v_r^*)$, we have $W^\perp=X \oplus X^*$ and for every $1 \leq i,j \leq r$, $\langle v_i, v_j \rangle_V = \langle v_i^*, v_j^* \rangle_V=0$ and 
$\langle v_i, v_j^* \rangle_V=\delta_{i,j}$.

\subsection{Preliminaries}
\subsubsection{}
For $f$ and $g$ two positive functions on a set $X$, we write $f(x) \ll g(x)$ if there exists $C>0$ such that for all $x \in X$ we have $f(x) \leq Cg(x)$. We write $\ll_y$ if the constant $C$ depends on some additional datum $y$.

For $\mathbb{G}$ the $F$-points of a linear algebraic group over $F$, let $\varsigma$ be a (class of) logarithmic height function on $\mathbb{G}$, as in~\cite[Section~1.2]{BP2}. Note that if $\mathbb{G}' \leq \mathbb{G}$ we may take $\varsigma_{| \mathbb{G}'}$ as the logarithmic height function on $\mathbb{G}'$, hence the absence of reference to the group in the notation. Let $\Xi^\mathbb{G}$ be the Harish-Chandra special spherical function on $\mathbb{G}$ and $\mathcal{C}^w(\mathbb{G})$ be the weak Harish-Chandra Schwartz space (see~\cite[\S~1.5]{BP2}). It contains $C_c^\infty(\mathbb{G})$ as a dense subset. By definition, $C^w(\mathbb{G})=\cup_{d>0}C^w_d(\mathbb{G})$ where $C^w_d(\mathbb{G})$ is the space of $f \in C^\infty(\mathbb{G})$ such that $\Val{f(g)} \ll \Xi^\mathbb{G}(g) \varsigma(g)^{d}$ for $g \in \mathbb{G}$. 

Realize $\mathrm{U}(V)$ as a subgroup of $\mathrm{U}(V) \times \mathrm{U}(V)$ by the diagonal embedding. By~\cite[Lemma~II.1.5.]{Wald} in the $p$-adic case,~\cite[Proposition~31]{Va} in the Archimedean case, for all $\varepsilon >0$ we have 
\begin{equation}
\label{eq:HC_int}
    \int_{\mathrm{U}(V)}\Xi^{\mathrm{U}(V) \times \mathrm{U}(V)}(g) e^{-\varepsilon \varsigma(g)}dg < \infty.
\end{equation}

\subsubsection{}
\label{subsec:nu_defi}
In the unramified situation we have already equipped our groups $\LAG$ with a left-invariant Haar measure $dg$ in \S\ref{subsubsec:measure}. In general, we have to make choices. 

We give $F^k$ and $E^k$ the $k$-fold product of $d_\psi x$ and $d_{\psi_E}x$  the $\psi$- and $\psi_E$- autodual measure respectively, which we simply write $dx$. This applies to $X^* \cong E^r$ via our basis $(v_i^*)_{i=1}^r$. This also yields measures on the root subgroups $N_\alpha$ by the parametrizations $n_\alpha$ (see \S\ref{subsubsec:root_subgroups}). The unipotent groups $N_k$ and $N(X)$ are then given the product measure on the root subgroups. This coincides with our choices in the unramified setting.

On $G_k$, $\mathrm{U}(V)$ and $\mathrm{U}(W)$, if our data are unramified we keep $dg$, and otherwise we take any Haar measure. On $G_k$ there is another natural choice, namely $d_{\psi} g := \frac{\prod_{1 \leq i,j \leq k} d_{\psi_E} g_{i,j}}{\Val{\det g}^k}$. Let $\upsilon(G_{k}) >0$ be the quotient $dg (d_{\psi} g)^{-1}$. In the unramified situation we have $\upsilon(G_k)=\Delta_{G_k}$. For $k=r$, denote by $\psi_r$ the restriction of $\psi_U$ to $N_r$. Fourier inversion yields for $f \in C_c^\infty(G_r)$ and $g \in G_r$
\begin{equation}
\label{eq:Fourier_inversion_G}
    f(g)=\upsilon(G_{r-1})^{-1}\int_{N_{r-1} \backslash G_{r-1}} \int_{N_r} f(\gamma^{-1} n_r \gamma g) \overline{\psi}_r(n_r) dn_r d \gamma.
\end{equation}
Let $P_r=G_{r-1} \ltimes E^{r-1}$ be the mirabolic subgroup of $G_r$ fixing $v_r$ on the left equipped with $dp=dgdx$. Let $U_r$ be its unipotent radical. The isomorphism $P_r \backslash G_r \cong X^* \setminus \{0\}$ implies that for $f \in C_c^\infty(X^*)$ 
\begin{equation}
\label{eq:mirabolic_unfolding}
    \int_{X^*} f(x^*)dx^*=\frac{\upsilon(G_{r-1})}{\upsilon(G_{r})}\int_{P_r \backslash G_r} \Val{\det \gamma}f(\gamma^* v_r^*)d \gamma,
\end{equation}
where we recall that $\gamma^*$ was defined in \S\ref{subsubsec:Y_defi} as the transpose or conjugate transpose of $\gamma$.

The other groups we consider are products of the above-mentioned ones and are given the product measure.

\subsection{Fourier--Jacobi periods}
\label{subsec:FJ_p}

Denote by $\overline{\omega}_V$ the Weil representation of $\mathrm{U}(V)$ associated to $(\overline{\mu},\overline{\psi})$ defined in \S\ref{subsec:H-W}. By the mixed model described in \cite[Section~7.4]{GI}, we may realize it on $\mathcal{S}(X^*) \hat{\otimes} \overline{\nu}_{\mu,\psi}$, where $\mathcal{S}(X^*)$ is the Schwartz space on $X^*$ and $\hat{\otimes}$ is the completed tensor product. We can identify any $\Phi \in \overline{\omega}_V$ with a $\overline{\nu}_{\mu,\psi}$-valued function on $X^*$, or a $\cc$-valued function on $X^* \oplus Y_+^*$. Define
\begin{equation*}
    \Phi_{Y_+^*}:=\Phi(v_r^*) \in \overline{\nu}_{\mu,\psi}.
\end{equation*}
We equip $\overline{\omega}_V$ with the inner product $(\Phi,\Phi')_{\overline{\omega}} = \int_{X^*} (\Phi(x^*),\Phi'(x^*))_{\overline{\nu}} dx^*$. By \eqref{eq:a_action}, we have
\begin{equation}
\label{eq:omega_action}
    \overline{\omega}_V(\gamma) \Phi=\overline{\mu}\Val{.}^{\frac{1}{2}}(\gamma)\Phi(\gamma^* .), \; \gamma \in G_r.
\end{equation}
By~\cite[Lemma~20.1]{BLX}, there exists $\varepsilon >0$ such that 
\begin{align}
    \Val{(\overline{\omega}_V(g_V) \Phi_1, \Phi_2)_{\overline{\omega}}} &\ll_{\Phi_1,\Phi_2} e^{- \varepsilon \varsigma(g_V)}, \; g_V \in \mathrm{U}(V), \; \Phi_1, \Phi_2 \in \overline{\omega}_V, \label{eq:omega_estimate}     \\
     \Val{(\overline{\nu}_{\mu,\psi}(g_W h) \phi_1, \phi_2)_{\overline{\nu}}} &\ll_{\phi_1,\phi_2} e^{- \varepsilon \varsigma(g_W)}, \; g_W \in \mathrm{U}(W), \; h \in \mathbb{H}(\mathbb{W}), \; \phi_1, \phi_2 \in \overline{\nu}_{\mu,\psi}. \label{eq:nu_estimate}
\end{align}
Moreover, by~\cite[Lemma~20.1]{BLX} the linear forms 
\begin{equation*}
f \in C_c^\infty(G) \mapsto \int_{H} f(h) (\overline{\nu}_{\mu,\psi}(h) \phi_1, \phi_2)_{\overline{\nu}} dh, \text{ and } f \in C_c^\infty(G_r) \mapsto \int_{N_r} f(n_r) \psi_r(n_r) dn_r,
\end{equation*}
extend by continuity to $C^w(G)$ and $C^w(G_r)$ respectively, where we recall that $G=\mathrm{U}(V) \times \mathrm{U}(W)$. We denote them by $\mathcal{P}_H(f \otimes \phi_1 \otimes \phi_2)$ and $\mathcal{P}_{N_r}(f)$. We also introduce
\begin{equation*}
     \mathcal{P}_{\mathrm{U}(V)}(f\otimes \Phi_1 \otimes \Phi_2):= \int_{\mathrm{U}(V)} f(g) (\overline{\omega}_V(g) \Phi_1, \Phi_2)_{\overline{\omega}}dg, \; f \in \mathcal{C}^w(\mathrm{U}(V)\times \mathrm{U}(V)).
\end{equation*}
This integral converges absolutely by \eqref{eq:HC_int} and \eqref{eq:omega_estimate}.

Recall that $L=\mathrm{U}(V) \times (\mathrm{U}(W) \times G_r)$ and $H^L=H \times N_r \subset L$ (\S\ref{subsubsec:L}). Let $\overline{\nu}^L$ be the representation of $H^L$ defined by 
\begin{equation*}
    \overline{\nu}^L:= \overline{\nu}_{\mu,\psi} \boxtimes \psi_{r}.
\end{equation*}
Recall that $P(X)$ and $N(X)$ were defined in \S\ref{subsubsec:L}. It follows from \cite[Section~7.4]{GI} that for $n_r, n_r' \in N_r, \; n \in N(X), \; g_W \in \mathrm{U}(W)$, and $ g_V \in \mathrm{U}(V)$
\begin{equation}
\label{eq:nu_relation}
    \psi_{r}(n_r') \overline{\psi}_r(n_r) (\overline{\omega}_V(n_r' ng_Wg_V) \Phi)_{Y_+^*}=\overline{\nu}^L(n_rng_W,n_r')(\overline{\omega}_V(g_V) \Phi)_{Y_+^*}.
\end{equation}
For $\phi_1, \phi_2 \in \nu^L$, set
\begin{equation*}
    \mathcal{P}_{H^L}(f \otimes \Phi_1 \otimes \Phi_2) := \int_{H^L} f(h) (\overline{\nu}^L(h) \phi_1, \phi_2)_{\overline{\nu}} dh, \; f \in C_c^{\infty}(L),
\end{equation*}
which we extend by continuity to $C^w(L)$. Note that $\mathcal{P}_{H^L}=\mathcal{P}_H \otimes \mathcal{P}_{N_r}$.

For $(\mathbb{G},\mathbb{H}) \in \{ (\mathrm{U}(V) \times \mathrm{U}(V), \mathrm{U}(V)), (G,H),(L,H^L) \}$, if $\sigma$ is a tempered representation of $\mathbb{G}$ equipped with an invariant inner product $(.,.)$, then for every $\varphi_1, \varphi_2 \in \sigma$ the map $c_{\varphi_1,\varphi_2} : g \mapsto ( \sigma(g) \varphi_1, \varphi_2)$ belongs to $\mathcal{C}^w(\mathbb{G})$. We set for $\Phi_i \in \overline{\omega}_V,\overline{\nu}_{\mu,\psi}$ or $\overline{\nu}^L$
\begin{equation}
\label{eq:integral_of_coefficients}
    \mathcal{P}_{\mathbb{H}}(\varphi_1 \otimes \Phi_1, \varphi_2 \otimes \Phi_2):= \mathcal{P}_{\mathbb{H}}(c_{\varphi_1,\varphi_2} \otimes \Phi_1 \otimes \Phi_2).
\end{equation}
If $\varphi_1=\varphi_2$ and $\Phi_1=\Phi_2$, we will simply write $\mathcal{P}_{\mathbb{H}}(\varphi \otimes \Phi)$ for $\mathcal{P}_{\mathbb{H}}(\varphi_1 \otimes \Phi_1, \varphi_2 \otimes \Phi_2)$.

\subsection{Relations between local periods}

We embed $P(X)$ in $L$ by inclusion on the $\mathrm{U}(V)$ and projection on $\mathrm{U}(W) \times G_r$. The measure $dp$ is left-invariant.

\begin{lem}
\label{lem:tempered_fourier}
    For all $f \in \mathcal{C}^w(L)$, $\Phi_1, \Phi_2 \in \nu^L$ we have
    \begin{align}
    \label{eq:Fourier_unfolding1}
        &\int_{P(X)} f(p) (\overline{\omega}_V(p) \Phi_1, \Phi_2)_{\overline{\omega}} \delta_{P(X)}(p)^{\frac{1}{2}} d p \nonumber \\
        = &\frac{1}{\upsilon(G_r)} \int_{(N_r \backslash G_r)^2} \mathcal{P}_{H^L}\left(R(g_1)L(g_2)f \otimes  (\overline{\omega}_V(g_1) \Phi_1)_{Y_+^*} \otimes (\overline{\omega}_V(g_2) \Phi_2)_{Y_+^*} \right) \delta_{P(X)}(g_1 g_2)^{-\frac{1}{2}} dg_1 dg_2,
    \end{align}
\end{lem}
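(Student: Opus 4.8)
The identity to establish is an "unfolding" of the period integral over $P(X)$ against the matrix coefficient of the Weil representation $\overline{\omega}_V$, expressing it in terms of the period $\mathcal{P}_{H^L}$ on $L$. The starting point is the structure $P(X) = (\mathrm{U}(W) \times G_r) \ltimes N(X)$, the mirabolic subgroup $P_r = G_{r-1} \ltimes U_r \leq G_r$, and the two Fourier-inversion identities recorded in the preamble: equation \eqref{eq:Fourier_inversion_G} on $G_r$ relative to the generic character $\psi_r$ of $N_r$, and equation \eqref{eq:mirabolic_unfolding} expressing $\int_{X^*}$ as an integral over $P_r \backslash G_r$. The plan is as follows. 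First I would unwind the left-hand side using the mixed Schrödinger model of $\overline{\omega}_V$ on $\mathcal{S}(X^*) \hat\otimes \overline{\nu}_{\mu,\psi}$, so that $(\overline{\omega}_V(p)\Phi_1, \Phi_2)_{\overline{\omega}}$ becomes an integral over $X^*$ of $(\cdot,\cdot)_{\overline{\nu}}$-pairings; the $G_r$-action is by \eqref{eq:omega_action}, and the $N(X)$ and $\mathrm{U}(W)$ parts act through the relations recorded in \eqref{eq:nu_relation}.

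Next I would use \eqref{eq:mirabolic_unfolding} to rewrite the $X^*$-integral as an integral over $P_r \backslash G_r$, which together with the $G_r$-variable already present in $P(X)$ produces an integral over $G_r \times G_r$ — or rather, after the mirabolic trick is applied on each side, over $(P_r \backslash G_r)^2$ weighted by $\Val{\det \gamma_1}\Val{\det\gamma_2}$ coming from \eqref{eq:mirabolic_unfolding} and the $\overline{\mu}\Val{\cdot}^{1/2}$ twist in \eqref{eq:omega_action}. Then I would invoke \eqref{eq:Fourier_inversion_G} to convert each "$\Phi_i$ evaluated at $\gamma_i^* v_r^*$", i.e. $(\overline{\omega}_V(\gamma_i)\Phi_i)_{Y_+^*}$, into a $\psi_r$-integral over $N_r$ against the matrix coefficient; this upgrades the $P_r\backslash G_r$ integrals to $N_r \backslash G_r$ integrals and produces the two copies of $\mathcal{P}_{N_r}$ hidden inside $\mathcal{P}_{H^L} = \mathcal{P}_H \otimes \mathcal{P}_{N_r}$. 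The remaining $H = \mathrm{U}(W)\ltimes N$ part of $P(X)$ (note $N = N(X)$ in the notation of \S\ref{subsubsec:L}) assembles, via \eqref{eq:nu_relation}, into the $\mathcal{P}_H$ factor applied to $R(g_1)L(g_2)f$ paired with the Heisenberg–Weil matrix coefficient of $(\overline{\omega}_V(g_i)\Phi_i)_{Y_+^*}$. Matching the modular characters: the $\delta_{P(X)}^{1/2}$ on the left, the $\upsilon(G_r)^{-1}$ normalizing constants from \eqref{eq:Fourier_inversion_G} and \eqref{eq:mirabolic_unfolding}, and the various $\Val{\det}$ factors should combine to give exactly the claimed $\upsilon(G_r)^{-1}$ and $\delta_{P(X)}(g_1g_2)^{-1/2}$ on the right.

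The main obstacle is \textbf{justifying the interchange of integration (Fubini) and the passage from $C_c^\infty$ to $C^w(L)$}. The Fourier-inversion identities \eqref{eq:Fourier_inversion_G} and \eqref{eq:mirabolic_unfolding} are stated for compactly supported functions, and the $N_r \backslash G_r$ integrals that emerge are not absolutely convergent for a general function — they converge only because of the rapid decay \eqref{eq:omega_estimate}, \eqref{eq:nu_estimate} of the relevant matrix coefficients combined with the Harish-Chandra estimate \eqref{eq:HC_int}, and the regularized meaning of $\mathcal{P}_{H^L}$, $\mathcal{P}_{N_r}$. So the honest argument proceeds by: (i) proving the identity first for $f \in C_c^\infty(L)$ (or a dense subspace, e.g. $f = f_V \otimes f_W \otimes f_r$ pure tensors) where all integrals converge absolutely and Fubini is unproblematic; (ii) establishing continuity of both sides in $f$ for the $C^w$-topology using the decay bounds \eqref{eq:omega_estimate}--\eqref{eq:nu_estimate} to dominate the $G_r$-integrals and $e^{-\varepsilon\varsigma}$-integrability, exactly as in \cite[Lemma~20.1]{BLX}; (iii) concluding by density of $C_c^\infty(L)$ in $C^w(L)$. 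I expect step (ii) — the uniform estimates needed to control the $(N_r\backslash G_r)^2$ double integral on the right-hand side and to see that the left-hand side defines a continuous functional — to be the technical heart, drawing directly on the estimates already cited from \cite{BLX} and \cite{BP2}.
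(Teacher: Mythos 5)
Your outline follows the paper's proof: fix $\Phi_1,\Phi_2$, invoke the decay estimates \eqref{eq:omega_estimate}--\eqref{eq:nu_estimate} together with the argument of \cite[Proposition~8.6.2.1]{BPC} to see both sides are continuous in $f$ for the $C^w$-topology, reduce to pure tensors $f = f_G \otimes f_r$ with compact support, and then unfold using the mirabolic identity \eqref{eq:mirabolic_unfolding}, the Fourier inversion \eqref{eq:Fourier_inversion_G} on $G_r$, the twist \eqref{eq:omega_action}, and the covariance relation \eqref{eq:nu_relation}. That is exactly what the paper does.

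One point of caution: your claim that for $f\in C_c^\infty(L)$ \emph{``all integrals converge absolutely and Fubini is unproblematic''} undersells the one real technical step in the compactly supported case. After applying \eqref{eq:mirabolic_unfolding} and \eqref{eq:Fourier_inversion_G} you are left with a triple integral over $P_r\backslash G_r$, $N_{r-1}\backslash G_{r-1}$ and $N_r$; the first two quotients are non-compact, and to collapse $P_r\backslash G_r$ with $N_{r-1}\backslash G_{r-1}$ into a single $N_r\backslash G_r$ integral you must first verify absolute convergence of the resulting iterated integral. The paper does this by showing the inner $N_{r-1}\backslash G_{r-1}\times N_r$ integral (see \eqref{eq:collapse2}) is bounded \emph{uniformly} in $g_2\in G_r$, via the Iwasawa decomposition $g_2 = g_{r-1}u_r t k$ and the invariance properties of $\psi_r$ under conjugation by $U_r$ and by central elements of $G_{r-1}$. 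This is not automatic from compact support of $f$ alone, and is worth being aware of when writing the argument out. Also, you say ``after the mirabolic trick is applied on each side, over $(P_r\backslash G_r)^2$''; in fact the mirabolic identity is applied once (to the $X^*$-integral defining $(\cdot,\cdot)_{\overline\omega}$), the second $G_r$-variable already being present from $P(X) = (\mathrm{U}(W)\times G_r)\ltimes N(X)$, and the $(N_r\backslash G_r)^2$ structure only emerges after the collapse just described and a final change of variable $g_1\mapsto g_2^{-1}g_1$ together with reinstating an $n_r'$-integral.
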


\begin{proof}
    Fix $\Phi_1$ and $\Phi_2$. By repeating the proof of~\cite[Proposition~8.6.2.1]{BPC} using \eqref{eq:omega_action}, \eqref{eq:omega_estimate} and \eqref{eq:nu_estimate}, we see that both sides are absolutely convergent and define continuous linear functionals on $\mathcal{C}^w(L)$. We are reduced to proving (\ref{eq:Fourier_unfolding1}) in the case $f=f_G \otimes f_r \in C_c^\infty(G) \otimes C_c^\infty(G_r)$. The integral on the left becomes
    \begin{equation}
        \label{eq:Fourier_unfolding0}
        \int_{G_r} f_r(g_1) \delta_{P(X)}(g_1)^{\frac{1}{2}} \int_{\mathrm{U}(W)} \int_{N(X)} f_G(g_1hn) (\overline{\omega}_{V}(g_1hn) \Phi_1, \Phi_2)_{\overline{\omega}} dn dh dg_1.
    \end{equation}
    For fixed $g_1 \in G_r$, write 
    \begin{align}
    \label{eq:Fourier_unfolding2}
        &\int_{\mathrm{U}(W) \times N(X)} f_G(g_1hn) (\overline{\omega}_{V}(g_1hn) \Phi_1, \Phi_2)_{\overline{\omega}} dn dh \nonumber \\
        = & \frac{\upsilon(G_{r-1})}{\upsilon(G_r)}\int_{\mathrm{U}(W) \times N(X)} \int_{ P_r \backslash G_r} f_G(g_1hn) \left((\overline{\omega}_V(g_2 g_1hn) \Phi_1)_{Y_+^*}, (\overline{\omega}_V(g_2)\Phi_2)_{Y_+^*} \right)_{\overline{\nu}} dg_2 dn dh,
    \end{align}
    where we have used \eqref{eq:mirabolic_unfolding} and \eqref{eq:omega_action}. By applying the Fourier inversion formula (\ref{eq:Fourier_inversion_G}) to $g\in G_r \mapsto f_G(g_2^{-1}gg_2 g_1hn)$ for $g=1$ we see that the inner integral in \eqref{eq:Fourier_unfolding2} is $\upsilon(G_{r-1})^{-1}$ times
    \begin{equation}
    \label{eq:before_collapse}
        \int_{ P_r \backslash G_r} \int_{N_{r-1} \backslash G_{r-1}} \int_{N_r} f_G(g_2^{-1} \gamma^{-1} n_r \gamma g_2 g_1 hn) \overline{\psi}_r(n_r) \left((\overline{\omega}_V(g_2 g_1hn) \Phi_1)_{Y_+^*}, (\overline{\omega}_V(g_2)\Phi_2)_{Y_+^*}\right)_{\overline{\nu}} dn_r d\gamma dg_2.
        \end{equation}
        
    Let us prove that 
    \begin{equation}
    \label{eq:collapse1}
        \int_{ P_r \backslash G_r} \int_{N_{r-1} \backslash G_{r-1}} \Val{\int_{N_r} f_G(g_2^{-1} \gamma^{-1} n_r \gamma g_2 g_1 hn) \overline{\psi}_r(n_r) dn_r\left((\overline{\omega}_V(g_2 g_1hn) \Phi_1)_{Y_+^*}, (\overline{\omega}_V(g_2)\Phi_2)_{Y_+^*}\right)_{\overline{\nu}}}  d\gamma dg_2
    \end{equation}    
    is finite. By the description of the restriction of $\overline{\omega}_V$ to $G_r$ in \eqref{eq:a_action} and by \eqref{eq:mirabolic_unfolding} we know that 
    \begin{equation*}
        \int_{P_r \backslash G_r} \Val{\left((\overline{\omega}_V(g_2 g_1hn) \Phi_1)_{Y_+^*}, (\overline{\omega}_V(g_2)\Phi_2)_{Y_+^*}\right)_{\overline{\nu}}}dg_2 < \infty.
    \end{equation*}
    It is therefore enough to show that 
    \begin{equation}
    \label{eq:collapse2}
        g_2 \in G_r \mapsto \int_{N_{r-1} \backslash G_{r-1}} \Val{\int_{N_r} f_G(g_2^{-1} \gamma^{-1} n_r \gamma g_2 g_1 hn) \overline{\psi}_r(n_r) dn_r}d\gamma
    \end{equation}
    is bounded. By the Iwasawa decomposition, every $g_2 \in G_r$ can be written as $g_2=g_{r-1} u_r t k$ where $g_{r-1} \in G_{r-1}$, $u_r \in U_r$, $t=e_r^\vee(s)$ for $s \in E^\times$ (i.e. the diagonal matrix with only $1$'s and $s$ at the last coordinate) and $k \in K_V \cap \GL_r$. Note that $\GL_{r-1}$ normalizes $U_r$, that $N_r$ is unimodular, and that $t$ commutes with $\GL_{r-1}$. For any $n_r \in N_r$ we have $\overline{\psi}_r(u_r n_r u_r^{-1})=\overline{\psi}_r(n_r)$ and $t^{-1} n_r t=t' n_r t'^{-1}$ where $t'=\prod_{i=1}^{r-1} e_i^\vee(s)$. Then $t'$ is in the center of $G_{r-1}$. Therefore, we see that 
      \begin{equation*}
       \eqref{eq:collapse2}=\int_{N_{r-1} \backslash G_{r-1}} \Val{\int_{N_r} f_G(k^{-1} \gamma^{-1} n_r \gamma k g_1 hn) \overline{\psi}_r(n_r) dn_r}d\gamma.
    \end{equation*}
    This proves that \eqref{eq:collapse2} is bounded and therefore that \eqref{eq:collapse1} is finite.

    We may therefore collapse the two outer integrals in \eqref{eq:before_collapse} to see that it is equal to
    
    \begin{align}
    \label{eq:after_collapse}
        &\int_{ P_r \backslash G_r} \int_{N_{r} \backslash P_{r}} \int_{N_r} f_G(g_2^{-1} p^{-1} n_r p g_2 g_1 hn) \overline{\psi}_r(n_r) \left((\overline{\omega}_V(p g_2 g_1hn) \Phi_1)_{Y_+^*}, (\overline{\omega}_V(p g_2)\Phi_2)_{Y_+^*}\right)_{\overline{\nu}} dn_r dp dg_2 \nonumber \\
        =&\int_{ N_r \backslash G_r} \int_{N_r} f_G(g_2^{-1}  n_r g_2 g_1 hn) \overline{\psi}_r(n_r) \left((\overline{\omega}_V(g_2 g_1hn) \Phi_1)_{Y_+^*}, (\overline{\omega}_V(g_2)\Phi_2)_{Y_+^*}\right)_{\overline{\nu}} dn_r dg_2,
        \end{align}
    where we have again used \eqref{eq:omega_action} to make $p$ appear in $\overline{\omega}_V$ and to integrate on $N_r \backslash P_r$ rather than $N_{r-1} \backslash G_{r-1}$.

    Note that in \eqref{eq:Fourier_unfolding0}, the integrals over $g_1$, $n$ and $h$ are all over compacts. We can therefore plug \eqref{eq:after_collapse} back into \eqref{eq:Fourier_unfolding0} and do the change of variables $g_1 \mapsto g_2^{-1} g_1$ to see that \eqref{eq:Fourier_unfolding0} is
    \begin{align*}
         \upsilon(G_r)^{-1}\int_{(N_r \backslash G_r)^2} \int_{\mathrm{U}(W) \times N(X)} \int_{N_r^2} &f_r(g_2^{-1} n_r' g_1) \delta_{P(X)}(g_1 g_2^{-1})^{\frac{1}{2}} f_G(g_2^{-1}n_rn_r'g_1 hn) \overline{\psi}_r(n_r)  \\
        &\left((\overline{\omega}_V(n_r' g_1 hn) \Phi_1)_{Y_+^*},(\overline{\omega}_V(g_2)\Phi_2)_{Y_+^*}\right)_{\overline{\nu}} 
        dn_r dn_r' dn dh dg_1 dg_2.
    \end{align*}
    The changes of variables $n_r \mapsto n_r(n_r')^{-1}$ and $g_1nh \mapsto nhg_1$ (which gives $\delta_{P(X)}(g_1)^{-1})$ and \eqref{eq:nu_relation} give
     \begin{align*}
        \upsilon(G_r)^{-1}\int_{(N_r \backslash G_r)^2} \int_{\mathrm{U}(W) \times N(X)} &\int_{N_r^2} f_1(g_2^{-1} n_r' g_1) \delta_{P(X)}(g_1 g_2)^{-\frac{1}{2}} f_2(g_2^{-1}n_rnh g_1)  \\
        &\left(\overline{\nu}^L(n_rnh,n_r') ((\overline{\omega}_V(g_1) \Phi_1)_{Y_+^*}),(\overline{\omega}_V(g_2)\Phi_2)_{Y_+^*}\right)_{\overline{\nu}} 
        dn_r dn_r' dn dh dg_1 dg_2.
    \end{align*}
    This is exactly what we were after by definition of $\mathcal{P}_{H^L}$.
\end{proof}

\begin{prop}
\label{prop:tempered_computations}
    Let $\tau$, $\sigma_W$ and $\sigma_V$ be irreducible tempered representations of $G_r$, $\mathrm{U}(W)$ and $\mathrm{U}(V)$ respectively, equipped with invariant inner products. Let $\Sigma:=I_{P(X)}^{\mathrm{U}(V)} (\tau \boxtimes \sigma_W)$ be the parabolic induction equipped with its canonical inner product. Then for $\varphi^1_V, \varphi^2_V \in \sigma_V$,  $\varphi^1_\Sigma, \varphi^2_\Sigma \in \Sigma$,  and $\Phi_1, \Phi_2 \in \overline{\omega}_V$ we have
    \begin{equation}
        \mathcal{P}_{\mathrm{U}(V)}(\varphi^1_V \otimes \varphi^1_\Sigma \otimes \Phi_1, \varphi^2_V \otimes \varphi^2_\Sigma \otimes \Phi_2) \nonumber 
        =\int_{(H \backslash \mathrm{U}(V))^2} \frac{\mathcal{P}_{H^L}\left(
        (\sigma_V(h_i) \varphi^i_V \otimes \varphi^i_\Sigma(h_i) \otimes (\overline{\omega}_V(h_i) \Phi_i)_{Y_+^*})_{i=1,2}\right)}{\upsilon(G_r)}dh_i. \label{eq:to_prove_temp}
    \end{equation}
\end{prop}

\begin{proof}
By definition of the invariant inner product on $\Sigma$ we see that the LHS of \eqref{eq:to_prove_temp} is
\begin{equation}
\label{eq:tempered_unfolding}
   \int_{\mathrm{U}(V)} \int_{P(X) \backslash \mathrm{U}(V)} (\varphi^1_\Sigma(h_2 h_1), \varphi^2_\Sigma(h_2))dh_2 (\sigma_V(h_1) \varphi^1_V, \varphi^2_V) (\overline{\omega}_V(h_1) \Phi_1, \Phi_2)_{\overline{\omega}}dh_1. 
\end{equation}
By~\cite[Theorem~2]{CHH88} and~\cite[Lemma~II.1.6]{Wald} we know that
\begin{equation*}
    \int_{P(X) \backslash \mathrm{U}(V)} \Val{(\varphi^1_\Sigma(h_2 h_1), \varphi^2_\Sigma(h_2))}dh_2  \ll \Xi^{\mathrm{U}(V)}(h_1).
\end{equation*}
Moreover, we have $\Val{(\sigma_V(h_1) \varphi^1_V, \varphi^2_V)} \ll \Xi^{\mathrm{U}(V)}(h_1)$. This implies that (\ref{eq:tempered_unfolding}) is absolutely convergent by \eqref{eq:omega_estimate} and \eqref{eq:HC_int}. Then by direct computation we have
\begin{align*}
    &\mathcal{P}_{\mathrm{U}(V)}(\varphi^1_V \otimes \varphi^1_\Sigma \otimes \Phi_1, \varphi^2_V \otimes \varphi^2_\Sigma \otimes \Phi_2) \\
    =& \upsilon(G_r)^{-1} \int_{(P(X) \backslash \mathrm{U}(V))^2}\int_{(N_r \backslash G_r)^2} \delta_{P(X)}^{-1}(g_i)\mathcal{P}_{H^L}\left(
     \sigma_V(g_i h_i) \varphi_V \otimes\varphi^i_\Sigma(g_i h_i) \otimes (\overline{\omega}_V(g_ih_i) \Phi_i)_{Y_+^*}\right) dg_i dh_i \\
    =&\upsilon(G_r)^{-1} \int_{(H \backslash \mathrm{U}(V))^2}\mathcal{P}_{H^L}\left(
     \sigma_V(h_i) \varphi_V^i \otimes \varphi^i_\Sigma(h_i) \otimes(\overline{\omega}_V(h_i) \Phi_i)_{Y_+^*}\right) dh_i,
\end{align*}
where in the second equality we have applied Lemma~\ref{lem:tempered_fourier} to the map in $\mathcal{C}^w(L)$ defined by
\begin{equation*}
    (g_V,g_W,g_r) \mapsto\left(\sigma_V(g_Vh_1) \varphi^1_V, \sigma_V(h_2) \varphi^2_V \right)  \left((\tau(g_r)\otimes \sigma_W(g_W)) \varphi^1_\Sigma(h_1), \varphi^2_\Sigma(h_2)\right),
\end{equation*}
\end{proof}

\section{Unramified $L$-function for Fourier--Jacobi models}

\label{section:L}

We now go back to the unramified setting described in \S\ref{section:WS} and prove Proposition~\ref{prop:normalization} (value of $\mathcal{W}^I_{\chi,\eta}(1))$ and Theorem~\ref{thm:II} (formula for $\mathcal{P}_H(\varphi^\circ,\phi^\circ)$ in the tempered case). 

\subsection{Satake parameters for unitary groups}

\subsubsection{}

Following~\cite[Section~8.7]{BPC}, we introduce some notations on dual complex groups. Define $W_F$ to be the Weil group of $F$, and let $\mathrm{Fr} \in W_F$ be an absolute Frobenius. In the inert case, let $W_E$ be the Weil group of $E$. Note that $\mathrm{Fr}$ is sent to $c$ in the quotient $W_F / W_E \simeq \gal(E/F)$. For convenience, we write $W_E$ for $W_F$ in the split case, so that $W_F / W_E$ is trivial.

For $V_k$ a $k$-dimensional unramified skew-Hermitian space over $E/F$, we identify the dual group $\widehat{\mathrm{U}}(V_k)$ of $\mathrm{U}(V_k)$ with $\GL_{k}(\cc)$ equipped with its standard pinning. The Langlands dual group of $\mathrm{U}(V_k)$ is $ {}^L \mathrm{U}(V_k)=\GL_k(\cc) \rtimes W_F$. The Galois action factorizes through $W_F / W_E$, with $c(g)=g^\star$ in the inert case where 
\begin{equation*}
    g^\star=J_k {}^tg^{-1}J_k^{-1}, \; J_k=\begin{pmatrix}
        & & 1 \\
        &~\reflectbox{$\ddots$} & \\
        (-1)^{k-1} & &
    \end{pmatrix}.
\end{equation*}
The dual group $\widehat{G}_k$ of $G_k$ is identified with $\GL_k(\cc) \times \GL_k(\cc)$, and its Langlands dual group is ${}^L G_k=(\GL_k(\cc) \times \GL_k(\cc)) \rtimes W_F$, where the Galois action factorizes through $W_F / W_E$ with $c(g_1,g_2)=(g_2,g_1)$ in the inert case. Note that in the split case there is also an action of $\gal(E/F)=\{1,c\}$ on ${}^L G_k$ : $c$ acts by $c(g_1,g_2)=(g_2,g_1)$ on $\widehat{G}_k$ and by identity on $W_F$. For $S \in {}^L G_k$, we denote this action by $S^c$.

We write $S \mapsto S^\star$ for the automorphism of ${}^L G_k$ which is identity on $W_F$ and sends $(g_1,g_2) \in \widehat{G}_k$ to $(g_2^\star,g_1^\star)$. There is a base change embedding $\mathrm{BC} : {}^L \mathrm{U}(V_k) \to {}^L G_k$ which is given by
\begin{equation}
\label{eq:Satake_embedding}
    g \in \widehat{\mathrm{U}}(V_k) \mapsto (g,g^\star) \in \widehat{G}_k,
\end{equation}
and which is identity on $W_F$. When the context is clear, we will identify an element $S \in {}^L \mathrm{U}(V_k)$ with its image in ${}^L G_k$.

We write $(\widehat{T}_{V_K},\widehat{B}_{V_k})$ for the standard Borel pair in $\widehat{\mathrm{U}}(V_k)$, and $(\widehat{T}_k,\widehat{B}_k)$ for the one in $\widehat{G}_k$. These are subgroups of $\widehat{\mathrm{U}}(V_k)$ and $\widehat{G}_k$ respectively which are stable by the action of $W_F$. Set ${}^L T_{V_k}=\widehat{T}_{V_k} \rtimes W_F, \; 
    {}^L B_{V_k}=\widehat{B}_{V_k} \rtimes W_F, \;
    {}^L T_k=\widehat{T}_k \rtimes W_F$ and $
    {}^L B_k=\widehat{B}_k \rtimes W_F$.

Recall that $G=\mathrm{U}(V) \times \mathrm{U}(W)$ and set $\widetilde{G}:=\mathrm{U}(W) \times \mathrm{U}(W)$. Their dual groups (resp. Langlands dual groups) are $\widehat{G}=\widehat{\mathrm{U}}(V) \times \widehat{\mathrm{U}}(W), \; \widehat{\widetilde{G}}=\widehat{\mathrm{U}}(W) \times \widehat{\mathrm{U}}(W),\; ( \text{resp. } {}^L G={}^L \mathrm{U}(V) \times_{W_F} {}^L \mathrm{U}(W), \; {}^L \widetilde{G}={}^L \mathrm{U}(W) \times_{W_F} {}^L \mathrm{U}(W) ) $.

Set $B=B_V \times B_W$, $T=T_V \times T_W$, $\widetilde{B}=B_W \times B_W$ and $\widetilde{T}=T_W \times T_W$. Write $\widehat{B}=\widehat{B}_V \times \widehat{B}_W$, $\widehat{T}=\widehat{T}_V \times \widehat{T}_W$, $\widehat{\widetilde{B}}=\widehat{B}_W \times \widehat{B}_W$ and $\widehat{\widetilde{T}}=\widehat{T}_W \times \widehat{T}_W$ and set ${}^L T=\widehat{T} \rtimes W_F, \; {}^L \widetilde{T}=\widehat{\widetilde{T}} \rtimes W_F$. 

Recall that $P(X)$ is the parabolic subgroup of $\mathrm{U}(V)$ stabilizing $X$, with Levi decomposition $P(X)=M(X)N(X)$ where $M(X)$ is the Levi subgroup stabilizing $X^*$. Set $M=M(X) \times \mathrm{U}(W) \subset G$. The corresponding $L$-groups are ${}^L M(X)$ and ${}^L M$ which are identified with the corresponding Levi subgroups in ${}^L \mathrm{U}(V)$ and ${}^L G$ respectively. Note that ${}^L M(X)$ is isomorphic to ${}^L G_r \times_{W_F} {}^L \mathrm{U}(W)$ via the map which is identity on $W_F$ and 
\begin{equation*}
    \begin{pmatrix}
        g_r^{(1)} & & \\
        & g_W & \\
        & & g_r^{(2)}
    \end{pmatrix} \mapsto ((g_r^{(1)},g_r^{(2)\star}),g_W).
\end{equation*}
on $\widehat{M}(X)$. For $S \in {}^L M(X)$, we denote by $(S^{(r)},S^{(m)}) \in {}^L G_r \times_{W_F} {}^L \mathrm{U}(W)$ its image by this morphism. If $S=(S_V,S_W) \in {}^L T$, we have $S_V \in {}^L M(X)$ and we set $\widetilde{S}:=(S_V^{(m)},S_W) \in {}^L \widetilde{G}$.

For $\LAG \in \{G,\widetilde{G},\mathrm{U}(V_k),G_k,M,M(X)\}$, let $W_{\mathbb{G}}$ be the Weyl group $\mathrm{Norm}_{\widehat{\mathbb{G}}}({}^L \mathbb{T})/\widehat{\mathbb{T}}$, where $\widehat{\mathbb{T}} \subset \widehat{\mathbb{G}}$ is the standard maximal torus. We have isomorphisms $W_{\mathrm{U}(V)}=W_V$, $W_{\mathrm{U}(W)}=W_W$ and $W_G=W_V \times W_W$. These groups act by conjugation on ${}^L T_V$, ${}^L T_W$, and 
${}^L T$ respectively, and we denote these actions by $wS$.

For a complex Lie groups ${}^L \mathbb{G}$ and a subgroup ${}^L \mathbb{Q}$ with respective identity components $\widehat{\mathbb{G}}$ and $\widehat{\mathbb{Q}}$, set 
\begin{equation*}
    D_{\widehat{\mathbb{G}}/\widehat{\mathbb{Q}}}(S)=\det\left(1-\mathrm{Ad}(S) \; | \; \mathrm{Lie}(\widehat{\mathbb{G}})/\mathrm{Lie}(\widehat{\mathbb{Q}})\right), \; S \in {}^L \mathbb{Q}.
\end{equation*}

For any $k$, the choice of the Borel pair $(T_k,B_k)$ in $G_k$ allows us to identify $\Lambda_k$ the group of cocharacters of $T_k$ with the group of characters of ${}^L T_k$ trivial on $W_F$. We will denote by $\lambda_k \mapsto \chi_{\lambda_k}$ this identification. For $\lambda_k \in \Lambda_k^+$ (the cone of positive cocharacters with respect to $B_k$), we write $ch_{\lambda_k}$ for the character of the irreducible representation of ${}^L G_k$ with highest weight $\chi_{\lambda_k}$. If $S_k \in {}^L T_k$, set $\overline{\mu} S_k=((I_k,-I_k),\mathrm{id}) .S_k$ in the inert case, and $\overline{\mu} S_k=((\overline{\mu}(\varpi)I_k,\mu(\varpi)I_k),\mathrm{id}) .S_k$ in the split case. Therefore, $\chi_{\lambda_k}(\overline{\mu} S_k)=\overline{\mu}(\lambda_k)\chi_{\lambda_k}(S_k)$.

\subsubsection{}
\label{eq:Rankin--Selberg_rep}
For $k, l \in \nn$, we define the representation
\begin{equation*}
  (S_k,S_l) \in  {}^L (G_k \times G_l) = {}^L G_k \times_{W_F} {}^L G_l  \mapsto S_k \stackrel{\mathrm{I}}{\otimes} S_l \in  \GL(\cc^k \otimes \cc^l \oplus \cc^k \otimes \cc^l),
\end{equation*}
which sends $((g_k^{(1)},g_k^{(2)}),(g_l^{(1)},g_l^{(2)})) \in \widehat{G}_k \times \widehat{G}_l$ to $g_k^{(1)} \otimes g_l^{(1)} \oplus g_k^{(2)} \otimes g_l^{(2)}$, factorizes through $W_F / W_E$ and sends $c$ to the operator that swaps the two copies of $\cc^k \otimes \cc^l$ in the inert case. 

By~\cite[Appendix~A]{LM}, for any quasisplit reductive group $\LAG$ over $F$, we have an isomorphism $H^1(W_F,Z(\widehat{\LAG}))\simeq \Hom(\LAG,\cc^\times)$. Let $\overline{\mu}_k$ be the character $(g_l,g_k) \mapsto \overline{\mu}(\det g_k)$ of $G_l \times G_k$. It corresponds to a class $[z_{\overline{\mu}}] \in H^1(W_F,Z(\widehat{G_l \times G_k}))$, and we choose a representative cocycle $z_{\overline{\mu}}$. Set 
\begin{equation*}
    a_{\overline{\mu}} : (\hat{g},w) \in {}^L(G_l \times G_k) \mapsto (z_{\overline{\mu}}(w) \hat{g},w) \in {}^L(G_l \times G_k),
\end{equation*}
and set $\stackrel{\mathrm{I}}{\otimes}_{\overline{\mu}}:=\stackrel{\mathrm{I}}{\otimes} \circ a_{\overline{\mu}}$. Note that if $S_l \in \widehat{G_l} . \mathrm{Fr}$ and $S_k \in \widehat{G_k} . \mathrm{Fr}$, we have $S_l \stackrel{\mathrm{I}}{\otimes}_{\overline{\mu}} S_k=S_l \stackrel{\mathrm{I}}{\otimes} \overline{\mu} S_k$. Composing $\stackrel{\mathrm{I}}{\otimes}_{\overline{\mu}}$ with the embedding $\mathrm{BC}$ of \eqref{eq:Satake_embedding}, we obtain representations
\begin{equation*}
    {}^L (\mathrm{U}(V) \times G_k) \to \GL(\cc^n \otimes \cc^k \oplus \cc^n \otimes \cc^k), \; \; \; {}^L (\mathrm{U}(W) \times G_k) \to \GL(\cc^m \otimes \cc^k \oplus \cc^m \otimes \cc^k),
\end{equation*}
which we denote again by $\stackrel{\mathrm{I}}{\otimes}_{\overline{\mu}}$ for simplicity, and
\begin{equation*}
    \mathcal{R}_{\overline{\mu}} : {}^L G \to \GL(\cc^n \otimes \cc^m \oplus \cc^n \otimes \cc^m), \; \; \; \widetilde{\mathcal{R}}_{\overline{\mu}} : {}^L \widetilde{G} \to \GL(\cc^m \otimes \cc^m \oplus \cc^m \otimes \cc^m).
\end{equation*}
Consider the subspace
\begin{align*}
    \mathcal{V}_{-}=\left\langle
    (e_i \otimes e_j,0) \; \middle| \; \substack{ 1 \leq i \leq n \\ 1 \leq j \leq m \\ i+j>r'+1} 
    \right\rangle \oplus \left\langle
    (0,e_i \otimes e_j) \; \middle| \; \substack{ 1 \leq i \leq n \\ 1 \leq j \leq m \\ i+j>r'+1} 
    \right\rangle, \\
       \left( \text{resp. }\widetilde{\mathcal{V}}_{-}=\left\langle
    (e_i \otimes e_j,0) \; \middle| \; \substack{ 1 \leq i \leq m \\ 1 \leq j \leq m \\ i+j>m+1} 
    \right\rangle \oplus \left\langle
    (0,e_i \otimes e_j) \; \middle| \; \substack{ 1 \leq i \leq m \\ 1 \leq j \leq m \\ i+j>m+1} 
    \right\rangle \right),
\end{align*}
where $(e_i)_{i=1}^k$ denotes the canonical basis of $\cc^k$ for any $k$. It is stable by ${}^L T$ (resp. by ${}^L \widetilde{T}$) and we set $\mathcal{R}_{-,\overline{\mu}}(S):=\mathcal{R}_{\overline{\mu}}(S)_{| \mathcal{V}_-}$ for $S \in {}^L T$ (resp. $\widetilde{\mathcal{R}}_{-,\overline{\mu}}(\widetilde{S}):=\mathcal{R}_{\overline{\mu}}(\widetilde{S})_{| \widetilde{\mathcal{V}}_-}$ for $\widetilde{S} \in {}^L \widetilde{T}$).

\subsubsection{}
\label{subsubsec:symplectic}
We can equip $\cc^n \otimes \cc^m \oplus \cc^n \otimes \cc^m$ with the symplectic pairing
\begin{equation*}
    \langle (u_n \otimes u_m ,v_n \otimes v_m), (u_n' \otimes u_m',v_n' \otimes v_m') \rangle := ({}^t u_n J_n^{-1} v_n')({}^t u_m J_m^{-1} v_m')-({}^t u_n' J_n^{-1} v_n)({}^t u_m' J_m^{-1} v_m).
\end{equation*}
The representation $\mathcal{R}_{\overline{\mu}}$ is symplectic. The subspace $\mathcal{V}_-$ is not a Lagrangian, but the subspace $\mathcal{Y}_-:=\mathcal{V}_- \oplus \left\langle
    (e_{j+r} \otimes e_{m-j+1},0) \; \middle| \; 1 \leq j \leq m 
    \right\rangle$ is. In the split case it is stable by the action of ${}^L T$. The Lagrangian subspace $\mathcal{Y}$ appearing in Proposition~\ref{prop:W_formula2} is $\mathcal{Y}=\mathcal{R}_{\overline{\mu}}(w_{0,V})\mathcal{Y}_-$. It is also stable by ${}^L T$, and we denote by $\mathcal{Y}_{\overline{\mu}}$ this representation of ${}^L T$.
\subsection{Proof of Proposition~\ref{prop:normalization}}
\label{section:proof_normalization}

\subsubsection{}

Let $\chi$ and $\eta$ be unramified characters of $T_V$ and $T_W$ respectively in general position as in \S\ref{section:Formula}. They correspond to semisimple conjugacy classes in ${}^L \mathrm{U}(V)$ and ${}^L \mathrm{U}(W)$ which we identify with representatives $S_V \in \widehat{T}_V.\mathrm{Fr}$ and $S_W \in \widehat{T}_W.\mathrm{Fr}$. Set $S=(S_V,S_W) \in {}^L T$. Recall that we have defined the regular functions $\mathbf{d}_V$, $\mathbf{d}_W$ and $\mathbf{b}$ in \eqref{eq:d_defi} and \eqref{eq:b_defi}. Let $w_{0,G}$ be the longest element in $W_G$. Then we have
\begin{equation}
\label{eq:Satake_reform}
    \mathbf{d}_V(\chi)\mathbf{d}_W(\eta)=D_{\widehat{G}/\widehat{B}}(w_{0,G} S)^{-1}, \text{ and } \mathbf{b}(\chi,\overline{\mu}\eta)=\det(1-q_F^{-\frac{1}{2}} \mathcal{R}_{-,\overline{\mu}}
        (w_{0,G}S)).
\end{equation}

\subsubsection{Proof of Proposition~\ref{prop:normalization} for \texorpdfstring{$r=0$}{r=0}}

We now write the proof of Proposition~\ref{prop:normalization} assuming first that $r=0$. We have to show that 
\begin{equation*}
    \mathcal{W}_{\chi,\overline{\mu}\eta}^I(1)=\Delta_{T_W}^{',-1} \Gamma(\chi,\overline{\mu}\eta) \Delta_{T_W} \Delta_{\mathrm{U}(W)}^{-1}.
\end{equation*}
By Proposition~\ref{prop:phi_0_calc} and \eqref{eq:Satake_reform}, this amounts to proving that for $S \in \widehat{T}.\mathrm{Fr}$ in general position we have 
\begin{equation}
\label{eq:normalization_computation}
    \sum_{w \in W_G} \frac{\det(1-q_F^{-\frac{1}{2}} \mathcal{R}_{-,\overline{\mu}}
        (wS))}{D_{\widehat{G}/\widehat{B}}(wS)}=\frac{\Delta_{T_W}}{ \Delta_{\mathrm{U}(W)}}.
\end{equation}
We claim that the LHS of \eqref{eq:normalization_computation} is constant in $S$. Indeed, note that the composition $S \mapsto S .\mathrm{Fr}\mapsto \det(1-q_F^{-\frac{1}{2}} \mathcal{R}_{-,\overline{\mu}}(S.\mathrm{Fr}))$ is a linear combination of $W_F$-invariant characters of $\widehat{T}$. Let $\lambda \in X^*(\widehat{T})^{W_F}$ be such a character. By the natural isomorphism $X^*(\widehat{T}) \cong \zz^n \times \zz^n$, we can write the coordinates $\lambda=(\lambda_{V,1}, \hdots, \lambda_{V,n}, \lambda_{W,1}, \hdots, \lambda_{W,n})$. For each $1 \leq i \leq n$ we have $i-n \leq \lambda_{V,i}, \lambda_{W,i} \leq i-1$. Let $\rho \in \frac{1}{2} X^*(\widehat{T})$ be the half-sum of the positive roots in $\widehat{T}$ with respect to $\widehat{B}$. The coordinates of $\lambda + \rho$ are all integers or all half-integers between $\frac{1-n}{2}$ and $\frac{n-1}{2}$, so that we have the following alternative.
        \begin{itemize}
            \item Either $\lambda$ is singular, i.e. there exists a coroot $\alpha^\vee$ such that $\langle \lambda + \rho, \alpha^\vee \rangle =0$, in which case $\sum_{w \in W_G} \frac{\lambda(wS)}{D_{\widehat{G}/\widehat{B}}(wS)}=0$ by the Weyl character formula \cite[Proposition~A.0.2.1]{BPC}.
            \item Either $\lambda + \rho$ is conjugated to $\rho$ under $W_G$ and $S \mapsto \sum_{w \in W_G} \frac{\lambda(wS)}{D_{\widehat{G}/\widehat{B}}(wS)}$ is constant.
        \end{itemize}
    This shows that \eqref{eq:normalization_computation} is constant in $S$, and the claim follows from elementary computations taking $S$ corresponding to the characters $(\chi,\eta)=w_{0,G}( \delta_{B_V}^{-\frac{1}{2}},\mu \delta_{B_V}^{-1}\delta_{B_J}^{\frac{1}{2}})$ in the inert case and $w_{0,G}(\delta_{B_V}^{-\frac{1}{2}},\mu \delta_{B_V}\delta_{B_J}^{-\frac{1}{2}})$ in the split case, noting that with this choice $\det(1-q_F^{-\frac{1}{2}} \mathcal{R}_{-,\overline{\mu}}
        (wS))$ is zero unless $w=1$ in the split or inert odd case, and $w=1$ or $w$ is the element corresponding to $(1,(1,n)) \in W_G \subset \mathfrak{S}_n \times \mathfrak{S}_n $ in the inert even case.

\subsubsection{Proof of Proposition~\ref{prop:normalization} for \texorpdfstring{$r\geq 1$}{r>0}}

    We now prove Proposition~\ref{prop:normalization} for $r \geq 1$. We first state two lemmas that will also be used in the proof of Proposition~\ref{prop:L_unfold}. Denote by $\Lambda_r^{++} \subset \Lambda_r^+$ the subcone of cocharacters that are dominant with respect to $B_{r+1}$ through the embedding $g \in G_r \mapsto \begin{pmatrix}
    g & \\
    & 1
\end{pmatrix} \in G_{r+1}$. Note that $\Lambda_r^{++} \subset \Lambda_V^+$. 

    \begin{lem}
    \label{lem:unfolding_W}
        For $\lambda_r \in \Lambda_r^{++}$ we have
        \begin{equation}
        \label{eq:W_unfolding}
            \mathcal{W}_{\chi,\overline{\mu}\eta}^I(\lambda_r)=\Delta_{T_W}^{',-1} \Gamma(\chi,\overline{\mu} \eta) \frac{\Delta_{T_W}}{\Delta_{\mathrm{U}(W)}}\sum_{w \in W_V} \frac{\det(1-q_F^{-\frac{1}{2}} (w S_V)^{(r)\star} \stackrel{\mathrm{I}}{\otimes}_{\overline{\mu}}S_W)}{D_{\widehat{\mathrm{U}}(V)/\widehat{B}_V}(wS_V)}ch_{\lambda_r}\left((wS_V)^{(r)}\right)\delta_{B_V}(\lambda_r)^\frac{1}{2}.
        \end{equation}
    \end{lem}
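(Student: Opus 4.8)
The plan is to reduce the statement to Proposition~\ref{prop:phi_0_calc} and then to collapse the resulting sum over $W_G$ down to a sum over $W_V$ by a Weyl character formula argument in the spirit of the $r=0$ case of Proposition~\ref{prop:normalization}.

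First I would note that the hypothesis $\lambda_r\in\Lambda_r^{++}$ is exactly what makes $\lambda_r$ dominant for $B_V$, so that $\lambda:=(\lambda_r,1)$ lies in $\Lambda^-=\Lambda_V^+\times\Lambda_W^-$ and Proposition~\ref{prop:phi_0_calc} applies to it, after replacing $\eta$ throughout by $\overline{\mu}\eta$ (since $\mathcal{W}^I_{\chi,\overline{\mu}\eta}$ has Jacobi parameter $\overline{\mu}\eta$, so $\mathrm{U}(W)$-parameter $\eta$, and $w_W\mu\eta$ is replaced by $w_W\eta$). Because the $\mathrm{U}(W)$-cocharacter of $\lambda$ is trivial, the factor $\big((w_0 w_V\chi\boxtimes w_W\eta)\delta_{B^+}^{-1/2}\big)(\lambda)$ equals $(w_{0,V}w_V\chi)(\lambda_r)\,\delta_{B_V}^{1/2}(\lambda_r)$ independently of $w_W$, using $\delta_{B^+}^{-1/2}(\lambda_r,1)=\delta_{B_V}^{1/2}(\lambda_r)$. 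Hence, identifying $\mathcal{W}^I_{\chi,\overline{\mu}\eta}(\lambda_r)$ with $(\Phi^I_{\chi,\overline{\mu}\eta}(\lambda_r,1),\phi^\circ)_\nu$ via \eqref{eq:iso_explicit}, I obtain
\[
\mathcal{W}^I_{\chi,\overline{\mu}\eta}(\lambda_r)=\Delta_{T_W}^{',-1}\,\Gamma(\chi,\overline{\mu}\eta)\,\delta_{B_V}^{1/2}(\lambda_r)\sum_{w_V\in W_V}\mathbf{d}_V(w_V\chi)\,(w_{0,V}w_V\chi)(\lambda_r)\,\mathbf{B}(w_V\chi),
\]
where $\mathbf{B}(w_V\chi):=\sum_{w_W\in W_W}\mathbf{b}(w_V\chi,w_W\overline{\mu}\eta)\,\mathbf{d}_W(w_W\eta)$.

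The core of the argument is then to rewrite this in terms of Satake parameters. Writing $S_V\leftrightarrow\chi$, $S_W\leftrightarrow\eta$ and invoking the factor-by-factor analogues of \eqref{eq:Satake_reform} on $\mathrm{U}(V)$ and $\mathrm{U}(W)$ separately, one has $\mathbf{b}(w_V\chi,w_W\overline{\mu}\eta)=\det(1-q_F^{-1/2}\mathcal{R}_{-,\overline{\mu}}((w_{0,V}w_V S_V,w_{0,W}w_W S_W)))$, $\mathbf{d}_W(w_W\eta)=D_{\widehat{\mathrm{U}}(W)/\widehat{B}_W}(w_{0,W}w_W S_W)^{-1}$, and $\mathbf{d}_V(w_V\chi)=D_{\widehat{\mathrm{U}}(V)/\widehat{B}_V}(w_{0,V}w_V S_V)^{-1}$. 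Since $S_V\in\widehat{T}_V$ lies in the Levi $\widehat{M}(X)$, I would restrict $\mathcal{R}_{\overline{\mu}}$ along ${}^L M(X)\times_{W_F}{}^L\mathrm{U}(W)\cong {}^L G_r\times_{W_F}{}^L\mathrm{U}(W)\times_{W_F}{}^L\mathrm{U}(W)$ using $\cc^n=\cc^r\oplus\cc^m\oplus\cc^r$: the condition $i+j>r'+1$ cutting out $\mathcal{V}_-$ is empty on the first $\cc^r\otimes\cc^m$-block, is precisely the condition $i'+j>m+1$ defining $\widetilde{\mathcal{V}}_-$ on the $\cc^m\otimes\cc^m$-block, and is the whole space on the second $\cc^r\otimes\cc^m$-block. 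Via the $\star$-twist in ${}^L M(X)\cong{}^L G_r\times_{W_F}{}^L\mathrm{U}(W)$ this factors
\[
\det\!\big(1-q_F^{-1/2}\mathcal{R}_{-,\overline{\mu}}((S_V',s_W))\big)=\det\!\big(1-q_F^{-1/2}\widetilde{\mathcal{R}}_{-,\overline{\mu}}(((S_V')^{(m)},s_W))\big)\cdot\det\!\big(1-q_F^{-1/2}(S_V')^{(r)\star}\!\stackrel{\mathrm{I}}{\otimes}_{\overline{\mu}}s_W\big),
\]
and the second factor, being the determinant of a tensor product $(\,\cdot\,)\otimes s_W$, is invariant under $s_W\mapsto w_W s_W$. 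Next I would perform the collapse: combining the inner sum over $w_W\in W_W$ with the sub-sum over $W_{\mathrm{U}(W)}\subset W_{M(X)}\subset W_V$ of the outer sum — after using that $D_{\widehat{\mathrm{U}}(V)/\widehat{B}_V}(S)=D_{\widehat{G}_r/\widehat{B}_r}(S^{(r)})\,D_{\widehat{\mathrm{U}}(W)/\widehat{B}_W}(S^{(m)})\,\det(1-\mathrm{Ad}(S)\,|\,\mathrm{Lie}(\widehat{\mathrm{U}}(V))/\mathrm{Lie}(\widehat{M}(X)))$ with the last factor $W_{M(X)}$-invariant — produces an honest double sum of the shape treated in the $r=0$ case of Proposition~\ref{prop:normalization}, now for the Rankin--Selberg representation $\widetilde{\mathcal{R}}_{-,\overline{\mu}}$ on $\widetilde{\mathcal{V}}_-$; expanding into $W_F$-invariant characters of $\widehat{\widetilde T}$, bounding their coordinates exactly as one bounds $i-n\le\lambda_{V,i},\lambda_{W,i}\le i-1$ in the $r=0$ argument, and applying \cite[Proposition~A.0.2.1]{BPC} shows it equals the constant $\Delta_{T_W}/\Delta_{\mathrm{U}(W)}$. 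The determinant $\det(1-q_F^{-1/2}(w_V S_V)^{(r)\star}\stackrel{\mathrm{I}}{\otimes}_{\overline{\mu}}S_W)$ is untouched by these sums and survives.

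Finally I would carry out the remaining sub-sum over $W_{G_r}$ of the outer sum. After the change of variables $w_V\mapsto w_{0,V}w_V$, which turns $\mathbf{d}_V(w_{0,V}w_V\chi)$ into $D_{\widehat{\mathrm{U}}(V)/\widehat{B}_V}(w_V S_V)^{-1}$ and $(w_V\chi)(\lambda_r)$ into the highest-weight monomial $\chi_{\lambda_r}((w_V S_V)^{(r)})$, the $W_{G_r}$-invariance of $\det(1-\mathrm{Ad}(S)\,|\,\mathrm{Lie}(\widehat{\mathrm{U}}(V))/\mathrm{Lie}(\widehat{M}(X)))$, of $D_{\widehat{\mathrm{U}}(W)/\widehat{B}_W}(S^{(m)})$ and of the $G_r\times\mathrm{U}(W)$ Rankin--Selberg determinant, together with $D_{\widehat{\mathrm{U}}(V)/\widehat{B}_V}(w_VS_V)=D_{\widehat{G}_r/\widehat{B}_r}((w_VS_V)^{(r)})\cdot(\text{$W_{G_r}$-invariant})$, lets the Weyl character formula for $\GL_r$ convert $\chi_{\lambda_r}$ into $ch_{\lambda_r}$ — here dominance of $\lambda_r$ for $\GL_r$, which is guaranteed by $\lambda_r\in\Lambda_r^{++}$, is essential. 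Assembling the pieces yields \eqref{eq:W_unfolding}.

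The main obstacle is precisely this bookkeeping: keeping the $\star$-twist and the $w_{0,V},w_{0,W},\overline{\mu}$-shifts consistent through the Levi restriction, checking that the coordinate bounds on the $\widehat{\widetilde T}$-characters arising from $\widetilde{\mathcal{V}}_-$ genuinely force the singular-or-$W$-orbit dichotomy, and — most delicately — verifying that the residual dependence on the $\mathrm{U}(W)$-block of $S_V$ left over after the $w_W$-sum is absorbed exactly by the $W_{\mathrm{U}(W)}$-sub-sum of the outer sum, so that the collapse produces the bare constant $\Delta_{T_W}/\Delta_{\mathrm{U}(W)}$ rather than an $S_V$-dependent factor. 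Once the combinatorics of the decomposition $\mathcal{V}_-=\widetilde{\mathcal{V}}_-\oplus(\text{the }G_r\times\mathrm{U}(W)\text{ Rankin--Selberg part})$ and of the cosets $W_V/W_{M(X)}$ is pinned down, the Weyl-character-formula manipulations are of exactly the type already performed for $r=0$.
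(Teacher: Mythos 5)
Your proposal is correct and follows essentially the same route as the paper's proof: apply Proposition~\ref{prop:phi_0_calc} at $\lambda=(\lambda_r,1)$, decompose $\mathcal{R}_{-,\overline{\mu}}$ and the $D$-denominator along the Levi $M=M(X)\times\mathrm{U}(W)$, collapse the $W_{\widetilde G}=W_{\mathrm{U}(W)}\times W_W$ sub-sum to $\Delta_{T_W}/\Delta_{\mathrm{U}(W)}$ via the $r=0$ computation, convert the $W_{G_r}$ sub-sum into $ch_{\lambda_r}$ by the $\GL_r$ Weyl character formula, and recognize the surviving coset sum through $W_M\backslash W_G\cong W_{M(X)}\backslash W_V$. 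One small slip worth fixing: in your factorization of $D_{\widehat{\mathrm{U}}(V)/\widehat{B}_V}$, the last factor should be $D_{\widehat{\mathrm{U}}(V)/\widehat{P}(X)}(S)$ (the determinant on $\mathrm{Lie}(\widehat{N}(X)^-)$ only), not the determinant on $\mathrm{Lie}(\widehat{\mathrm{U}}(V))/\mathrm{Lie}(\widehat{M}(X))$, which over-counts by the full $\widehat{N}(X)$; with that correction the factor is indeed $W_{M(X)}$-invariant and matches the paper's \eqref{eq:alternative_weyl_sum}.
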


    \begin{proof}
        For any $S_W$ the map $S_V \in \widehat{T}_V . \mathrm{Fr} \mapsto \det(1-q_F^{-\frac{1}{2}} ( S_V)^{(r)\star} \stackrel{\mathrm{I}}{\otimes}_{\overline{\mu}}S_W)ch_{\lambda_r}\left((S_V)^{(r)}\right)$ is invariant under the action of $W_{M(X)}$. By the Weyl character formula \cite[Proposition~A.0.2.1]{BPC}, we have
        \begin{equation*}
            \sum_{w \in W_{M(X)}} D_{\widehat{\mathrm{U}}(V) / \widehat{B}_V}(w S_V)^{-1}=D_{\widehat{\mathrm{U}}(V) / \widehat{P}(X)}(S_V)^{-1}.
        \end{equation*}
        It follows that the RHS of \eqref{eq:W_unfolding} is
        \begin{equation*}
            \Delta_{T_W}^{',-1} \Gamma(\chi,\overline{\mu} \eta)\frac{\Delta_{T_W}}{\Delta_{\mathrm{U}(W)}}\sum_{w \in W_{M(X)} \backslash W_V} \frac{\det(1-q_F^{-\frac{1}{2}} (w S_V)^{(r)\star} \stackrel{\mathrm{I}}{\otimes}_{\overline{\mu}} S_W)}{D_{\widehat{\mathrm{U}}(V)/\widehat{P}(X)}(wS_V)}ch_{\lambda_r}\left((wS_V)^{(r)}\right)\delta_{B_V}(\lambda_r)^\frac{1}{2}.
        \end{equation*}
        By Remark~\ref{rem:iso_explicit} and Proposition~\ref{prop:phi_0_calc} specialized to $\lambda_V=\lambda_r \in \Lambda_V^+$ and $\lambda_W=1$, we know that
        \begin{equation*}
            \mathcal{W}_{\chi,\overline{\mu}\eta}^I(\lambda_r)=\Delta_{T_W}^{',-1} \Gamma(\chi,\overline{\mu} \eta) \sum_{w \in W_G} \frac{\det(1-q_F^{-\frac{1}{2}} \mathcal{R}_{-,\overline{\mu}}
        (wS))}{D_{\widehat{G}/\widehat{B}}(wS)} \chi_{\lambda_r}\left((wS_V)^{(r)}\right)\delta_{B_V}(\lambda_r)^\frac{1}{2}.
        \end{equation*}
        The natural projection $W_G \to W_V$ induces a bijection $W_M \backslash W_G=W_{M(X)} \backslash W_V$, where we recall that $M=M(X) \times \mathrm{U}(W)$. Therefore we have to show that
        \begin{equation}
        \label{eq:alternative_weyl_sum}
            \sum_{w \in W_M} \frac{\det(1-q_F^{-\frac{1}{2}} \mathcal{R}_{-,\overline{\mu}}
        (wS))}{D_{\widehat{G}/\widehat{B}}(wS)} \chi_{\lambda_r}\left((wS_V)^{(r)}\right)= \frac{\Delta_{T_W}}{\Delta_{\mathrm{U}(W)}}\frac{\det(1-q_F^{-\frac{1}{2}} S_V^{(r)\star} \stackrel{\mathrm{I}}{\otimes}_{\overline{\mu}}S_W)}{D_{\widehat{\mathrm{U}}(V)/\widehat{P}(X)}(S_V)}ch_{\lambda_r}(S_V^{(r)}).
        \end{equation}
        We have the equalities $W_M=W_{\widetilde{G}} \times W_{G_r}$ and
        \begin{equation*}
        D_{\widehat{G}/\widehat{B}}(S)=D_{\widehat{\mathrm{U}}(V) / \widehat{P}(X)}(S_V) D_{\widehat{\widetilde{G}} / \widehat{\widetilde{B}}}(\widetilde{S})D_{\widehat{G}_r / \widehat{B}_r}(S_V^{(r)}), \quad
        \mathcal{R}_{-,\overline{\mu}}
        (S)=S_V^{(r)\star} \stackrel{\mathrm{I}}{\otimes}_{\overline{\mu}} S_W  \oplus \widetilde{\mathcal{R}}_{-,\overline{\mu}}(\widetilde{S}).
    \end{equation*}
    It follows that the LHS of \eqref{eq:alternative_weyl_sum} is
    \begin{equation*}
        \frac{\det(1-q_F^{-\frac{1}{2}} S_V^{(r)\star} \stackrel{\mathrm{I}}{\otimes}_{\overline{\mu}} S_W)}{D_{\widehat{\mathrm{U}}(V)/\widehat{P}(X)}(S_V)} \sum_{\widetilde{w} \in W_{\widetilde{G}}} \frac{\det(1-q_F^{-\frac{1}{2}} \widetilde{\mathcal{R}}_{-,\overline{\mu}}
        (\widetilde{w} \widetilde{S}))}{D_{\widehat{\widetilde{G}}/\widehat{\widetilde{B}}}(\widetilde{w} \widetilde{S})}  \sum_{w_r \in W_{G_r}} \frac{\chi_{\lambda_r}(w_r S_V^{(r)})}{D_{\widehat{G}_r/\widehat{B}_r}(w_r S_V^{(r)})}.
    \end{equation*}
    By the $r=0$ case and the Weyl character formula \cite[Proposition~A.0.2.1]{BPC}, we conclude that 
    \begin{equation*}
           \sum_{\widetilde{w} \in W_{\widetilde{G}}} \frac{\det(1-q_F^{-\frac{1}{2}} \widetilde{\mathcal{R}}_{-,\overline{\mu}}
        (\widetilde{w} \widetilde{S}))}{D_{\widehat{\widetilde{G}}/\widehat{\widetilde{B}}}(\widetilde{w} \widetilde{S})}=\frac{\Delta_{T_W}}{ \Delta_{\mathrm{U}(W)}}, \quad \text{ and } \sum_{w_r \in W_{G_r}} \frac{\chi_{\lambda_r}(w_r S_V^{(r)})}{D_{\widehat{G}_r/\widehat{B}_r}(w_r S_V^{(r)})}=ch_{\lambda_r}(S_V^{(r)}).
    \end{equation*}        
    \end{proof}

    For $k \in \nn$, denote by $\mathrm{As}^{(-1)^k}$ the Asai representation $ \mathrm{As}^{(-1)^k} : {}^L G_r \to \GL(\cc^r \otimes \cc^r)$ given by $\mathrm{As}^{(-1)^k}(g)=g^{(1)} \otimes g^{(2)}$ for $g = (g^{(1)} ,g^{(2)} ) \in \widehat{G}_r$, and which factorizes through $W_F / W_E$ with $\mathrm{As}^{(-1)^k}(c)=(-1)^k s$ in the inert case, where $s \in \GL(\cc^r \otimes \cc^r)$ is characterized by $s(u \otimes v)=v \otimes u$.
    
    \begin{lem}
    \label{lem:L_function_unfolding}
        Let $S_V \in \widehat{T}_V.\mathrm{Fr}$, $S_W \in \widehat{T}_W.\mathrm{Fr}$ and $S_r \in \widehat{T}_r.\mathrm{Fr}$ be in general position. We have
        \begin{align*}
            &\det(1-q_F^{-1} S_W\stackrel{\mathrm{I}}{\otimes} S_r^{c})\det(1-q_F^{-1} \mathrm{As}^{(-1)^m}(S_r))\\
            =& \sum_{w \in W_V} \frac{\det(1-q_F^{-\frac{1}{2}} (wS_V)^{(r) \star} \stackrel{\mathrm{I}}{\otimes}_{\overline{\mu}} S_W)}{D_{\widehat{\mathrm{U}}(V)/\widehat{B}_V}(w S_V)}\det(1-q_F^{-\frac{1}{2}} (wS_V)^{(r) \star} \stackrel{\mathrm{I}}{\otimes}_{\overline{\mu}} S_r) \det(1-q_F^{-\frac{1}{2}} (wS_V)^{(m)} \stackrel{\mathrm{I}}{\otimes}_{\overline{\mu}} S_r),
        \end{align*}
        where $S_r^{c}$ is the conjugate of $S_r$ by $c$.
    \end{lem}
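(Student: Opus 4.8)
Since the left-hand side does not involve $S_V$, it is enough to show that the right-hand side, regarded as a function of $S_V \in \widehat T_V.\mathrm{Fr}$ (with $S_W, S_r$ fixed and generic), is constant, and then to identify the constant. Set
$$f(S_V):=\det(1-q_F^{-\frac{1}{2}} S_V^{(r) \star} \stackrel{\mathrm{I}}{\otimes}_{\overline{\mu}} S_W)\,\det(1-q_F^{-\frac{1}{2}} S_V^{(r) \star} \stackrel{\mathrm{I}}{\otimes}_{\overline{\mu}} S_r)\, \det(1-q_F^{-\frac{1}{2}} S_V^{(m)} \stackrel{\mathrm{I}}{\otimes}_{\overline{\mu}} S_r),$$
so that the right-hand side is $\sum_{w\in W_V} f(wS_V)\,D_{\widehat{\mathrm{U}}(V)/\widehat B_V}(wS_V)^{-1}$. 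As $f$ is pulled back from ${}^L M(X)$ it is a finite $\cc$-linear combination of $W_F$-invariant characters of $\widehat T_V$; hence, by the Weyl character formula \cite[Proposition~A.0.2.1]{BPC} applied to each such character, the sum is a regular function of $S_V$ (the poles of the individual summands cancel). Moreover, $f$ being invariant under $W_{M(X)}=W_{G_r}\times W_{\mathrm{U}(W)}$, a reindexing of the $W_V$-sum by $W_{M(X)}$-cosets (identical to the one performed at the start of the proof of Lemma~\ref{lem:unfolding_W}) shows it is $W_V$-invariant. Thus the right-hand side is a finite combination of irreducible characters $ch_\mu$ of $\widehat{\mathrm{U}}(V)$, and it suffices to prove that only $ch_0=1$ occurs; then a single specialization of $(S_V,S_W,S_r)$ — for instance one for which $D_{\widehat{\mathrm{U}}(V)/\widehat B_V}(wS_V)$ has a pole only at $w=1$ — pins down the constant and matches it with the left-hand side.

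To see that only the trivial character survives, I would argue as in the $r=0$ case of Proposition~\ref{prop:normalization}: by the Weyl character formula the character attached to a weight $\lambda$ in the support of $f$ is zero when $\lambda+\rho$ is $W_V$-singular and is $ch_\mu$ with $\mu+\rho$ the dominant $W_V$-conjugate of $\lambda+\rho$ otherwise, so $ch_0$ is the only possible non-zero contribution once one knows that every $\lambda$ occurring in $f$ with $\lambda+\rho$ regular is $W_V$-conjugate to $\rho$ (equivalently that the coordinates of $\lambda+\rho$ lie in the range of those of $\rho$). Unlike in the $r=0$ case, the representations $\mathrm{std}_{G_r}^{\vee}\boxtimes\mathrm{std}_{\mathrm{U}(W)}$ and $\mathrm{As}^{\pm}(\mathrm{std}_{G_r})$ entering $f$ have a priori large $\widehat T_V$-weights, so the direct bound is not immediate; I would therefore factor $f=f_1(S_V^{(m)})\,f_2(S_V^{(r)})$ and $D_{\widehat{\mathrm{U}}(V)/\widehat B_V}=D_{\widehat{\mathrm{U}}(V)/\widehat P(X)}\cdot D_{\widehat G_r/\widehat B_r}\cdot D_{\widehat{\mathrm{U}}(W)/\widehat B_W}$ along the $M(X)$-decomposition $\overline{\mathfrak n}(X)\cong \mathrm{Hom}(W,X)\oplus(\mathrm{As}\text{-part})$ of the opposite nilradical, and then collapse the Weyl sum in two stages: the sum over $W_{\mathrm{U}(W)}$ (acting on $S_V^{(m)}$) is evaluated by the $r=0$ identity \eqref{eq:normalization_computation}, producing $\Delta_{T_W}/\Delta_{\mathrm{U}(W)}$, while the sum over $W_{G_r}$ (acting on $S_V^{(r)}$) is a Gindikin--Karpelevich-type identity for $\GL_r(E)$ whose value is $\det(1-q_F^{-1} S_W\stackrel{\mathrm{I}}{\otimes} S_r^{c})\det(1-q_F^{-1}\mathrm{As}^{(-1)^m}(S_r))$ up to the constant $\Delta_{\mathrm{U}(W)}/\Delta_{T_W}$. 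The Asai factor and the sign $(-1)^m$ arise precisely because the Galois element $c$ interchanges the two copies of $\cc^r\otimes\cc^m$ in $S_V^{(r)\star}\stackrel{\mathrm{I}}{\otimes}_{\overline\mu}S_W$, effectively doubling the $G_r$-torus against itself.

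The main obstacle is this $\GL_r(E)$-identity, i.e. showing that $\sum_{w_r\in W_{G_r}} f_2(w_rS_V^{(r)})\,[D_{\widehat{\mathrm{U}}(V)/\widehat P(X)}D_{\widehat G_r/\widehat B_r}](w_rS_V^{(r)})^{-1}$ is independent of $S_V^{(r)}$ with the stated value. I expect the cleanest route is an induction on $r$: splitting off the last $\GL_1$-factor of $G_r$, one reduces to the corresponding statement for $G_{r-1}$ inside $\mathrm{U}(V')$ with $\dim V'=\dim V-2$ together with a $\GL_1$-computation that contributes one more factor to each of the Rankin--Selberg and Asai products, the base case being Proposition~\ref{prop:normalization}. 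Alternatively one evaluates the $\GL_r$ Gindikin--Karpelevich sum directly using \cite[Theorem~3.1]{Cas} and the explicit $c$-functions, normalizing by the Asai $L$-factor as in the standard unramified computation of an Asai/Rankin--Selberg intertwining operator. In either case the constant is fixed by evaluating at a degenerate parameter, and the $W_F$-twists are tracked exactly as in \S\ref{section:proof_normalization}.
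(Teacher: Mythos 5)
The paper's own proof of Lemma~\ref{lem:L_function_unfolding} is a one-line reduction: by \eqref{eq:mu_explanation} the $\overline{\mu}$-twists cancel on the left-hand side, and the identity is then exactly \cite[Equation~(8.7.2.8)]{BPC}, which treats the analogous Bessel-case Weyl sum. Your approach is entirely different — you attempt a self-contained proof — which would be welcome if it worked, but there is a concrete gap at the central step.

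The error is in the claim that ``the sum over $W_{\mathrm{U}(W)}$ (acting on $S_V^{(m)}$) is evaluated by the $r=0$ identity, producing $\Delta_{T_W}/\Delta_{\mathrm{U}(W)}$.'' You observed yourself (and correctly so, in the first paragraph) that $f$ is $W_{M(X)}$-invariant, precisely because the three determinant factors in $f$ are conjugation-invariant. This means the inner sum over $W_{M(X)} = W_{G_r} \times W_{\mathrm{U}(W)}$ collapses \emph{trivially} via the denominator identity $\sum_{w'\in W_{M(X)}} D_{\widehat{M}(X)/\widehat{B}_{M(X)}}(w'S_V)^{-1}=1$, leaving a single sum over $W_{M(X)}\backslash W_V$ with denominator $D_{\widehat{\mathrm{U}}(V)/\widehat{P}(X)}$. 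No factor $\Delta_{T_W}/\Delta_{\mathrm{U}(W)}$ can appear at this stage. Compare with Lemma~\ref{lem:unfolding_W}: there the factor $\Delta_{T_W}/\Delta_{\mathrm{U}(W)}$ does arise from the $r=0$ identity, but only because the sum is over $W_G=W_V\times W_W$, the numerator $\det(1-q_F^{-1/2}\mathcal{R}_{-,\overline{\mu}}(wS))$ is \emph{not} $W_{\widetilde{G}}$-invariant (the isotropic subspace $\mathcal{V}_-$ is not stable under $W_{\widetilde{G}}$), and both parameters $S_V^{(m)},S_W$ vary. In Lemma~\ref{lem:L_function_unfolding} the sum is over $W_V$ only, $S_W$ is frozen, and there is no nontrivial inner Weyl sum on which to invoke \eqref{eq:normalization_computation}.

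Beyond this, the proposed split of the coset sum into ``a $W_{\mathrm{U}(W)}$ stage and a $W_{G_r}$ stage'' does not match the combinatorics: after the $W_{M(X)}$-collapse the remaining sum is over $W_{M(X)}\backslash W_V$, and $D_{\widehat{\mathrm{U}}(V)/\widehat{P}(X)}$ couples $S_V^{(r)}$ and $S_V^{(m)}$, so the summands do not factor as $f_1(S_V^{(m)})f_2(S_V^{(r)})$ times a product of independent denominators. The remaining alternatives you mention (induction on $r$, or a direct $\GL_r$ Gindikin--Karpelevich computation over the coset sum $W_{M(X)}\backslash W_V$) are plausible directions but are not carried out, and with the $\Delta_{T_W}/\Delta_{\mathrm{U}(W)}$-step gone, the matching of the resulting expression with the Rankin--Selberg$\times$Asai left-hand side is the entire content of the lemma and would need an actual argument. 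As it stands the proof is incomplete. If you want to give a self-contained argument rather than cite \cite[Equation~(8.7.2.8)]{BPC}, the coset-sum reformulation is a good start, but you would need to evaluate $\sum_{w\in W_{M(X)}\backslash W_V} f(wS_V)\,D_{\widehat{\mathrm{U}}(V)/\widehat{P}(X)}(wS_V)^{-1}$ directly, most likely by induction on $r$ peeling off a $\GL_1$-factor of $G_r$ and using a rank-one Gindikin--Karpelevich computation at each step.
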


    \begin{proof}
        Because $S_l \stackrel{\mathrm{I}}{\otimes}_{\overline{\mu}} S_k=S_l \stackrel{\mathrm{I}}{\otimes} \overline{\mu} S_k$, this is the same proof as~\cite[Equation~(8.7.2.8)]{BPC}, noting that $\det(1-q_F^{-1} \overline{\mu}S_W\stackrel{\mathrm{I}}{\otimes} (\overline{\mu}S_r)^{c} )=\det(1-q_F^{-1} S_W\stackrel{\mathrm{I}}{\otimes} S_r^{c})$ and $\det(1-q_F^{-1} \mathrm{As}^{(-1)^{m+1}}(\overline{\mu}S_r))=\det(1-q_F^{-1} \mathrm{As}^{(-1)^m}(S_r))$.
    \end{proof}
    
    We now end the proof of Proposition~\ref{prop:normalization} in the case $r\geq 1$. We have to show that $\mathcal{W}_{\chi,\overline{\mu}\eta}^I(1)=\Delta_{T_W}^{',-1} \Gamma(\chi,\overline{\mu}\eta) \Delta_{T_W} \Delta_{\mathrm{U}(W)}^{-1}$, which by Lemma~\ref{lem:unfolding_W} amounts to 
    \begin{equation*}
        \sum_{w \in W_V} \frac{\det(1-q_F^{-\frac{1}{2}} (w S_V)^{(r)\star} \stackrel{\mathrm{I}}{\otimes}_{\overline{\mu}} S_W)}{D_{\widehat{\mathrm{U}}(V)/\widehat{B}_V}(wS_V)}=1.
    \end{equation*}
    This follows from Lemma~\ref{lem:L_function_unfolding} by taking the limit $S_r \to 0$.

\subsubsection{}

As the proof of Theorem~\ref{thm:II} is now complete, we rewrite its formula using Satake parameters. In the next section we will use it for $\lambda_W=1$.

\begin{theorem}
\label{thm:Sat_reform}
    Let $\varphi^\circ_{V} \in \sigma_V^{K_V}$ and $\varphi^\circ_W \in \sigma_W^{K_W}$ and $\mathcal{L}_H \in \Hom_H(\sigma_V \otimes \sigma_W \otimes \overline{\nu}_{\mu,\psi},\cc)$. For every $\lambda=(\lambda_V,\lambda_W) \in \Lambda^{-}$ we have
    \begin{align*}
        \mathcal{L}_H(\sigma_V(\lambda_V) \varphi^\circ_V \otimes \sigma_W(\lambda_W)\varphi^\circ_W \otimes \phi^\circ)&= \frac{\Delta_{\mathrm{U}(W)}}{\Delta_{T_W}} \mathcal{L}_H(\varphi^\circ_V \otimes \varphi^\circ_W \otimes \phi^\circ) \\
        &\times \sum_{w \in W_G} \frac{\det(1-q_F^{-\frac{1}{2}} \mathcal{R}_{-,\overline{\mu}}
        (wS))}{D_{\widehat{G}/\widehat{B}}(wS)} \left( \left((w_V,w_0 w_W) \chi \boxtimes \eta\right) \delta_{B^+}^{-\frac{1}{2}} \right)(\lambda).
    \end{align*}
\end{theorem}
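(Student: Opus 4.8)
The statement is a reformulation of Theorem~\ref{thm:W_formula} in terms of Satake parameters, so the plan is to translate each piece of the formula \eqref{eq:formula_intro} through the dictionary set up in \S\ref{subsection:proof_normalization}. First I would observe that by Theorem~\ref{thm:mult1intro} the Whittaker--Shintani function $\mathcal{W}_{\chi,\overline{\mu}\eta}(g) = \mathcal{L}_H(\sigma_V(g)\varphi^\circ_V\otimes\varphi^\circ_W\otimes\phi^\circ)$ spans $\mathrm{WS}_{\chi,\overline{\mu}\eta}$ when it is nonzero, and that restricting $\mathcal{L}_H$ to act first by $\sigma_W(\lambda_W)$ on the $\mathrm{U}(W)$-factor and then by $\sigma_V(\lambda_V)$ on the $\mathrm{U}(V)$-factor is exactly the $R\otimes L$-action that Theorem~\ref{thm:W_formula} computes (the $\mathrm{U}(W)$-translation is absorbed into the Jacobi-group action via Proposition~\ref{prop:isos}). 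So the content is purely the identity \eqref{eq:Satake_reform} between the RHS of \eqref{eq:formula_intro} and the RHS above; the case $\lambda_V=\lambda_W=1$ is precisely Proposition~\ref{prop:normalization}'s input, already established, which also handles the normalization constant $\mathcal{L}_H(\varphi^\circ_V\otimes\varphi^\circ_W\otimes\phi^\circ)$.

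Next I would carry out the term-by-term translation. By \eqref{eq:Satake_reform} we have $\mathbf{d}_V(\chi)\mathbf{d}_W(\eta) = D_{\widehat G/\widehat B}(w_{0,G}S)^{-1}$ and $\mathbf{b}(\chi,\overline\mu\eta) = \det(1-q_F^{-1/2}\mathcal{R}_{-,\overline\mu}(w_{0,G}S))$, and these identities are Weyl-equivariant: replacing $S$ by $wS$ and using that $w_{0,G}$ normalizes the relevant data, the summand indexed by $w\in W_G$ in \eqref{eq:formula_intro} becomes the summand indexed by $w$ (after the reindexing $w\mapsto w_{0,G}w$, or equivalently absorbing $w_{0,V}$ into the identification of $T$ with $T_G$ as explained just before \eqref{eq:formula_not_intro}) of the sum over $\frac{\det(1-q_F^{-1/2}\mathcal{R}_{-,\overline\mu}(wS))}{D_{\widehat G/\widehat B}(wS)}$. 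The character factor $\big(w(\chi\boxtimes\eta)\delta_{B^+}^{-1/2}\big)(\lambda)$ must be matched with $\big((w_V,w_0w_W)\chi\boxtimes\eta)\delta_{B^+}^{-1/2}\big)(\lambda)$: here one uses that $\chi_{\lambda}$ is the character of $\widehat T$ attached to $\lambda$, that evaluation of an unramified character on $\lambda\in\Lambda^-$ equals evaluation of the corresponding character of $\widehat T$ on $S$ up to the Frobenius twist recorded in \S\ref{subsection:proof_mult1}, and that the asymmetry between the $V$- and $W$-indices ($w_V$ versus $w_0w_W$) comes precisely from the $B^+ = B_V^-\times B_W$ convention versus the $B$-convention, i.e.\ the conjugation by $w_{0,V}$ noted around \eqref{eq:formula_not_intro}. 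Putting this together, both sides of the claimed identity are the same sum over $W_G$.

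Concretely the steps, in order: (1) reduce to showing the Satake-parameter identity for the ratio $\mathcal{L}_H(\sigma(\lambda)\varphi^\circ\otimes\phi^\circ)/\mathcal{L}_H(\varphi^\circ\otimes\phi^\circ)$, invoking multiplicity one and the normalization Proposition~\ref{prop:normalization}; (2) invoke Theorem~\ref{thm:W_formula} (equivalently its $B$-version \eqref{eq:formula_not_intro}) to write that ratio as $\frac{\Delta_{\mathrm{U}(W)}}{\Delta_{T_W}}\sum_{w\in W_G}\mathbf{b}(w_V\chi,w_W\overline\mu\eta)\,\mathbf{d}_V(w_V\chi)\,\mathbf{d}_W(w_W\mu\eta)\,\big((w_0w_V\chi\boxtimes w_W\mu\eta)\delta_{B^+}^{-1/2}\big)(\lambda)$; (3) apply \eqref{eq:Satake_reform} and its Weyl-translates to rewrite $\mathbf{b}\cdot\mathbf{d}_V\cdot\mathbf{d}_W$ as $\det(1-q_F^{-1/2}\mathcal{R}_{-,\overline\mu}(wS))/D_{\widehat G/\widehat B}(wS)$; (4) match the character factors by the cocharacter/Satake dictionary of \S\ref{subsubsec:Gross_defi}--\S\ref{subsection:proof_mult1}, keeping careful track of the $w_0$ on the $W$-side. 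The main obstacle is the bookkeeping in step (4): one must reconcile three sign-and-$w_0$ conventions at once — the $B$ versus $B^+$ induction, the passage from $\mathbf{b}(\chi,\overline\mu\eta)$ (the ``$\mathrm{WS}_{\chi,\overline\mu\eta}$'' convention) to the representation $\mathcal{R}_{-,\overline\mu}$ evaluated at the Satake parameter $S$ of $\sigma=\sigma_V\boxtimes\sigma_W$, and the Frobenius twist relating evaluation of unramified characters on $\Lambda^-$ to evaluation of $\widehat T$-characters on $S$ — and check that they compose to exactly the asymmetric pair $(w_V, w_0w_W)$ appearing in the statement; everything else is the formal substitution already prepared in \S\ref{section:L}.
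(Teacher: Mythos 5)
Your proposal is correct and takes the same route the paper implicitly relies on: the theorem is indeed nothing more than Theorem~\ref{thm:W_formula} (in the $B$-convention form \eqref{eq:formula_not_intro}) specialized to $\eta_{\mathrm{WS}}=\overline{\mu}\eta$, translated through \eqref{eq:Satake_reform}, and reindexed over $W_G$ by $w\mapsto w_{0,G}w$; the paper states the result as a "rewriting using Satake parameters" without further comment, and your steps (1)--(4) are precisely the missing bookkeeping. The one place where your write-up is slightly loose is step (1): the evaluation at $(\lambda_V,\lambda_W)$ is not really the ``$R\otimes L$-action absorbed into the Jacobi group'' but simply the evaluation of the two-variable function $\Phi_{\chi,\overline{\mu}\eta}\in(\Ind_H^G\nu_{\mu,\psi})^K$ at $(\lambda_V,\lambda_W)$, as recorded in Remark~\ref{rem:iso_explicit} (eq.~\eqref{eq:Phi_equation}); Theorem~\ref{thm:W_formula} is already stated for $\lambda=(\lambda_V,\lambda_W)\in\Lambda^-$, so no extra absorption is needed. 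With that phrasing fixed, the argument is complete.
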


\subsubsection{Proof of Proposition~\ref{prop:W_formula2}}
\label{subsubsec:end_proof_formula_2}
We end this section by completing the proof of the alternative formula of Proposition~\ref{prop:W_formula2} in the split case. It involved the Lagrangian subspace $\mathcal{Y}$ defined in \S\ref{subsubsec:symplectic}. Proposition~\ref{prop:W_formula2} is now a consequence of Proposition~\ref{prop:times_version}, \eqref{eq:Satake_reform} and the equalities $\mathbf{b}^{\times}(\chi,w_0\overline{\mu}\eta)=\det(1-q_F^{-\frac{1}{2}} \mathcal{Y}_{\overline{\mu}}(S))$ and $\mathbf{d}_V(\chi) \mathbf{d}_W(w_0 \eta)=D_{\widehat{G}/\widehat{B}^+}(S)^{-1}$, where $\mathbf{b}^{\times}$ was defined in \eqref{eq:b_times_defi}.

\subsection{Proof of Theorem~\ref{thm:II}}
\label{subsec:proof_II}
\subsubsection{}

In this section, let $\sigma_V$, $\sigma_W$ and $\tau$ be irreducible unramified representations of $\mathrm{U}(V)$, $\mathrm{U}(W)$ and $G_r$ respectively. Their Satake parameters are semisimple conjugacy classes in ${}^L \mathrm{U}(V)$, ${}^L \mathrm{U}(W)$ and ${}^L G_r$ which we identify with representatives $S_V \in \widehat{T}_V .\mathrm{Fr}$, $S_W \in \widehat{T}_W . \mathrm{Fr}$ and $S_r \in \widehat{T}_r . \mathrm{Fr}$ respectively. Set $S:=(S_V,S_W) \in {}^L T$.  Let $\varphi^\circ_{V} \in \sigma_V^{K_V}$, $\varphi^\circ_W \in \sigma_W^{K_W}$ and $\varphi_\tau^\circ \in \tau^{K_r}$ be spherical vectors. Let $\mathcal{L}_H \in \Hom_H(\sigma_V \otimes \sigma_W \otimes \overline{\nu}_{\mu,\psi},\cc)$ and $\mathcal{L}_W \in \Hom_{N_r}(\tau,\overline{\psi_r})$. Set $\Phi^\circ=1_{X^*(\oo_E)} \otimes \phi^\circ \in \overline{\omega}_V$. Write $\tau^c=\tau \circ c$. Recall that $P$ is the parabolic subgroup of $\mathrm{U}(V)$ stabilizing the flag $0 \subset E v_1 \subset \hdots \subset X$.

\begin{prop}
\label{prop:L_unfold}
Let $s \in \cc$. For $\Re(s)$ large enough we have the absolutely convergent sum
\begin{align}
\label{eq:L_unfold}
       &\sum_{\lambda_r \in \Lambda_r} \mathcal{L}_W\left(\tau(\lambda_r) \varphi^\circ_\tau\right) \mathcal{L}_H\left(\sigma_V(\lambda_r)\varphi^\circ_V \otimes \varphi^\circ_W \otimes (\overline{\omega}_V(\lambda_r)\Phi^\circ)_{Y_+^*} \right)(\Val{.}^{s} \delta_{P(X)}^{\frac{1}{2}} \delta_{P}^{-1})(\lambda_r) \nonumber \\
       =  &\frac{L(\frac{1}{2}+s,\sigma_V \times \tau  \otimes \overline{\mu})}{L(1+s, \sigma_W \times \tau^c )L(1+2s,\tau,\mathrm{As}^{(-1)^{m}})}\mathcal{L}_W(\varphi^\circ_\tau)\mathcal{L}_H(\varphi^\circ_V \otimes \varphi^\circ_W \otimes \phi^\circ).
\end{align}
Moreover, if $\sigma_V$, $\sigma_W$ and $\tau$ are tempered, \eqref{eq:L_unfold} holds if $\Re(s)>-\frac{1}{2}$.
\end{prop}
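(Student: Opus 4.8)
The plan is to read \eqref{eq:L_unfold} as the unramified evaluation of a local Rankin--Selberg integral over $N_r\backslash G_r$, and to carry it out from the explicit formulas. The ingredients are: the Casselman--Shalika formula for the unramified Whittaker function of $G_r=\GL_r(E)$ (the $\GL_r$ case, as in \cite{CS}, \cite{KMS2}, \cite{Zhang}); the explicit formula \eqref{eq:omega_action} for the action of $\overline{\omega}_V$ restricted to $G_r$; the Whittaker--Shintani formula in the form of Lemma~\ref{lem:unfolding_W}; the Cauchy identity for Schur functions; and Lemma~\ref{lem:L_function_unfolding}. First I would prove the identity for $\Re(s)$ large by absolute convergence, and then obtain the tempered refinement by analytic continuation together with an absolute-convergence estimate valid on $\Re(s)>-\tfrac12$.

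\textbf{Reduction to $\Lambda_r^{++}$.} By Casselman--Shalika, $\mathcal{L}_W(\tau(\lambda_r)\varphi_\tau^\circ)$ vanishes unless $\lambda_r$ is $B_r$-dominant, in which case it equals $\mathcal{L}_W(\varphi_\tau^\circ)\,\delta_{B_r}^{1/2}(\lambda_r)\,ch_{\lambda_r}(S_r)$, where $ch_{\lambda_r}$ is the (in the inert case twisted) character of the ${}^L G_r$-representation of highest weight $\lambda_r$. Since $\Phi^\circ=1_{X^*(\oo_E)}\otimes\phi^\circ$, \eqref{eq:omega_action} gives $(\overline{\omega}_V(\lambda_r)\Phi^\circ)_{Y_+^*}=\overline{\mu}(\lambda_r)\Val{\lambda_r}^{1/2}\,1_{X^*(\oo_E)}(\lambda_r^{*}v_r^{*})\,\phi^\circ$, which equals $\overline{\mu}(\lambda_r)\Val{\lambda_r}^{1/2}\phi^\circ$ when $\lambda_r$ stabilizes $\oo_E v_r^{*}$ (i.e. the last coordinate of $\lambda_r$ is $\geq 0$) and $0$ otherwise; combined with $B_r$-dominance this restricts the sum to $\lambda_r\in\Lambda_r^{++}\subset\Lambda_V^+$. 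On this cone I apply Lemma~\ref{lem:unfolding_W}, i.e. the Whittaker--Shintani formula for $(\lambda_V,\lambda_W)=(\lambda_r,1)$ (valid for any unramified $\sigma_V,\sigma_W$ via Theorem~\ref{thm:Sat_reform} and the normalization of Theorem~\ref{thm:W_formula}), which expresses $\mathcal{L}_H(\sigma_V(\lambda_r)\varphi_V^\circ\otimes\varphi_W^\circ\otimes\phi^\circ)$ as $\mathcal{L}_H(\varphi_V^\circ\otimes\varphi_W^\circ\otimes\phi^\circ)$ times $\delta_{B_V}^{1/2}(\lambda_r)\sum_{w\in W_V}\tfrac{\det(1-q_F^{-1/2}(wS_V)^{(r)\star}\stackrel{\mathrm{I}}{\otimes}_{\overline{\mu}}S_W)}{D_{\widehat{\mathrm{U}}(V)/\widehat{B}_V}(wS_V)}ch_{\lambda_r}((wS_V)^{(r)})$. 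Finally all modular characters cancel: from the descriptions of $P$ and $P(X)$ in \S\ref{subsubsec:L}, and since $\lambda_r$ centralizes $\mathrm{U}(W)$, one gets $\delta_{B_V}(\lambda_r)=\delta_P(\lambda_r)=\delta_{P(X)}(\lambda_r)\delta_{B_r}(\lambda_r)$ for $\lambda_r\in\Lambda_r$, hence $\delta_{B_r}^{1/2}\delta_{B_V}^{1/2}\delta_{P(X)}^{1/2}\delta_P^{-1}\equiv 1$ on $\Lambda_r$. Thus the left side of \eqref{eq:L_unfold} equals $\mathcal{L}_W(\varphi_\tau^\circ)\,\mathcal{L}_H(\varphi_V^\circ\otimes\varphi_W^\circ\otimes\phi^\circ)\sum_{w\in W_V}\tfrac{\det(1-q_F^{-1/2}(wS_V)^{(r)\star}\stackrel{\mathrm{I}}{\otimes}_{\overline{\mu}}S_W)}{D_{\widehat{\mathrm{U}}(V)/\widehat{B}_V}(wS_V)}\,\Sigma_w(s)$ with $\Sigma_w(s)=\sum_{\lambda_r\in\Lambda_r^{++}}ch_{\lambda_r}((wS_V)^{(r)})\,ch_{\lambda_r}(S_r)\,\overline{\mu}(\lambda_r)\Val{\lambda_r}^{s+1/2}$.

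\textbf{Summation and convergence.} Since $\Lambda_r^{++}$ is the set of partitions and, via the base-change identifications of \S\ref{section:L}, the $ch_{\lambda_r}$ are Schur functions of the relevant Satake data, $\Sigma_w(s)$ is a Cauchy series in the variable $\overline{\mu}(\varpi)q_E^{-(s+1/2)}$; for $\Re(s)$ large it converges absolutely, and the Cauchy identity evaluates it to the factor of $\det(1-q_F^{-1/2}(wS_V)^{(r)\star}\stackrel{\mathrm{I}}{\otimes}_{\overline{\mu}}(\Val{\cdot}^s S_r))^{-1}$ attached to it. Substituting back into the $W_V$-sum and invoking the $\Val{\cdot}^s$-twisted form of Lemma~\ref{lem:L_function_unfolding}, together with the definitions of $L(\tfrac12+s,\sigma_V\times\tau\otimes\overline{\mu})$, $L(1+s,\sigma_W\times\tau^c)$ and $L(1+2s,\tau,\mathrm{As}^{(-1)^m})$ as products $\det(1-q_F^{-\bullet}\,\bullet)^{-1}$, yields the claimed ratio. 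Absolute convergence for $\Re(s)$ large follows because the Casselman--Shalika and Whittaker--Shintani values are $O(C^{|\lambda_r|})$ against the geometric factor $\Val{\lambda_r}^{\Re(s)+1/2}=q_E^{-(\Re(s)+1/2)|\lambda_r|}$. For the tempered case: when $\sigma_V,\sigma_W,\tau$ are tempered their Satake parameters are unitary, so $|ch_{\lambda_r}(S_r)|$ is polynomial in $\lVert\lambda_r\rVert$, and the regular function $S_V\mapsto \sum_{w\in W_V}\tfrac{\det(\cdots S_W)}{D_{\widehat{\mathrm{U}}(V)/\widehat{B}_V}(wS_V)}ch_{\lambda_r}((wS_V)^{(r)})$ is, uniformly on the compact unitary torus, polynomially bounded in $\lVert\lambda_r\rVert$; after the modular cancellation the general term of \eqref{eq:L_unfold} is therefore $O(\lVert\lambda_r\rVert^{N})\,q_E^{-(\Re(s)+1/2)|\lambda_r|}$, and since $\#\{\lambda_r\in\Lambda_r^{++}:|\lambda_r|=k\}$ grows polynomially the series converges absolutely whenever $\Re(s)>-\tfrac12$. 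The identity on that half-plane then follows from its validity for $\Re(s)\gg0$ by analytic continuation, the right side being meromorphic.

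\textbf{Main obstacle.} The genuinely delicate step is the last matching: identifying the Cauchy-summed inner series together with the $W_V$-sum with the three explicit $L$-factors. This is precisely the content of Lemma~\ref{lem:L_function_unfolding}, and it requires careful bookkeeping with the base-change embedding $\mathrm{BC}$, the twisted tensor $\stackrel{\mathrm{I}}{\otimes}_{\overline{\mu}}$, and, in the inert case, with the Asai representation $\mathrm{As}^{(-1)^m}$ and the $q_E=q_F^2$ versus $q_F$ normalizations (including the inverse/$\star$ bookkeeping in the Cauchy evaluation). Everything else is a combination of standard unramified computations; a secondary technical point is the uniform polynomial bound on the Whittaker--Shintani sum over the unitary torus needed for the sharp range $\Re(s)>-\tfrac12$.
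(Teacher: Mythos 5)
Your proposal follows the paper's proof essentially step for step: reduce to $\Lambda_r^{++}$ via Casselman--Shalika and the support of $(\overline{\omega}_V(\cdot)\Phi^\circ)_{Y_+^*}$, apply Lemma~\ref{lem:unfolding_W} to rewrite $\mathcal{L}_H(\sigma_V(\lambda_r)\varphi_V^\circ\otimes\cdots)$ as a $W_V$-sum, use the modular-character cancellation $(\delta_{B_V}^{1/2}\delta_{B_r}^{1/2}\delta_{P(X)}^{1/2}\delta_P^{-1})|_{\Lambda_r}=1$, carry out the Cauchy identity per $w$, and identify the result via Lemma~\ref{lem:L_function_unfolding} and the factorization of $L(\tfrac12+s,\sigma_V\times\tau\otimes\overline\mu)$. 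The only differences are cosmetic: the paper gets the tempered range $\Re(s)>-\tfrac12$ directly from absolute convergence of the per-$w$ Cauchy series when the Satake parameters are unitary (no analytic continuation needed), and in your description of the Cauchy factor you write $(wS_V)^{(r)\star}$ where the paper's Cauchy step produces $(wS_V)^{(r)}$ without a $\star$ (the $\star$ appears in the factor coming from Lemma~\ref{lem:unfolding_W}, not the Cauchy sum); this is exactly the kind of bookkeeping in the twisted tensor $\stackrel{\mathrm{I}}{\otimes}_{\overline\mu}$ that you flag as the delicate point, and it does need to be kept straight to make Lemma~\ref{lem:L_function_unfolding} apply cleanly.
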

\begin{proof}
    Let $\lambda_r \in \Lambda_r^+$. By \eqref{eq:omega_action}, $(\overline{\omega}_V(\lambda_r)\Phi^\circ)_{Y_+^*}$ is $\overline{\mu}(\lambda_r)\Val{\lambda_r}^{\frac{1}{2}} \phi^\circ$ if $\lambda_r \in \Lambda_r^{++}$ and $0$ otherwise. By Lemma~\ref{lem:unfolding_W} and the normalization of Proposition~\ref{prop:normalization}, we therefore have
    \begin{align*}
        &\mathcal{L}_H\left(\sigma_V(\lambda_r)\varphi^\circ_V \otimes \varphi^\circ_W \otimes (\overline{\omega}_V(\lambda_r)\Phi^\circ)_{Y_+^*} \right)= \mathcal{L}_H(\varphi^\circ_V \otimes \varphi^\circ_W \otimes \phi^\circ)  \times \\
        &\left\{ \begin{array}{ll}
           \sum_{w \in W_V} \frac{\det(1-q_F^{-\frac{1}{2}} (w S_V)^{(r)\star} \stackrel{\mathrm{I}}{\otimes}_{\overline{\mu}}S_W)}{D_{\widehat{\mathrm{U}}(V)/\widehat{B}_V}(wS_V)}ch_{\lambda_r}\left((w \overline{\mu} S_V)^{(r)}\right)\delta_{B_V}(\lambda_r)^\frac{1}{2}|\lambda_r|^{\frac{1}{2}} & \text{if } \lambda_r \in \Lambda_r^{++}, \\
         0    &  \text{otherwise.}
        \end{array} \right.
    \end{align*}
    By Shintani and Casselman--Shalika formulae (\cite{Shi} and \cite{CS}), we know that for $\lambda_r \in \Lambda_r$
    \begin{equation*}
        \mathcal{L}_W(\tau(\lambda_r) \varphi_\tau^\circ) = \left\{ \begin{array}{ll}
           \mathcal{L}_W(\varphi_\tau^\circ)\delta_{B_r}(\lambda_r)^{\frac{1}{2}}ch_{\lambda_r}(S_\tau)  & \text{if } \lambda_r \in \Lambda_r^{+}, \\
           0  & \text{otherwise.}
        \end{array} \right.
    \end{equation*}
    By Cauchy identity \cite[Theorem~43.3]{Bump}, for $\Re(s)$ large enough and $w \in W_V$ we have
    \begin{equation*}
        \sum_{\lambda_r \in \Lambda_r^{++}} |\det \lambda_r |^{\frac{1}{2}+s} ch_{\lambda_r}((w \overline{\mu}S_V)^{(r)})ch_{\lambda_r}(S_r)=\det(1-q_F^{-\frac{1}{2}}(w S_V)^{(r)}\stackrel{\mathrm{I}}{\otimes}_{\overline{\mu}}S_{r,s}  ),
    \end{equation*}
    where we have set $S_{r,s}:=q_F^{-s} S_r$. Moreover, if $\sigma_V$ and $\tau$ are tempered, this series is absolutely convergent for $\Re(s)>-\frac{1}{2}$ as their Satake parameters have complex norm $1$. Therefore, it follows from Lemma~\ref{lem:unfolding_W} and the equality $(\delta_{B_V}^{\frac{1}{2}}\delta_{B_r}^{\frac{1}{2}}\delta_{P(X)}^{\frac{1}{2}}\delta_P^{-1})(\lambda_r)=1$ that the LHS of \eqref{eq:L_unfold} is
    \begin{equation*}
        \mathcal{L}_W(\varphi^\circ_\tau)\mathcal{L}_H(\varphi^\circ_V \otimes \varphi^\circ_W \otimes \phi^\circ)\sum_{w \in W_V} \frac{\det(1-q_F^{-\frac{1}{2}} (w S_V)^{(r)\star} \stackrel{\mathrm{I}}{\otimes}_{\overline{\mu}}S_W)}{D_{\widehat{\mathrm{U}}(V)/\widehat{B}_V}(wS_V)}\det(1-q_F^{-\frac{1}{2}} (w S_V)^{(r)}\stackrel{\mathrm{I}}{\otimes}_{\overline{\mu}}S_{r,s} )^{-1}.
    \end{equation*}
    Proposition~\ref{prop:L_unfold} now follows from Lemma~\ref{lem:L_function_unfolding}, and the equality of $L$-functions for any $w \in W_V$
    \begin{align*}
        &L(\frac{1}{2}+s,\sigma_V \times \tau \otimes \overline{\mu})^{-1}=\det(1-q_F^{-\frac{1}{2}}S_V\stackrel{\mathrm{I}}{\otimes}_{\overline{\mu}}S_{r,s} ) \\
        &=\det(1-q_F^{-\frac{1}{2}}(w S_V)^{(r)}\stackrel{\mathrm{I}}{\otimes}_{\overline{\mu}}S_{r,s} )\det(1-q_F^{-\frac{1}{2}} (w S_V)^{(m)}\stackrel{\mathrm{I}}{\otimes}_{\overline{\mu}}S_{r,s} )\det(1-q_F^{-\frac{1}{2}} (w S_V)^{(r) \star}\stackrel{\mathrm{I}}{\otimes}_{\overline{\mu}}S_{r,s} ).
    \end{align*}
    
\end{proof}

\subsubsection{}
We now prove Theorem~\ref{thm:II}. Assume that $\sigma_V$, $\sigma_W$ and $\tau$ are tempered. Set $\Sigma:=I_{P(X)}^{\mathrm{U}(V)} (\tau \boxtimes \sigma_W)$ a tempered representation of $\mathrm{U}(V)$. Without loss of generality we assume that $( \varphi^\circ_V, \varphi^\circ_V )=( \varphi^\circ_W,\varphi^\circ_W )=( \varphi^\circ_\tau,\varphi^\circ_\tau )=1$ for the chosen inner products. Let $\varphi^\circ_\Sigma \in \Sigma$ be the spherical vector with $\varphi^\circ_\Sigma(1)=\varphi^\circ_\tau \otimes \varphi^\circ_W$. Then $( \varphi^\circ_\Sigma, \varphi^\circ_\Sigma )=1$. By Proposition~\ref{prop:tempered_computations} and Proposition~\ref{prop:L_unfold} and the Iwasawa decomposition $\mathrm{U}(V)=T_r H K_V$ we have
\begin{equation*}
    \mathcal{P}_{\mathrm{U}(V)}(\varphi^\circ_V \otimes \varphi^\circ_\Sigma \otimes \Phi^\circ)=\Val{\frac{L(\frac{1}{2},\sigma_V \times \tau  \otimes \overline{\mu})}{L(1, \sigma_W \times \tau^c )L(1,\tau,\mathrm{As}^{(-1)^{m}})}}^2 \frac{\mathcal{P}_{N_r}(\varphi^\circ_\tau) \mathcal{P}_{H}(\varphi^\circ_V \otimes \varphi^\circ_W \otimes \phi^{\circ})}{\Delta_{G_r}}.
\end{equation*}
By~\cite{CS} and~\cite[Proposition~1.1.1]{Xue} we know that
\begin{equation*}
    \mathcal{P}_{N_r}(\varphi^\circ_\tau)=\frac{\Delta_{G_r}}{L(1,\tau,\mathrm{Ad})} \text{ and } \mathcal{P}_{\mathrm{U}(V)}(\varphi^\circ_V \otimes \varphi^\circ_\Sigma \otimes \Phi^\circ)=\Delta_{\mathrm{U}(V)}\frac{L(\frac{1}{2},\sigma_V \times \Sigma \otimes \overline{\mu})}{L(1,\sigma_V,\mathrm{Ad})L(1,\Sigma,\mathrm{Ad})}.
\end{equation*}
Theorem~\ref{thm:II} now follows from the following $L$-function equality
\begin{equation*}
    \frac{L(\frac{1}{2}, \sigma_V \times \Sigma \otimes \overline{\mu})}{L(1,\Sigma,\mathrm{Ad})}=\frac{L(\frac{1}{2},\sigma_V \times \sigma_W \otimes \overline{\mu})}{L(1,\sigma_W,\mathrm{Ad})L(1,\tau,\mathrm{Ad})}\Val{\frac{L(\frac{1}{2},\sigma_V \times \tau  \otimes \overline{\mu})}{L(1, \sigma_W \times \tau^c )L(1,\tau,\mathrm{As}^{(-1)^{m}})}}^2.
\end{equation*}

\printbibliography

@article{Ban,
 author = {Banks, W.},
 title = {A corollary to {Bernstein}'s theorem and {Whittaker} functionals on the metaplectic group},
 fjournal = {Mathematical Research Letters},
 journal = {Math. Res. Lett.},
 issn = {1073-2780},
 volume = {5},
 number = {6},
 pages = {781--790},
 year = {1998},
 language = {English},
 doi = {10.4310/MRL.1998.v5.n6.a7},
 keywords = {22E50,11F70},
 zbMATH = {1489472},
 Zbl = {0944.22007}
}

@book{BP2,
    label={BP},
 author = {Beuzart-Plessis, R.},
 title = {A local trace formula for the {Gan}-{Gross}-{Prasad} conjecture for unitary groups: the {Archimedean} case},
 fseries = {Ast{\'e}risque},
 series = {Ast{\'e}risque},
 issn = {0303-1179},
 volume = {418},
 isbn = {978-2-85629-919-7},
 year = {2020},
 publisher = {Paris: Soci{\'e}t{\'e} Math{\'e}matique de France (SMF)},
 language = {English},
 doi = {10.24033/ast.1120},
 keywords = {22E50,11F72,11F85,20G05,11-02,22-02},
 zbMATH = {7269852},
 Zbl = {1475.22024}
}

@article{BPC,
    label={BPC},
 author = {Beuzart-Plessis, R. and Chaudouard, P.-H.},
 title = {The global {Gan}-{Gross}-{Prasad} conjecture for unitary groups. {II}: {From} {Eisenstein} series to {Bessel} periods},
 fjournal = {Forum of Mathematics, Pi},
 journal = {Forum Math. Pi},
 issn = {2050-5086},
 volume = {13},
 pages = {98},
 note = {Id/No e16},
 year = {2025},
 language = {English},
 doi = {10.1017/fmp.2025.8},
 keywords = {11F70,11F72,22E50},
 zbMATH = {8047525}
}

@misc{BLX,
 author = {Boisseau, P. and Lu, W. and Xue, H.},
 title = {The global {Gan}-{Gross}-{Prasad} conjecture for {Fourier}-{Jacobi} periods on unitary groups},
 year = {2024},
 howpublished = {Preprint, {arXiv}:2404.07342 [math.{RT}] (2024)},
 url = {https://arxiv.org/abs/2404.07342},
 arXiv = {arXiv:2404.07342}
}

@book{Bump,
 author = {Bump, D.},
 title = {Lie groups},
 edition = {2nd ed.},
 fseries = {Graduate Texts in Mathematics},
 series = {Grad. Texts Math.},
 issn = {0072-5285},
 volume = {225},
 year = {2013},
 publisher = {New York, NY: Springer},
 language = {English},
 doi = {10.1007/978-1-4614-8024-2},
 keywords = {22-01,22Exx,22C05,20Cxx,53C35,11Fxx},
 zbMATH = {6189103},
 Zbl = {1279.22001}
}

@article{Cas,
 author = {Casselman, W.},
 title = {The unramified principal series of p-adic groups. {I}: {The} spherical function},
 fjournal = {Compositio Mathematica},
 journal = {Compos. Math.},
 issn = {0010-437X},
 volume = {40},
 pages = {387--406},
 year = {1980},
 language = {English},
 keywords = {22E50,22E35,11R39,22E55},
 url = {https://eudml.org/doc/89444},
 zbMATH = {3739834},
 Zbl = {0472.22004}
}

@article{CHH88,
 author = {Cowling, M. and Haagerup, U. and Howe, R.},
 title = {Almost {{\(L^ 2\)}} matrix coefficients},
 fjournal = {Journal f{\"u}r die Reine und Angewandte Mathematik},
 journal = {J. Reine Angew. Math.},
 issn = {0075-4102},
 volume = {387},
 pages = {97--110},
 year = {1988},
 language = {English},
 keywords = {22D10,22E46,43A35},
 url = {https://eudml.org/doc/153027},
 zbMATH = {4040190},
 Zbl = {0638.22004}
}

@article{CS,
 author = {Casselman, W. and Shalika, J.},
 title = {The unramified principal series of p-adic groups. {II}: {The} {Whittaker} function},
 fjournal = {Compositio Mathematica},
 journal = {Compos. Math.},
 issn = {0010-437X},
 volume = {41},
 pages = {207--231},
 year = {1980},
 language = {English},
 keywords = {22E50,22E35,11R39,22E55},
 url = {https://eudml.org/doc/89456},
 zbMATH = {3739835},
 Zbl = {0472.22005}
}

@incollection{GGP,
 author = {Gan, W. T. and Gross, B. and Prasad, D.},
 title = {Symplectic local root numbers, central critical {{\(L\)}}-values, and restriction problems in the representation theory of classical groups},
 booktitle = {Sur les conjectures de Gross et Prasad. I},
 isbn = {978-2-85629-348-5},
 pages = {1--109},
 year = {2012},
 publisher = {Paris: Soci{\'e}t{\'e} Math{\'e}matique de France (SMF)},
 language = {English},
 keywords = {22E50,22E55,11F70,11R39},
 zbMATH = {6154194},
 Zbl = {1280.22019}
}

@article{GI,
 author = {Gan, W. T. and Ichino, A.},
 title = {The {Gross}-{Prasad} conjecture and local theta correspondence},
 fjournal = {Inventiones Mathematicae},
 journal = {Invent. Math.},
 issn = {0020-9910},
 volume = {206},
 number = {3},
 pages = {705--799},
 year = {2016},
 language = {English},
 doi = {10.1007/s00222-016-0662-8},
 keywords = {11F70,22E50,22E35},
 zbMATH = {6664761},
 Zbl = {1358.11061}
}

@book{GKT,
    author={Gan, W. T. and Kudla, S. and Takeda, S.},
    title={The Local Theta Correspondence},
    year={2023},

}

@article{Gro97,
 author = {Gross, B.},
 title = {On the motive of a reductive group},
 fjournal = {Inventiones Mathematicae},
 journal = {Invent. Math.},
 issn = {0020-9910},
 volume = {130},
 number = {2},
 pages = {287--313},
 year = {1997},
 language = {English},
 doi = {10.1007/s002220050186},
 keywords = {11G40,14A20,11G09,20G30,20G35},
 zbMATH = {1096053},
 Zbl = {0904.11014}
}

@incollection{Gross,
 author = {Gross, B.},
 title = {On the {Satake} isomorphism},
 booktitle = {Galois representations in arithmetic algebraic geometry. Proceedings of the symposium, Durham, UK, July 9--18, 1996},
 isbn = {0-521-64419-4},
 pages = {223--237},
 year = {1998},
 publisher = {Cambridge: Cambridge University Press},
 language = {English},
 keywords = {11F70,22E50},
 zbMATH = {1302902},
 Zbl = {0996.11038}
}

@article{Harris,
 author = {Harris, R. N.},
 title = {The refined {Gross}-{Prasad} conjecture for unitary groups},
 fjournal = {IMRN. International Mathematics Research Notices},
 journal = {Int. Math. Res. Not.},
 issn = {1073-7928},
 volume = {2014},
 number = {2},
 pages = {303--389},
 year = {2014},
 language = {English},
 doi = {10.1093/imrn/rns219},
 keywords = {11F67,11F70,11F27},
 url = {escholarship.org/uc/item/60c6540w},
 zbMATH = {6340333},
 Zbl = {1322.11047}
}

@article{II,
 author = {Ichino, A. and Ikeda, T.},
 title = {On the periods of automorphic forms on special orthogonal groups and the {Gross}-{Prasad} conjecture},
 fjournal = {Geometric and Functional Analysis. GAFA},
 journal = {Geom. Funct. Anal.},
 issn = {1016-443X},
 volume = {19},
 number = {5},
 pages = {1378--1425},
 year = {2010},
 language = {English},
 doi = {10.1007/s00039-009-0040-4},
 keywords = {11F67,11F70},
 zbMATH = {5680961},
 Zbl = {1216.11057}
}

@phdthesis{Khoury,
    author = {Khoury Jr., M.},
    title = {Multiplicity-one results and explicit formulas for quasi-split $p$-adic unitary groups},
    school = {The Ohio State University},
    year = {2008}
}

@article{KMS,
 author = {Kato, S. I. and Murase, A. and Sugano, T.},
 title = {Whittaker-{Shintani} function for orthogonal groups},
 fjournal = {T{\^o}hoku Mathematical Journal. Second Series},
 journal = {T{\^o}hoku Math. J. (2)},
 issn = {0040-8735},
 volume = {55},
 number = {1},
 pages = {1--64},
 year = {2003},
 language = {English},
 doi = {10.2748/tmj/1113247445},
 keywords = {22E50,11F70,22E35},
 zbMATH = {2040356},
 Zbl = {1037.22034}
}

@unpublished{KMS2,
   author={Kato, S. I. and Murase, A and Sugano, T. },
   title={Whittaker-Shintani funtions for $\GL_n$: An explicit formula}
}

@article{LM,
 author = {Lapid, E. and Mao, Z.},
 title = {A conjecture on {Whittaker}-{Fourier} coefficients of cusp forms},
 fjournal = {Journal of Number Theory},
 journal = {J. Number Theory},
 issn = {0022-314X},
 volume = {146},
 pages = {448--505},
 year = {2015},
 language = {English},
 doi = {10.1016/j.jnt.2013.10.003},
 keywords = {11F70,11F30,22E55},
 zbMATH = {6359743},
 Zbl = {1396.11081}
}

@book{MVW,
 author = {Moeglin, C. and Vign{\'e}ras, M.-F. and Waldspurger, J.-L.},
 title = {Correspondances de {Howe} sur un corps {{\(p\)}}-adique},
 fseries = {Lecture Notes in Mathematics},
 series = {Lect. Notes Math.},
 issn = {0075-8434},
 volume = {1291},
 isbn = {3-540-18699-9},
 year = {1987},
 publisher = {Springer, Cham},
 language = {French},
 doi = {10.1007/BFb0082712},
 keywords = {22E50,11S40,22-02,11F70,11-02,11F33},
 zbMATH = {4046035},
 Zbl = {0642.22002}
}

@book{Ren,
 author = {Renard, D.},
 title = {Repr{\'e}sentations des groupes r{\'e}ductifs {{\(p\)}}-adiques},
 fseries = {Cours Sp{\'e}cialis{\'e}s (Paris)},
 series = {Cours Sp{\'e}c. (Paris)},
 issn = {1284-6090},
 volume = {17},
 isbn = {978-2-85629-278-5},
 year = {2010},
 publisher = {Paris: Soci{\'e}t{\'e} Math{\'e}matique de France},
 language = {French},
 keywords = {22E50,22-02,11F70},
 zbMATH = {5665785},
 Zbl = {1186.22020}
}

@article{Shen,
 author = {Shen, X.},
 title = {The {Whittaker}-{Shintani} functions for symplectic groups},
 fjournal = {IMRN. International Mathematics Research Notices},
 journal = {Int. Math. Res. Not.},
 issn = {1073-7928},
 volume = {2014},
 number = {21},
 pages = {5769--5831},
 year = {2014},
 language = {English},
 doi = {10.1093/imrn/rnt144},
 keywords = {22E50,11F70},
 zbMATH = {6371868},
 Zbl = {1306.22009}
}

@phdthesis{Shen2,
  author={Shen, X. },
   title={Unramified computation of tensor L-functions on symplectic groups},
   date={2013},
   school={University of Minnesota},
}

@article{Shi,
 author = {Shintani, T.},
 title = {On an explicit formula for class-1 ''{Whittaker} functions'' on {{\(GL_n\)}} over {{\(\mathfrak p\)}} -adic fields},
 fjournal = {Proceedings of the Japan Academy},
 journal = {Proc. Japan Acad.},
 issn = {0021-4280},
 volume = {52},
 pages = {180--182},
 year = {1976},
 language = {English},
 doi = {10.3792/pja/1195518347},
 keywords = {43A22,11S45},
 zbMATH = {3602076},
 Zbl = {0387.43002}
}

@book{Va,
 author = {Varadarajan, V. S.},
 title = {Harmonic analysis on real reductive groups},
 fseries = {Lecture Notes in Mathematics},
 series = {Lect. Notes Math.},
 issn = {0075-8434},
 volume = {576},
 year = {1977},
 publisher = {Springer, Cham},
 language = {English},
 doi = {10.1007/BFb0097814},
 keywords = {43-02,22-02,46F05},
 zbMATH = {3551349},
 Zbl = {0354.43001}
}

@article{Wald,
 author = {Waldspurger, J.-L.},
 title = {La formule de {Plancherel} pour les groupes {{\(p\)}}-adiques [d'apr{\`e}s Harish-Chandra]. (The Plancherel formula for {{\(p\)}}-adic groups [after {Harish}-{Chandra}])},
 fjournal = {Journal of the Institute of Mathematics of Jussieu},
 journal = {J. Inst. Math. Jussieu},
 issn = {1474-7480},
 volume = {2},
 number = {2},
 pages = {235--333},
 year = {2003},
 language = {French},
 doi = {10.1017/S1474748003000082},
 keywords = {22E35,22E50},
 zbMATH = {2011918},
 Zbl = {1029.22016}
}

@article{Xue1,
 author = {Xue, H.},
 title = {The {Gan}-{Gross}-{Prasad} conjecture for {{\(\mathrm{U}(n) \times \mathrm{U}(n)\)}}},
 fjournal = {Advances in Mathematics},
 journal = {Adv. Math.},
 issn = {0001-8708},
 volume = {262},
 pages = {1130--1191},
 year = {2014},
 language = {English},
 doi = {10.1016/j.aim.2014.06.010},
 keywords = {11F70},
 zbMATH = {6317168},
 Zbl = {1301.11051}
}

@article{Xue,
 author = {Xue, H.},
 title = {Fourier-{Jacobi} periods and the central value of {Rankin}-{Selberg} {{\(L\)}}-functions},
 fjournal = {Israel Journal of Mathematics},
 journal = {Isr. J. Math.},
 issn = {0021-2172},
 volume = {212},
 number = {2},
 pages = {547--633},
 year = {2016},
 language = {English},
 doi = {10.1007/s11856-016-1300-2},
 keywords = {11F67,11F70,11F66,11F27},
 zbMATH = {6605984},
 Zbl = {1347.11046}
}

@article{Zhang,
 author = {Zhang, L.},
 title = {Whittaker-{Shintani} functions for general linear groups over {{\(p\)}}-adic fields},
 fjournal = {Science China. Mathematics},
 journal = {Sci. China, Math.},
 issn = {1674-7283},
 volume = {61},
 number = {1},
 pages = {95--110},
 year = {2018},
 language = {English},
 doi = {10.1007/s11425-016-9064-6},
 keywords = {22E50,11F70,11F85,22E35},
 zbMATH = {6850824},
 Zbl = {1406.22021}
}

\begin{flushleft}
Paul Boisseau \\
Aix Marseille Univ, \\
CNRS, I2M, \\
Marseille, France
\medskip
	
email:\\
paul.boisseau@univ-amu.fr \\
\end{flushleft}

\end{document}